\newcommand{\vertiii}[1]{{\left\vert\kern-0.25ex\left\vert\kern-0.25ex\left\vert #1 
    \right\vert\kern-0.25ex\right\vert\kern-0.25ex\right\vert}}
\theoremstyle{plain}
\newtheorem{teorema}{Theorem}[section]
\newtheorem{proposizione}[teorema]{Proposition}
\newtheorem{lemma}[teorema]{Lemma}
\newtheorem{corollario}[teorema]{Corollary}
\newtheorem*{theorem*}{Theorem}
\theoremstyle{definition}
\newtheorem{definizione}{Definition}[section]
\theoremstyle{remark}
\newtheorem{osservazione}{Remark}[section]
\newcommand{\Tan}{\mathrm{Tan}}
\newcommand{\N}{\mathbb{N}}
\newcommand{\Q}{\mathbb{Q}}
\newcommand{\R}{\mathbb{R}}
\newcommand{\HH}{\mathbb{H}}
\newcommand{\G}{Gr}
\newcommand{\res}
\newcounter{const}
\newcommand{\newC}{\refstepcounter{const}\ensuremath{C_{\theconst}}}
\newcommand{\oldC}[1]{\ensuremath{C_{\ref{#1}}}}
\newcounter{eps}
\newcommand{\newep}{\refstepcounter{eps}\ensuremath{\varepsilon_{\theeps}}}
\newcommand{\oldep}[1]{\ensuremath{\varepsilon_{\ref{#1}}}}
\DeclareMathOperator*{\lip}{Lip_1^+}
\DeclareMathOperator*{\supp}{supp}
\DeclareMathOperator*{\diam}{diam}
\DeclareMathOperator*{\dist}{dist}
\title{\normalfont\spacedallcaps{On rectifiable measures in Carnot groups: representation}} 
\author{\spacedlowsmallcaps{Gioacchino Antonelli\textsuperscript{*} and Andrea Merlo\textsuperscript{**}}}
\date{}
\begin{document}

\renewcommand{\sectionmark}[1]{\markright{\spacedlowsmallcaps{#1}}} 
\lehead{\mbox{\llap{\small\thepage\kern1em\color{halfgray} \vline}\color{halfgray}\hspace{0.5em}\rightmark\hfil}} 
\pagestyle{scrheadings}
\maketitle 
\setcounter{tocdepth}{2}
\paragraph*{Abstract}
This paper deals with the theory of rectifiability in arbitrary Carnot groups, and in particular with the study of the notion of $\mathscr{P}$-rectifiable measure. 

First, we show that in arbitrary Carnot groups the natural \textit{infinitesimal} definition of rectifiabile measure, i.e., the definition given in terms of the existence of \textit{flat} tangent measures, is equivalent to the global definition given in terms of coverings with intrinsically differentiable graphs, i.e., graphs with \textit{flat} Hausdorff tangents. In general we do not have the latter equivalence if we ask the covering to be made of intrinsically Lipschitz graphs.

Second, we show a geometric area formula for the centered Hausdorff measure restricted to intrinsically differentiable graphs in arbitrary Carnot groups. The latter formula extends and strengthens other area formulae obtained in the literature in the context of Carnot groups.

As an application, our analysis allows us to prove the intrinsic $C^1$-rectifiability of almost all the preimages of a large class of Lipschitz functions between Carnot groups. In particular, from the latter result, we obtain that any geodesic sphere in a Carnot group equipped with an arbitrary left-invariant homogeneous distance is intrinsic $C^1$-rectifiable.
{\let\thefootnote\relax\footnotetext{* \textit{Scuola Normale Superiore, Piazza dei Cavalieri, 7, 56126 Pisa, Italy,}}}
{\let\thefootnote\relax\footnotetext{** \textit{Université Paris-Saclay, 307 Rue Michel Magat Bâtiment, 91400 Orsay, France.}}}

\paragraph*{Keywords} Carnot groups, Rectifiability, Rectifiable set, Rectifiable measure, Area formula, Preiss' tangent measure.

\paragraph*{MSC (2010)} 53C17, 22E25, 28A75, 49Q15, 26A16.


\section{Introduction}


In the Euclidean setting the notion of rectifiable set, and more in general that of rectifiable measure, can be given in two equivalent ways. Either one could prescribe the \textit{infinitesimal} behaviour of the measure by saying that it has \textit{flat} tangent measures almost everywhere, i.e., Hausdorff measures supported on vector subspaces of dimension $k$ or, following a \textit{global} approach, one could say that the measure is absolutely continuous with respect to the Hausdorff $k$-dimensional measure, and that it is supported on a countable union of $k$-dimensional Lipschitz graphs. In Euclidean spaces the latter two notions are equivalent, pretty well-understood and  thoroughly studied, see \cite{Federer1996GeometricTheory, Preiss1987GeometryDensities, Mattila1995GeometrySpaces, DeLellis2008RectifiableMeasures}. 

In the last two decades an increasing interest has grown towards the understanding of rectifiability in some specific non-smooth contexts, such as the context of Carnot groups, see \cref{sec:Prel} for details. A Carnot group $\mathbb{G}$ is a simply connected nilpotent Lie group, whose Lie algebra is stratified and generated by its first layer. Carnot groups are a generalization of Euclidean spaces, and we remark that (quotients of) Carnot groups arise as the infinitesimal models of sub-Riemannian manifolds. The geometry of a Carnot group, even at an infinitesimal scale, might be very different from the Euclidean one. On every Carnot group we have a natural family of anisotropic dilations $\{\delta_\lambda\}_{\lambda>0}$. We always endow $\mathbb G$ with an arbitrary left-invariant homogeneous (with respect to $\{\delta_\lambda\}_{\lambda>0}$) distance $d$, and we recall that any two of them are bi-Lipschitz equivalent. We denote $Q$ the Hausdorff dimension of $\mathbb G$ with respect to any of such distances.

As shown in the fundamental papers \cite{Serapioni2001RectifiabilityGroup, step2}, in step-2 Carnot groups the reduced boundary of a finite perimeter set can be covered up to $\mathcal{H}^{Q-1}$-negligible sets by countably many intrinsic $C^1$-regular hypersurfaces, $C^1_{\mathrm{H}}$ hypersurfaces from now on, see \cref{def:C1Hmanifold}.
The positive De Giorgi's rectifiability result in \cite{Serapioni2001RectifiabilityGroup} has started an effort to study Geometric Measure Theory in sub-Riemannian Carnot groups, and in particular to study various notion(s) of rectifiability, mostly given following the \textit{global} approach described at the beginning of this paragraph. 
 
One of the big efforts in this study is trying to understand what is the correct class of \textit{building blocks} to consider in order to give a satisfactory \textit{global} definition of rectifiable set, or measure, in the setting of Carnot groups. The first notion of rectifiability that has been proposed and studied is the one which considers as buliding blocks $C^1_{\mathrm H}$-surfaces, as explained above, see \cite{step2, FSSC03b, FSSC07, MagnaniTwoardsDiff, JNGV20}. Then a notion of intrinsic Lipschitz graph (see \cref{def:iLipfunctions}) has been proposed and studied in \cite{FSSC06, FranchiSerapioni16}, and relations between the notion of intrinsic Lipschitz rectifiability and the notion of $C^1_{\mathrm H}$-rectifiability have been investigated in \cite{FSSC11, FMS14}. The problem of linking the latter two definitions of rectifiability with the \textit{infinitesimal} viewpoint was raised in \cite{MatSerSC} in the setting of Heisenberg groups $\mathbb H^n$. From the results in \cite{MatSerSC} one deduces that in $\mathbb H^n$ the natural \textit{infinitesimal} notion of rectifiable measure - namely the one given in terms of the existence of flat tangent measures almost everywhere - agrees with the one given in terms of intrinsic Lipschitz graphs in low dimensions, and with the one given in terms of $C^1_{\mathrm H}$-surfaces in low codimensions. Eventually, it took about ten years to conclude that a Rademacher theorem for intrinsic Lipschitz functions in low codimensions holds in $\mathbb H^n$, see \cite{Vittone20}. As a consequence, at least in $\mathbb H^n$, the natural infinitesimal definition of rectifiability always agrees with the one given in terms of coverings with intrinsic Lipschitz graphs. 
An analysis similar to the one of \cite{MatSerSC} has been pursued in \cite{IMM20} in the setting of homogeneous groups and for measures with horizontal tangents. 

Other notions of rectifiability modelled on Lipschitz images of (homogeneous subgroups of) Carnot groups have been proposed by Pauls and Cole-Pauls in \cite{Pauls04, CP06}. An interesting open question asks whether in $\mathbb H^1$ the notion of rectifiability by means of $C^1_{\mathrm H}$-hypersurfaces is equivalent to the one of Cole-Pauls given in \cite{CP06}, see \cite{BigolinVittone10,DDFO20} for some partial results. Nevertheless, in arbitrary Carnot groups, the two notions could be very different, see the results in \cite{ALD}. A weak notion of rectifiability in terms of building blocks that satisfy some mild cone property has also been recently investigated in \cite{DLDMV19}.

On the other hand, from the \textit{infinitesimal} viewpoint, a notion that makes sense in arbitrary Carnot groups has been proposed in \cite{MarstrandMattila20} by the second-named author, namely the notion of $\mathscr{P}$-rectifiable measure, which we recall here. We recall that a subgroup $\mathbb V$ of $\mathbb G$ is said to be \textit{homogeneous} if it is closed under the action of the dilations $\{\delta_\lambda\}_{\lambda>0}$. Again we remark that $\mathbb G$ is endowed with a left-invariant homogeneous distance $d$.

\begin{definizione}[$\mathscr{P}$-rectifiable measures]\label{def:PhRectifiableMeasureINTRO}
Fix a natural number $1\leq h\leq Q$. A Radon measure $\phi$ on $\mathbb G$ is said to be {\em $\mathscr{P}_h$-rectifiable} if for $\phi$-almost every $x\in \mathbb{G}$ we have \begin{itemize}
    \item[(i)]$0<\Theta^h_*(\phi,x)\leq\Theta^{h,*}(\phi,x)<+\infty$,
    \item[(\hypertarget{due}{ii})]$\mathrm{Tan}_h(\phi,x) \subseteq \{\lambda\mathcal{S}^h\llcorner \mathbb V(x):\lambda\geq 0\}$, where $\mathbb V(x)$ is a homogeneous subgroup of $\mathbb G$ of Hausdorff dimension $h$,
\end{itemize}
where $\Theta^h_*(\phi,x)$ and $\Theta^{h,*}(\phi,x)$ are, respectively, the lower and the upper $h$-density of $\phi$ at $x$, see \cref{def:densities}, $\mathrm{Tan}_h(\phi,x)$ is the set of $h$-tangent measures to $\phi$ at $x$, see \cref{def:TangentMeasure}, and $\mathcal{S}^h$ is the spherical Hausdorff measure of dimension $h$, see \cref{def:HausdorffMEasure}.
\end{definizione}

In \cite{antonelli2020rectifiable} we started to study structure results for the class of $\mathscr{P}$-rectifiable measures, proving a Marstrand-Mattila type rectifiability criterion in the co-normal case \cite[Theorem 1.3]{antonelli2020rectifiable}. The latter theorem directly leads to the proof of Preiss's theorem in the first Heisenberg group $\mathbb H^1$ equipped with the Koranyi norm, see \cite[Theorem 1.4]{antonelli2020rectifiable}. In this paper we complete the study of the notion of $\mathscr{P}$-rectifiable measure, when the tangents are complemented, showing that the notion of $\mathscr{P}$-rectifiability, which is \textit{infinitesimal} in nature, is equivalent to the \textit{global} one given in terms of intrinsic differentiable graphs, see \cref{thm:INTRO1Equivalence}. We stress that our \cref{thm:INTRO1Equivalence} extends to arbitrary Carnot groups and to all the dimensions the results given in \cite{MatSerSC}.

All in all we conclude that, in Carnot groups, the correct \textit{building blocks} to consider in order to give a \textit{global} definition of rectifiability that agrees with the infinitesimal one seem to be intrinsic differentiable graphs. We also provide an area formula for such building blocks, see \cref{thm:AREAINTRO2}. We stress that, due to the existence of intrinsic Lipschitz graphs that are nowhere intrinsically differentiable, see \cite{JNGV20a}, one cannot give a geometric area formula in the spirit of \cref{thm:AREAINTRO2} for arbitrary intrinisc Lipschitz graphs. Nevertheless the area formulae in \cref{thm:AREAINTRO1} and \cref{thm:AREAINTRO2} extend the area formula given in \cite[Theorem 1.1]{JNGV20}, see the discussion after \cref{thm:AREAINTRO2}.

We stress that one of the main achievements of this paper is also the rectifiability criterion in \cref{prop:rett.1} which allows to prove the $\mathscr{P}$-rectifiability of almost all the preimages of a large number of Lipschitz functions, see \cref{structure:liplevelsetsINTRO}, and \cref{structure:liplevelsets3Intro}. 

{\em Remark:} Some of the results presented in this paper use results proven in \cite[Sections 2-3-4-6]{antonelli2020rectifiable}. We recall the most important ones in the preliminary section of this work, see \cref{sec:Prel}. During the proofs we give precise references to the results of \cite{antonelli2020rectifiable} when we need them.

\subsection{Main results}

We discuss the main contributions of the present paper. We fix $\mathbb G$ a Carnot group and we equip it with a left-invariant homogeneous distance. We recall that when we say that a homogeneous subgroup $\mathbb V$ {\em admits a complementary subgroup}, we mean that there exists a homogeneous subgroup $\mathbb L$ such that $\mathbb G=\mathbb V\cdot \mathbb L$ and $\mathbb V\cap\mathbb L=\{0\}$. The first result of this work is a complete characterization of $\mathscr{P}_h$-rectifiable sets with complemented tangents in arbitrary Carnot groups either in terms of the existence of flat $h$-dimensional complemented Preiss's tangents almost everywhere or in terms of a covering property with $h$-dimensional intrinsically differentiable graphs with complemented tangents. We recall that, while $\mathrm{Tan}_h(\phi,x)$ captures the behaviour of tangent measures obtained rescaling with the $h$-th power of the scale, see \cref{def:TangentMeasure}, the Preiss's tangent $\mathrm{Tan}(\phi,x)$, see \cref{def:TangentMeasure}, captures the behaviour of all the possible tangent measures, namely 
$$
\mathrm{Tan}(\phi,x):=\{\nu:\,\,\text{there exist $\{c_i\}$, with $c_i>0$, and $\{r_i\}$ with $r_i\to _{i}0$ such that}\,\, c_iT_{x,r_i}\phi\rightharpoonup_{i} \nu\},
$$
where the convergence of measures is meant in the duality with $C_c(\mathbb G)$, see \cref{def:WeakConvergence}. For the reader's convenience we recall here that an intrinsic graph with respect to a splitting $\mathbb G=\mathbb V\cdot\mathbb L$ of the group is said to be {\em intrinsically differentiable} at one of its points if the Hausdorff tangent at that point is a homogeneous subgroup, see \cref{defiintrinsicdiffgraph} for a precise definition. For the proof of the next statement, see the end of \cref{sec:Density}.
\begin{teorema}\label{thm:INTRO1Equivalence}
Let $\mathbb G$ be a Carnot group and fix a natural number $1\leq h\leq Q$. Let $\Gamma\subseteq\mathbb G$ be a Borel set such that $\mathcal{S}^h(\Gamma)<+\infty$, where $\mathcal{S}^h$ is the $h$-dimensional spherical Hausdorff measure. Then the following are equivalent
\begin{enumerate}
    \item $\mathcal{S}^h\llcorner\Gamma$ is a $\mathscr{P}_h$-rectifiable measure with \textbf{complemented} tangents, i.e., a $\mathscr{P}_h^c$-rectifiable measure, see \cref{def:PhRectifiableMeasure},
    \item For $\mathcal{S}^h\llcorner\Gamma$-almost every $x\in\mathbb G$ we have 
    $$
    \Tan(\mathcal{S}^h\llcorner\Gamma,x)=\{\lambda\mathcal{S}^h\llcorner\mathbb V(x):\lambda>0,\mathbb V(x)\,\text{is a \textbf{complemented} homogeneous subgroup of $\mathbb G$ with $\mathrm{dim}_H\mathbb V(x)=h$}\},
    $$
    \item There exist countably many compact intrinsic Lipschitz graphs $\Gamma_i$ that are $h$-dimensional intrinsically differentiable graphs at $\mathcal{S}^h$-almost every $x\in\Gamma_i$, that have \textbf{complemented} Hausdorff tangents at $\mathcal{S}^h$-almost every $x\in\Gamma_i$, and such that 
    $$
    \mathcal{S}^h(\Gamma\setminus\cup_{i=1}^{+\infty}\Gamma_i)=0.
    $$
\end{enumerate}
Moreover, denoting with $\mathcal{C}^h$ the centered Hausdorff measure of dimension $h$, see \cref{def:HausdorffMEasure}, if any of the previous holds, then $\Theta^{h}(\mathcal{C}^h\llcorner\Gamma,x)=1$ exists for $\mathcal{C}^h\llcorner\Gamma$-almost every $x\in\mathbb G$ and
$$
r^{-h}(T_{x,r})_*(\mathcal{C}^h\llcorner\Gamma)\rightharpoonup \mathcal{C}^h\llcorner\mathbb V(x), \qquad \text{for $\mathcal{C}^h\llcorner\Gamma$-almost every $x\in\mathbb G$,}
$$
where the convergence of measures is meant in the duality with $C_c(\mathbb G)$.
\end{teorema}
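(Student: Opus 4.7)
The plan is to prove the cycle $(3) \Rightarrow (2) \Rightarrow (1) \Rightarrow (3)$, and then deduce the moreover part about the centered Hausdorff measure from the analysis obtained along the way. Throughout I would rely on the density and tangent measure machinery developed in \cite{antonelli2020rectifiable} and recalled in \cref{sec:Prel}, together with the geometric area formula of \cref{thm:AREAINTRO2} and the rectifiability criterion \cref{prop:rett.1} that is introduced in this paper.

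For $(3) \Rightarrow (2)$, I would fix a compact intrinsic Lipschitz graph $\Gamma_i$ in the covering and work at an $\mathcal{S}^h$-a.e.\ point $x \in \Gamma_i$ where $\Gamma_i$ is intrinsically differentiable with complemented Hausdorff tangent $\mathbb{V}(x)$. The area formula of \cref{thm:AREAINTRO2} applied to $\Gamma_i$ delivers the weak convergence of $r^{-h}(T_{x,r})_*(\mathcal{C}^h\llcorner\Gamma_i)$ to $\mathcal{C}^h\llcorner\mathbb{V}(x)$ and, since $\mathcal{C}^h$ and $\mathcal{S}^h$ are proportional on any homogeneous subgroup, yields the description of $\mathrm{Tan}(\mathcal{S}^h\llcorner\Gamma_i,x)$ as $\{\lambda\mathcal{S}^h\llcorner\mathbb{V}(x):\lambda>0\}$ by letting the scaling factors $c_i$ in the Preiss tangent vary. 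To pass from $\Gamma_i$ to $\Gamma$, I would use that at $\mathcal{S}^h\llcorner\Gamma_i$-a.e.\ point the remainder $\Gamma\setminus\Gamma_i$ has zero lower $h$-density, so that its contribution to any tangent measure vanishes.

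For $(2) \Rightarrow (1)$, the implication is essentially formal: assumption (2) immediately contains $\mathrm{Tan}_h(\phi,x)$ inside the flat complemented family appearing in condition (\hyperlink{due}{ii}) of \cref{def:PhRectifiableMeasureINTRO}, and a standard compactness argument for Preiss tangents (as in \cite{DeLellis2008RectifiableMeasures}) uses the strict positivity of $\lambda$ and the boundedness of the tangent masses to force $0<\Theta^h_*(\phi,x)\leq\Theta^{h,*}(\phi,x)<+\infty$ at the same points, giving $\mathscr{P}_h^c$-rectifiability.

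The main obstacle is $(1) \Rightarrow (3)$. Here I would invoke the rectifiability criterion \cref{prop:rett.1}, which upgrades infinitesimal $\mathscr{P}_h^c$-rectifiability into the existence of a covering of $\Gamma$ by countably many compact intrinsic Lipschitz graphs; this step is built on the cone-fattening and Marstrand--Mattila machinery of \cite[Sections 2-3-4-6]{antonelli2020rectifiable}, which is where the complementedness of tangents is genuinely used to produce graphs at all. The remaining point is to promote each intrinsic Lipschitz graph $\Gamma_i$ to an intrinsically differentiable graph with complemented Hausdorff tangent at $\mathcal{S}^h$-a.e.\ of its points. For this I would compare the measure-theoretic tangent $\mathbb{V}(x)$ supplied by $\mathscr{P}_h^c$-rectifiability with the set-theoretic Hausdorff tangent of $\Gamma_i$ at $x$: the cone estimates satisfied by intrinsic Lipschitz graphs confine every Hausdorff tangent to a cone around a homogeneous subgroup, while the flatness of the Preiss tangent $\lambda\mathcal{S}^h\llcorner\mathbb{V}(x)$ pins the support of this tangent to $\mathbb{V}(x)$ at $\mathcal{S}^h$-a.e.\ point of positive lower density, forcing the two tangents to coincide and giving intrinsic differentiability. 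Once (3) is in hand, the moreover statement follows by applying the area formula \cref{thm:AREAINTRO2} graph-by-graph: the centered Hausdorff measure is normalised precisely so that $\Theta^h(\mathcal{C}^h\llcorner\Gamma_i,x)=1$ at a.e.\ $x$, and the weak convergence $r^{-h}(T_{x,r})_*(\mathcal{C}^h\llcorner\Gamma)\rightharpoonup \mathcal{C}^h\llcorner\mathbb{V}(x)$ is the same computation already performed in $(3) \Rightarrow (2)$, now on $\Gamma$ itself via the vanishing-remainder argument.
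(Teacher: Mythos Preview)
Your proposal has a genuine gap in the step $(2)\Rightarrow(1)$, which you describe as ``essentially formal''. It is not. Knowing that $\mathrm{Tan}(\mathcal{S}^h\llcorner\Gamma,x)=\{\lambda\mathcal{S}^h\llcorner\mathbb V(x):\lambda>0\}$ does \emph{not} force $\Theta^h_*(\mathcal{S}^h\llcorner\Gamma,x)>0$: the scaling constants $c_i$ in the definition of $\mathrm{Tan}$ are arbitrary, so even if every Preiss tangent is a positive multiple of $\mathcal{S}^h\llcorner\mathbb V(x)$, one can have $\phi(B(x,r_i))/r_i^h\to 0$ along a sequence while $c_iT_{x,r_i}\phi$ still converges to a nonzero measure by choosing $c_i$ appropriately. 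The paper singles this out explicitly: the positivity of the lower density must be \emph{deduced} from the flat complemented tangents, and this is the content of \cref{thm:ExistenceOfDensityPlus}, which in turn rests on the big-projection estimate of \cref{prop:proj} and \cref{prop:DensitaInfPositiva}. Your ``standard compactness argument'' cannot replace this.

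There are two further misidentifications. In $(1)\Rightarrow(3)$ you invoke \cref{prop:rett.1} to produce the covering by intrinsic Lipschitz graphs, but \cref{prop:rett.1} goes the other way: it takes a set satisfying a cone condition and outputs $\mathscr{P}_h^c$-rectifiability. The paper obtains the covering and its a.e.\ intrinsic differentiability from \cite[Theorem~1.8]{antonelli2020rectifiable} together with \cref{prop:TangentCompemented}. In $(3)\Rightarrow(2)$ you appeal to the area formula \cref{thm:AREAINTRO2} to get the weak convergence of rescaled measures; the area formula is an integral identity and does not by itself deliver this convergence. The paper instead routes through $(3)\Rightarrow(1)$ via \cref{prop:idiffapproximatetangent} and \cref{prop:rett.1}, which yields both $\Theta^h(\mathcal{C}^h\llcorner\Gamma_i,x)=1$ and the tangent description directly; the moreover part then follows from this density identity and \cite[Proposition~2.26]{antonelli2020rectifiable}, not from the area formula.
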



Let us observe that when a Rademacher theorem is available, we can equivalently consider as the building blocks in item 3. of \cref{thm:INTRO1Equivalence} the class of $h$-dimensional intrinsically Lipschitz graphs, without asking anything a priori on the differentiability. Let us recall that a Rademacher theorem is proved in \cite{FMS14, FSSC11} in the setting of Carnot groups $\mathbb G$ of type $\star$, i.e., a class strictly larger than Carnot groups of step 2, and for maps $\varphi:U\subseteq\mathbb W\to \mathbb L$, where $\mathbb W$ and $\mathbb L$ are complementary subgroups of $\mathbb G$, with $\mathbb L$ \textbf{horizontal and one-dimensional}. Moreover, with the recent results of \cite{LDM20}, the latter codimension-one Rademacher theorem can be extended to the groups of type diamond introduced in \cite{LDM20}.
Recently, by making use of the theory of currents, the author of \cite{Vittone20} has proved the Rademacher theorem for intrinsically Lipschitz maps between complementary subgroups of any dimension in the Heisenberg groups $\mathbb H^n$, while in \cite{AM20} we proved the validity of a Rademacher theorem for co-normal intrinsically Lipschitz graphs.

Nevertheless, Rademacher theorem is now known to be false in arbitrary Carnot groups in a very strict sense, i.e., there exists an $h$-dimensional intrinsically Lipschitz graph in a Carnot group such that at every point of it there exist infinitely many blow-ups and each of these blow-ups is not a homogeneous subgroup, see \cite[Theorem 1.1]{JNGV20a}. This latter result implies that in general in item 3. of \cref{thm:INTRO1Equivalence} one cannot equivalently consider as building blocks of a locally well-behaved definition of rectifiable sets the family of $h$-dimensional intrinsically Lipschitz graphs. So, in some sense, the result of \cref{thm:INTRO1Equivalence} is sharp also in view of the negative result of \cite{JNGV20a}.

Let us further notice that we do not consider in this work the relations between the three items in \cref{thm:INTRO1Equivalence} and the existence of an approximate tangent in the sense of \cite[Definition 15.17]{Mattila1995GeometrySpaces} (cf. \cite[Definition 3.7]{MatSerSC}). This relation will be target of future investigations. All in all, taking into account that $\mathcal{S}^h\llcorner\Gamma$ is $\mathscr{P}_h^c$-rectifiable with co-horizontal tangents if and only if $\Gamma$ is $C^1_H(\mathbb G,\mathbb R^{Q-h})$-rectifiable, see \cref{def:GG'Rect} and \cref{cor:PhCCoorizzontali}, our result in \cref{thm:INTRO1Equivalence} extends and strengthens \cite[(i)$\Leftrightarrow$(ii)$\Leftrightarrow$(iv)$\Leftrightarrow$(v) of Theorem 3.15]{MatSerSC}. Notice also that in the previous chain of equivalences, we can also drop the assumption on the lower density in \cite[(iv),(v)]{MatSerSC}. 
Moreover, taking into account the Rademacher theorem of \cite{AM20} in the co-normal case, our result in \cref{thm:INTRO1Equivalence} extends \cite[(i)$\Leftrightarrow$(ii)$\Leftrightarrow$(iv)$\Leftrightarrow$(v) of Theorem 3.14]{MatSerSC} as well. 
Let us recall, for the reader's convenience, that \cite[Theorem 3.15]{MatSerSC} deals with the characterization of co-horizontal rectifiability in the Heisenberg groups $\mathbb H^n$, while \cite[Theorem 3.14]{MatSerSC} 
deals with the characterization of horizontal rectifiability in the Heisenberg groups $\mathbb H^n$.

Let us final notice that we stated our result in \cref{thm:INTRO1Equivalence} for measures of the form $\mathcal{S}^h\llcorner \Gamma$, but we could also give a version of it for Radon measures with $\Theta^{h,*}(\phi,x)<+\infty$ for $\phi$-almost every $x\in\mathbb G$, after having proven the analogue of \cref{thm:ExistenceOfDensityPlus} for measures. 

%

The second result of the work is an area formula for intrinsic Lipschitz graphs that are in addition $\mathscr{P}_h^c$-rectifiable. The proof of the following statement is at the end of \cref{sec:AreaFormula}.
\begin{teorema}\label{thm:AREAINTRO1}
Let $\mathbb V,\mathbb L$ be homogeneous complementary subgroups of a Carnot group $\mathbb G$ such that $h:=\dim_H\mathbb V$. Let $\Gamma$ be the graph of an intrinsic Lipschitz map $\varphi:A\subseteq\mathbb V\to\mathbb L$ (see \cref{def:iLipfunctions}), with $A$ Borel, such that $\mathcal{S}^h\llcorner\Gamma$ is $\mathscr{P}_h^c$-rectifiable with tangent measures $\mathcal{S}^h\llcorner\Gamma$-almost everywhere supported on homogeneous subgroups \textbf{complemented by $\mathbb L$}. Then, for every Borel function $\psi:\Gamma\to[0,+\infty)$ the following area formula holds
\begin{equation}\label{eqn:AreaFormulaIntro}
\int_\Gamma \psi d\mathcal{C}^h\llcorner\Gamma = \int_A \psi(a\cdot\varphi(a))\mathcal{A}(\mathbb V(a\cdot\varphi(a)))\mathcal{C}^h\llcorner\mathbb V,
\end{equation}
where $\mathcal{C}^h$ is the centered Hausdorff measure, see \cref{def:HausdorffMEasure}, $\mathbb V(a\cdot\varphi(a))$ is the tangent on which it is supported the tangent measure of $\mathcal{S}^h\llcorner\Gamma$ at the point $a\cdot\varphi(a)\in\Gamma$, and $\mathcal{A}(\cdot)$ is the centered area factor defined with respect to the splitting $\mathbb G=\mathbb V\cdot\mathbb L$, see \cref{def:areafactorCENTR}.
\end{teorema}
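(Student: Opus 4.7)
The plan is to view both sides of \eqref{eqn:AreaFormulaIntro} as Radon measures on $\Gamma$ pushed forward by the graph map $\Phi\colon A\to\Gamma$, $\Phi(a)=a\cdot\varphi(a)$, and compare them via a Radon--Nikodym/differentiation argument. Set $\mu:=\Phi_*(\mathcal{C}^h\llcorner A)$. Since $\Phi$ is a bijection and $\psi\geq 0$ is arbitrary Borel, a standard change-of-variables reduces \eqref{eqn:AreaFormulaIntro} to the pointwise identity of Radon measures
\[
\mathcal{C}^h\llcorner\Gamma \;=\; \mathcal{A}(\mathbb V(\cdot))\,\mu \qquad \text{on } \Gamma.
\]

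First I would establish that $\mathcal{C}^h\llcorner\Gamma$ and $\mu$ are mutually absolutely continuous. Mutual absolute continuity follows from the intrinsic Lipschitz property of $\varphi$, which gives two-sided comparisons between $\mathcal{C}^h\llcorner A$ and $\mathcal{H}^h\llcorner\Gamma$, combined with the fact that on a $\mathscr{P}_h^c$-rectifiable set $\mathcal{C}^h\llcorner\Gamma$ is comparable to $\mathcal{H}^h\llcorner\Gamma$ by the finite, positive density bounds. Both measures are then asymptotically doubling on $\Gamma$, so, applying the differentiation theorem for Radon measures in Carnot groups (Besicovitch's covering theorem is available for left-invariant homogeneous distances), the proof reduces to computing
\[
\frac{d(\mathcal{C}^h\llcorner\Gamma)}{d\mu}(x) \;=\; \lim_{r\to 0^+}\frac{\mathcal{C}^h(\Gamma\cap B(x,r))}{\mu(B(x,r))} \;\stackrel{?}{=}\; \mathcal{A}(\mathbb V(x)) \quad \text{for } \mu\text{-a.e. } x\in\Gamma.
\]

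For the numerator I would invoke the final conclusion of \cref{thm:INTRO1Equivalence}, which provides the weak convergence $r^{-h}(T_{x,r})_*(\mathcal{C}^h\llcorner\Gamma)\rightharpoonup \mathcal{C}^h\llcorner\mathbb V(x)$ at $\mathcal{C}^h\llcorner\Gamma$-a.e. $x$. Since $\mathcal{C}^h\llcorner\mathbb V(x)$ is a Haar-type $h$-homogeneous measure on the subgroup $\mathbb V(x)$, it does not charge the boundary of the metric unit ball; standard upgrading of weak convergence therefore yields
\[
\lim_{r\to 0^+} r^{-h}\,\mathcal{C}^h(\Gamma\cap B(x,r)) \;=\; \mathcal{C}^h(\mathbb V(x)\cap B(0,1)).
\]
For the denominator I would use that $\mu(B(x,r)) = \mathcal{C}^h\llcorner\mathbb V\big(P_{\mathbb V}(\Gamma\cap B(x,r))\big)$, where $P_{\mathbb V}\colon\mathbb G\to\mathbb V$ is the projection along $\mathbb L$, a homogeneous group homomorphism. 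Here the hypothesis that $\mathbb V(x)$ be complemented by the \emph{same} $\mathbb L$ that complements $\mathbb V$ is crucial: it makes $P_{\mathbb V}\big|_{\mathbb V(x)}$ a homogeneous bi-Lipschitz homeomorphism onto $\mathbb V$. Combining this with the intrinsic differentiability of $\Gamma$ at $x$, which provides local Hausdorff convergence of the rescaled graphs $T_{x,r}(\Gamma\cap B(x,r))$ to $\mathbb V(x)\cap B(0,1)$, I would deduce
\[
\lim_{r\to 0^+} r^{-h}\,\mu(B(x,r)) \;=\; \mathcal{C}^h\big(P_{\mathbb V}(\mathbb V(x)\cap B(0,1))\big).
\]
The ratio of the two limits is by definition (cf. \cref{def:areafactorCENTR}) the centered area factor $\mathcal{A}(\mathbb V(x))$, concluding the proof.

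The main obstacle lies in the denominator limit. While weak convergence of the rescaled measures immediately gives the blow-up of $\mathcal{C}^h(\Gamma\cap B(x,r))$, converting local Hausdorff convergence of the rescaled graphs into convergence of $\mathcal{C}^h\llcorner\mathbb V$ of their $P_{\mathbb V}$-projections requires a quantitative estimate to exclude accumulation of mass on $\partial P_{\mathbb V}(\mathbb V(x)\cap B(0,1))$. I would handle this by combining the intrinsic Lipschitz bound on $\varphi$, which confines $\Gamma$ to an intrinsic cone and therefore $P_{\mathbb V}(\Gamma\cap B(x,r))$ to a controlled neighbourhood of $P_{\mathbb V}(\mathbb V(x)\cap B(0,1))$, with the homogeneity and boundary-null behaviour of $\mathcal{C}^h\llcorner\mathbb V$, which kills any residual boundary contribution.
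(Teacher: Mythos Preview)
Your overall scheme—compute the Radon--Nikodym derivative of $\mathcal{C}^h\llcorner\Gamma$ against $\mu=\Phi_*(\mathcal{C}^h\llcorner A)$ as the ratio of two density limits—is exactly the paper's strategy (see \cref{thm:AreaFormulaCENTR}); and your numerator limit is correct, being a direct consequence of the last part of \cref{thm:INTRO1Equivalence}. The denominator limit you write down,
\[
\lim_{r\to 0}r^{-h}\mathcal{C}^h\big(P_{\mathbb V}(\Gamma\cap B(x,r))\big)=\mathcal{C}^h\big(P_{\mathbb V}(\mathbb V(x)\cap B(0,1))\big),
\]
is precisely the content of \cref{prop:CrucialLimit}. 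The genuine gap is in how you propose to establish it.

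You invoke ``intrinsic differentiability of $\Gamma$ at $x$'', i.e.\ Hausdorff convergence of $\delta_{1/r}(x^{-1}\Gamma)\cap B(0,1)$ to $\mathbb V(x)\cap B(0,1)$. This is \emph{not} a consequence of the hypotheses: $\mathscr{P}_h^c$-rectifiability yields weak convergence of rescaled \emph{measures}, which only gives one side of set convergence (every point of $\mathbb V(x)$ is approximated by points of the rescaled $\Gamma$, cf.\ \cref{prop:convergence}), not the other. A $\mathcal{C}^h$-null piece of $\Gamma$ near $x$ can stick out of any cone $xC_{\mathbb V(x)}(\beta)$ without being seen by the tangent measure; nothing in the hypothesis rules this out for the whole $\Gamma$. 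The paper circumvents this by restricting to the subsets $E(\vartheta,\gamma)$ (\cref{def:EThetaGamma}), on which a cone-containment $E(\vartheta,\gamma)\cap B(x,r)\subseteq xC_{\mathbb V(x)}(\beta)$ \emph{does} hold almost everywhere (from \cite[Proposition~3.2]{antonelli2020rectifiable}), and then uses a density-one argument via the pushforward measure to pass from $E(\vartheta,\gamma)$ back to $\Gamma$.

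Even on such a subset, your suggestion to upgrade Hausdorff convergence of sets to convergence of $\mathcal{C}^h$ of projections via ``the intrinsic Lipschitz bound'' and ``boundary-null behaviour'' is too vague to work: the intrinsic Lipschitz cone is over the fixed $\mathbb V$, not over $\mathbb V(x)$, so it gives no direct control on $P_{\mathbb V}(\Gamma\cap B(x,r))$ relative to $P_{\mathbb V}(\mathbb V(x)\cap B(0,1))$. The paper's mechanism here is the estimate of \cref{prop:EstimateOnProjection}, which quantifies the gap between $P_{\mathbb V}(B(x,r)\cap xC_{\mathbb V(x)}(\beta))\cap P_{\mathbb V}(\Gamma)$ and $P_{\mathbb V}(B(x,r)\cap xC_{\mathbb V(x)}(\beta)\cap\Gamma)$ as $\beta\to 0$, together with a bespoke Vitali-type differentiation (\cref{prop:vitali2PDAdapted}) to handle the first of these; this combination is exactly what \cref{prop:CrucialLimit} packages. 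Two smaller points: Besicovitch's covering theorem is \emph{not} available for arbitrary left-invariant homogeneous distances on Carnot groups, so you must rely on the asymptotic doubling of $\mathcal{S}^h\llcorner\Gamma$ (via \cref{prop:Lebesuge}) throughout; and mutual absolute continuity of $\mu$ and $\mathcal{C}^h\llcorner\Gamma$ does not follow from the intrinsic Lipschitz property alone but needs the asymptotic doubling as input (see \cref{prop:mutuallyabs}).
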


A consequence of the previous result is the following one, which is an area formula for intrinsic Lipschitz graphs that are also intrinsically differentiable. The proof of the following statement can be found at the end of \cref{sec:AreaFormula}.

\begin{teorema}\label{thm:AREAINTRO2}
Let $\mathbb V,\mathbb L$ be homogeneous complementary subgroups of a Carnot group $\mathbb G$ such that $h=\dim_H\mathbb V$. Let $\Gamma$ be the graph of an intrinsic Lipschitz map $\varphi:A\subseteq\mathbb V\to\mathbb L$, with $A$ Borel. Let us assume $\Gamma$ is an intrinsically differentiable graph (see \cref{defiintrinsicdiffgraph}) at $\mathcal{S}^h$-almost every $x\in\Gamma$ and let us assume that the Hausdorff tangent $\mathbb V(x)$ of $\Gamma$ at $x$ is complemented by $\mathbb L$ at $\mathcal{S}^h$-almost every $x\in\Gamma$. Then, for every Borel function $\psi:\Gamma\to[0,+\infty)$, the area formula in \eqref{eqn:AreaFormulaIntro} holds.
\end{teorema}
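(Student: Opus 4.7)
The plan is to derive \cref{thm:AREAINTRO2} as an essentially immediate corollary of \cref{thm:AREAINTRO1}, by showing that its hypotheses force $\mathcal{S}^h\llcorner\Gamma$ to be $\mathscr{P}_h^c$-rectifiable with tangent subgroups complemented \emph{precisely} by $\mathbb L$. Once this is in place, the area formula \eqref{eqn:AreaFormulaIntro} follows at once by plugging $\Gamma$ into \cref{thm:AREAINTRO1}, so the whole proof is about verifying the infinitesimal hypothesis from the differentiability-plus-Lipschitz hypothesis.

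To obtain the $\mathscr{P}_h^c$-rectifiability I would invoke the implication $(3)\Rightarrow(1)$ of \cref{thm:INTRO1Equivalence}. Since $\Gamma$ is the graph of an intrinsic Lipschitz map defined on a Borel set $A\subseteq\mathbb V$, standard intrinsic Lipschitz estimates (combined with inner regularity of $\mathcal{S}^h\llcorner\Gamma$, which has locally finite $\mathcal{S}^h$-measure) let me write $\Gamma$ as a countable union, up to an $\mathcal{S}^h$-negligible set, of compact intrinsic Lipschitz graphs $\Gamma_i$. By the assumption of \cref{thm:AREAINTRO2}, each $\Gamma_i$ inherits the property of being an intrinsically differentiable graph at $\mathcal{S}^h$-a.e. point with complemented Hausdorff tangent. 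This is exactly the covering condition in item 3 of \cref{thm:INTRO1Equivalence}, which delivers that $\mathcal{S}^h\llcorner\Gamma$ is $\mathscr{P}_h^c$-rectifiable.

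The delicate point, and the step I expect to require the most care, is the identification of the tangent objects. \cref{thm:INTRO1Equivalence} produces a flat tangent measure supported on \emph{some} complemented subgroup, while \cref{thm:AREAINTRO1} requires that subgroup to be complemented by the \emph{specific} $\mathbb L$ entering the splitting. I would bridge this by showing that the flat tangent measure of $\mathcal{S}^h\llcorner\Gamma$ at $x$ is supported on the same Hausdorff tangent $\mathbb V(x)$ prescribed by the intrinsic differentiability hypothesis. The ingredients are: intrinsic differentiability yields Hausdorff convergence of the rescaled graphs $\delta_{1/r}(x^{-1}\cdot\Gamma)$ to $\mathbb V(x)$ on compact sets, and the intrinsic Lipschitz bound gives uniform upper density estimates on the rescaled measures $r^{-h}(T_{x,r})_*(\mathcal{S}^h\llcorner\Gamma)$. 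Combined with the fact that the $\mathscr{P}_h^c$-rectifiability already forces any weak limit to be of the form $\lambda\mathcal{S}^h\llcorner\mathbb W$ for a homogeneous $\mathbb W$, the support constraint pins down $\mathbb W=\mathbb V(x)$. Since $\mathbb V(x)$ is complemented by $\mathbb L$ by hypothesis, the tangents are complemented by $\mathbb L$ as required, and \cref{thm:AREAINTRO1} applies verbatim to give \eqref{eqn:AreaFormulaIntro}.
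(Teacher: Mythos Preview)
Your proposal is correct and follows the same route as the paper: reduce to \cref{thm:AREAINTRO1} by establishing $\mathscr{P}_h^c$-rectifiability via intrinsic differentiability. The paper streamlines your argument by invoking \cref{prop:rett.1} directly (through \cref{prop:idiffapproximatetangent}), which is exactly the engine behind the implication $3\Rightarrow 1$ of \cref{thm:INTRO1Equivalence} and which \emph{already} identifies $\Tan_h(\mathcal{C}^h\llcorner\Gamma,x)=\{\mathcal{C}^h\llcorner\mathbb V(x)\}$ with $\mathbb V(x)$ the Hausdorff tangent---so your separate ``delicate point'' support argument, while valid, is redundant.
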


Let us remark that, taking into account that $\Theta^{h}(\mathcal{C}^h\llcorner\Gamma,x)=1$ for $\mathcal{C}^h\llcorner\Gamma$-almost every $x\in\mathbb G$, see \cref{thm:INTRO1Equivalence}, and considering the result in \cite[Corollary 3.6]{JNGV20} one can show that \eqref{eqn:AreaFormulaIntro} extends and strengthens the area formula of  \cite[Theorem 1.1]{JNGV20}. Indeed, the graph of a $C^1_{\mathbb W,\mathbb V}$ function as in the statement of \cite[Theorem 1.1]{JNGV20} is a $\mathscr{P}_h^c$-rectifiable set, see \cite[Proposition 6.2]{antonelli2020rectifiable}. Moreover, we stress that \cref{thm:AREAINTRO1} strictly strengthens \cite[Theorem 1.1]{JNGV20} for two reasons: there exists natural examples of graphs that are $\mathscr{P}_h$-rectifiable but not $C^1_{\mathrm H}$-rectifiable, see \cite[Remark 6.3]{antonelli2020rectifiable}, and moreover in our result the map $\varphi$ does not need to be defined on an \textbf{open} set but just on a \textbf{Borel} set. 


Let us notice that the area formula in \cref{thm:AREAINTRO2} is \textit{geometric}. It roughly asserts that when an intrinsic Lipschitz graph over the split $\mathbb V\cdot\mathbb L$ has almost everywhere a flat Hausdorff tangent complemented by $\mathbb L$, then the area of this graph can be obtained integrating on $\mathbb V$ a \textit{geometric} area factor. With \textit{geometric} we mean that the factor only depends on the tangent. Let us stress that when a Rademacher theorem is available, one can remove the hypothesis about the intrinsic differentiability in \cref{thm:AREAINTRO2}. Nevertheless, as discussed above, a Rademacher theorem might not hold in arbitrary Carnot groups, see \cite{JNGV20a}. 

Let us point out that in the literature one can find many more \textit{analytic} area formulae in Carnot groups, i.e., in which the area element is expressed in terms of properly defined intrinsic derivatives of the map $\varphi$. This is the case of \cite[Theorem 1.1 and Theorem 1.2]{CM20} for low-codimensional $C^1_{\mathrm H}$-surfaces in Heisenberg groups (cf. also \cite[Theorem 2]{FSSC06}), which has been extended to intrinsic Lipschitz low-codimensional surfaces  in \cite[Theorem 1.3]{Vittone20} (cf. also \cite[Theorem 1.6]{CMPSC14}); 
and of \cite[Proposition 1.8]{ADDDLD20} for one-codimensional $C^1_{\mathrm H}$-graphs in arbitrary Carnot groups. These formulas could be derived from \cref{thm:AREAINTRO2} explicitly writing the area element in terms of the intrinsic derivatives of the parametrisation map $\varphi$. Other geometric area formulae for Euclidean $C^1$ or $C^{1,1}$-submanifolds in Carnot groups have been investigated in \cite{MV08, MTV15, MagnaniTowardArea}. Let us remark that our point of view is \textit{intrinsic} while on the contrary the works \cite{MV08, MTV15, MagnaniTowardArea} investigate Euclidean-regular manifolds. The results in \cite[Theorem 1.1 and Theorem 1.2]{MV08}, \cite[Theorem 1.1]{MTV15}, and \cite[Theorem 1.1 and Theorem 1.2]{MagnaniTowardArea} roughly assert that whenever a point of a Euclidean-regular submanifold is sufficiently \textit{nice}, then the intrinsic blow-up at that point exists and it is a homogeneous subgroup; and as a consequence also the density - of the correct dimension - of the (Euclidean) volume measure of the submanifold exists at that point. Then what one notices is that in a lot of cases the \textit{nice} points are almost all - with respect to the intrinsic Hausdorff measure of the correct dimension - the points of the submanifold, cf. \cite[Theorem 1.2]{MTV15}. These latter results have to be compared with our \cref{prop:rett.1} in which we prove that having almost everywhere an intrinsic complemented blow-up implies the existence of the density of the Hausdorff measure. Let us notice that when a negligibility theorem, a blow-up theorem, and the existence of the density hold in the sense of \cite{MV08, MTV15, MagnaniTowardArea} discussed above for a Euclidean-regular submanifold $\Sigma$, then one gets that $\mathrm{Tan}_h(\mathcal{S}^h\llcorner\Sigma,x)\subseteq \{\lambda\mathcal{S}^h\llcorner\mathbb V(x):\lambda>0\}$ holds for $\mathcal{S}^h$-almost every $x\in\Sigma$, where $h$ is the Hausdorff dimension of $\Sigma$, and where $\mathbb V(x)$ is a homogeneous subgroup. This last observation easily follows arguing as in the last lines of the proof of \cref{prop:rett.1}. As a result, when a negligibility theorem, a blow-up theorem, and the existence of the density hold in the sense of \cite{MV08, MTV15,MagnaniTowardArea} for a Euclidean-regular submanifold $\Sigma$, one gets that $\mathcal{S}^h\llcorner\Sigma$ is a $\mathscr{P}_h$-rectifiable measure.

Let us finally notice that the area formula in \cref{thm:AREAINTRO2} is a formula for the building blocks for the global notion of rectifiability in item 3. of \cref{thm:INTRO1Equivalence}. Thus, by localization, one obtains from \cref{thm:AREAINTRO2} and \cref{thm:INTRO1Equivalence}, an area formula for arbitrary $\mathscr{P}_h^c$-rectifiable measures, see \cref{cor:AreaFormulaMeasure}.

The third result of the work is a rectifiability result for the level sets of Lipschitz functions between Carnot groups. For the proof of the following statement, we refer the reader to \cref{structure:liplevelsets}.

\begin{proposizione}\label{structure:liplevelsetsINTRO}
Let $B$ be a Borel set in $\mathbb{G}$ and suppose $\mathbb{H}$ is a Carnot group of homogeneous dimension $Q'$ with $Q\geq Q'$. Let $f:B\subseteq \mathbb{G}\to \mathbb{H}$ be a Lipschitz map such that
\begin{equation}
    \text{$\mathrm{Ker}(df(x))$ is a \textbf{complemented} homogeneous sbugroup of $\mathbb G$ for $\mathcal{S}^{Q}$-almost every $x\in \{z\in B:df(z)\, \text{exists surjective}\}$},
    \nonumber
\end{equation}
where $df(x)$ is the Pansu differential that exists for $\mathcal{S}^Q$-almost every $x\in\mathbb G$, see \cref{def:C1h}. Then, for $\mathcal{S}^{Q'}$-almost every $y\in f(B)$, the measure $\mathcal{S}^{Q-Q'}\llcorner f^{-1}(y)$ is $\mathscr{P}^c_{Q-Q'}$-rectifiable in $\mathbb{G}$ and 
$$
\Tan_{Q-Q'}(\mathcal{S}^{Q-Q'}\llcorner f^{-1}(y),x)\subseteq\{\lambda\mathcal{S}^{Q-Q'}\llcorner\mathrm{Ker}(df(x)):\lambda> 0\},\quad\text{for $\mathcal{S}^{Q-Q'}$-almost every $x\in f^{-1}(y)$.}
$$
\end{proposizione}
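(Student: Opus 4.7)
The plan is to combine a Carnot-group coarea inequality with a blow-up analysis at Pansu-differentiability points, and then invoke the rectifiability criterion \cref{prop:rett.1}. Let $E\subseteq B$ denote the Borel set of points at which $df$ exists, is surjective, and has complemented kernel $\mathbb V(x):=\mathrm{Ker}(df(x))$; by Pansu's differentiability theorem and the hypothesis, $E$ covers $\{z\in B:df(z)\text{ exists and surjective}\}$ up to an $\mathcal S^Q$-null set. A Magnani-type coarea inequality
\[
\int_{\mathbb H}\mathcal S^{Q-Q'}(A\cap f^{-1}(y))\,d\mathcal S^{Q'}(y)\;\le\;C\,\mathrm{Lip}(f)^{Q'}\,\mathcal S^Q(A),\qquad A\subseteq B\text{ Borel},
\]
applied to $\mathcal S^Q$-null sets shows that the non-differentiability set is $\mathcal S^{Q-Q'}$-negligible on $\mathcal S^{Q'}$-a.e.\ level set, while a Morse-Sard-type estimate (using that at non-surjectivity points $df$ takes values in a proper homogeneous subgroup of $\mathbb H$) gives $\mathcal S^{Q'}(f(\{df\text{ not surjective}\}))=0$. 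Together these show that for $\mathcal S^{Q'}$-a.e.\ $y\in f(B)$, $\mathcal S^{Q-Q'}$-a.e.\ $x\in f^{-1}(y)$ lies in $E$; applying the same inequality to annuli also yields $\Theta^{Q-Q',*}(\mathcal S^{Q-Q'}\llcorner f^{-1}(y),x)<+\infty$ at such points.

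Next I would carry out the blow-up. Fix a generic such $y$ and $x\in E\cap f^{-1}(y)$, write $\mathbb L(x)$ for a complement of $\mathbb V(x)$ in $\mathbb G$, and consider the rescaled Lipschitz maps
\[
f_r(z):=\delta_{1/r}\bigl(f(x)^{-1}\cdot f(x\cdot\delta_r z)\bigr),
\]
which converge locally uniformly to $L:=df(x)$ by Pansu differentiability. Since $L$ is a homogeneous surjective morphism with complemented kernel $\mathbb V(x)$, its restriction to $\mathbb L(x)$ is a homogeneous isomorphism onto $\mathbb H$; by a quantitative inverse/implicit function argument, for all sufficiently small $r$ the level set $f_r^{-1}(0)$ is the intrinsic graph of an intrinsic Lipschitz map $\varphi_r:\Omega_r\subseteq\mathbb V(x)\to\mathbb L(x)$ with $\varphi_r\to 0$ locally uniformly. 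Using $f^{-1}(y)\cap(x\cdot\delta_r K)=x\cdot\delta_r(f_r^{-1}(0)\cap K)$, the rescaled level sets $\delta_{1/r}(x^{-1}\cdot f^{-1}(y))$ therefore converge locally in Hausdorff distance to $\mathbb V(x)$.

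Finally, the measures $\mu_r:=r^{-(Q-Q')}(T_{x,r})_*(\mathcal S^{Q-Q'}\llcorner f^{-1}(y))$ have locally uniformly bounded mass by the coarea inequality, and are concentrated on shrinking neighborhoods of $\mathbb V(x)$ by the previous step; hence every weak limit $\nu\in\Tan_{Q-Q'}(\mathcal S^{Q-Q'}\llcorner f^{-1}(y),x)$ is a Radon measure supported on $\mathbb V(x)$. Applying the coarea inequality to $f_r$ on small balls around $0$ produces a matching lower density estimate, while the intrinsic graph structure of $f_r^{-1}(0)$ over $\mathbb V(x)$ together with $\varphi_r\to 0$ yields translation invariance of $\nu$ along $\mathbb V(x)$, forcing $\nu=c\,\mathcal S^{Q-Q'}\llcorner\mathbb V(x)$ for some $c>0$. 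Having verified the density conditions and the flat complemented form of $\Tan_{Q-Q'}$, \cref{prop:rett.1} yields the $\mathscr P^c_{Q-Q'}$-rectifiability of $\mathcal S^{Q-Q'}\llcorner f^{-1}(y)$ together with the claimed inclusion.

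The step I expect to be the principal obstacle is the last one: upgrading the Hausdorff closeness of the rescaled level sets to genuine weak convergence of measures with a nontrivial positive multiplicative constant. In arbitrary Carnot groups, where no Rademacher theorem is available, this forces essential use of the complementedness of $\mathbb V(x)$, both in producing the intrinsic graph description of $f_r^{-1}(0)$ via a quantitative inverse function theorem for the surjective differential $L$, and in transferring two-sided density bounds from $\mu_r$ to the limit $\nu$.
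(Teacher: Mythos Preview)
Your ``principal obstacle'' is self-inflicted: you are trying to identify $\Tan_{Q-Q'}$ directly, but the hypothesis of \cref{prop:rett.1} is \emph{only} the cone condition
\[
\Gamma\cap B(x,\rho(x,\beta))\subseteq xC_{\mathbb V(x)}(\beta)\quad\text{for all }\beta>0,
\]
and its \emph{conclusion} already delivers both the $\mathscr P^c$-rectifiability and the identification $\Tan_h(\mathcal C^h\llcorner\Gamma,x)=\{\mathcal C^h\llcorner\mathbb V(x)\}$. So your entire third paragraph---uniform mass bounds for $\mu_r$, lower density via coarea, translation invariance of $\nu$---is unnecessary. Once you have that the rescaled level sets are locally Hausdorff-close to $\mathbb V(x)$ (equivalently, the cone condition), you are done.

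The paper also reaches the cone condition more cheaply than your implicit-function route. Fix $x$ with $df(x)$ surjective and $\mathbb V(x)=\mathrm{Ker}(df(x))$ complemented by some $\mathbb L(x)$. Pansu differentiability at $x$ gives, for $w\in B(x,\eta)\cap f^{-1}(y)$,
\[
\|df(x)[x^{-1}w]\|_{\mathbb H}\le \varepsilon\|x^{-1}w\|_{\mathbb G}.
\]
Since $df(x)$ vanishes on $\mathbb V(x)$ and restricts to a homogeneous isomorphism on $\mathbb L(x)$, this reads $\|P_{\mathbb L(x)}(x^{-1}w)\|\le C(x)^{-1}\varepsilon\|x^{-1}w\|$, i.e.\ $f^{-1}(y)\cap B(x,\eta)\subseteq xC_{\mathbb V(x)}(C(x)^{-1}\varepsilon)$. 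No quantitative inverse/implicit function theorem is needed---which is fortunate, because for a merely Lipschitz $f$ on a Borel set $B$ (not open, no $C^1_{\mathrm H}$-regularity) your claim that $f_r^{-1}(0)$ is an intrinsic Lipschitz graph over $\mathbb V(x)$ is not justified by standard results. Likewise, your ``translation invariance of $\nu$'' step does not follow just from $\varphi_r\to 0$; without a priori density control it does not rule out, e.g., a limit measure supported on $\mathbb V(x)$ but with nonconstant density. All of this is absorbed into \cref{prop:rett.1}; you should invoke it immediately after the cone condition.
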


As an immediate consequence of the previous result we have the following corollary, which is worth pointing out explicitly, since, even if simple to state, it seems to be not present in the literature up to the authors' knowledge. For a proof, see \cref{structure:liplevelsets3}. We remark that a more general statement, in the co-horizontal case, is still true and can be found in \cref{structure:liplevelsets2}.

\begin{corollario}\label{structure:liplevelsets3Intro}
Suppose $f:B\subseteq \mathbb{G}\to \R$ is a Lipschitz map, where $B$ is a Borel set. Then, for $\mathcal{S}^1$-almost every $y\in f(B)$ the set $f^{-1}(y)$ is $C^1_{\mathrm H}$-rectifiable. In particular, for every $r>0$ and $x\in\mathbb G$, every geodesic sphere $\partial B_r(x)$ is $C^1_{\mathrm H}$-rectifiable.
\end{corollario}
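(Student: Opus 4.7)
The plan is to apply \cref{structure:liplevelsetsINTRO} with $\mathbb H=\mathbb R$, so $Q'=1$. The one point that needs checking is that, whenever the Pansu differential $df(x)\colon \mathbb G\to\mathbb R$ is surjective, its kernel is a complemented homogeneous subgroup of $\mathbb G$. Any Pansu homomorphism $L\colon\mathbb G\to\mathbb R$ is a group homomorphism satisfying $L(\delta_\lambda g)=\lambda L(g)$; since $\mathbb R$ is abelian, $L$ vanishes on $[\mathbb G,\mathbb G]$, and by homogeneity it also vanishes on every layer $V_j$ with $j\geq 2$. Consequently $L$ is determined by its linear restriction to the first layer $V_1$, and when surjective one has $\ker L=W\oplus V_2\oplus\cdots\oplus V_s$ for a codimension-one subspace $W\subseteq V_1$. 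Picking any $v\in V_1\setminus W$, the one-dimensional horizontal subgroup $\mathbb L:=\{\delta_t v:t\in\mathbb R\}$ is a homogeneous complement of $\ker L$ in $\mathbb G$, since the corresponding Lie algebras split as a vector-space direct sum and $\mathbb L$ is stable under $\delta_\lambda$.

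Applying \cref{structure:liplevelsetsINTRO} we then obtain, for $\mathcal S^1$-almost every $y\in f(B)$, that $\mathcal S^{Q-1}\llcorner f^{-1}(y)$ is $\mathscr P^c_{Q-1}$-rectifiable with tangents contained in $\{\lambda \mathcal S^{Q-1}\llcorner\ker(df(x)):\lambda>0\}$. By the previous paragraph, $\ker(df(x))$ is co-horizontal, i.e.\ complemented by a horizontal $1$-dimensional subgroup. Invoking the identification between $\mathscr P^c_h$-rectifiability with co-horizontal tangents and $C^1_H(\mathbb G,\mathbb R^{Q-h})$-rectifiability provided by \cref{cor:PhCCoorizzontali} and recalled in the introduction, we conclude that $f^{-1}(y)$ is $C^1_H$-rectifiable for $\mathcal S^1$-almost every $y\in f(B)$.

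For the second assertion, apply the first part to the $1$-Lipschitz function $f(z):=d(z,0_{\mathbb G})$, whose level sets are exactly the geodesic spheres centered at the identity $0_{\mathbb G}$. This produces at least one $r^\ast>0$ with $\partial B_{r^\ast}(0_{\mathbb G})$ being $C^1_H$-rectifiable. By homogeneity of $d$ one has $\partial B_r(0_{\mathbb G})=\delta_{r/r^\ast}\bigl(\partial B_{r^\ast}(0_{\mathbb G})\bigr)$ for every $r>0$, and by left-invariance $\partial B_r(x)=x\cdot \partial B_r(0_{\mathbb G})$ for every $x\in\mathbb G$. Since both the dilations $\delta_\lambda$ and the left-translations $L_x$ map $C^1_H$-hypersurfaces to $C^1_H$-hypersurfaces (they preserve the class of Pansu-differentiable defining functions with non-vanishing horizontal differential), $C^1_H$-rectifiability is preserved under these maps, and the claim for arbitrary $r>0$ and $x\in\mathbb G$ follows.

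The only step requiring any real work is the algebraic verification that $\ker(df(x))$ admits a homogeneous complement when the target is $\mathbb R$; everything else reduces to quoting \cref{structure:liplevelsetsINTRO} and \cref{cor:PhCCoorizzontali} together with the short dilation/translation symmetry argument that upgrades the almost-every-$r$ statement for spheres to every $r>0$.
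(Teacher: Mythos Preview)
Your argument is correct and follows essentially the same route as the paper. The paper's proof of \cref{structure:liplevelsets3} observes that any surjective Pansu differential $df(x)\colon\mathbb G\to\mathbb R$ has a one-codimensional homogeneous kernel, which is always complemented, and then invokes \cref{structure:liplevelsets2}; you instead apply \cref{structure:liplevelsetsINTRO} followed by \cref{cor:PhCCoorizzontali}, which is precisely the content packaged inside \cref{structure:liplevelsets2}. The spheres argument via the distance function and the dilation/translation upgrade from one radius to all radii is exactly what the paper indicates after the statement of the corollary. One tiny caveat: \cref{cor:PhCCoorizzontali} is stated for compact $\Gamma$ with finite $\mathcal S^{Q-1}$-measure, so when $B$ is unbounded you should first restrict to bounded pieces (the paper does this as well at the start of the proof of \cref{structure:liplevelsets}); since $C^1_{\mathrm H}$-rectifiability is stable under countable unions this causes no difficulty.
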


Notice that the last part of the previous result comes from the first applied to the distance function from $x$, and the observation that, by dilating, once $\partial B_r(x)$ is $C^1_{\mathrm H}$-rectifiable for \emph{one} radius $r>0$, then it is $C^1_{\mathrm H}$-rectifiable for \emph{every} radius $r>0$. The previous corollary should be compared with \cite[Theorem 3.2]{Vit12}, where an intrinisc Lipschitz rectifiability for Lipschitz surfaces is proved in CC-spaces. Notice, however, that nowadayas it is not known wheter, in codimension-one, intrinisc Lipschitz rectifiability and $C^1_{\mathrm H}$-rectifiability are equivalent in arbitrary Carnot groups; while being intrinisc Lipschitz rectifiable is weaker than being $C^1_{\mathrm H}$-rectifiable. Nevertheless, our previous corollary provides the $C^1_{\mathrm H}$-rectifiability of all the spheres in every Carnot group. It is however interesting to point out how the {\em Euclidean} rectifiability of the geodesic spheres in Carnot groups is still an intriguing open question in general, and it is related to asymptotic volume expansion in nilpotent Lie groups \cite{BrLD, LeDonneNGTAMS}.

Let us remark that the previous results in \cref{structure:liplevelsetsINTRO} and \cref{structure:liplevelsets3Intro} follow from the rectifiability criterion in \cref{prop:rett.1}. It is worth pointing out that, given a Lipschitz function $f:B\subseteq\mathbb G\to\mathbb R$, for every $y\in f(B)$, the set $\{f\leq y\}$ is of locally finite perimeter in $\mathbb G$, see, e.g., \cite[Theorem 2.40]{DonVittone19}. Hence, taking into account \cref{structure:liplevelsets3Intro}, we deduce the following consequence: $\mathcal{S}^1$-almost all the sublevel sets of real-valued Lipschitz functions defined on Borel subsets of Carnot groups are examples of sets of locally finite perimeter whose boundary is $C^1_{\mathrm H}$-rectifiable - namely De Giorgi's rectifiability Theorem holds for such sets.

Let us finally stress that the previous results in \cref{structure:liplevelsetsINTRO} and \cref{structure:liplevelsets3Intro} open the way to proving slicing theorems and coarea formulae for $\mathscr{P}$-rectifiable and Lipschitz slicing functions. This will be target of future investigations.

\vspace{0.3cm}

Let us briefly comment on the proof of \cref{thm:INTRO1Equivalence}. For what concerns the implications 1. $\Rightarrow$ 2., and 1. $\Rightarrow$ 3., the first is just a matter of routine argument, see \cite[Remark 14.4(3)]{Mattila1995GeometrySpaces}, and the second is a consequence of \cite[Theorem 1.8]{antonelli2020rectifiable}. The main new contributions of this paper, which lead to the equivalence in \cref{thm:INTRO1Equivalence}, are the implications 2. $\Rightarrow$ 1., and 3. $\Rightarrow$ 1., both of them non-trivial. 

For what concerns the implication 2. $\Rightarrow$ 1., we first use that the hypothesis of flat Preiss's tangents allows to conclude that $\Gamma$ is $\mathcal{S}^h$-almost everywhere covered by countably many $h$-dimensional graphs $\Gamma_i$ of  intrinsically Lipschitz functions, namely $\mathcal{S}^h(\Gamma\setminus \cup_{i=1}^{+\infty}\Gamma_i)=0$, see \cref{prop:TangentCompemented}. Hence we exploit the general fact, that dates back to Preiss's paper (cf. \cite[Corollary 2.7]{Preiss1987GeometryDensities}), that a measure with a compact-based tangent at a point is  asymptotically doubling at that point. Joining the latter two observations, we deduce that, for every $i$, the measure $\mathcal{S}^h\llcorner\Gamma_i$ is asymptotically doubling, and then this enables us to prove that $\Gamma_i$ has big projections on the plane over which $\Gamma_i$ is a graph, see \cref{prop:proj}. The latter proposition is just a straightforward empowerment of our result already proved in \cite[Proposition 4.6]{antonelli2020rectifiable}. Finally, the big projections property of \cref{prop:proj} allows us to conclude that the $h$-lower density $\Theta^h_*(\mathcal{S}^h\llcorner\Gamma_i,\cdot)$ is positive $\mathcal{S}^h\llcorner\Gamma_i$ almost everywhere, see \cref{prop:DensitaInfPositiva}. Hence, the proof of the implication 2. $\Rightarrow$ 1. is concluded since we can argue, by exploiting Lebesgue differentiation theorem, that $\Theta^{h}_*(\mathcal{S}^h\llcorner\Gamma,\cdot)$ is positive $\mathcal{S}^h\llcorner\Gamma$-almost everywhere, which was the non-trivial missing information to prove 1. Let us notice that in 2. we are not requiring anything a priori on the positivity of the $h$-lower density of $\mathcal{S}^h\llcorner\Gamma$, otherwise the implication 2. $\Rightarrow$ 1. would have been trivial. Nevertheless, we deduce the positivity of the $h$-lower density from the fact that the tangents are flat and complemented as we discussed above. 

The proof of the implication 3. $\Rightarrow$ 1. relies on the fact that an arbitrary $h$-dimensional (almost everywhere) intrinsically differentiable graph $\Gamma$ with complemented Hausdorff tangents has the property that $\mathcal{S}^h\llcorner\Gamma$ is $\mathscr{P}_h^c$-rectifiable. This is exactly the content of \cref{prop:rett.1}. In order to prove the latter proposition, we show that when we have an arbitrary $h$-dimensional (almost everywhere) intrinsically differentiable graph $\Gamma$ with complemented Hausdorff tangents, at ($\mathcal{S}^h\llcorner\Gamma$-almost) every point we have that the graph $\Gamma$ is, at arbitrarily small scales, contained in a cone with arbitrarily small opening and with basis the Hausdorff tangent at that point. This observation enables us to perform a covering argument and to show directly that $\Theta^h(\mathcal{C}^h\llcorner\Gamma,\cdot)=1$ at $\mathcal{C}^h\llcorner\Gamma$-almost every point. Then the fact that $\mathcal{C}^h\llcorner\Gamma$, and hence $\mathcal{S}^h\llcorner\Gamma$, is $\mathscr{P}_h^c$-rectifiable is reached by using a classical argument, see \cref{prop::E} and \cref{regularity:density}. Let us notice that in \cref{prop:rett.1} it is essential to work with the centered Hausdorff measure $\mathcal{C}^h\llcorner\Gamma$, since we consider coverings with balls centered on $\Gamma$. It is also worth noticing that $\mathcal{S}^h\llcorner\Gamma$ and $\mathcal{C}^h\llcorner\Gamma$ are mutually absolutely continuous so any rectifiability property for one measure is transferred to the other by means of Lebesgue differentiation theorem and the locality of tangents.

The final part of the statement in \cref{thm:INTRO1Equivalence} is a consequence of the fact that the $h$-density of $\mathcal{C}^h\llcorner\Gamma$ is 1 as a consequence of the previous reasoning, and the fact that $\mathcal{C}^h\llcorner\mathbb V(B(0,1))=1$ for every homogeneous subgroup $\mathbb V$ of Hausdorff dimension $h$, see \cref{lemma:PALLAUNITARIAVOLUMEUNO}.

Let us briefly comment on the proof of \cref{thm:AREAINTRO1}. The strategy of the proof is similar to the one in \cite{JNGV20} and it is based on a continuity property of the volumes through a blow-up procedure, see \cref{prop:CrucialLimit}. Nevertheless, in order to prove \cref{prop:CrucialLimit}, one needs to face some delicate technical problems due to the fact that the map $\varphi$ is not necessarily defined on an open set, but just on a \textbf{Borel} set. Hence, one needs to argue directly on the graph by using a Vitali-type differentiation theorem, see \cref{prop:vitali2PDAdapted}, and a new delicate estimate on the volumes of the projections of balls in $\Gamma$, see \cref{prop:EstimateOnProjection}.

\section{Preliminaries}\label{sec:Prel}
\subsection{Carnot Groups}\label{sub:Carnot}
In this subsection we briefly introduce some notations on Carnot groups that we will extensively use throughout the paper. For a detailed account on Carnot groups we refer to \cite{LD17}.

A Carnot group $\mathbb{G}$ of step $\kappa$  is a connected and simply connected Lie group whose Lie algebra $\mathfrak g$ admits a stratification $\mathfrak g=V_1\, \oplus \, V_2 \, \oplus \dots \oplus \, V_\kappa$. We say that $V_1\, \oplus \, V_2 \, \oplus \dots \oplus \, V_\kappa$ is a {\em stratification} of $\mathfrak g$ if $\mathfrak g = V_1\, \oplus \, V_2 \, \oplus \dots \oplus \, V_\kappa$,
$$
[V_1,V_i]=V_{i+1}, \quad \text{for any $i=1,\dots,\kappa-1$}, \quad \text{and} \quad [V_1,V_\kappa]=\{0\},
$$ 
where $[A,B]:=\mathrm{span}\{[a,b]:a\in A,b\in B\}$. We call $V_1$ the \emph{horizontal layer} of $\mathbb G$. We denote by $n$ the topological dimension of $\mathfrak g$, by $n_j$ the dimension of $V_j$ for every $j=1,\dots,\kappa$. 
Furthermore, we define $\pi_i:\mathbb{G}\to V_i$ to be the projection maps on the $i$-th strata. 
We will often shorten the notation to $v_i:=\pi_iv$.

The exponential map $\exp :\mathfrak g \to \mathbb{G}$ is a global diffeomorphism from $\mathfrak g$ to $\mathbb{G}$.
Hence, if we choose a basis $\{X_1,\dots , X_n\}$ of $\mathfrak g$,  any $p\in \mathbb{G}$ can be written in a unique way as $p=\exp (p_1X_1+\dots +p_nX_n)$. This means that we can identify $p\in \mathbb{G}$ with the $n$-tuple $(p_1,\dots , p_n)\in \R^n$ and the group $\mathbb{G}$ itself with $\R^n$ endowed with $\cdot$ the group operation determined by the Baker-Campbell-Hausdorff formula. From now on, we will always assume that $\mathbb{G}=(\R^n,\cdot)$ and, as a consequence, that the exponential map $\exp$ acts as the identity.

The stratificaton of $\mathfrak{g}$ carries with it a natural family of dilations $\delta_\lambda :\mathfrak{g}\to \mathfrak{g}$, that are Lie algebra automorphisms of $\mathfrak{g}$ and are defined by
\begin{equation}
     \delta_\lambda (v_1,\dots , v_\kappa):=(\lambda v_1,\lambda^2 v_2,\dots , \lambda^\kappa v_\kappa), \quad \text{for any $\lambda>0$},
     \nonumber
\end{equation}
where $v_i\in V_i$. The stratification of the Lie algebra $\mathfrak{g}$  naturally induces a gradation on each of its homogeneous Lie sub-algebras $\mathfrak{h}$, i.e., a sub-algebra that is $\delta_{\lambda}$-invariant for any $\lambda>0$, that is
\begin{equation}
    \mathfrak{h}=V_1\cap \mathfrak{h}\oplus\ldots\oplus V_\kappa\cap \mathfrak{h}.
    \label{eq:intr1}
\end{equation}
We say that $\mathfrak h=W_1\oplus\dots\oplus W_{\kappa}$ is a {\em gradation} of $\mathfrak h$ if $[W_i,W_j]\subseteq W_{i+j}$ for every $1\leq i,j\leq \kappa$, where we mean that $W_\ell:=\{0\}$ for every $\ell > \kappa$.
Since the exponential map acts as the identity, the Lie algebra automorphisms $\delta_\lambda$ are also group automorphisms of $\mathbb{G}$.

\begin{definizione}[Homogeneous subgroups]\label{homsub}
A subgroup $\mathbb V$ of $\mathbb{G}$ is said to be \emph{homogeneous} if it is a Lie subgroup of $\mathbb{G}$ that is invariant under the dilations $\delta_\lambda$.
\end{definizione}

We recall the following basic terminology: a {\em horizontal subgroup} of a Carnot group $\mathbb G$ is a homogeneous subgroup of it that is contained in $\exp(V_1)$; a {\em Carnot subgroup} $\mathbb W=\exp(\mathfrak h)$ of a Carnot group $\mathbb G$ is a homogeneous subgroup of it such that the first layer $V_1\cap\mathfrak h$ of the grading of $\mathfrak h$ inherited from the stratification of $\mathfrak g$ is the first layer of a stratification of $\mathfrak h$.

Homogeneous Lie subgroups of $\mathbb{G}$ are in bijective correspondence through $\exp$ with the Lie sub-algebras of $\mathfrak{g}$ that are invariant under the dilations $\delta_\lambda$. 
For any Lie algebra $\mathfrak{h}$ with gradation $\mathfrak h= W_1\oplus\ldots\oplus W_{\kappa}$, we define its \emph{homogeneous dimension} as
$$\text{dim}_{\mathrm{hom}}(\mathfrak{h}):=\sum_{i=1}^{\kappa} i\cdot\text{dim}(W_i).$$
Thanks to \eqref{eq:intr1} we infer that, if $\mathfrak{h}$ is a homogeneous Lie sub-algebra of $\mathfrak{g}$, we have $\text{dim}_{\mathrm{hom}}(\mathfrak{h}):=\sum_{i=1}^{\kappa} i\cdot\text{dim}(\mathfrak{h}\cap V_i)$. It is well-known that the Hausdorff dimension (for a definition of Hausdorff dimension see for instance \cite[Definition 4.8]{Mattila1995GeometrySpaces}) of a graded Lie group $\mathbb G$ with respect to a left-invariant homogeneous distance coincides with the homogeneous dimension of its Lie algebra. For a reference for the latter statement, see \cite[Theorem 4.4]{LDNG19}. \textbf{From now on, if not otherwise stated, $\mathbb G$ will be a fixed Carnot group}.


For any $p\in \mathbb{G}$, we define the {\em left translation} $\tau _p:\mathbb{G} \to \mathbb{G}$ as\label{tran}
\begin{equation*}
q \mapsto \tau _p q := p\cdot q.
\end{equation*}
As already remarked above, the group operation $\cdot$ is determined by the Campbell-Hausdorff formula, and it has the form (see \cite[Proposition 2.1]{step2})
\begin{equation*}
p\cdot q= p+q+\mathscr{Q}(p,q), \quad \mbox{for all }\, p,q \in  \R^n,
\end{equation*} 
where $\mathscr{Q}=(\mathscr{Q}_1,\dots , \mathscr{Q}_\kappa):\R^n\times \R^n \to V_1\oplus\ldots\oplus V_\kappa$, and the $\mathscr{Q}_i$'s have the following properties. For any $i=1,\ldots \kappa$ and any $p,q\in \mathbb{G}$ we have
\begin{itemize}
    \item[(i)]$\mathscr{Q}_i(\delta_\lambda p,\delta_\lambda q)=\lambda^i\mathscr{Q}_i(p,q)$,
    \item[(ii)] $\mathscr{Q}_i(p,q)=-\mathscr{Q}_i(-q,-p)$,
    \item[(iii)] $\mathscr{Q}_1=0$ and $\mathscr{Q}_i(p,q)=\mathscr{Q}_i(p_1,\ldots,p_{i-1},q_1,\ldots,q_{i-1})$.
\end{itemize}
%
Thus, we can represent the product $\cdot$ as
\begin{equation}\label{opgr}
p\cdot q= (p_1+q_1,p_2+q_2+\mathscr{Q}_2(p_1,q_1),\dots ,p_\kappa +q_\kappa+\mathscr{Q}_\kappa (p_1,\dots , p_{\kappa-1} ,q_1,\dots ,q_{\kappa-1})). 
\end{equation}
%

\begin{definizione}[Homogeneous left-invariant distance and norm]
A metric $d:\mathbb{G}\times \mathbb{G}\to \R$ is said to be {\em homogeneous} and {\em left-invariant} if for any $x,y\in \mathbb{G}$ we have, respectively
\begin{itemize}
    \item[(i)] $d(\delta_\lambda x,\delta_\lambda y)=\lambda d(x,y)$ for any $\lambda>0$,
    \item[(ii)] $d(\tau_z x,\tau_z y)=d(x,y)$ for any $z\in \mathbb{G}$.
\end{itemize}
Given a homogeneous left-invariant distance, its associated norm is defined by $\|g\|_{d}:=d(g,0)$, for every $g\in\mathbb G$, where $0$ is the identity element of $\mathbb G$.

Given a homogeneous left-invariant distance $d$ on $\mathbb G$, for every $x\in \mathbb G$ and every $E\subseteq \mathbb G$ we define $\mathrm{dist}(x,E):=\inf\{d(x,y):y\in E\}$.
\end{definizione}

Throughout the paper we will always endow, if not otherwise stated, the group $\mathbb{G}$ with an arbitrary homogeneous and left-invariant metric. We will denote such a distance with $d$. We remark that two homogeneous left-invariant distances on a Carnot group are always bi-Lipschitz equivalent.


\begin{definizione}[Metric balls and tubular neighbourhoods]
Suppose a homogeneous and left-invariant metric  $d$ has been fixed on $\mathbb{G}$. Then, we define $U_d(x,r):=\{z\in \mathbb{G}:d(x,z)<r\}$ to be the open metric ball relative to the distance $d$ centred at $x$ at radius $r>0$. The closed ball will be denoted with $B_d(x,r):=\{z\in \mathbb{G}:d(x,z)\leq r\}$. Moreover, for a subset $E\subseteq \mathbb G$ and $r>0$, we denote with $B_d(E,r):=\{z\in\mathbb G:\dist(z,E)\leq r\}$ the {\em closed $r$-tubular neighborhood of $E$}  and with $U_d(E,r):=\{z\in\mathbb G:\dist(z,E)< r\}$ the {\em open $r$-tubular neighborhood of $E$}. When the metric $d$ is understood, we will tacitly drop the dependence on the metric in the notation.
\end{definizione}

\begin{definizione}[Hausdorff Measures]\label{def:HausdorffMEasure}
Let $d$ be a homogeneous and left-invariant metric on $\mathbb{G}$ and $A\subseteq \mathbb{G}$ be a Borel set. For any $0\leq h\leq \mathcal{Q}$ and  $\delta>0$, define
\begin{equation}
\begin{split}
    &\mathscr{C}^{h}_{d,\delta}(A):=\inf\Bigg\{\sum_{j=1}^\infty r_j^h:A\subseteq \bigcup_{j=1}^\infty B_{d}(x_j,r_j),\text{ }r_j\leq\delta\text{ and }x_j\in A\Bigg\},\\
    &\mathscr{S}^{h}_{d,\delta}(A):=\inf\Bigg\{\sum_{j=1}^\infty r_j^h:A\subseteq \bigcup_{j=1}^\infty B_d(x_j,r_j),\text{ }r_j\leq\delta\Bigg\},
    \nonumber
\end{split}    
\end{equation}
and $\mathscr{S}^{h}_{\delta,E}(\emptyset):=0=:\mathscr{C}^{h}_{\delta}(\emptyset)$. Eventually, we let
\begin{equation}
    \begin{split}
  \mathcal{C}^{h}_{d}(A):=\sup_{B\subseteq A}\sup_{\delta>0}\mathscr{C}^{h}_{d,\delta}(B)=\sup_{B\subseteq A}\mathcal{C}_{d,0}^h(B)&\qquad\text{be the {\em centred spherical Hausdorff measure}},\\
   \mathcal{S}^{h}_{d}(A):=\sup_{\delta>0}\mathscr{S}^{h}_{d,\delta}(A)&\qquad\text{be the {\em spherical Hausdorff measure}}.
   \nonumber
    \end{split}
\end{equation}
We stress that $\mathcal{C}^h_{d}$ is an outer measure, and thus it defines a Borel regular measure, see \cite[Proposition 4.1]{EdgarCentered}, and that the measures $\mathcal{S}^h_{d},\mathcal{H}^h_{d},\mathcal{C}^h_{d}$ are all equivalent measures, see \cite[Section 2.10.2]{Federer1996GeometricTheory} and \cite[Proposition 4.2]{EdgarCentered}. When the metric $d$ is understood, we will tacitly drop the dependence on the metric in the notation.
\end{definizione}

We recall here the following result that has been proved in \cite[item (iii) of Proposition 2.11]{antonelli2020rectifiable}, and that will often be used in the paper.
\begin{lemma}\label{lemma:PALLAUNITARIAVOLUMEUNO}
Let $\mathbb V$ be a homogeneous subgroup of a Carnot group $\mathbb G$ endowed with a left-invariant homogeneous distance $d$. Let us call $h$ the homogeneous dimension of $\mathbb V$. Hence 
$$
\mathcal{C}^h_d(B_d(x,r)\cap\mathbb V)=r^h,
$$
for every $x\in\mathbb V$ and any $r>0$.
\end{lemma}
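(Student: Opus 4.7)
The plan is to reduce to the unit ball centered at the identity and then sandwich $\mathcal{C}^h_d(B_d(0,1)\cap\mathbb V)$ between two bounds coming from a left Haar measure on $\mathbb V$. First, since $x\in\mathbb V$ and left translations are isometries preserving $\mathbb V$, one has $B_d(x,r)\cap\mathbb V = x\cdot(B_d(0,r)\cap\mathbb V)$, and the left-invariance of $\mathcal{C}^h_d$ reduces to $x=0$. Next, using $\delta_r$-invariance of $\mathbb V$ together with the scaling identity $\mathcal{C}^h_d(\delta_r E)=r^h\mathcal{C}^h_d(E)$ that follows directly from the definition, it is enough to prove $\mathcal{C}^h_d(B_d(0,1)\cap\mathbb V)=1$. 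I would then fix a left Haar measure $\mu$ on the homogeneous Lie subgroup $\mathbb V$: left-invariance and dilation homogeneity force $\mu(B_d(x,r)\cap\mathbb V)=c_0 r^h$ for some constant $c_0:=\mu(B_d(0,1)\cap\mathbb V)>0$, and the exact value of $c_0$ will cancel out.

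For the lower bound $\mathcal{C}^h_d(B_d(0,1)\cap\mathbb V)\geq 1$, any admissible cover in the definition of $\mathscr{C}^h_{d,\delta}$, namely $B_d(0,1)\cap\mathbb V\subseteq\bigcup_j B_d(x_j,r_j)$ with $x_j\in B_d(0,1)\cap\mathbb V$ and $r_j\leq\delta$, satisfies $c_0=\mu(B_d(0,1)\cap\mathbb V)\leq\sum_j\mu(B_d(x_j,r_j)\cap\mathbb V)=c_0\sum_j r_j^h$, hence $\sum_j r_j^h\geq 1$. Taking infimum over covers and supremum over $\delta$ proves the inequality.

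The main obstacle is the upper bound, because the definition of $\mathcal{C}^h_d$ takes a supremum over all Borel subsets $B\subseteq B_d(0,1)\cap\mathbb V$ (the pre-measure $\mathscr{C}^h_{d,\delta}$ fails to be monotone, since shrinking $B$ also restricts the admissible set of centers). For a fixed such $B$ and $\delta,\epsilon>0$, I would invoke outer regularity of $\mu$ to pick an open set $U\supseteq B$ in $\mathbb V$ with $\mu(U)\leq\mu(B)+\epsilon$, observe that $\{B_d(x,r):x\in B,\,0<r\leq\delta,\,B_d(x,r)\cap\mathbb V\subseteq U\}$ is a fine Vitali cover of $B$ in the doubling space $(\mathbb V,d|_{\mathbb V},\mu)$, and apply Vitali's covering theorem to extract a disjoint countable subfamily $\{B_d(x_i,r_i)\}_i$ with $\mu\bigl(B\setminus\bigcup_i B_d(x_i,r_i)\bigr)=0$. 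Disjointness then yields $c_0\sum_i r_i^h=\sum_i\mu(B_d(x_i,r_i)\cap\mathbb V)\leq\mu(U)\leq\mu(B)+\epsilon$. The residual null set $N:=B\setminus\bigcup_i B_d(x_i,r_i)$ has $\mathcal{S}^h_d(N)=0$, so for any $\eta>0$ it admits a cover by balls $B_d(y_j,s_j)$ with $s_j\leq\delta/2$ and $\sum_j s_j^h\leq\eta/2^h$; re-centering each ball at a point $z_j\in N\cap B_d(y_j,s_j)\subseteq B$ and doubling the radius produces an admissible cover with $\sum_j(2s_j)^h\leq\eta$. Concatenating the two covers yields $\mathscr{C}^h_{d,\delta}(B)\leq(\mu(B)+\epsilon)/c_0+\eta$; letting $\eta,\epsilon\to 0$, then taking the supremum over $B\subseteq B_d(0,1)\cap\mathbb V$, gives $\mathcal{C}^h_d(B_d(0,1)\cap\mathbb V)\leq\mu(B_d(0,1)\cap\mathbb V)/c_0=1$, which closes the argument.
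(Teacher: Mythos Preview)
Your proof is correct. The reduction to the unit ball, the lower bound via subadditivity of the Haar measure, and the upper bound via Vitali's covering theorem in the Ahlfors $h$-regular space $(\mathbb V,d|_{\mathbb V},\mu)$ all work as stated; the passage from $\mu(N)=0$ to $\mathcal S^h_d(N)=0$ is justified by Ahlfors regularity (a $5r$-covering argument gives it immediately), so the re-centering step is sound.

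As for comparison: the paper does not prove this lemma at all but simply recalls it from \cite[item (iii) of Proposition 2.11]{antonelli2020rectifiable}. Your argument is therefore a self-contained proof where the paper offers only a citation. The approach you take---identifying $\mathcal C^h_d\llcorner\mathbb V$ with the Haar measure normalized so that the unit ball has mass one, by sandwiching via covering arguments---is the natural one and is essentially what underlies the cited result; the main point you handle carefully (the extra $\sup_{B\subseteq A}$ in the definition of $\mathcal C^h_d$, which forces you to control $\mathscr C^h_{d,\delta}(B)$ uniformly over all Borel $B$) is exactly the technical wrinkle that distinguishes the centered Hausdorff measure from the ordinary spherical one.
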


\subsection{Densities and tangents of Radon measures}

Throughout the rest of  the paper we will always assume that $\mathbb{G}$ is a fixed Carnot group endowed with a left-invariant homogeneous distance $d$. The homogeneous, and thus Hausdorff, dimension with respect to $d$ will be denoted with $Q$. Furthermore, as discussed in the previous subsection, we will assume without loss of generality that $\mathbb{G}$ coincides with $\R^n$ endowed with the product induced by the Baker-Campbell-Hausdorff formula.

\begin{definizione}[Weak convergence of measures]\label{def:WeakConvergence}
Given a family $\{\phi_i\}_{i\in\N}$ of Radon measures on $\mathbb{G}$ we say that the sequence $\phi_i$ {\em weakly* converges} to a Radon measure $\phi$, and we write $\phi_i\rightharpoonup \phi$, if
$$
\int fd \phi_i \to \int fd\phi \qquad\text{for any } f\in C_c(\mathbb G),
$$
where $C_c(\mathbb G)$ is the space of compactly supported functions on $\mathbb G$.
\end{definizione}

\begin{definizione}[Tangent measures]\label{def:TangentMeasure}
Let $\phi$ be a Radon measure on $\mathbb G$. For any $x\in\mathbb G$ and any $r>0$ we define the measure
$$
T_{x,r}\phi(E):=\phi(x\cdot\delta_r(E)), \qquad\text{for any Borel set }E.
$$
Furthermore, we define $\mathrm{Tan}(\phi,x)$, the {\em tangent measures to $\phi$ at $x$}, to be the collection of the \emph{non-null} Radon measures $\nu$ for which there is a sequence $\{r_i\}_{i\in\N}$, with $r_i\to 0$, and a sequence $\{c_i\}_{i\in\N}$, with $c_i>0$, such that
$$c_iT_{x,r_i}\phi\rightharpoonup \nu.$$

Moreover, we define $\Tan_h(\phi,x)$, the {\em $h$-tangent measures to $\phi$ at $x$}, to be the collection of Radon measures $\nu$ for which there is a sequence $\{r_i\}_{i\in\mathbb N}$, with $r_i\to 0$, such that 
$$
r_i^{-h}T_{x,r_i}\phi\rightharpoonup \nu.
$$
\end{definizione}

\begin{definizione}[Lower and upper densities]\label{def:densities}
Suppose $d$ is a fixed homogeneous left-invariant metric on $\mathbb{G}$. If $\phi$ is a Radon measure on $\mathbb{G}$, and $h>0$, we define
$$
\Theta_{d,*}^{h}(\phi,x):=\liminf_{r\to 0} \frac{\phi(B_{d}(x,r))}{r^{h}},\qquad \text{and}\qquad \Theta^{h,*}_{d}(\phi,x):=\limsup_{r\to 0} \frac{\phi(B_d(x,r))}{r^{h}},
$$
and we say that $\Theta_{d,*}^{h}(\phi,x)$ and $\Theta^{h,*}_{d}(\phi,x)$ are the {\em lower and upper $h$-density of $\phi$ at the point $x\in\mathbb{G}$}, respectively. Furthermore, we say that measure $\phi$ {\em has $h$-density} if
$$
0<\Theta_{d,*}^{h}(\phi,x)=\Theta^{h,*}_{d}(\phi,x)<\infty,\qquad \text{for }\phi\text{-almost any }x\in\mathbb{G}.
$$
When the metric $d$ is understood, we will tacitly drop the dependence on the metric in the notation.
\end{definizione}

\begin{proposizione}\label{prop:convergence}
Assume $\phi$ is a Radon measure on $\mathbb{G}$ and suppose that $r_i^{-h}T_{x,r_i}\phi\rightharpoonup \nu$. Then, for any $z\in\supp(\nu)$ there exists a sequence $y_i\in \supp(\phi)$ such that $\delta_{1/r_i}(x^{-1}y_i)\to z$.
\end{proposizione}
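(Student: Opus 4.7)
The plan is to unfold the definition of weak$^*$ convergence on open sets, translate it through the definition of $T_{x,r_i}\phi$ using left-invariance and the homogeneity of the distance, and then extract the desired sequence by a diagonal argument.

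First, fix $z\in\supp(\nu)$ and, for each $k\in\N$, consider the open ball $U(z,1/k)$. By definition of support, $\nu(U(z,1/k))>0$. Since $r_i^{-h}T_{x,r_i}\phi\rightharpoonup\nu$ and $U(z,1/k)$ is open, the standard lower-semicontinuity property of weak$^*$ convergence of Radon measures on open sets (see e.g.\ \cite[Proposition 1.62]{Mattila1995GeometrySpaces}) gives
$$
\liminf_{i\to\infty} r_i^{-h}\,T_{x,r_i}\phi\bigl(U(z,1/k)\bigr)\ \geq\ \nu\bigl(U(z,1/k)\bigr)\ >\ 0.
$$
In particular, there exists $N_k\in\N$ such that $T_{x,r_i}\phi(U(z,1/k))>0$ for every $i\geq N_k$, and we may assume the sequence $\{N_k\}_k$ is strictly increasing.

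Next, I would translate the event $T_{x,r_i}\phi(U(z,1/k))>0$ into a statement about $\supp(\phi)$. By definition $T_{x,r_i}\phi(E)=\phi(x\cdot\delta_{r_i}(E))$, and by the homogeneity of $d$ under $\delta_{r_i}$ together with the left-invariance of $d$, one has
$$
x\cdot\delta_{r_i}\bigl(U(z,1/k)\bigr)\ =\ x\cdot U\bigl(\delta_{r_i}(z),\,r_i/k\bigr)\ =\ U\bigl(x\cdot\delta_{r_i}(z),\,r_i/k\bigr).
$$
Therefore $\phi(U(x\cdot\delta_{r_i}(z),r_i/k))>0$ whenever $i\geq N_k$, and since an open set with positive $\phi$-measure must intersect $\supp(\phi)$, we may pick a point $y_i^{(k)}\in\supp(\phi)\cap U(x\cdot\delta_{r_i}(z),r_i/k)$. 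Equivalently, using the $\delta_{1/r_i}$-homogeneity and left-invariance of $d$ once more,
$$
d\bigl(\delta_{1/r_i}(x^{-1}y_i^{(k)}),\,z\bigr)\ <\ \frac{1}{k}.
$$

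Finally, a diagonal construction produces the desired sequence: for each $i$, let $k(i)$ be the largest $k$ with $N_k\leq i$ (and set $k(i)=1$ otherwise), and define $y_i:=y_i^{(k(i))}$. Since $k(i)\to\infty$ as $i\to\infty$, we conclude $y_i\in\supp(\phi)$ and $d(\delta_{1/r_i}(x^{-1}y_i),z)<1/k(i)\to 0$, which is the claim. The only point requiring a bit of care is the interplay of the two scales $r_i$ and $\epsilon=1/k$: one must choose the index $k(i)$ depending on $i$ rather than fixing $\epsilon$ first, and this is handled exactly by the diagonal trick above. No additional structure of Carnot groups beyond left-invariance and the homogeneity of $d$ under $\delta_\lambda$ is needed.
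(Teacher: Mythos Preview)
Your argument is correct and follows the standard route: lower semicontinuity of weak$^*$ limits on open sets gives positive mass of $T_{x,r_i}\phi$ on shrinking neighbourhoods of $z$, the definition of $T_{x,r_i}\phi$ together with left-invariance and homogeneity of $d$ turns this into the existence of points of $\supp(\phi)$ near $x\cdot\delta_{r_i}(z)$, and a diagonal selection finishes. The paper does not spell out a proof but points to the Euclidean analogue in \cite[Proposition 3.4]{DeLellis2008RectifiableMeasures} phrased by contradiction; your direct construction is simply the contrapositive of that same idea, so the two approaches are essentially identical.
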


\begin{proof}
A simple argument by contradiction yields the claim. The proof follows verbatim its Euclidean analogue, see for instance the proof of \cite[Proposition 3.4]{DeLellis2008RectifiableMeasures}.
\end{proof}

\begin{definizione}[Definition of $E(\vartheta,\gamma)$]\label{def:EThetaGamma}
Let $\phi$ be a Radon measure on $\mathbb G$ that is supported on the compact set $K$, i.e., such that $\phi(\mathbb G\setminus K)=0$. For any $\vartheta,\gamma\in\N$ we define
\begin{equation}
    E(\vartheta,\gamma):=\big\{x\in K:\vartheta^{-1}r^h\leq \phi(B(x,r))\leq \vartheta r^h\text{ for any }0<r<1/\gamma\big\}.
    \label{eq:A1}
\end{equation}
\end{definizione}

The following two propositions can be found in \cite[Proposition 2.4 and Proposition 2.5]{antonelli2020rectifiable}.

\begin{proposizione}[{\cite[Proposition 2.4 and Proposition 2.5]{antonelli2020rectifiable}}] \label{prop::E}
Assume $\phi$ is a Radon measure supported on the compact set $K$ such that $
0<\Theta^h_*(\phi,x)\leq \Theta^{h,*}(\phi,x)<\infty
$ for $\phi$-almost every $x\in\mathbb G$. Then, for every $\vartheta,\gamma\in\mathbb N$ the set $E(\vartheta,\gamma)$ is compact and $\phi(\mathbb{G}\setminus \bigcup_{\vartheta,\gamma\in\N} E(\vartheta,\gamma))=0$.
\end{proposizione}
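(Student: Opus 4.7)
The statement has two independent pieces, so the plan naturally splits into two steps.

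For the compactness claim, since $E(\vartheta,\gamma)\subseteq K$ and $K$ is compact, it suffices to prove that $E(\vartheta,\gamma)$ is closed. Take a sequence $\{x_n\}\subseteq E(\vartheta,\gamma)$ with $x_n\to x$ and fix $r\in(0,1/\gamma)$. The point is that one can compare the closed balls $B(x,r)$ and $B(x_n,r)$ by monotonicity of radii: for any $\varepsilon>0$, eventually $d(x,x_n)<\varepsilon$, which gives the two inclusions $B(x,r)\subseteq B(x_n,r+\varepsilon)$ and $B(x_n,r-\varepsilon)\subseteq B(x,r)$. Applying the bounds $\vartheta^{-1}s^h\leq \phi(B(x_n,s))\leq \vartheta s^h$ valid for $x_n\in E(\vartheta,\gamma)$ and any admissible $s$, one deduces
\[
\vartheta^{-1}(r-\varepsilon)^h\leq \phi(B(x,r))\leq \vartheta(r+\varepsilon)^h,
\]
provided $\varepsilon$ is small enough that $r\pm\varepsilon\in(0,1/\gamma)$; here we use also that $\phi$ is finite on bounded sets (being Radon and supported on the compact $K$) so that letting $\varepsilon\to 0$ is legitimate via continuity from above/below of $\phi$ on nested families of balls. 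Taking $\varepsilon\to 0$ gives $x\in E(\vartheta,\gamma)$, so $E(\vartheta,\gamma)$ is closed, hence compact.

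For the measure claim, let $N$ be the $\phi$-null set on which the density assumption $0<\Theta^h_*(\phi,x)\leq \Theta^{h,*}(\phi,x)<\infty$ fails. For each $x\in K\setminus N$, by definition of the upper density there exist $\vartheta_1\in\mathbb N$ and $\gamma_1\in\mathbb N$ such that $\phi(B(x,r))\leq \vartheta_1 r^h$ for every $r\in(0,1/\gamma_1)$; analogously, from $\Theta^h_*(\phi,x)>0$ there exist $\vartheta_2,\gamma_2\in\mathbb N$ such that $\phi(B(x,r))\geq \vartheta_2^{-1} r^h$ for every $r\in(0,1/\gamma_2)$. Choosing $\vartheta:=\max\{\vartheta_1,\vartheta_2\}$ and $\gamma:=\max\{\gamma_1,\gamma_2\}$ witnesses $x\in E(\vartheta,\gamma)$. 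Therefore $K\setminus N\subseteq \bigcup_{\vartheta,\gamma\in\mathbb N}E(\vartheta,\gamma)$, and since $\phi$ is concentrated on $K$ we conclude $\phi(\mathbb G\setminus \bigcup_{\vartheta,\gamma}E(\vartheta,\gamma))\leq \phi(N)+\phi(\mathbb G\setminus K)=0$.

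The only subtle point is the closedness argument: one must be careful that the closed-ball bounds pass to the limit, which ultimately rests on the fact that $r\mapsto \phi(B(x,r))$ has at most countably many discontinuities and that $\phi$ is finite on the balls in question, so that the sandwich inequality above collapses to the desired one as $\varepsilon\to 0$. The measure-theoretic part is completely routine once the definition of $E(\vartheta,\gamma)$ is unpacked.
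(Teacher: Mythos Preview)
Your proof is correct and follows the standard approach (the paper merely cites the result from \cite{antonelli2020rectifiable} without reproving it, and your argument is essentially what appears there). One minor remark: in the closedness step, once you have established the sandwich $\vartheta^{-1}(r-\varepsilon)^h\leq \phi(B(x,r))\leq \vartheta(r+\varepsilon)^h$, the middle term is a \emph{fixed} number, so sending $\varepsilon\to 0$ is purely a limit of the numerical bounds $(r\pm\varepsilon)^h$; your comments about continuity of $r\mapsto\phi(B(x,r))$ and countably many discontinuities are unnecessary and can be dropped.
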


\begin{proposizione}\label{regularity:density}
Suppose $\phi$ is a Radon measure with $h$-density. Then for $\phi$-almost every $x\in\mathbb{G}$ we have that $\Tan_h(\phi,x)$ is not empty and for any $\nu\in \Tan_h(\phi,x)$ we have $0\in\supp(\nu)$, and
$\nu(B(y,s))=\Theta^h(\phi,x)s^h$
for any $y\in \supp(\nu)$ and any $s>0$. 
\end{proposizione}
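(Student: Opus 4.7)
The plan is to establish the three assertions—nonemptiness of $\Tan_h(\phi, x)$, membership $0 \in \supp(\nu)$, and the identification $\nu(B(y,s)) = \Theta^h(\phi,x)s^h$—in sequence. First I would fix $x$ at which the $h$-density $c := \Theta^h(\phi, x) \in (0, \infty)$ exists. The rescaled measures $\mu_r := r^{-h} T_{x, r}\phi$ are locally uniformly bounded, since $\mu_r(B(0, R)) = R^h \phi(B(x, rR))/(rR)^h \to c R^h$ as $r \to 0$ for every $R > 0$. Sequential weak-$*$ compactness of Radon measures then gives that every sequence $r_i \to 0$ admits a subsequence along which $\mu_{r_i}$ converges to a nonzero Radon measure $\nu$, proving nonemptiness of $\Tan_h(\phi, x)$.

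Next, to identify $\nu$ on centered balls, I would combine the portmanteau inequalities $\nu(U(0, R)) \leq \liminf_i \mu_{r_i}(U(0, R))$ and $\limsup_i \mu_{r_i}(B(0, R)) \leq \nu(B(0, R))$ with the fact that, by density existence at $x$, both $\mu_{r_i}(U(0, R))$ and $\mu_{r_i}(B(0, R))$ converge to $c R^h$. This gives $c R^h \leq \nu(B(0, R))$ and $\nu(U(0, R)) \leq c R^h$. Right-continuity of $R \mapsto \nu(B(0, R))$ then forces equality: for $R' > R$ one has $\nu(B(0, R)) \leq \nu(U(0, R')) \leq c (R')^h$, and letting $R' \downarrow R$ gives $\nu(B(0, R)) \leq c R^h$. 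Hence $\nu(B(0, R)) = c R^h$ for every $R > 0$, and in particular $0 \in \supp(\nu)$.

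The main obstacle is the extension to arbitrary $y \in \supp(\nu)$. By \cref{prop:convergence}, there exist $y_i \in \supp(\phi)$ with $z_i := \delta_{1/r_i}(x^{-1} y_i) \to y$, so $y_i = x \cdot \delta_{r_i}(z_i)$. A direct computation identifies $\eta_i := r_i^{-h} T_{y_i, r_i}\phi$ with $(\tau_{z_i^{-1}})_*\mu_{r_i}$, and continuity of left-translation under weak-$*$ convergence combined with $z_i \to y$ yields $\eta_i \rightharpoonup (\tau_{y^{-1}})_*\nu$. Thus for any $s$ with $\nu(\partial B(y, s)) = 0$, one has $\nu(B(y, s)) = \lim_i \eta_i(B(0, s)) = \lim_i r_i^{-h}\phi(B(y_i, r_i s))$, and the task is to show this last limit equals $c s^h$. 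The naive sandwich via balls centered at $x$ only produces bounds of the form $(s \pm \|y\|)^h c$, since $d(x, y_i)$ is only of order $r_i \|y\|$; this is not sharp. I expect the hard step to be invoking a uniform version of the density existence at $\phi$-a.e. $x$, namely that $\phi(B(z, \sigma))/\sigma^h \to c$ uniformly in $z$ provided $d(x, z)/\sigma$ remains bounded, which can be established by a Lusin/Egorov argument on the full-measure set of density points. Applied with $z = y_i$ and $\sigma = r_i s$, for which $d(x, y_i)/(r_i s) = \|z_i\|/s$ stays bounded, this yields $\eta_i(B(0, s)) \to c s^h$ and closes the identification. The remaining values of $s$ with $\nu(\partial B(y, s)) > 0$ are handled by the same right/left-continuity argument as in the centered case.
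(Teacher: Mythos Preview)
Your argument is correct in outline and follows the standard proof the paper cites (\cite{DeLellis2008RectifiableMeasures}, Proposition~3.4). The uniform-density lemma you isolate is indeed the crux, and Lusin/Egorov is the right tool. One refinement is worth making explicit: Egorov/Lusin only furnishes a compact $E$ of nearly full $\phi$-measure on which $\phi(B(\cdot,r))/r^h$ converges uniformly and the density $\Theta^h(\phi,\cdot)$ is continuous, and the approximating points $y_i\in\supp(\phi)$ produced by \cref{prop:convergence} need not lie in $E$. The fix is to restrict further to those $x$ that are $\phi$-density-one points of $E$ (legitimate since $\phi$ is asymptotically doubling under the $h$-density hypothesis, so Lebesgue differentiation applies, cf.\ \cref{prop:Lebesuge}). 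At such $x$, since $y\in\supp(\nu)$ forces $\liminf_i r_i^{-h}\phi(B(x\cdot\delta_{r_i}y,\eta r_i))>0$ for every $\eta>0$ while $r_i^{-h}\phi(B(x,Kr_i)\setminus E)\to 0$, one can choose $y_i'\in E$ with $\delta_{1/r_i}(x^{-1}y_i')\to y$. Then the sandwich $\phi(B(y_i',(1-\delta)r_is))\le \phi(B(y_i,r_is))\le \phi(B(y_i',(1+\delta)r_is))$, together with uniform convergence and continuity of the density on $E$, gives $r_i^{-h}\phi(B(y_i,r_is))\to c\,s^h$. With this adjustment your proof is complete.
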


\begin{proof}
The proof follows verbatim its Euclidean counterpart, see for instance \cite[Proposition 3.4]{DeLellis2008RectifiableMeasures}.
\end{proof}

\begin{proposizione}\label{regularity:density2}
Suppose that $\mu$ is a Borel regular measure on $\mathbb{G}$ supported on a homogeneous subgroup $\mathbb{V}\in\G(h)$, such that $0\in\supp(\mu)$ and assume that there exists a constant $C>0$ such that for any $z\in \supp(\mu)$ and any $s>0$ we have
$$\mu(B(z,s))=Cs^h.$$
Then $\mu$ is a Haar measure of the subgroup $\mathbb V$.
\end{proposizione}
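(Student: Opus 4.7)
The plan is to identify $\mu$ explicitly with a constant multiple of $\nu:=\mathcal{C}^h\llcorner\mathbb V$. By \cref{lemma:PALLAUNITARIAVOLUMEUNO}, $\nu(B(z,r))=r^h$ for every $z\in\mathbb V$ and every $r>0$; in particular $\nu$ is $2^h$-doubling on $\mathbb V$, has full support in $\mathbb V$, and is left-invariant, so by uniqueness of Haar measure on the locally compact group $\mathbb V$ it is itself a Haar measure of $\mathbb V$ up to a positive scalar. Hence showing $\mu=C\nu$ yields the claim.

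The first and most technical step is to establish $\mu\ll\nu$. Fix an open set $U\subseteq\mathbb G$ and $\delta>0$; for each $z\in U\cap\supp(\mu)$ select a radius $r_z\in(0,\delta)$ with $B(z,r_z)\subseteq U$. The Vitali $5r$-covering lemma produces a countable disjoint subfamily $\{B(z_i,r_i)\}_{i\in\mathbb N}$, with $z_i\in U\cap\supp(\mu)\subseteq\mathbb V$, $B(z_i,r_i)\subseteq U$, and $U\cap\supp(\mu)\subseteq\bigcup_i B(z_i,5r_i)$. The hypothesis together with \cref{lemma:PALLAUNITARIAVOLUMEUNO} then gives
$$
\mu(U)=\mu(U\cap\supp(\mu))\leq\sum_i\mu(B(z_i,5r_i))=5^hC\sum_i r_i^h=5^hC\sum_i\nu(B(z_i,r_i))\leq 5^hC\,\nu(U),
$$
using disjointness and inclusion in $U$ in the last inequality. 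Radon regularity upgrades this to $\mu\leq 5^hC\,\nu$ on Borel sets, so $\mu\ll\nu$.

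Since $\nu$ is doubling, the classical Lebesgue differentiation theorem for doubling Radon measures gives $(d\mu/d\nu)(x)=\lim_{s\to 0}\mu(B(x,s))/\nu(B(x,s))$ for $\nu$-a.e.~$x\in\mathbb V$. For $x\in\supp(\mu)$ the hypothesis and \cref{lemma:PALLAUNITARIAVOLUMEUNO} make this ratio equal to $C$ at every scale, while for $x\in\mathbb V\setminus\supp(\mu)$ one has $\mu(B(x,s))=0$ as soon as $s<\dist(x,\supp(\mu))$, so the limit is $0$. Thus $d\mu/d\nu=C\,\mathbf 1_{\supp(\mu)}$ $\nu$-almost everywhere, and therefore $\mu=C\,\nu\llcorner\supp(\mu)$.

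It remains to show $\supp(\mu)=\mathbb V$. Combining the identity just derived with the hypothesis $\mu(B(z,s))=Cs^h$ for $z\in\supp(\mu)$ gives $\nu(B(z,s)\cap(\mathbb V\setminus\supp(\mu)))=0$ for every such $z$ and every $s>0$. Assume for contradiction that some $p\in\mathbb V\setminus\supp(\mu)$ exists, and pick $r>0$ with $B(p,r)\cap\supp(\mu)=\emptyset$; choosing $z=0\in\supp(\mu)$ and $s:=d(0,p)+r$ yields $B(p,r)\cap\mathbb V\subseteq B(0,s)\cap(\mathbb V\setminus\supp(\mu))$, forcing $r^h=\nu(B(p,r)\cap\mathbb V)=0$ by the previous identity, a contradiction. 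Hence $\supp(\mu)=\mathbb V$ and $\mu=C\nu$. The main obstacle is the absolute continuity step: without it the differentiation theorem cannot be invoked, but once $\mu\ll\nu$ is in hand, the rigid ball identity together with \cref{lemma:PALLAUNITARIAVOLUMEUNO} pins down both the density value and the full support.
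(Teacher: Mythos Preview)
Your proof is correct and follows the same two-step structure as the paper: first identify $\mu$ with $C\,\mathcal{C}^h\llcorner\supp(\mu)$, then show $\supp(\mu)=\mathbb V$ by the same contradiction using \cref{lemma:PALLAUNITARIAVOLUMEUNO} and $0\in\supp(\mu)$. The only difference is in the first step: the paper dispatches it in one line by invoking the Federer-type density theorem \cite[Theorem 3.1]{FSSC15}, which directly gives $\mu=\mathcal{C}^h\llcorner\supp(\mu)$ from the ball identity, whereas you unpack this by hand via the $5r$-covering lemma (to obtain $\mu\leq 5^hC\,\nu$ and hence $\mu\ll\nu$) followed by Lebesgue differentiation against the doubling measure $\nu$. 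Your route is more self-contained and avoids the external reference; the paper's is shorter but relies on a black box that is essentially doing the same work.
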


\begin{proof}
Without loss of generality we can assume $C=1$. Thanks to \cite[Theorem 3.1]{FSSC15} we thus infer that $\mu=\mathcal{C}^h\llcorner \text{supp}(\mu)$.
Moreover, for any $x\in \supp(\mu)$, thanks to \cref{lemma:PALLAUNITARIAVOLUMEUNO},
we have that $\mu(B(x,r))=\mathcal{C}^h\llcorner \mathbb V (B(x,r))$ for every $r>0$. If by contradiction $\text{supp}(\mu)\neq \mathbb V$, then there would exist a $p\in \mathbb V$ and a $r_0>0$ such that $B(p,r_0)\cap \text{supp} (\mu)=\emptyset$. This however is impossible since we would have
$$\mathcal{C}^h\llcorner \mathbb V(B(0,2(\|p\|+r_0)))\geq \mathcal{C}^h(B(0,2(\|p\|+r_0))\cap \text{supp}(\mu))+\mathcal{C}^h(B(p,r_0)\cap \mathbb V)>\mu(B(0,2(\|p\|+r_0)),$$
and this is a contradiction with what we found above, since by assumption $0\in\text{supp}(\mu)$.
\end{proof}

A very useful property of locally asymptotically doubling measures is that Lebesgue theorem holds and thus local properties are stable under restriction to Borel subsets. The forhcoming result is a direct consequence of \cite[Theorem 3.4.3]{HeinonenKoskelaShanmugalingam} and the Lebesgue differentiation Theorem in \cite[page 77]{HeinonenKoskelaShanmugalingam}.

\begin{proposizione}\label{prop:Lebesuge}
Suppose $d$ is a fixed homogeneous left-invariant metric on $\mathbb{G}$ and that $\phi$ is a Radon measure on $\mathbb{G}$ such that for $\phi$-almost every $x\in \mathbb{G}$ we have
\begin{equation}\label{eqn:DEFASYMPDOUBLING}
\limsup_{r\to 0}\frac{\phi(B_d(x,2r))}{\phi(B_d(x,r))}<\infty.
\end{equation}
Then
\begin{itemize}
    \item[(i)]for any Borel set $B\subseteq \mathbb{G}$ the measure $\phi\llcorner B$ is a locally asymptotically doubling measure, and we have that the following equalities hold for $\phi$-almost every $x\in B$
$$\Theta^h_{d,*}(\phi\llcorner B,x)=\Theta^h_{d,*}(\phi,x),\qquad \text{and}\qquad\Theta^{h,*}_d(\phi\llcorner B,x)=\Theta^{h,*}_d(\phi,x),$$
\item[(ii)] for every non-negative $\rho\in L^1(\phi)$, and for $\phi$-almost every $x\in\mathbb{G}$ we have
$\mathrm{Tan}(\rho\phi,x)=\rho(x)\mathrm{Tan}(\phi,x)$. More precisely, for $\phi$-almost every $x\in\mathbb G$ the following holds
\begin{equation}
    \begin{split}
        \text{if $r_i\to 0$ is such that}\quad c_iT_{x,r_i}\phi \rightharpoonup \nu\quad \text{then}\quad c_iT_{x,r_i}(\rho\phi)\rightharpoonup \rho(x)\nu.  \\
    \end{split}
\end{equation}
\end{itemize}
\end{proposizione}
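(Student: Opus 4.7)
The plan is to derive both items from the Lebesgue differentiation theorem, which, under the local asymptotic doubling assumption \eqref{eqn:DEFASYMPDOUBLING}, holds for $\phi$ by the result cited from \cite{HeinonenKoskelaShanmugalingam}.

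For item (i), the key observation is that applying Lebesgue differentiation to the indicator function $\mathbf{1}_B \in L^1_{\mathrm{loc}}(\phi)$ yields
\[
\lim_{r\to 0}\frac{\phi(B\cap B_d(x,r))}{\phi(B_d(x,r))}=\mathbf{1}_B(x),\qquad \text{for $\phi$-a.e. $x\in\mathbb G$.}
\]
In particular, for $\phi$-a.e. $x\in B$ the ratio tends to $1$. From here the local asymptotic doubling of $\phi\llcorner B$ follows by writing
\[
\frac{(\phi\llcorner B)(B_d(x,2r))}{(\phi\llcorner B)(B_d(x,r))}=\frac{\phi(B\cap B_d(x,2r))}{\phi(B_d(x,2r))}\cdot\frac{\phi(B_d(x,2r))}{\phi(B_d(x,r))}\cdot\frac{\phi(B_d(x,r))}{\phi(B\cap B_d(x,r))},
\]
and taking the limsup as $r\to 0$: two factors tend to $1$ and the middle one is bounded by hypothesis. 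The equalities of upper and lower $h$-densities follow analogously from the factorisation $(\phi\llcorner B)(B_d(x,r))/r^h=[\phi(B\cap B_d(x,r))/\phi(B_d(x,r))]\cdot [\phi(B_d(x,r))/r^h]$ and from taking separately $\liminf$ and $\limsup$ as $r\to 0$.

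For item (ii), fix $\rho\in L^1(\phi)$ with $\rho\geq 0$ and apply the Lebesgue differentiation theorem to $\rho$: for $\phi$-a.e. $x$ one has the Lebesgue point property
\[
\lim_{r\to 0}\frac{1}{\phi(B_d(x,r))}\int_{B_d(x,r)}|\rho(y)-\rho(x)|\,d\phi(y)=0.
\]
Fix such an $x$, assume $c_iT_{x,r_i}\phi\rightharpoonup\nu$, and take $f\in C_c(\mathbb G)$ with $\supp f\subseteq B_d(0,R)$. By a change of variable $z=x\cdot\delta_{r_i}(y)$,
\[
\int f\,d\bigl(c_iT_{x,r_i}(\rho\phi)\bigr)-\rho(x)\int f\,d\bigl(c_iT_{x,r_i}\phi\bigr)=c_i\int_{B_d(x,r_iR)} f(\delta_{1/r_i}(x^{-1}z))\bigl(\rho(z)-\rho(x)\bigr)\,d\phi(z),
\]
whose absolute value is bounded by
\[
\|f\|_\infty\,c_i\phi(B_d(x,r_iR))\cdot\frac{1}{\phi(B_d(x,r_iR))}\int_{B_d(x,r_iR)}|\rho-\rho(x)|\,d\phi.
\]
The second factor vanishes as $i\to\infty$ by the Lebesgue point property, while the first is bounded since $c_i\phi(B_d(x,r_iR))=(c_iT_{x,r_i}\phi)(B_d(0,R))$ is controlled by the weak* convergence to the Radon measure $\nu$ (finite on bounded sets). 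Hence $c_iT_{x,r_i}(\rho\phi)\rightharpoonup \rho(x)\nu$, and the stated identity $\mathrm{Tan}(\rho\phi,x)=\rho(x)\mathrm{Tan}(\phi,x)$ follows.

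The only mildly delicate point is the uniform boundedness of $c_i\phi(B_d(x,r_iR))$: weak* convergence gives a bound on $\nu$ of slightly larger balls, so one may either pick $R$ such that $\nu(\partial B_d(0,R))=0$ (possible for a.e.\ $R$) to apply the Portmanteau-type characterisation, or simply apply weak convergence to a cutoff $\chi\in C_c(\mathbb G)$ with $\chi\equiv 1$ on $B_d(0,R)$; either way no essential difficulty arises.
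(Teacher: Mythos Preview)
Your argument is correct and follows exactly the approach the paper indicates: the paper does not give a detailed proof but simply states that the proposition is a direct consequence of \cite[Theorem 3.4.3]{HeinonenKoskelaShanmugalingam} and the Lebesgue differentiation theorem in \cite[page 77]{HeinonenKoskelaShanmugalingam}, and your proof spells out precisely this derivation. The factorisations in (i) and the Lebesgue-point estimate in (ii), including your handling of the boundedness of $c_i\phi(B_d(x,r_iR))$ via a continuous cutoff, are the standard way to extract the statement from those cited results.
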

We recall that any Radon measure $\phi$ on $(\mathbb G,d)$ that satisfies \eqref{eqn:DEFASYMPDOUBLING} for $\phi$-almost every $x\in\mathbb G$ is said to be {\em locally asymptotically doubling}, or simply {\em asymptotically doubling}.

\subsection{Intrinsic Grassmannian in Carnot groups}

We recall in this section some useful properties about homogeneous subgroups in Carnot groups. We equip $\mathbb G$ with a fixed left-invariant homogeneous distance $d$ that will sometimes be understood.

\begin{definizione}[Intrinsic Grassmanian on Carnot groups]\label{def:Grassmannian}
For any $1\leq h\leq Q$, we define $\G(h)$ to be the family of homogeneous subgroups $\mathbb V$ of $\mathbb{G}$ that have Hausdorff dimension $h$. 
Let us recall that if $\mathbb{V}$ is a homogeneous subgroup of $\mathbb{G}$, any other homogeneous subgroup $\mathbb L$ such that
$$
\mathbb{V}\cdot\mathbb{L}=\mathbb{G},\qquad \text{and}\qquad \mathbb{V}\cap \mathbb{L}=\{0\},
$$
is said to be a \emph{complement} of $\mathbb{G}$. We let $\G_c(h)$ to be the subfamily of those $\mathbb V\in\G(h)$ that have a complement and we will refer to $\G_c(h)$ as the $h$-dimensional \emph{complemented} Grassmanian. 
Finally, for any $1\leq h\leq Q$ we endow $\G(h)$ with the metric
$$
d_{\mathbb G}(\mathbb W_1,\mathbb W_2):=d_{H,\mathbb G}(\mathbb W_1\cap B(0,1),\mathbb W_2\cap B(0,1)),
$$
where $d_{H,\mathbb G}$ is the Hausdorff distance of sets induced by the distance $d$. For more details we refer to \cite{antonelli2020rectifiable}.
\end{definizione}

We recall here the following proposition from \cite[Proposition 2.7]{antonelli2020rectifiable} that will be used several times.
\begin{proposizione}[Compactness of the Grassmannian]\label{prop:CompGrassmannian}
For any $1\leq h\leq Q$ $(\G(h),d_{\mathbb G})$ is a compact metric space. 
\end{proposizione}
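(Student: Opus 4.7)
The plan is to prove compactness by extracting a convergent subsequence from any sequence $\{\mathbb V_n\}_{n\in\mathbb N}\subseteq \G(h)$, using a layer-by-layer reduction to the compactness of the usual Euclidean Grassmannian. Since $\exp$ acts as the identity, each $\mathbb V_n$ coincides with a graded Lie subalgebra $\mathfrak h_n=\bigoplus_{j=1}^\kappa W_j^{(n)}$, with $W_j^{(n)}:=\mathfrak h_n\cap V_j$, and the constraint $h=\sum_{j=1}^\kappa j\dim W_j^{(n)}$ forces the dimension vector $(\dim W_1^{(n)},\dots,\dim W_\kappa^{(n)})$ to lie in a finite set. After passing to a subsequence I may assume $\dim W_j^{(n)}\equiv d_j$ for every $n$, with $\sum j\,d_j=h$.

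Next, for each $j$ the sequence $\{W_j^{(n)}\}_n$ lies in the compact Euclidean Grassmannian of $d_j$-planes in $V_j$, so by a diagonal extraction I obtain subspaces $W_j\subseteq V_j$ with $\dim W_j=d_j$ and $W_j^{(n)}\to W_j$ in this Euclidean Grassmannian. Set $\mathfrak h:=\bigoplus_j W_j$, which automatically is a graded vector subspace with homogeneous dimension $h$, and is $\delta_\lambda$-invariant by construction. To check that $\mathfrak h$ is a Lie subalgebra I would pick $a\in W_i$, $b\in W_j$, write $a=\lim a_n$, $b=\lim b_n$ with $a_n\in W_i^{(n)}$, $b_n\in W_j^{(n)}$; then $[a_n,b_n]\in\mathfrak h_n\cap V_{i+j}=W_{i+j}^{(n)}$ and by continuity of the bracket the limit lies in $W_{i+j}$. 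Hence $\mathbb V:=\exp(\mathfrak h)$ is a homogeneous Lie subgroup in $\G(h)$.

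The final and most delicate step is upgrading the Euclidean Grassmannian convergence $\mathfrak h_n\to\mathfrak h$ to convergence in the intrinsic metric, namely $d_{H,\mathbb G}(\mathbb V_n\cap B(0,1),\mathbb V\cap B(0,1))\to 0$. For this I would exploit that on any Euclidean bounded set the identity $(\mathbb G,d)\to(\mathbb R^n,|\cdot|)$ is a homeomorphism, so Euclidean and intrinsic topologies agree on $B(0,1)$. Given $v\in \mathbb V\cap B(0,1)$, pick $v_n\in \mathfrak h_n$ with $v_n\to v$ in Euclidean norm, hence also in $d$; to place $v_n$ inside the homogeneous unit ball I replace it by $\delta_{t_n}v_n$ with $t_n:=\min\{1,\|v_n\|_d^{-1}\}\to 1$, obtaining points of $\mathbb V_n\cap B(0,1)$ converging to $v$. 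The reverse inclusion is handled symmetrically: any limit of points in $\mathbb V_n\cap B(0,1)$ (after a convergent subsequence, extracted by Blaschke for closed subsets of the compact set $B(0,1)$) lies in the Euclidean limit of $\mathfrak h_n$, which is $\mathfrak h$. The main obstacle is exactly this last bridge between the Euclidean and intrinsic Hausdorff topologies; once it is in place, closedness of $\G(h)$ under the extracted subsequential limit together with the fact that $\dim_H\mathbb V=\sum_j j d_j=h$ yield $\mathbb V_{n_k}\to\mathbb V$ in $d_{\mathbb G}$, which concludes sequential compactness.
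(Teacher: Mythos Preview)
Your argument is correct. The paper does not actually prove this proposition: it is only recalled from \cite[Proposition~2.7]{antonelli2020rectifiable}, so there is no proof in the present paper to compare against. Your strategy---fixing the layer-dimension vector by pigeonhole, extracting limits $W_j^{(n)}\to W_j$ in the Euclidean Grassmannians of each $V_j$, closing $\mathfrak h=\bigoplus_j W_j$ under the bracket by continuity, and then upgrading Euclidean-subspace convergence to convergence in $d_{\mathbb G}$---is the natural one and goes through.

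The only place worth tightening is the final bridge. What you need, and implicitly use, is that on the $d$-compact set $B_d(0,1)$ the metrics $d$ and $|\cdot|$ are uniformly equivalent (same topology on a compact set), so Hausdorff convergence of closed subsets of $B_d(0,1)$ is the same for both. With this in hand your two inclusions are clean: for $v\in\mathbb V\cap B_d(0,1)$ your dilation trick $\delta_{t_n}v_n$ with $t_n=\min\{1,\lVert v_n\rVert_d^{-1}\}$ works because $\mathbb V_n$ is $\delta$-invariant and $(\lambda,w)\mapsto\delta_\lambda w$ is jointly continuous; for the reverse inclusion, if $w_n\in\mathbb V_n\cap B_d(0,1)$ accumulates at $w$, then $w_n\to w$ in the Euclidean sense as well, and since $W_j^{(n)}\to W_j$ in each Euclidean Grassmannian (hence $\mathfrak h_n\to\mathfrak h$ as linear subspaces, the summands living in complementary $V_j$), one gets $w\in\mathfrak h=\mathbb V$ and $w\in B_d(0,1)$.
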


\begin{definizione}[Projections related to a splitting]\label{def:Projections}
For any $\mathbb V\in \G_c(h)$, if we choose a complement $\mathbb L$ of $\mathbb V$, we can find two unique elements $g_{\mathbb V}:=P_\mathbb V g\in \mathbb V$ and $g_{\mathbb L}:=P_{\mathbb L}g\in \mathbb L$ such that
$$
g=P_\mathbb V (g)\cdot P_{\mathbb L}(g)=g_{\mathbb V}\cdot g_{\mathbb L}.
$$
We will refer to $P_{\mathbb V}(g)$ and $P_{\mathbb L}(g)$ as the \emph{splitting projections}, or simply {\em projections}, of $g$ onto $\mathbb V$ and $\mathbb L$, respectively.
\end{definizione}

\begin{proposizione}[{\cite[Proposition 2.12 and Corollary 2.15]{FranchiSerapioni16}}, {\cite[Proposition 2.14]{antonelli2020rectifiable}}]\label{cor:2.2.19}
Suppose $d$ is a fixed homogeneous left-invariant metric on $\mathbb{G}$ and let $\lVert\cdot\rVert_{d}$ be the associated homogeneous norm. Then, for any $\mathbb{V}\in \G_c(h)$ with complement $\mathbb L$ there is a constant $ \newC\label{ProjC}(d,\mathbb{V},\mathbb L)>0$ such that for any $p\in\mathbb{G}$ we have
\begin{equation}\label{eqn:EstimateC1}
\begin{split}
        \oldC{ProjC}(d,\mathbb{V},\mathbb{L})\lVert P_\mathbb{L}(p)\rVert_{d}\leq \mathrm{dist}&(p,\mathbb{V})\leq\lVert P_\mathbb{L}(p)\rVert_{d},\\
        \oldC{ProjC}(d,\mathbb{V},\mathbb{L})(\lVert P_\mathbb{L}(p)\rVert_{d}+\lVert P_\mathbb{V}(p)\rVert_{d})\leq& \lVert p\rVert_{d}\leq\lVert P_\mathbb{L}(p)\rVert_{d}+\lVert P_\mathbb{V}(p)\rVert_{d}.
\end{split}
\end{equation}
Furthermore, for any $r>0$, there exists a constant $\newC\label{BallC}$ we have
$\mathcal{S}^{h}_d\llcorner \mathbb V\big(P_\mathbb{V}(B_d(p,r))\big)=\oldC{BallC}(d,\mathbb{V},\mathbb L)r^{h}$ and, for any Borel set $A\subseteq \mathbb{G}$ for which $\mathcal{S}^{h}_{d}(A)<\infty$, we have
\begin{equation}
\begin{split}
     \mathcal{S}^{h}_d\llcorner \mathbb{V}(P_\mathbb{V}(A))\leq 2\oldC{BallC}(d,\mathbb{V},\mathbb L)\mathcal{S}^{h}_d(A).
\end{split}
    \label{eq:n520}
\end{equation}
 When the metric $d$ is understood, we will tacitly drop the dependence on the metric in the notation.
\end{proposizione}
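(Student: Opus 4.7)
The upper bounds in \eqref{eqn:EstimateC1} are immediate from the splitting: $p = P_{\mathbb V}(p) \cdot P_{\mathbb L}(p)$ together with left-invariance of $d$ yields $d(p, P_{\mathbb V}(p)) = \lVert P_{\mathbb L}(p) \rVert_d$, hence $\mathrm{dist}(p, \mathbb V) \leq \lVert P_{\mathbb L}(p) \rVert_d$, while the triangle inequality for $\lVert \cdot \rVert_d$ gives $\lVert p \rVert_d \leq \lVert P_{\mathbb V}(p) \rVert_d + \lVert P_{\mathbb L}(p) \rVert_d$. For the matching lower bounds I would run a homogeneity-plus-compactness argument. Define $\Phi \colon \mathbb L \setminus \{0\} \to (0,+\infty)$ by $\Phi(\ell) := \mathrm{dist}(\ell, \mathbb V)/\lVert \ell \rVert_d$. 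Since both $\mathbb V$ and $\mathbb L$ are $\delta_\lambda$-invariant, $\Phi$ is $0$-homogeneous and therefore factors through the sphere $\{\ell \in \mathbb L : \lVert \ell \rVert_d = 1\}$, which is compact because $\mathbb L$ is finite-dimensional. On this sphere $\Phi$ is continuous and strictly positive, so it attains a positive minimum, which plays the role of $\oldC{ProjC}$. Since left-invariance of $d$ yields $\mathrm{dist}(p, \mathbb V) = \mathrm{dist}(P_{\mathbb L}(p), \mathbb V)$, the first lower bound follows; the second is obtained analogously by applying the same recipe to $p \mapsto \lVert p \rVert_d/(\lVert P_{\mathbb V}(p) \rVert_d + \lVert P_{\mathbb L}(p) \rVert_d)$ on the unit sphere of $\mathbb G$, possibly shrinking $\oldC{ProjC}$.

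For the equality $\mathcal{S}^h \llcorner \mathbb V \big(P_{\mathbb V}(B(p,r))\big) = \oldC{BallC} r^h$ I would first reduce to $r = 1$ by dilation: since $\mathbb V$ and $\mathbb L$ are homogeneous, $P_{\mathbb V}$ commutes with $\delta_\lambda$, and combining this with $B_d(p,r) = \delta_r B_d(\delta_{1/r} p, 1)$ and the scaling $\mathcal{S}^h(\delta_\lambda S) = \lambda^h \mathcal{S}^h(S)$ for $S \subseteq \mathbb V$ gives $\mathcal{S}^h(P_{\mathbb V}(B(p,r))) = r^h \mathcal{S}^h(P_{\mathbb V}(B(\delta_{1/r}p, 1)))$. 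It thus suffices to check that $q \mapsto \mathcal{S}^h(P_{\mathbb V}(B(q,1)))$ is constant on $\mathbb G$. Writing $q = P_{\mathbb V}(q) \cdot P_{\mathbb L}(q)$ and using the identity $P_{\mathbb V}(v \cdot w) = v \cdot P_{\mathbb V}(w)$ valid for $v \in \mathbb V$, $w \in \mathbb G$, one obtains $P_{\mathbb V}(B(q,1)) = P_{\mathbb V}(q) \cdot P_{\mathbb V}(B(P_{\mathbb L}(q), 1))$, and left-invariance of the Haar measure $\mathcal{S}^h \llcorner \mathbb V$ reduces the claim to showing that $\ell \mapsto \mathcal{S}^h(P_{\mathbb V}(B(\ell,1)))$ is constant on $\mathbb L$. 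This $\mathbb L$-independence is the genuine technical core of the proposition, and I would handle it as in the cited references: the triangular polynomial structure \eqref{opgr} of the group law guarantees that left-translation by an element of $\mathbb L$, postcomposed with $P_{\mathbb V}$, acts on the $\mathbb V$-slices of $B(0,1)$ as a polynomial, measure-preserving change of variables, and a Fubini-type decomposition of $B(0,1)$ along the splitting $\mathbb V \cdot \mathbb L$ produces the desired constancy.

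Finally, to obtain $\mathcal{S}^h \llcorner \mathbb V(P_{\mathbb V}(A)) \leq 2 \oldC{BallC} \mathcal{S}^h(A)$ I would use the definition of $\mathcal{S}^h$ through coverings: given $\epsilon,\delta > 0$, pick balls $\{B(x_j, r_j)\}$ covering $A$ with $r_j \leq \delta$ and $\sum_j r_j^h \leq \mathscr{S}^h_{d,\delta}(A) + \epsilon$. Since $P_{\mathbb V}(A) \subseteq \bigcup_j P_{\mathbb V}(B(x_j, r_j))$, the volume formula just proved and countable subadditivity of $\mathcal{S}^h \llcorner \mathbb V$ give $\mathcal{S}^h(P_{\mathbb V}(A)) \leq \oldC{BallC} \sum_j r_j^h \leq \oldC{BallC}(\mathscr{S}^h_{d,\delta}(A) + \epsilon)$, and letting $\delta \to 0$ and $\epsilon \to 0$ produces the desired bound; the factor $2$ in \eqref{eq:n520} is slack absorbing the cost of re-covering the non-ball sets $P_{\mathbb V}(B(x_j, r_j)) \subseteq \mathbb V$ by genuine metric balls of $\mathbb V$ at a scale comparable to $r_j$, as required by the definition of $\mathcal{S}^h$ on $\mathbb V$. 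The main obstacle in the whole plan is the $\mathbb L$-invariance step, where the compactness-plus-homogeneity yoga ceases to be enough and one must exploit the precise form of the group operation.
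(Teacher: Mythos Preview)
Your outline is correct. Note that the paper does not actually supply a proof of this proposition: it is quoted verbatim from \cite{FranchiSerapioni16} and \cite{antonelli2020rectifiable}, so there is no ``paper's own proof'' to compare against. Your argument follows the standard route taken in those references: the upper bounds in \eqref{eqn:EstimateC1} are trivial, the lower bounds come from homogeneity plus compactness of the unit sphere, the ball-projection formula reduces via dilation and left-invariance to the $\mathbb L$-invariance statement, and that last step is precisely the content of \cref{prop:InvarianceOfProj} (the unit-Jacobian fact for $z\mapsto P_{\mathbb V}(xz)$), which the paper also imports from the same sources. So you have correctly located the one non-formal ingredient.

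One minor remark on your last paragraph: your covering argument, once combined with the exact volume formula $\mathcal{S}^h\llcorner\mathbb V(P_{\mathbb V}(B(x_j,r_j)))=\oldC{BallC}\,r_j^h$ and countable subadditivity of the outer measure $\mathcal{S}^h\llcorner\mathbb V$, already yields $\mathcal{S}^h\llcorner\mathbb V(P_{\mathbb V}(A))\leq \oldC{BallC}\,\mathcal{S}^h(A)$ with constant $1$, not $2$. There is no need to re-cover the sets $P_{\mathbb V}(B(x_j,r_j))$ by balls of $\mathbb V$, since you are bounding the \emph{measure} of the union, not a premeasure. The factor $2$ in \eqref{eq:n520} is therefore genuine slack relative to your argument; in the cited references it arises from a slightly different bookkeeping that passes through $\mathscr{S}^h_\delta$ on $\mathbb V$ directly.
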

\begin{osservazione}[About the definition of \oldC{ProjC}]
We stress that we fix \oldC{ProjC} to be the supremum of all the constants for which the inequalities in \eqref{eqn:EstimateC1} hold.
\end{osservazione}
We recall here the following proposition that will be useful later on.
\begin{proposizione}[{\cite[Proposition 2.10]{antonelli2020rectifiable}, \cite[Proof of Lemma 2.20]{FranchiSerapioni16}}]\label{prop:InvarianceOfProj}
Suppose $d$ is a fixed homogeneous left-invariant metric on $\mathbb{G}$. Let us fix $\mathbb V\in\G_c(h)$ and $\mathbb L$ two complementary homogeneous subgroups of $\mathbb G$. Then, for any $x\in\mathbb{G}$ the map $\Psi:\mathbb{V}\to\mathbb{V}$ defined as $\Psi(z):=P_\mathbb{V}(xz)$ is invertible and it has unitary Jacobian. As a consequence $\mathcal{S}^h_{d}(P_{\mathbb V}(\mathcal{E}))=\mathcal{S}^h_{d}(P_{\mathbb V}(xP_{\mathbb V}(\mathcal{E})))=\mathcal{S}^h_{d}(P_{\mathbb V}(x\mathcal{E}))$ for every $x\in \mathbb G$ and $\mathcal{E}\subseteq \mathbb G$ Borel.
\end{proposizione}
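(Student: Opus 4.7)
The plan is to construct an explicit inverse of $\Psi$, compute its Euclidean Jacobian by conjugating the left translation $\tau_x$ through the splitting diffeomorphism $\Theta(v,\ell):=v\cdot \ell$, and finally transfer the unit-Jacobian property to $\mathcal{S}^h$ by observing that $\mathcal{S}^h\llcorner \mathbb V$ is a Haar measure on the subgroup $\mathbb V$.

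The driving observation, obtained from the uniqueness of the splitting $\mathbb G=\mathbb V\cdot \mathbb L$, is that for every $g\in\mathbb G$ and $\ell\in\mathbb L$ one has $P_{\mathbb V}(g\cdot\ell) = P_{\mathbb V}(g)$ and $P_{\mathbb L}(g\cdot \ell) = P_{\mathbb L}(g)\cdot\ell$, since $g\cdot\ell = P_{\mathbb V}(g)\cdot (P_{\mathbb L}(g)\cdot\ell)$ is already a splitting. Using this twice: first, solving $\Psi(z)=v$ amounts to $xz = v\cdot \ell$ for some $\ell\in\mathbb L$, so $z=x^{-1}\cdot v\cdot\ell$, and the constraint $z\in\mathbb V$ pins down $\ell = P_{\mathbb L}(x^{-1}v)^{-1}$, yielding $\Psi^{-1}(v) = P_{\mathbb V}(x^{-1}\cdot v)$; second, the conjugated map $T_x:=\Theta^{-1}\circ\tau_x\circ\Theta$ simplifies to
\[
T_x(v,\ell) \;=\; \bigl(\Psi(v),\ P_{\mathbb L}(x\cdot v)\cdot\ell\bigr).
\]

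The differential of $T_x$ in the splitting coordinates $\mathbb V\times\mathbb L$ is therefore block lower triangular,
\[
dT_x \;=\; \begin{pmatrix} d\Psi & 0 \\ * & d(\tau_{P_{\mathbb L}(xv)})|_{\mathbb L}\end{pmatrix},
\]
so $\det dT_x = \det d\Psi\cdot \det d(\tau_{P_{\mathbb L}(xv)})|_{\mathbb L}$. The second factor equals $1$ because left translation on a graded Lie subgroup has unit Euclidean Jacobian: choosing a basis of $\mathfrak l$ adapted to the layer decomposition inherited from $\mathfrak g$, the relation $p\cdot q = p+q + \mathscr{Q}(p,q)$ together with the fact that $\mathscr{Q}_i$ depends only on strata $<i$ makes the Jacobian matrix lower triangular with $1$'s on the diagonal. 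The same triangular argument, applied this time to a basis of $\mathfrak v\oplus \mathfrak l$ grouped by common layer, proves that $\Theta$ itself has constant Jacobian $1$, hence $\det dT_x = \det d\tau_x = 1$ and consequently $\det d\Psi\equiv 1$.

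For the measure statement, the right-$\mathbb L$-invariance of $P_{\mathbb V}$ applied to the splitting of an arbitrary $g$ gives $P_{\mathbb V}(x\cdot g) = \Psi(P_{\mathbb V}(g))$, hence $P_{\mathbb V}(x\mathcal E) = \Psi(P_{\mathbb V}(\mathcal E))$ for every set $\mathcal E$. Since $\mathcal{S}^h\llcorner \mathbb V$ is left-invariant on the Lie subgroup $\mathbb V$ of Hausdorff (i.e.\ homogeneous) dimension $h$, it is a Haar measure on $\mathbb V$, hence a positive multiple of Lebesgue measure on $\mathfrak v$ in exponential coordinates; the unit-Jacobian property of $\Psi$ then yields $\mathcal{S}^h(\Psi(A)) = \mathcal{S}^h(A)$ for every Borel $A\subseteq \mathbb V$, and taking $A = P_{\mathbb V}(\mathcal E)$ gives both nontrivial equalities in one stroke. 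The main obstacle to anticipate is verifying the block triangular form of $dT_x$ and cleanly computing $\det d\Theta\equiv 1$; both reduce to the nilpotent Baker--Campbell--Hausdorff structure, provided one picks the basis of $\mathfrak v\oplus\mathfrak l$ compatibly with the stratification of $\mathfrak g$, which is exactly where the homogeneity of $\mathbb V$ and $\mathbb L$ enters.
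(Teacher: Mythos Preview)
Your proof is correct. The paper does not give its own proof of this proposition; it merely records the statement and cites \cite[Proposition 2.10]{antonelli2020rectifiable} and \cite[Proof of Lemma 2.20]{FranchiSerapioni16}, whose arguments proceed along the same lines you outline (conjugating $\tau_x$ through the splitting diffeomorphism $\Theta$ and exploiting the block lower triangular Jacobians that arise from the Baker--Campbell--Hausdorff structure).
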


The following proposition will be useful in the proof of \cref{structure:liplevelsets2}. We omit the proof. The first part of the statement, i.e., the one about the homeomorphism, can be reached by using elementary Linear Algebra; while the second part of the statement follows from the first. We stress that in the following statement we are endowing $\mathfrak g$, that is identified with $\mathbb R^n$ through a choice of a basis of left-invariant vector fields $X_1,\dots,X_n$, with a scalar product $\langle\cdot,\cdot\rangle$ that makes $X_1,\dots,X_n$ orthonormal. 
\begin{proposizione}\label{prop:projmap}
Let $\mathcal{L}(\mathfrak g,\mathfrak g)$ be the set of linear maps from the Lie algebra $\mathfrak g$ of $\mathbb G$ into itself, endowed with the operator norm $\rho$. Then, being $\G(\mathfrak g)$ the Grassmannian of the vector space $\mathfrak g$, the map $\mathfrak P:\G(\mathfrak g)\to \mathcal{L}(\mathfrak g,\mathfrak g)$ defined as $\mathfrak P(V):=\Pi_{V^\perp}$, where $\Pi_{V^\perp}$ is the orthogonal projection onto $V^\perp$, is a homeomorphism onto its image.

Then, a map $V:\mathfrak g\to \exp^{-1}(\G(h))$ is Borel measurable if and only if the projection map $\pi_{V^\perp}:\mathfrak g\to\mathcal{L}(\mathfrak g,\mathfrak g)$  defined as
$$\pi_{V^\perp}(x):=\Pi_{V(x)^\perp},$$
where $\Pi_{V(x)^\perp}$ is the orthogonal projection onto $V(x)^\perp$, is Borel measurable.
In addition to this, the Borelianity of $\pi_{V^\perp}$ is also equivalent to saying that  for any fixed $v,w\in \mathfrak g$, the map $x\mapsto \langle v,\Pi_{V(x)^\perp}[w]\rangle$ is Borel.
\end{proposizione}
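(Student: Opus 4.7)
The plan is to decompose the statement into its three natural assertions and handle them in order: the homeomorphism property of $\mathfrak P$, the equivalence between Borelianity of $V$ and of $\pi_{V^\perp}$, and finally the componentwise reformulation.

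\textbf{Part 1: $\mathfrak P$ is a homeomorphism onto its image.} First I would verify injectivity: if $\Pi_{V^\perp}=\Pi_{W^\perp}$ then the two projections have the same kernel, hence $V=\ker\Pi_{V^\perp}=\ker\Pi_{W^\perp}=W$. Continuity of $\mathfrak P$ would follow by the classical description of convergence in $\G(\mathfrak g)$: if $V_n\to V$ in the Grassmannian, one can select orthonormal bases $(e_1^n,\dots,e_k^n)$ of $V_n$ with $e_i^n\to e_i$ for an orthonormal basis $(e_1,\dots,e_k)$ of $V$. Then the explicit formula
\[
\Pi_{V_n^\perp}(x)=x-\sum_{i=1}^k\langle x,e_i^n\rangle e_i^n
\]
converges, uniformly on the unit ball of $\mathfrak g$ (the space being finite-dimensional), to $\Pi_{V^\perp}(x)$, giving $\rho(\mathfrak P(V_n)-\mathfrak P(V))\to 0$. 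Finally, as $\G(\mathfrak g)$ is a compact topological space (a standard fact, cf.\ also the Carnot analogue \cref{prop:CompGrassmannian}) and $(\mathcal{L}(\mathfrak g,\mathfrak g),\rho)$ is Hausdorff, a continuous injection between these is automatically a homeomorphism onto its image.

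\textbf{Part 2: Borelianity equivalence.} The direct implication is a triviality: $\pi_{V^\perp}=\mathfrak P\circ V$, and since $\mathfrak P$ is continuous by Part 1, $\pi_{V^\perp}$ inherits Borel measurability from $V$. For the converse, the key point is that the inverse $\mathfrak P^{-1}:\mathfrak P(\G(\mathfrak g))\to\G(\mathfrak g)$ is itself continuous (again by Part 1), in particular Borel. Therefore one can write $V=\mathfrak P^{-1}\circ\pi_{V^\perp}$, and the composition of a Borel map with a continuous one is Borel. Since $V$ takes values in $\exp^{-1}(\G(h))\subseteq\G(\mathfrak g)$, measurability into the ambient Grassmannian is equivalent to measurability into the subspace $\exp^{-1}(\G(h))$.

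\textbf{Part 3: Componentwise criterion.} Fix any orthonormal basis $\{X_1,\dots,X_n\}$ of $\mathfrak g$; this identifies $\mathcal{L}(\mathfrak g,\mathfrak g)$ with the finite-dimensional normed space of $n\times n$ matrices. A map $T:\mathfrak g\to\mathcal{L}(\mathfrak g,\mathfrak g)$ is Borel if and only if each coordinate function $x\mapsto\langle X_i,T(x)[X_j]\rangle$ is Borel, because the projections onto matrix entries are continuous and, conversely, norm convergence in a finite-dimensional space is equivalent to convergence of each coordinate. Applying this to $T=\pi_{V^\perp}$ and exploiting bilinearity of $(v,w)\mapsto\langle v,\pi_{V^\perp}(x)[w]\rangle$, the condition on the basis pairs is equivalent to the assertion for all $v,w\in\mathfrak g$.

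\textbf{Anticipated obstacle.} The only nontrivial step is the continuity argument in Part 1, which hinges on being able to choose orthonormal bases of $V_n$ converging to an orthonormal basis of $V$; this is a standard but slightly delicate fact, and it is the only place where the explicit relationship between the topology on $\G(\mathfrak g)$ and orthogonal projections enters. Once Part 1 is settled, Parts 2 and 3 reduce to general measure-theoretic manipulations in finite-dimensional Euclidean spaces.
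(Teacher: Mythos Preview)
Your proposal is correct and follows precisely the route the paper indicates: the paper actually omits the proof, stating only that the homeomorphism assertion ``can be reached by using elementary Linear Algebra'' and that ``the second part of the statement follows from the first.'' Your Part~1 (injectivity via $V=\ker\Pi_{V^\perp}$, continuity via converging orthonormal bases, homeomorphism via compact-to-Hausdorff) is exactly the elementary linear algebra the authors have in mind, and your Parts~2--3 are the natural way to make ``follows from the first'' precise.
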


\subsection{Cones over homogeneous subgroups and intrinisc Lipschitz functions}\label{sub:Cones}

In this section we recall some basic definitions about intrinsic cones in Carnot groups.
\begin{definizione}[Intrinsic cone]\label{def:Cone}
Suppose $d$ is an homogeneous left-invariant distance on $\mathbb{G}$. For any  $\alpha>0$ and $\mathbb V\in \G(h)$, we define the cone $C_{\mathbb V,d}(\alpha)$ as:
$$C_{\mathbb V,d}(\alpha):=\{w\in\mathbb{G}:\mathrm{dist}_{d}(w,\mathbb V)\leq \alpha\|w\|_{d}\}.$$
Furthermore, given $\mathbb V\in \G(h)$ and an $\alpha>0$, we say that a Borel set $E\subseteq \mathbb G$ is a {\em $C_{\mathbb V,d}(\alpha)$-set} if
$$
E\subseteq pC_{\mathbb V,d}(\alpha) \qquad  \text{for any } p\in E. 
$$
When the metric $d$ is understood, we will tacitly drop the dependence on the metric in the notation.
\end{definizione}

\begin{osservazione}[Equivalent intrinsic cones]\label{oss:ConiEquivalenti}Suppose $d$ is a homogeneous left-invariant distance on $\mathbb{G}$.
For some benefits toward the rest of the paper, let us prove that if $\mathbb V\in\G_c(h)$, $\mathbb L$ is a complementary subgroup of $\mathbb V$, and $\alpha<\oldC{ProjC}(d,\mathbb V,\mathbb L)$, then
\begin{equation}
C_{\mathbb V,d}(\alpha)\subseteq \left\{w\in\mathbb G:\|P_\mathbb Lw\|_{d}\leq \frac{\alpha}{\oldC{ProjC}(d,\mathbb V,\mathbb L)-\alpha}\|P_\mathbb Vw\|_{d}\right\}.
    \label{numberoo25}
\end{equation}
Indeed, let us take an element $w$ in the complement of the set in the right-hand-side above. Thanks to the fact that  $\|w\|_{d}\leq\|P_\mathbb Lw\|_{d}+\|P_\mathbb Vw\|_{d}< \oldC{ProjC}(d,\mathbb V,\mathbb L)\alpha^{-1}\|P_\mathbb Lw\|_{d}$ and to \cref{cor:2.2.19} we have
\begin{equation}
    \begin{split}
        \mathrm{dist}_{d}(w,\mathbb V)\geq\oldC{ProjC}(d,\mathbb V,\mathbb L)\|P_\mathbb L(w)\|_{d}>\alpha\|w\|_{d}.
    \end{split}
\end{equation}
Therefore,  any such $w$ is contained in the complement of the left-hand-side of \eqref{numberoo25}, and thus we get the sought conclusion.
Moreover, for any $\mathbb V\in\G_c(h)$ and any of its complementary subgroup $\mathbb{L}$, let us show that for any $\alpha>0$
$$
C_{\mathbb V,\mathbb L,d}(\alpha):=\{w\in\mathbb G:\|P_\mathbb Lw\|_d\leq \alpha\|P_\mathbb Vw\|_d\}\subseteq C_{\mathbb V,d}(\oldC{ProjC}(d,\mathbb V,\mathbb L)^{-1}\alpha).
$$
Indeed, if $w$ is an element in the left-hand-side above, we can readily see thanks to \cref{cor:2.2.19} that
$$
\mathrm{dist}_{d}(w,\mathbb V)\leq \|P_\mathbb Lw\|_{d}\leq \alpha\|P_\mathbb Vw\|_{d}\leq \alpha\oldC{ProjC}({d},\mathbb V,\mathbb L)^{-1}\|w\|_{d}.
$$

All in all we have proved that if $\mathbb V\in \G_c(h)$, $\mathbb L$ is one of its complementary subgroups and $\alpha<\oldC{ProjC}(d,\mathbb V,\mathbb L)$ we have
$$
C_{\mathbb V,\mathbb L, d}(\oldC{ProjC}(d,\mathbb V,\mathbb L)\alpha)\subseteq C_{\mathbb V,d}(\alpha)\subseteq C_{\mathbb V,\mathbb L,d}(\alpha/(\oldC{ProjC}(d,\mathbb V,\mathbb L)-\alpha)),
$$
thus showing that, below some threshold on the opening, the cones $C_{\mathbb V,d}$ and $C_{\mathbb V,\mathbb 
L,d}$ are equivalent.
\end{osservazione}

\begin{osservazione}[Equivalent distances and cones]\label{equiv.lip.cones}
Let $d_1,d_2$ be two homogeneous and left-invariant metrics on $\mathbb{G}$. Since they are bi-Lipschitz equivalent, we can find a constant $0<\mathfrak{c}<1$ such that $\mathfrak{c}d_1(x,y)\leq d_2(x,y)\leq \mathfrak{c}^{-1}d_1(x,y)$
for any $x,y\in \mathbb{G}$. Then for any $\alpha>0$ and any $\mathbb{V}\in\G(h)$ we have
$$C_{\mathbb{V},d_1}(\mathfrak{c}^{2}\alpha)\subseteq C_{\mathbb{V},d_2}(\alpha)\subseteq C_{\mathbb{V},d_1}(\mathfrak{c}^{-2}\alpha).$$
\end{osservazione}

We now recall two results that were already proven in \cite{antonelli2020rectifiable}. We refer the reader to the reference \cite{antonelli2020rectifiable} for the simple proofs.
\begin{lemma}[{\cite[Lemma 2.16]{antonelli2020rectifiable}}]\label{lemma:LCapCw=e}
Suppose $d$ is a homogeneous left-invariant distance on $\mathbb{G}$. For any $\mathbb V\in \G_c(h)$, given $\mathbb L$ to be a complementary subgroup of $\mathbb V$, there exists $0<\newep\label{ep:Cool}:=\oldep{ep:Cool}(d,\mathbb V,\mathbb L)<1$ such that 
$$
\mathbb L\cap C_{\mathbb V,d}(\oldep{ep:Cool})=\{0\}.
$$
and for the aims of this paper, we can and will fix $\oldep{ep:Cool}(d,\mathbb V,\mathbb L):=\oldC{ProjC}(d,\mathbb V,\mathbb L)/2$.
\end{lemma}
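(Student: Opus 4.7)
The plan is to combine the two bounds from \cref{cor:2.2.19} directly. Fix $w\in\mathbb L\cap C_{\mathbb V,d}(\varepsilon)$ for some $\varepsilon>0$ to be determined, and write $C_1:=\oldC{ProjC}(d,\mathbb V,\mathbb L)$ for brevity. Since $w\in\mathbb L$, the splitting projections trivialize: $P_\mathbb L(w)=w$ and $P_\mathbb V(w)=0$. Plugging this into the second line of \eqref{eqn:EstimateC1} collapses the double inequality to the equality $\|w\|_d=\|P_\mathbb L(w)\|_d$, so in the first line of \eqref{eqn:EstimateC1} we obtain
\[
\mathrm{dist}_d(w,\mathbb V)\;\geq\; C_1\,\|P_\mathbb L(w)\|_d\;=\;C_1\,\|w\|_d.
\]

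On the other hand, the membership $w\in C_{\mathbb V,d}(\varepsilon)$ is by definition the reverse inequality
\[
\mathrm{dist}_d(w,\mathbb V)\;\leq\;\varepsilon\,\|w\|_d.
\]
Choosing $\varepsilon=C_1/2$ and combining the two displays yields $C_1\|w\|_d\leq (C_1/2)\|w\|_d$, i.e.\ $(C_1/2)\|w\|_d\leq 0$. Since $C_1>0$, this forces $\|w\|_d=0$ and hence $w=0$, proving $\mathbb L\cap C_{\mathbb V,d}(C_1/2)=\{0\}$.

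There is no real obstacle here: the statement is essentially an algebraic consequence of the quasi-additivity of $\|\cdot\|_d$ with respect to the splitting $\mathbb G=\mathbb V\cdot\mathbb L$ encoded in \cref{cor:2.2.19}. The only point one should keep in mind is that $C_1$ is defined as the supremum of the admissible constants in \eqref{eqn:EstimateC1}, so the choice $\varepsilon=C_1/2$ is genuinely strictly less than $C_1$, which is what makes the final inequality strict enough to conclude.
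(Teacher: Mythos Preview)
Your argument is correct: the two displayed inequalities, one from \cref{cor:2.2.19} with $P_{\mathbb L}(w)=w$ and one from the cone definition, immediately force $w=0$ when $\varepsilon=\oldC{ProjC}/2$. The paper does not actually provide its own proof of this lemma---it cites \cite[Lemma 2.16]{antonelli2020rectifiable} and refers the reader there for the ``simple proofs''---so there is nothing to compare against beyond noting that your proof is exactly the kind of direct computation the paper has in mind.
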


\begin{proposizione}\label{prop:grasscompiffC3}
Suppose $d$ is a homogeneous left-invariant distance on $\mathbb{G}$. The function $\mathfrak{e}:\G_c(h)\to\R$ defined as
\begin{equation}\label{eqn:mathfrake}
\mathfrak{e}(\mathbb{V}):=\sup\{\oldep{ep:Cool}(d,\mathbb{V},\mathbb{L}):\mathbb{L}\text{ is a complement of }\mathbb{V}\},
\end{equation}
is lower semicontinuous. In particular the following conclusion holds
\begin{itemize}
    \item[] if $\mathscr{G}\subseteq \G_c(h)$ is compact with respect to $d_{\mathbb G}$, then there exists a $\mathfrak{e}_{\mathscr G}>0$ such that $\mathfrak{e}(\mathbb{V})\geq \mathfrak{e}_{\mathscr G}$ for any $\mathbb{V}\in{\mathscr{G}}$.
\end{itemize}
\end{proposizione}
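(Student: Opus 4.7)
The plan is to deduce the bullet-point conclusion directly from lower semicontinuity: a real-valued lower semicontinuous function on a compact metric space attains its infimum, and by \cref{lemma:LCapCw=e} we have $\mathfrak{e}(\mathbb V)\geq \oldep{ep:Cool}(d,\mathbb V,\mathbb L)=\oldC{ProjC}(d,\mathbb V,\mathbb L)/2>0$ for every $\mathbb V\in\G_c(h)$ and every complement $\mathbb L$ of $\mathbb V$. Thus, once lower semicontinuity is established, the infimum of $\mathfrak{e}$ over any compact $\mathscr G\subseteq\G_c(h)$ is attained and strictly positive, giving the displayed consequence with $\mathfrak{e}_{\mathscr G}$ equal to this minimum.

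To prove lower semicontinuity at an arbitrary point $\mathbb V_\infty\in\G_c(h)$, I would fix $\eta>0$ and, using the supremum in the definition of $\mathfrak{e}$, choose a complement $\mathbb L$ of $\mathbb V_\infty$ with $\oldep{ep:Cool}(d,\mathbb V_\infty,\mathbb L)>\mathfrak{e}(\mathbb V_\infty)-\eta$. Given any sequence $\mathbb V_n\to\mathbb V_\infty$ in $(\G_c(h),d_{\mathbb G})$, the first claim is that $\mathbb L$ remains a complement of $\mathbb V_n$ for all sufficiently large $n$. This is because $d_{\mathbb G}$-convergence of the unit balls forces convergence of the underlying subalgebras in the ordinary Grassmannian of $\mathfrak g$, and transversality of two linear subspaces of complementary dimensions is an open condition; once we have $\mathbb V_n\cap\mathbb L=\{0\}$ and the dimensions add up to $\dim\mathfrak g$, the splitting $\mathbb G=\mathbb V_n\cdot\mathbb L$ holds.

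Next, and this is the core step, I would establish the continuity of $\mathbb V\mapsto\oldC{ProjC}(d,\mathbb V,\mathbb L)$ at $\mathbb V_\infty$ along the sequence $\mathbb V_n$, which together with $\mathfrak{e}(\mathbb V_n)\geq \oldC{ProjC}(d,\mathbb V_n,\mathbb L)/2$ will give
\begin{equation*}
\liminf_{n\to\infty}\mathfrak{e}(\mathbb V_n)\geq \tfrac{1}{2}\lim_{n\to\infty}\oldC{ProjC}(d,\mathbb V_n,\mathbb L)=\oldep{ep:Cool}(d,\mathbb V_\infty,\mathbb L)>\mathfrak{e}(\mathbb V_\infty)-\eta,
\end{equation*}
and sending $\eta\to 0$ yields the desired inequality. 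The continuity is a consequence of the fact that the splitting projections $P_{\mathbb V_n}$ and $P_{\mathbb L}$ associated with the pair $(\mathbb V_n,\mathbb L)$ depend continuously on $\mathbb V_n$, once a basis of $\mathfrak g$ adapted to the stratification and to the splitting has been chosen; indeed, solving $p=P_{\mathbb V_n}(p)\cdot P_{\mathbb L}(p)$ via the Baker--Campbell--Hausdorff product \eqref{opgr} and a choice of bases of $\mathbb V_n$ and $\mathbb L$ shows that the projections are the solutions of a smooth system of equations whose coefficients depend continuously on the data defining $\mathbb V_n$. In particular, $\|P_{\mathbb V_n}(p)\|_d$ and $\|P_{\mathbb L}(p)\|_d$ converge uniformly to $\|P_{\mathbb V_\infty}(p)\|_d$ and $\|P_{\mathbb L}(p)\|_d$ as $p$ ranges over the compact ball $B(0,1)$, and plugging this uniform convergence into the sharp characterization of $\oldC{ProjC}$ via the two inequalities of \cref{cor:2.2.19} yields the sought inequality for the projection constants.

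The main technical obstacle is the passage from the metric convergence $\mathbb V_n\to\mathbb V_\infty$ in $d_{\mathbb G}$, which is expressed as a Hausdorff distance between curved unit balls, to a genuine convergence of the linear data determining the splitting projections, which are nonlinear because of the BCH product. I would handle this by picking an ordered basis of $\mathfrak g$ compatible with the stratification and writing each $\mathbb V_n$ as the image of a Carnot-linear map whose matrix depends continuously on the subalgebra; the Hausdorff convergence of the unit balls implies convergence of such a family of adapted bases, and from there the convergence of the projections to $\mathbb L$ is a routine implicit-function argument.
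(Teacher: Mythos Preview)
Your approach is correct and is the natural one: fix a near-optimal complement $\mathbb L$ of the limit $\mathbb V_\infty$, show that $\mathbb L$ remains a complement of nearby $\mathbb V_n$ by openness of transversality, and then establish continuity of $\mathbb V\mapsto\oldC{ProjC}(d,\mathbb V,\mathbb L)$ via uniform convergence of the splitting projections on the unit ball. The paper does not give an independent argument here; it simply refers to the verbatim proof of the analogous statement in the companion paper \cite[Proposition 2.22]{antonelli2020rectifiable}, so there is nothing substantive to compare against beyond noting that your outline matches the expected strategy.
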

\begin{proof}
It follows verbatim from the proof of \cite[Proposition 2.22]{antonelli2020rectifiable}, taking \cite[Remark 2.5]{antonelli2020rectifiable} into account. 
\end{proof}

Let us recall the classical definition of intrinisc Lipschits function, see \cite[Definition 11]{FranchiSerapioni16}. 

\begin{definizione}[Intrinsic Lipschitz function]\label{def:iLipfunctions}
Let $\mathbb{W}\in \G_c(h)$, assume $\mathbb{L}$ is a complement of $\mathbb{W}$, and let $E\subseteq \mathbb{W}$. A function $f:E\to \mathbb{L}$ is said to be an \emph{intrinsic Lipschitz function} if there exists an $\alpha>0$ such that for every $p\in\text{graph}(f):=\{v\cdot f(v):v\in E\}$ we have 
$$
\text{graph}(f)\cap pC_{\mathbb W,\mathbb L}(\alpha)=\text{graph}(f),
$$
where the cones $C_{\mathbb W,\mathbb L}(\alpha)$ have been defined in \cref{oss:ConiEquivalenti}.
\end{definizione}

We finally state two properties of intrinisc Lipschitz graphs that will be useful later on, and whose simple proofs can be found in \cite{antonelli2020rectifiable}.
\begin{proposizione}[{\cite[Proposition 2.19]{antonelli2020rectifiable}}]\label{prop:ConeAndGraph}
Suppose $d$ is a homogeneous left-invariant distance on $\mathbb{G}$ and let us fix $\mathbb V\in\G_c(h)$ with complement $\mathbb L$. If $\Gamma\subset\mathbb G$ is a $C_{\mathbb V}(\alpha)$-set for some $\alpha\leq \oldep{ep:Cool}(d,\mathbb V,\mathbb L)$, then the map $P_{\mathbb V}:\Gamma\to\mathbb V$ is injective. As a consequence $\Gamma$ is the intrinsic graph of an intrinsically Lipschitz map defined on $P_{\mathbb V}(\Gamma)$.
\end{proposizione}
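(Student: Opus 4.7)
The plan is to verify injectivity of $P_{\mathbb V}$ on $\Gamma$ directly from the two hypotheses, and then to translate the $C_{\mathbb V,d}(\alpha)$-set property into the cone formulation $C_{\mathbb V,\mathbb L,d}(\cdot)$ used in \cref{def:iLipfunctions} via the cone comparison in \cref{oss:ConiEquivalenti}.

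For injectivity, suppose $p_1,p_2\in\Gamma$ satisfy $P_{\mathbb V}(p_1)=P_{\mathbb V}(p_2)=:v$. Writing $p_i=v\cdot P_{\mathbb L}(p_i)$ with $P_{\mathbb L}(p_i)\in\mathbb L$ and using that $\mathbb L$ is a subgroup, I compute
\[
p_1^{-1}p_2 = P_{\mathbb L}(p_1)^{-1}\cdot v^{-1}\cdot v\cdot P_{\mathbb L}(p_2) = P_{\mathbb L}(p_1)^{-1}\cdot P_{\mathbb L}(p_2)\in\mathbb L.
\]
On the other hand, the $C_{\mathbb V,d}(\alpha)$-set hypothesis applied at the point $p_1\in\Gamma$ gives $p_2\in p_1\, C_{\mathbb V,d}(\alpha)$, hence $p_1^{-1}p_2\in C_{\mathbb V,d}(\alpha)$. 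Since $\alpha\leq\oldep{ep:Cool}(d,\mathbb V,\mathbb L)$, the cones are nested and \cref{lemma:LCapCw=e} yields
\[
p_1^{-1}p_2\in \mathbb L\cap C_{\mathbb V,d}(\alpha)\subseteq \mathbb L\cap C_{\mathbb V,d}(\oldep{ep:Cool}(d,\mathbb V,\mathbb L))=\{0\},
\]
so $p_1=p_2$, and thus $P_{\mathbb V}|_\Gamma$ is injective.

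With injectivity in hand, I set $\varphi:P_{\mathbb V}(\Gamma)\to\mathbb L$ by declaring $\varphi(v)$ to be the unique element $P_{\mathbb L}(p)$ corresponding to the unique $p\in\Gamma$ with $P_{\mathbb V}(p)=v$; the equality $p=v\cdot\varphi(v)$ then shows $\Gamma=\mathrm{graph}(\varphi)$. To check the intrinsic Lipschitz condition of \cref{def:iLipfunctions}, I translate the defining cone property of $\Gamma$ into the split-cone language: since $\alpha\leq\oldep{ep:Cool}(d,\mathbb V,\mathbb L)=\oldC{ProjC}(d,\mathbb V,\mathbb L)/2<\oldC{ProjC}(d,\mathbb V,\mathbb L)$, the inclusion proved in \cref{oss:ConiEquivalenti} gives
\[
C_{\mathbb V,d}(\alpha)\subseteq C_{\mathbb V,\mathbb L,d}\!\left(\frac{\alpha}{\oldC{ProjC}(d,\mathbb V,\mathbb L)-\alpha}\right).
\]
Therefore, for every $p\in\Gamma$, $\Gamma\subseteq p\, C_{\mathbb V,d}(\alpha)\subseteq p\, C_{\mathbb V,\mathbb L,d}(\alpha')$ with $\alpha':=\alpha/(\oldC{ProjC}(d,\mathbb V,\mathbb L)-\alpha)$, which is exactly the definition of $\varphi$ being intrinsic Lipschitz with constant $\alpha'$.

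There is no real obstacle here; the only point requiring a moment of care is the algebraic identity $p_1^{-1}p_2\in\mathbb L$, which crucially uses that $\mathbb L$ is a subgroup and that the splitting $P_{\mathbb V},P_{\mathbb L}$ comes from left-factorization $g=P_{\mathbb V}(g)\cdot P_{\mathbb L}(g)$ with first factor in $\mathbb V$. Once this is combined with \cref{lemma:LCapCw=e}, the rest is a bookkeeping translation between the two cone conventions via \cref{oss:ConiEquivalenti}.
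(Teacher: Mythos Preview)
Your proof is correct and follows the standard route: the injectivity argument via $p_1^{-1}p_2\in\mathbb L\cap C_{\mathbb V,d}(\alpha)$ together with \cref{lemma:LCapCw=e}, followed by the cone comparison of \cref{oss:ConiEquivalenti} to recover the intrinsic Lipschitz condition of \cref{def:iLipfunctions}. The paper does not supply its own proof of this proposition but cites \cite[Proposition~2.19]{antonelli2020rectifiable}, and your argument is precisely the expected one.
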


\begin{lemma}[{\cite[Lemma 2.21]{antonelli2020rectifiable}}]\label{lemma:lowerbd}
Let $\mathbb{L}$ and $\mathbb{V}$ be homogeneous complementary subgroups of $\mathbb G$ endowed with a left-invariant homogeneous distance $d$. Suppose $\Gamma$ is a $C_{\mathbb V,d}(\alpha)$-set with $\alpha\leq\oldep{ep:Cool}(d,\mathbb V,\mathbb L)$. Then there exists a constant $\mathfrak{C}(\alpha)=\mathfrak{C}(\alpha,d,\mathbb V,\mathbb L)>0$ such that
$$\mathcal{S}^h_d(P_\mathbb{V}(B_d(x,r)\cap \Gamma))\geq \mathcal{S}^h_d\Big(P_\mathbb{V}\big(B_d(x,\mathfrak{C}(\alpha)r)\cap xC_{\mathbb{V},d}(\alpha)\big)\cap P_\mathbb{V}(\Gamma)\Big),$$
for any $x\in\Gamma$, and any $r>0$.
\end{lemma}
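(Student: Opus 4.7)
The plan is to reduce the measure inequality to the set-theoretic inclusion
$$
P_{\mathbb V}\bigl(B_d(x,\mathfrak{C}(\alpha)r)\cap xC_{\mathbb V,d}(\alpha)\bigr)\cap P_{\mathbb V}(\Gamma)\ \subseteq\ P_{\mathbb V}\bigl(B_d(x,r)\cap\Gamma\bigr),
$$
and then choose $\mathfrak{C}(\alpha)$ to make this inclusion hold. Fix $y$ in the left-hand set. By definition $y=P_{\mathbb V}(p)$ for some $p\in B_d(x,\mathfrak{C}(\alpha)r)\cap xC_{\mathbb V,d}(\alpha)$, and $y=P_{\mathbb V}(q)$ for some $q\in\Gamma$; the latter point is unique because, under the assumption $\alpha\leq\oldep{ep:Cool}(d,\mathbb V,\mathbb L)$, \cref{prop:ConeAndGraph} tells us that $P_{\mathbb V}$ is injective on $\Gamma$. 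The goal is to prove that $q\in B_d(x,r)$.

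The bound on $\|x^{-1}q\|_d$ will be extracted from the cone condition combined with the norm-splitting estimates of \cref{cor:2.2.19}. Since $\Gamma$ is a $C_{\mathbb V,d}(\alpha)$-set and $x,q\in\Gamma$, one has $x^{-1}q\in C_{\mathbb V,d}(\alpha)$; because $\alpha\leq\oldC{ProjC}/2<\oldC{ProjC}$, \cref{oss:ConiEquivalenti} then yields $\|P_{\mathbb L}(x^{-1}q)\|_d\leq \frac{\alpha}{\oldC{ProjC}-\alpha}\|P_{\mathbb V}(x^{-1}q)\|_d$, so
$$
\|x^{-1}q\|_d\leq \|P_{\mathbb V}(x^{-1}q)\|_d+\|P_{\mathbb L}(x^{-1}q)\|_d\leq \frac{\oldC{ProjC}}{\oldC{ProjC}-\alpha}\|P_{\mathbb V}(x^{-1}q)\|_d.
$$
Next I will compare $P_{\mathbb V}(x^{-1}q)$ with $P_{\mathbb V}(x^{-1}p)$ via the pointwise identity $P_{\mathbb V}(ab)=P_{\mathbb V}(a\cdot P_{\mathbb V}(b))$ (a direct consequence of the splitting $\mathbb G=\mathbb V\cdot\mathbb L$ together with the fact that $\mathbb L$ is a subgroup, which is also the heart of \cref{prop:InvarianceOfProj}): applied with $a=x^{-1}$ and $b=q$ (respectively $b=p$) it gives $P_{\mathbb V}(x^{-1}q)=P_{\mathbb V}(x^{-1}y)=P_{\mathbb V}(x^{-1}p)$. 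Using $p\in xC_{\mathbb V,d}(\alpha)$ and the left inequality of \eqref{eqn:EstimateC1} one has $\|x^{-1}p\|_d\geq\oldC{ProjC}\|P_{\mathbb V}(x^{-1}p)\|_d=\oldC{ProjC}\|P_{\mathbb V}(x^{-1}q)\|_d$, so chaining this with the previous display yields
$$
\|x^{-1}q\|_d\leq \frac{1}{\oldC{ProjC}-\alpha}\,\|x^{-1}p\|_d\leq \frac{\mathfrak{C}(\alpha)}{\oldC{ProjC}-\alpha}\,r.
$$

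Choosing $\mathfrak{C}(\alpha):=\oldC{ProjC}(d,\mathbb V,\mathbb L)-\alpha$, which is strictly positive because $\alpha\leq\oldC{ProjC}/2$, forces $\|x^{-1}q\|_d\leq r$. Therefore $q\in B_d(x,r)\cap\Gamma$ and $y=P_{\mathbb V}(q)\in P_{\mathbb V}(B_d(x,r)\cap\Gamma)$, which proves the inclusion and hence the measure inequality. The one step that requires care, and is the only genuinely non-trivial point, is the identity $P_{\mathbb V}(x^{-1}q)=P_{\mathbb V}(x^{-1}p)$: in the non-abelian group $\mathbb G$ one cannot simply "cancel" the common $\mathbb V$-component of $p$ and $q$, and it is precisely the fact that $\mathbb L$ is a homogeneous subgroup (so that left-multiplication by any element of $\mathbb L$ does not affect the $\mathbb V$-part of the splitting projection) that makes the comparison work; once this is in place the rest is bookkeeping with the two inequalities of \cref{cor:2.2.19}.
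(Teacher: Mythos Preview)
The paper does not prove this lemma itself; it is quoted verbatim from \cite[Lemma~2.21]{antonelli2020rectifiable}, so there is no in-paper proof to compare against. Your argument is correct: the reduction to the set inclusion is the natural strategy, the identity $P_{\mathbb V}(x^{-1}q)=P_{\mathbb V}(x^{-1}\cdot P_{\mathbb V}(q))=P_{\mathbb V}(x^{-1}\cdot P_{\mathbb V}(p))=P_{\mathbb V}(x^{-1}p)$ is exactly the content of \cref{prop:InvarianceOfProj}, and the chain of norm estimates from \cref{cor:2.2.19} and \cref{oss:ConiEquivalenti} is applied correctly. Two minor remarks: (i) the hypothesis $p\in xC_{\mathbb V,d}(\alpha)$ is never actually used in your estimates (the bound $\|P_{\mathbb V}(x^{-1}p)\|\le\oldC{ProjC}^{-1}\|x^{-1}p\|$ holds for all $p$), so your argument in fact proves the slightly stronger inclusion with $B_d(x,\mathfrak C(\alpha)r)$ in place of $B_d(x,\mathfrak C(\alpha)r)\cap xC_{\mathbb V,d}(\alpha)$; (ii) the uniqueness of $q$ is not needed, only its existence.
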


\subsection{Rectifiable measures in Carnot groups}
In what follows we are going to define the class of $h$-flat measures on a Carnot group and then we will give proper definitions of rectifiable measures on Carnot groups. 

\begin{definizione}[Flat measures]
For any $h\in\{1,\ldots,Q\}$ we let $\mathfrak{M}(h)$ to be the {\em family of flat $h$-dimensional measures} in $\mathbb{G}$, i.e.,
$$\mathfrak{M}(h):=\{\lambda\mathcal{S}^h\llcorner \mathbb W:\text{ for some }\lambda> 0 \text{ and }\mathbb W\in\G(h)\}.$$ 
Furthermore, if $G$ is a subset of the $h$-dimensional Grassmanian $\G(h)$, we let $\mathfrak{M}(h,G)$ to be the set \begin{equation}
    \mathfrak{M}(h,G):=\{\lambda\mathcal{S}^h\llcorner \mathbb{W}:\text{ for some }\lambda> 0\text{ and }\mathbb{W}\in G\}.
    \label{Gotico(h)}
\end{equation}
\end{definizione}

\begin{definizione}[$\mathscr{P}_h$ and $\mathscr{P}_h^c$-rectifiable measures]\label{def:PhRectifiableMeasure}
Let $h\in\{1,\ldots,Q\}$. A Radon measure $\phi$ on $\mathbb G$ is said to be a $\mathscr{P}_h$-rectifiable measure if for $\phi$-almost every $x\in \mathbb{G}$ we have
\begin{itemize}
    \item[(i)]$0<\Theta^h_*(\phi,x)\leq\Theta^{h,*}(\phi,x)<+\infty$,
    \item[(\hypertarget{due}{ii})]there exists a $\mathbb{V}(x)\in\G(h)$ such that $\mathrm{Tan}_h(\phi,x) \subseteq \{\lambda\mathcal{S}^h\llcorner \mathbb V(x):\lambda\geq 0\}$.
\end{itemize}
Furthermore, we say that $\phi$ is $\mathscr{P}_h^c$-rectifiable if (\hyperlink{due}{ii})  is replaced with the weaker
\begin{itemize}
    \item[(ii)*] there exists a $\mathbb{V}(x)\in\G_c(h)$ such that $\mathrm{Tan}_h(\phi,x) \subseteq \{\lambda\mathcal{S}^h\llcorner \mathbb V(x):\lambda\geq 0\}$.
\end{itemize}
\end{definizione}
\begin{osservazione}(About $\lambda=0$ in \cref{def:PhRectifiableMeasure})\label{rem:AboutLambda=0}
It is readily noticed that, since in \cref{def:PhRectifiableMeasure} we are asking $\Theta^h_*(\phi,x)>0$ for $\phi$-almost every $x$, we can not have the zero measure as an element of $\Tan_h(\phi,x)$ thanks to \cite[Proposition 2.26]{antonelli2020rectifiable}. As a consequence, a posteriori, we have that in item (ii) and item (ii)* we can restrict to $\lambda>0$. We will tacitly work in this restriction from now on, see \cite[Remark 2.7]{antonelli2020rectifiable}.

\end{osservazione}
\begin{osservazione}[About the rectifiability of Hausdorff measures]\label{rem:ShPRectifiableIfAndOnlyIf}
We observe that if $\Gamma$ is a Borel set in $\mathbb G$, $\mathcal{S}^h\llcorner\Gamma$ is $\mathscr{P}_h$-rectifiable if and only if $\mathcal{C}^h\llcorner\Gamma$ (or also $\mathcal{H}^h\llcorner\Gamma)$ is $\mathscr{P}_h$-rectifiable. This is because $\mathcal{S}^h,\mathcal{H}^h,\mathcal{C}^h$ are equivalent measures (see \cref{def:HausdorffMEasure}), the $\mathscr{P}_h$-rectifiability implies being locally asymptotically doubling, and then we can transfer the property of being $\mathscr{P}_h$-rectifiable from one measure to the other by using Lebesgue--Radon--Nikodym theorem (see \cite[page 82]{HeinonenKoskelaShanmugalingam}) and the locality of tangents in \cref{prop:Lebesuge}.
\end{osservazione}

We introduce now a way to estimate how far two measures are.

\begin{definizione}[Definition of $F_K$]\label{def:Fk}
Given $\phi$ and $\psi$ two Radon measures on $\mathbb G$, and given $K\subseteq \mathbb G$ a compact set, we define 
\begin{equation}
    F_K(\phi,\psi):= \sup\left\{\left|\int fd\phi - \int fd\psi\right|:f\in \mathrm{Lip}_1^+(K)\right\},
    \label{eq:F}
\end{equation}
where $\mathrm{Lip}_1^+(K)$ denotes the class of $1$-Lipschitz nonnegative function with compact support contained in $K$. We also write $F_{x,r}$ for $F_{B(x,r)}$.
\end{definizione}

\begin{osservazione}[Properties of $F_K$]\label{rem:ScalinfFxr}
With few computations that we omit, it is easy to see that $F_{x,r}(\phi,\psi)=rF_{0,1}(T_{x,r}\phi,T_{x,r}\psi)$. Furthermore, $F_K$ enjoys the triangular inequality. Indeed, if $\phi_1,\phi_2,\phi_3$ are Radon measures and $f\in\lip(K)$, then
$$\Big\lvert\int fd\phi_1-\int fd\phi_2\Big\rvert\leq \Big\lvert\int fd\phi_1-\int fd\phi_3\Big\rvert+\Big\lvert\int fd\phi_3-\int fd\phi_2\Big\rvert\leq F_K(\phi_1,\phi_2)+F_K(\phi_2,\phi_3).$$
The arbitrariness of $f$ concludes that $F_K(\phi_1,\phi_2)\leq F_K(\phi_1,\phi_3)+F_K(\phi_3,\phi_2)$.
\end{osservazione}

\begin{definizione}[Definition of $F_r$]\label{def:Fr}
 For a given Radon measure $\phi$ on $\mathbb G$ and for $r>0$, let us define $F_r(\phi):=\int \text{dist}(z,U(0,r)^c)d\phi(z)$. 
\end{definizione} 

\begin{lemma}\label{dist:lemma}
For any Radon measure $\phi$ on $\mathbb{G}$ and any $r>0$ we have that  $F_r(\phi)=F_{0,r}(\phi,0)$.
\end{lemma}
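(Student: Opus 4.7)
The plan is to prove two inequalities, each essentially by exhibiting or bounding test functions in $\mathrm{Lip}_1^+(B(0,r))$.

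First I would observe that the function $f_0(z):=\mathrm{dist}(z,U(0,r)^c)$ is itself admissible in the supremum defining $F_{0,r}(\phi,0)$. Indeed, $f_0$ is $1$-Lipschitz (as a distance to a set), nonnegative, and vanishes on $U(0,r)^c$; hence its support is contained in the closure of $U(0,r)$, which is a closed subset of the compact ball $B(0,r)$. Therefore $f_0\in\mathrm{Lip}_1^+(B(0,r))$, and evaluating the supremum on this single test function gives
\[
F_{0,r}(\phi,0)\ \geq\ \Bigl|\int f_0\,d\phi-\int f_0\,d0\Bigr|\ =\ \int\mathrm{dist}(z,U(0,r)^c)\,d\phi(z)\ =\ F_r(\phi).
\]

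For the reverse inequality, I would fix an arbitrary $g\in\mathrm{Lip}_1^+(B(0,r))$ and show the pointwise bound $g(z)\leq\mathrm{dist}(z,U(0,r)^c)$ for every $z\in\mathbb G$. The key observation is that $g$ vanishes on the whole closed set $U(0,r)^c$: by the support condition $g\equiv 0$ on the open set $\{d(0,\cdot)>r\}$, and by continuity of $g$ this extends to the sphere $\{d(0,\cdot)=r\}$, so $g=0$ on $\{d(0,\cdot)\geq r\}=U(0,r)^c$. Now for any $z\in\mathbb G$ and any $w\in U(0,r)^c$ the $1$-Lipschitz property yields $g(z)=g(z)-g(w)\leq d(z,w)$, and taking the infimum over $w\in U(0,r)^c$ gives the claimed inequality. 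Integrating against $\phi$ and taking the supremum over $g$ yields $F_{0,r}(\phi,0)\leq F_r(\phi)$, which combined with the first step closes the proof.

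There is no real obstacle here; the only tiny subtlety is making sure $g$ actually vanishes on the full complement $U(0,r)^c$ and not merely on $B(0,r)^c$, which requires the continuity argument at the sphere $\partial U(0,r)$. Everything else is a direct manipulation of the definitions of $F_K$, $F_r$, and $\mathrm{Lip}_1^+$.
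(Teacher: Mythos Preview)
Your proof is correct and follows essentially the same approach as the paper's: both directions use $f_0(z)=\dist(z,U(0,r)^c)$ as the extremal test function, with the reverse inequality obtained from the pointwise bound $g\le f_0$ for every $g\in\mathrm{Lip}_1^+(B(0,r))$. Your version is slightly more careful in two respects: you avoid claiming that the distance to $U(0,r)^c$ is realized at a boundary point (the paper's proof picks such a minimizer), and you explicitly justify why $g$ vanishes on the sphere $\{d(0,\cdot)=r\}$ via continuity, whereas the paper simply asserts $f|_{\partial B(0,r)}=0$.
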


\begin{proof}
It is immediate to see that $F_{0,r}(\phi,0)\geq F_r(\phi)$ for any $r>0$. In order to prove the viceversa, note that for any $f\in \lip(B(0,r))$ we have that $f\rvert_{\partial B(0,r)}=0$. Thanks to this observation, for any $y\in B(0,r)$ if we let $x\in \partial B(0,r)$ be a point of minimal distance of $y$ from $U(0,r)^c$ we have
$$f(y)=\lvert f(y)-f(x) \rvert\leq d(y,x)=\dist(y,U(0,r)^c),$$
and this finally shows that $F_{0,r}(\phi,0)= F_r(\phi)$, concluding the proof of the lemma.
\end{proof}

\begin{proposizione}\label{prop1.12preiss}
The function defined on $\mathcal{M}\times\mathcal{M}$ as
$$d(\phi,\psi):=\sum_{p=0}^\infty 2^{-p}\min\{1,F_{0,p}(\phi,\psi)\},$$
is a distance and $(\mathcal{M},d)$ is a separable metric space. The topology induced by $d$ on $\mathcal{M}$ coincides with the one given by the weak* topology.

Moreover, let us assume $\{\phi_i\}_{i\in\N}$ is a sequence of Radon measures such that $\limsup_{i\to\infty}\phi_i(B(0,r))<\infty$ for every $r>0$. Then $\{\phi_i\}_{i\in\N}$ has a converging subsequence with respect to the weak* topology.
\end{proposizione}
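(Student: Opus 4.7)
The plan is to verify first that $d$ is a well-defined metric on $\mathcal{M}$, then to establish that the metric topology coincides with the weak* topology, and finally to deduce separability together with the Helly-type compactness statement.

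First I would check the metric properties. Each term in the series is bounded by $2^{-p}$, so $d$ is always finite, and symmetry is immediate from the symmetry of $F_{0,p}$ in \cref{def:Fk}. The triangle inequality follows from the triangle inequality for $F_K$ recalled in \cref{rem:ScalinfFxr}, combined with the fact that $a\mapsto \min\{1,a\}$ preserves subadditivity. The only delicate point is positivity: if $d(\phi,\psi)=0$, then $F_{0,p}(\phi,\psi)=0$ for every $p\in\N$, so $\int f\,d\phi=\int f\,d\psi$ for every $f\in \lip(B(0,p))$. Any nonnegative $g\in C_c(\mathbb{G})$ with $\supp g\subseteq U(0,p)$ can be written as a positive multiple of an element of $\lip(B(0,p))$ after truncation against a suitable scalar multiple of $\dist(\cdot,U(0,p)^c)$, so by splitting an arbitrary $g\in C_c(\mathbb{G})$ into positive and negative parts we recover $\int g\,d\phi=\int g\,d\psi$, and the Riesz representation theorem yields $\phi=\psi$.

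For the equivalence of topologies, the direction ``$d$-convergence implies weak*-convergence'' is straightforward: $d(\phi_i,\phi)\to 0$ forces $F_{0,p}(\phi_i,\phi)\to 0$ for every $p$, and the approximation scheme above extends this to $\int g\,d\phi_i\to\int g\,d\phi$ for every $g\in C_c(\mathbb{G})$, once one checks that the masses $\phi_i(B(0,p))$ stay bounded (this follows by testing against a fixed element of $\lip(B(0,p+1))$ that dominates a positive multiple of $\mathbf{1}_{B(0,p)}$). The converse is the main obstacle: fixing $p$, I must show $F_{0,p}(\phi_i,\phi)\to 0$, which is a \emph{uniform} convergence statement over the entire family $\lip(B(0,p))$. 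The key observation is that elements of $\lip(B(0,p))$ are all $1$-Lipschitz and uniformly bounded by $\diam B(0,p)$, so by Arzel\`a--Ascoli the family is relatively compact in $C(B(0,p))$ with respect to uniform convergence. A finite $\varepsilon$-net combined with weak* convergence on each net element, together with the local mass bound $\sup_i \phi_i(B(0,p))<+\infty$, produces the desired uniform convergence via a standard three-$\varepsilon$ argument.

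Finally, for separability I would exhibit a countable dense family such as finite rational combinations of Dirac masses at a countable dense subset of $\mathbb{G}$; approximating each truncation $\phi\llcorner B(0,p)$ by such discrete measures in the weak* sense and summing over $p$ with geometric weights gives density in $(\mathcal{M},d)$. For the Helly-type compactness, the assumption $\limsup_i \phi_i(B(0,r))<+\infty$ for every $r>0$ ensures that $\{\int f\,d\phi_i\}_i$ is bounded for every $f\in C_c(\mathbb{G})$. Picking a countable dense subset $\{f_k\}\subseteq C_c(\mathbb{G})$ (which exists since $\mathbb{G}$ is separable and locally compact) and applying a standard diagonal extraction produces a subsequence $\{\phi_{i_j}\}$ along which $\int f_k\,d\phi_{i_j}$ converges for every $k$; by density and the uniform local mass bound, the map $f\mapsto \lim_j \int f\,d\phi_{i_j}$ defines a positive linear functional on $C_c(\mathbb{G})$, which by the Riesz representation theorem is integration against a Radon measure $\phi$, and hence $\phi_{i_j}\rightharpoonup \phi$ by construction.
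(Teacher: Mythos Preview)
Your proposal is correct and self-contained; the paper itself does not give an argument but simply observes that Preiss's original Euclidean proof \cite[Proposition 1.12]{Preiss1987GeometryDensities} carries over verbatim to the Carnot setting. Your outline is precisely the standard route one follows to unpack that citation: metric axioms via the subadditivity of $a\mapsto\min\{1,a\}$ and the triangle inequality for $F_K$, equivalence of topologies via Arzel\`a--Ascoli on $\lip(B(0,p))$ together with local mass bounds, separability via discrete measures on a countable dense set, and compactness via diagonal extraction against a countable dense family in $C_c(\mathbb{G})$ followed by Riesz representation.

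One small imprecision worth tightening: in the positivity step you write that an arbitrary nonnegative $g\in C_c(\mathbb{G})$ ``can be written as a positive multiple of an element of $\lip(B(0,p))$ after truncation''. A merely continuous $g$ need not be Lipschitz, so it is not literally a scalar multiple of such a function; the correct phrasing is that compactly supported Lipschitz functions are uniformly dense in $C_c(\mathbb{G})$, and scaling reduces an arbitrary Lipschitz function to a $1$-Lipschitz one. This does not affect the validity of your argument.
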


\begin{proof}
The result is stated in \cite[Proposition 1.12]{Preiss1987GeometryDensities} in the Euclidean case, but the proof works verbatim for Radon measures on Carnot groups.
\end{proof}

\begin{proposizione}\label{F1:metric}
The function $F_{0,1}(\cdot,\cdot)$ is a metric on $\mathfrak{B}(h):=\{\psi\in\mathfrak{M}(h):F_1(\psi)=1\}$ and $(\mathfrak{B}(h),F_{0,1})$ is a compact metric space.
\end{proposizione}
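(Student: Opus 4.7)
The plan is to verify the metric axioms directly for $F_{0,1}$ on $\mathfrak{B}(h)$, and then prove compactness by extracting subsequences in $\G(h)$, in the scaling parameter $\lambda_n$, and in the weak* topology of measures, finally identifying the limit as a flat measure via \cref{regularity:density2}.

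For the metric axioms, non-negativity and symmetry are immediate from \cref{def:Fk}, and the triangle inequality is \cref{rem:ScalinfFxr}. For positivity, suppose $\phi=\lambda_1\mathcal{S}^h\llcorner\mathbb W_1$ and $\psi=\lambda_2\mathcal{S}^h\llcorner\mathbb W_2$ lie in $\mathfrak B(h)$ with $F_{0,1}(\phi,\psi)=0$. Taking positive and negative parts and rescaling promotes the equality $\int f\,d\phi=\int f\,d\psi$ from $f\in\lip(B(0,1))$ to every Lipschitz function compactly supported in $B(0,1)$; density of Lipschitz functions in $C_c(U(0,1))$ then yields $\phi\llcorner U(0,1)=\psi\llcorner U(0,1)$. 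Comparing supports gives $\mathbb W_1\cap U(0,1)=\mathbb W_2\cap U(0,1)$, and dilation invariance of the $\mathbb W_i$ propagates this to $\mathbb W_1=\mathbb W_2$, since any $w\in\mathbb W_i$ can be mapped into $U(0,1)$ by $\delta_{1/(2\|w\|)}$, where the two subgroups coincide. Equality of the local $\mathcal{S}^h$-masses then forces $\lambda_1=\lambda_2$.

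For compactness, let $\psi_n=\lambda_n\mathcal{S}^h\llcorner\mathbb W_n\in\mathfrak B(h)$. By \cref{prop:CompGrassmannian} a subsequence satisfies $\mathbb W_n\to\mathbb W_\infty$ in $(\G(h),d_{\mathbb G})$. The normalization $F_1(\psi_n)=1$ rewrites as
$$
1=\lambda_n\int_{\mathbb W_n\cap B(0,1)}\dist(z,U(0,1)^c)\,d\mathcal{S}^h(z),
$$
with integrand in $[0,1]$ and at least $1/2$ on $B(0,1/2)$. Invoking the equivalence of $\mathcal{S}^h$ and $\mathcal{C}^h$ together with \cref{lemma:PALLAUNITARIAVOLUMEUNO}, both the integral and $\mathcal{S}^h(\mathbb W_n\cap B(0,1))$ are squeezed between universal positive constants, so $\lambda_n\in[c_1,c_2]$; a further subsequence yields $\lambda_n\to\lambda_\infty\in(0,\infty)$ and $\sigma_n:=\mathcal{S}^h(\mathbb W_n\cap B(0,1))\to\sigma\in(0,\infty)$. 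Since $\psi_n(B(0,R))$ is uniformly bounded for each $R>0$, \cref{prop1.12preiss} supplies a sub-subsequence with $\psi_n\rightharpoonup\mu$ weakly*.

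It remains to identify $\mu$ as an element of $\mathfrak B(h)$. Weak* convergence combined with Hausdorff convergence $\mathbb W_n\to\mathbb W_\infty$ on compact sets gives $\supp(\mu)\subseteq\mathbb W_\infty$. Conversely, for any $z\in\mathbb W_\infty$ we pick $z_n\in\mathbb W_n$ with $z_n\to z$ and use left-invariance of $\mathcal{S}^h\llcorner\mathbb W_n$ together with the inclusions $B(z_n,s-\varepsilon)\subseteq B(z,s)\subseteq B(z_n,s+\varepsilon)$ to obtain $\mu(B(z,s))=\lim_n \psi_n(B(z,s))=\lambda_\infty\sigma s^h$ at every continuity radius $s>0$ of $\mu$. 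This uniform density identifies $\mu$, via \cref{regularity:density2}, as a Haar measure on $\mathbb W_\infty$, necessarily of the form $c_0\mathcal{S}^h\llcorner\mathbb W_\infty$ for some $c_0>0$. Continuity and compact support in $B(0,1)$ of $z\mapsto\dist(z,U(0,1)^c)$ yield $F_1(\mu)=\lim F_1(\psi_n)=1$, so $\mu\in\mathfrak B(h)$; finally \cref{prop1.12preiss} ensures that weak* convergence implies $F_{0,1}(\psi_n,\mu)\to 0$. The main obstacle is precisely this identification step, namely the control of the normalization $\sigma_n=\mathcal{S}^h(\mathbb W_n\cap B(0,1))$ under convergence of $\mathbb W_n$; the cleanest workaround is either to extract a further subsequence trapping $\sigma_n$ (permissible since it is bounded in a positive interval by \cref{lemma:PALLAUNITARIAVOLUMEUNO} and the universal equivalence between $\mathcal{S}^h$ and $\mathcal{C}^h$) or to recast the identification entirely in terms of $\mathcal{C}^h$, for which \cref{lemma:PALLAUNITARIAVOLUMEUNO} supplies the uniform normalization $\mathcal{C}^h(B(0,1)\cap\mathbb W)=1$ across $\mathbb W\in\G(h)$.
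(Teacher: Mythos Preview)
Your proof is correct, but for the compactness part you take a longer and genuinely different route than the paper. The paper exploits the fact that $\mathcal{C}^h(\mathbb V\cap B(0,1))=1$ uniformly in $\mathbb V\in\G(h)$ (\cref{lemma:PALLAUNITARIAVOLUMEUNO}), which together with the computation $F_1(\mathcal{C}^h\llcorner\mathbb V)=\int_0^1(1-t)^h\,dt=1/(h+1)$ shows that every element of $\mathfrak B(h)$ is exactly $(h+1)\mathcal{C}^h\llcorner\mathbb V$ for some $\mathbb V\in\G(h)$. This parametrizes $\mathfrak B(h)$ by $\G(h)$ with a \emph{fixed} normalization constant, so compactness follows immediately from compactness of $\G(h)$ plus the continuity result $\mathbb V_i\to\mathbb V\Rightarrow\mathcal{C}^h\llcorner\mathbb V_i\rightharpoonup\mathcal{C}^h\llcorner\mathbb V$ (cited from the companion paper as \cite[Proposition~2.29]{antonelli2020rectifiable}). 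You instead work with $\mathcal{S}^h$, carry the a priori non-constant quantities $\lambda_n$ and $\sigma_n$ through a subsequence extraction, and then reconstruct the limit from scratch by computing ball masses and invoking \cref{regularity:density2}. This is more self-contained (you do not need the external continuity result), but it is also more laborious, and the last paragraph of your write-up shows you are aware of the shortcut: recasting everything in terms of $\mathcal{C}^h$ via \cref{lemma:PALLAUNITARIAVOLUMEUNO} eliminates the need to track $\sigma_n$ at all, since the scaling constant becomes universal. That is precisely what the paper does.
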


\begin{proof}
First of all, we note that for any $\mu,\nu\in\mathfrak{B}(h)$ we have that $F_{0,1}(\mu,\nu)=0$ if and only if $\mu=\nu$ and this is an immediate consequence of the fact that $\mu$ and $\nu$ are cones. Symmetry follows directly form the definition and the triangular inequality follows from\cref{rem:ScalinfFxr}.

We are left to show that $(\mathfrak{B}(h),F_{0,1})$ is a compact metric space. Let $\Psi_i$ be a sequence in $\mathfrak{B}(h)$ 
and note that since $\mathcal{C}^h\llcorner\mathbb V(B(0,1))=1$ for every $\mathbb V\in \G(h)$, because of \cref{lemma:PALLAUNITARIAVOLUMEUNO}, we deduce that $\Psi_i=(h+1)\mathcal{C}^h\llcorner \mathbb{V}_i$ for some $\mathbb{V}_i\in \G(h)$. Thus, we can find a (non-relabeled) subsequence of the planes $\mathbb{V}_i$ that converges to some $\mathbb{V}\in\G(h)$ in the Hausdorff metric thanks to the compactness of the Grassmanian $\G(h)$, see \cref{prop:CompGrassmannian}. Hence, by \cite[Proposition 2.29]{antonelli2020rectifiable} we infer that $\Psi_i\rightharpoonup (h+1)\mathcal{C}^{h}\llcorner \mathbb{V} \in \mathfrak{B}(h)$ and therefore the compactness follows.
\end{proof}

Now we are going to define some functionals that quantifies how far is a measure from being flat around a point $x\in\mathbb G$ and at a certain scale $r>0$.

\begin{definizione}[Definition of $d_{x,r}$]\label{def:metr}
For any $x\in\mathbb{G}$, any $h\in\{1,\ldots,Q\}$ and any $r>0$ we define the  functional
\begin{equation}
        d_{x,r}(\phi,\mathfrak{M}(h)):=\inf_{\substack{\mathbb V\in \G(h)}} F_{0,1}(T_{x,r}\phi/F_1(T_{x,r}\phi), (h+1)\mathcal{C}^{h}\llcorner \mathbb V).
\end{equation}
Furthermore, if $G$ is a subset of the $h$-dimensional Grassmanian $\G(h)$, we also define
\begin{equation}\label{eqn:Definitiondxr}
d_{x,r}(\phi,\mathfrak{M}(h,G)):=\inf_{\substack{\mathbb V\in G}} F_{0,1}(T_{x,r}\phi/F_1(T_{x,r}\phi), (h+1)\mathcal{C}^{h}\llcorner \mathbb V).
\end{equation}
\end{definizione}

\begin{osservazione}[About the definition of $d_{x,r}$]\label{rem:ScalingDandF}
For any Radon measure $\phi$ on $\mathbb{G}$ and any $r>0$ it is immediate to see that $F_1(T_{0,r}\phi)=r^{-1}F_r(\phi)$. Moreover, thanks to the first part of \cref{rem:ScalinfFxr}, we get by few simple computations 
\begin{equation}\label{eqn:Homogeneitydr}
F_{0,1}\left(\frac{T_{x,r}\phi}{F_1(T_{x,r}\phi)},(h+1)\mathcal{C}^h\llcorner\mathbb V\right)=r^{-(h+1)}F_{0,r}\left(\frac{T_{x,1}\phi}{r^{-(h+1)}F_r(T_{x,1}\phi)},(h+1)\mathcal{C}^h\llcorner\mathbb V\right),
\end{equation}
for all $r>0$ and $\mathbb V\in \G(h)$. Hence, since $F_1((h+1)\mathcal{C}^h\llcorner\mathbb V)=1$ as a consequence of \cite[Proposition 2.12]{antonelli2020rectifiable} and \cref{lemma:PALLAUNITARIAVOLUMEUNO}, we notice that the definition in \eqref{eqn:Definitiondxr} agrees with the definition given in \cite[\S 2.1(3)]{Preiss1987GeometryDensities}. Namely, $d_{x,r}(\phi,\mathfrak M(h,G))=d_r(T_{x,1}\phi,\mathfrak M(h,G))=d_1(T_{x,r}\phi,\mathfrak M(h,G))$, where $d_r$ is the one defined in \cite[\S 2.1(3)]{Preiss1987GeometryDensities}. 

 For the sake of completeness, and for some benefits toward subsequent calculations, let us recall here the precise definition of the function $d$ Preiss gave in his setting. Let $\mathscr{C}$ be an arbitrary cone of measures without the origin, that means $0\not\in\mathscr{C}$ and $\mu\in\mathscr{C}$ implies $\lambda T_{0,\nu}\mu\in\mathscr{C}$ for every $\lambda,\nu>0$. Then, for every $r>0$ and $\phi$ a Radon measure we define
 \begin{equation}\label{eqn:drPreiss}
 d_r(\phi,\mathscr{C}):=\inf \left\{F_{r}\left(\frac{\phi}{F_r(\phi)},\psi\right):\psi\in\mathscr{C},\,\,F_r(\psi)=1\right\}.
 \end{equation}
 By the explicit expression and the continuity of $F_r(\cdot)$ with respect to the weak* convergence, one easily verifies that, for every $r>0$
 \begin{equation}\label{eqn:ContinuitydrPreiss}
 \phi_k\rightharpoonup_{k}\phi,\,\,F_r(\phi)>0\,\, \Rightarrow d_r(\phi_k,\mathscr{C})\to_{k} d_r(\phi,\mathscr{C}),
 \end{equation}
 compare \cite[2.1(6)]{Preiss1987GeometryDensities}. Moreover, due to a slight modification of \eqref{eqn:Homogeneitydr}, we have, for every $r>0$ and every Radon measure $\phi$, 
 \begin{equation}\label{eqn:HomogeneityPreiss}
   d_r(\phi,\mathscr{C})=d_1(T_{0,r}\phi,\mathscr{C}).  
 \end{equation}
\end{osservazione}

We now adapt some classical results contained in \cite{Preiss1987GeometryDensities} to our context. The aim will be to prove that when a Radon measure on $\mathbb G$ has a tangent at a point that is a cone (of measures) with compact basis, then the measure is locally asymptotically doubling. The following proposition is the analogue of \cite[Propostion 2.2]{Preiss1987GeometryDensities}.

\begin{proposizione}\label{prop:asdoubl}
Assume that $\mathcal{T}$ is a non-empty cone of Radon measures, i.e.,
for any $\nu\in\mathcal{T}$ and any $\lambda,\eta>0$ we have $\eta T_{0,\lambda}\nu\in\mathcal{T}$, and moreover $0\not\in \mathcal{T}$. Then, the following are equivalent
\begin{enumerate}
    \item[(i)] the set $\mathfrak{B}(\mathcal{T}):=\{\nu\in\mathcal{T}:F_{1}(\nu)=1\}$ is weak* compact,
    \item[(ii)] for any sequence $\{\nu_i\}_{i\in\N}\subseteq \mathcal{T}$ such that $\lim_{i\to\infty}F_1(\nu_i)=0$, we have $\nu_i\rightharpoonup 0$,
    \item[(iii)]there is a $q\in(0,\infty)$ such that $\nu(B(0,2r))\leq q\nu(B(0,r))$ for every $r>0$ and any $\nu\in\mathcal{T}$.
\end{enumerate}
\end{proposizione}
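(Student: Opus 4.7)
The plan is to prove the cyclic chain of implications (i) $\Rightarrow$ (ii) $\Rightarrow$ (iii) $\Rightarrow$ (i), following the classical argument of \cite[Proposition 2.2]{Preiss1987GeometryDensities} adapted to the Carnot setting. The key tools will be the weak*-continuity of $F_1$ and of the evaluation functionals $\mu \mapsto \int f\, d\mu$ for $f \in C_c(\mathbb{G})$, together with the cone hypothesis on $\mathcal{T}$ which lets us freely dilate and rescale measures.

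For (i) $\Rightarrow$ (ii), given $\nu_i \in \mathcal{T}$ with $F_1(\nu_i) \to 0$, the cone property makes $\tilde\nu_i := \nu_i/F_1(\nu_i)$ an element of $\mathfrak{B}(\mathcal{T})$. Weak*-compactness of $\mathfrak{B}(\mathcal{T})$ combined with the weak*-continuity of $\mu \mapsto \int f\, d\mu$ yields $M_f := \sup_{\mu \in \mathfrak{B}(\mathcal{T})} |\int f\, d\mu| < \infty$ for every $f \in C_c(\mathbb{G})$. Then
\[
\Big| \int f\, d\nu_i \Big| = F_1(\nu_i) \Big| \int f\, d\tilde\nu_i \Big| \leq F_1(\nu_i)\, M_f \to 0,
\]
so $\nu_i \rightharpoonup 0$.

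For (ii) $\Rightarrow$ (iii), I would argue by contradiction: if no doubling constant exists, then for each $n \in \mathbb{N}$ I can find $\nu_n \in \mathcal{T}$ and $r_n > 0$ with $\nu_n(B(0, 2r_n)) > n\, \nu_n(B(0, r_n))$. Setting $\mu_n := T_{0, r_n}\nu_n / \nu_n(B(0, 2r_n)) \in \mathcal{T}$, a direct computation using $T_{0,r_n}\nu_n(B(0,R))=\nu_n(B(0,Rr_n))$ gives $\mu_n(B(0,2)) = 1$ and $\mu_n(B(0,1)) < 1/n$. Since $F_1(\mu_n) \leq \mu_n(B(0,1)) \to 0$ (the integrand in $F_1$ is bounded by $1$ and supported in $B(0,1)$), assumption (ii) forces $\mu_n \rightharpoonup 0$; but testing against any $f \in C_c(\mathbb{G})$ with $f \geq \chi_{B(0,2)}$ contradicts $\int f\, d\mu_n \geq 1$.

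For (iii) $\Rightarrow$ (i), iterating the doubling bound $k := \lceil 1 + \log_2 R \rceil$ times gives $\nu(B(0, R)) \leq q^k\, \nu(B(0, 1/2)) \leq 2\, q^k\, F_1(\nu)$ for every $\nu \in \mathcal{T}$, where the last inequality uses that $\dist(z, U(0,1)^c) \geq 1/2$ whenever $z \in B(0, 1/2)$. In particular the measures in $\mathfrak{B}(\mathcal{T})$ enjoy uniform mass bounds on every ball, so \cref{prop1.12preiss} supplies weak*-convergent subsequences, and the continuity of $F_1$ preserves $F_1 = 1$ in the limit, ensuring the limit is non-zero. The step I expect to be the principal obstacle is showing that this weak*-limit still lies in $\mathcal{T}$: this implicitly requires $\mathcal{T}$ to be weak*-closed, which is the natural reading of ``cone of Radon measures'' in the spirit of \cite{Preiss1987GeometryDensities} and which holds in all intended applications, e.g.\ for cones generated by tangent measures.
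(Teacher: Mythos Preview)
Your proposal follows the same cyclic scheme (i) $\Rightarrow$ (ii) $\Rightarrow$ (iii) $\Rightarrow$ (i) as the paper, and your arguments for (ii) $\Rightarrow$ (iii) and (iii) $\Rightarrow$ (i) are essentially identical to the paper's (the paper first reduces to $r=1$ via the cone property and then tests against $F_3$ rather than a bump $\geq \chi_{B(0,2)}$, but this is cosmetic). You also correctly flag the tacit use of weak*-closedness of $\mathcal{T}$ in (iii) $\Rightarrow$ (i); the paper equally leaves this point implicit. The one substantive difference is in (i) $\Rightarrow$ (ii). The paper argues by contradiction in the style of Preiss: assuming $F_t(\nu_i) > \varepsilon$ along a subsequence for some $t > 1$, it locates intermediate scales $r_i \in [1,t]$ at which $F_{r_i}(\nu_i)$ is still small, normalizes $T_{0,r_i}\nu_i$ into $\mathfrak{B}(\mathcal{T})$, extracts a weak* limit $\nu$ by compactness, and reaches a contradiction since $F_t(\nu)$ would then be infinite. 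Your route is more direct and more elementary: compactness of $\mathfrak{B}(\mathcal{T})$ immediately gives uniform bounds $M_f = \sup_{\mu \in \mathfrak{B}(\mathcal{T})} \lvert\int f\,d\mu\rvert < \infty$, and then $\lvert\int f\,d\nu_i\rvert \leq F_1(\nu_i)\,M_f \to 0$. The paper's indirect argument has the minor advantage of automatically covering the possibility $F_1(\nu_i) = 0$ (where your normalization $\nu_i/F_1(\nu_i)$ is undefined); under (i) this case is in fact excluded---the same uniform bounds $M_f$ force $F_1(\nu) > 0$ for every $\nu \in \mathcal{T}$---but you should say so explicitly.
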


\begin{proof}
Let us first prove that (i)$\Rightarrow$(ii). Let $\nu_i$ be a sequence in $\mathcal{T}$ and let us assume that $\lim_{i\to\infty}F_{1}(\nu_i)=0$. 
We note that $\nu_i\rightharpoonup 0$ if and only if $F_{0,t}(\nu_i,0)=F_t(\nu_i)\to_i 0$ for any $t>0$. This means that if $\nu_i$ does not converge to $0$, we infer that there are  a $t>0$ and an $\varepsilon>0$ such that, up to passing to subsequences, we have $F_t(\nu_i)>\varepsilon$ for any $i\in\N$. 
Let us define
$$r_i:=\sup\{r\in[1,t]:F_r(\nu_i)\leq F_1(\nu_i)+1/i\}.$$
It is immediate to see that up to further subsequences
 $F_1(T_{0,r_i}\nu_i)=r_i^{-1}F_{r_i}(\nu_i)>0$ and that
  $$\lim_{i\to \infty} \frac{F_{t/r_i}(T_{0,r_i}\nu_i)}{F_1(T_{0,r_i}\nu_i)}=\lim_{i\to \infty}\frac{F_{t}(\nu_i)}{F_{r_i}(\nu_i)}>\varepsilon\lim_{i\to\infty}(F_1(\nu_i)+1/i)^{-1}=\infty.$$
Thanks to the fact that $\mathcal{T}$ is a cone, we know that $F_1(T_{0,r_i}\nu_i)^{-1}T_{0,r_i}\nu_i\in \mathfrak{B}(\mathcal{T})$ and thus there must exists a converging (non-relabeled) subsequence of $r_i$ and a $\nu\in\mathfrak{B}(\mathcal{T})$ such that $F_1(T_{0,r_i}\nu_i)^{-1}T_{0,r_i}\nu_i\rightharpoonup\nu$. This however implies that
 $$\infty=\lim_{i\to\infty}\frac{F_{t/r_i}(T_{0,r_i}\nu_i)}{F_1(T_{0,r_i}\nu_i)}\leq \lim_{i\to\infty}\frac{F_{t}(T_{0,r_i}\nu_i)}{F_1(T_{0,r_i}\nu_i)}=\lim_{i\to\infty}F_{t}(F_1(T_{0,r_i}\nu_i)^{-1}T_{0,r_i}\nu_i)=F_t(\nu),$$
 that is a contradiction with the fact that $\nu$ is a Radon measure.
 
 Secondly, let us show that (ii)$\Rightarrow$(iii). Since $\mathcal{T}$ is a cone, it suffices to prove that there exists $q\in(0,+\infty)$ such that $\nu(B(0,2))\leq q\nu (B(0,1))$ for every $\nu\in\mathcal{T}$. Indeed, we thus would get that for every $\nu\in\mathcal{T}$ and $r>0$ we have $\nu(B(0,2r))=T_{0,r}\nu(B(0,2))\leq qT_{0,r}\nu(B(0,1))=q\nu(B(0,r))$. Suppose by contradiction that there exists a sequence of measures $\nu_i\in \mathcal{T}$ such that $\nu_i(B(0,2))>i\nu_i(B(0,1))$. Note now that since $\mathcal{T}$ is a cone, the measures $\nu_i(B(0,2))^{-1}\nu_i$ are still in $\mathcal{T}$ and $\lim_{i\to \infty} F_1(\nu_i(B(0,2))^{-1}\nu_i)=0$. Thanks to (ii) this shows in particular that 
 \begin{equation}
 \nu_i(B(0,2))^{-1}\nu_i\rightharpoonup0     
     \label{eq:conv0}
 \end{equation}
 However, since $ F_3(\nu_i(B(0,2))^{-1}\nu_i)\geq 1$ for any $i\in \N$, this is a contradiction with \eqref{eq:conv0}, according to which one should have
 $$\lim_{i\to\infty} F_3(\nu_i(B(0,2))^{-1}\nu_i)=F_3(0)=0,$$
 as clearly $F_3$ is a weak* continuous operator on Radon measures.
 
 Finally, let us prove the implication (iii)$\Rightarrow$(i). Let $\{\nu_i\}_{i\in\N}$ be a sequence in $\mathfrak{B}(\mathcal{T})$ and note that for any $i\in\N$ we have
 $$\nu_i(B(0,1/2))\leq 2F_1(\nu_i)=2,$$
 and thus thanks to (iii) we infer that for any $r>0$ we have $\nu_i(B(0,r))\leq 4\max\{1,q^{\log_2(r)+1}\}$ for any $i\in\N$. \cref{prop1.12preiss} and the weak* continuity of $F_1$ conclude the proof.
\end{proof}
\begin{osservazione}\label{rem:LambdaCones}
For some benefit towards the remaining part of this section, let us notice that if $\mathcal{T}$ is a non-empty cone of Radon measure such that $\mathfrak B(\mathcal{T})$ is weak* compact, for every $\lambda>1$ there is $\tau>1$ such that $F_{\tau r}(\psi)\leq \lambda F_r(\psi)$ for every $r>0$ and $\psi\in\mathcal{T}$. The proof follows verbatim from the five lines in \cite[(1)$\Rightarrow$(5) of Proposition 2.2]{Preiss1987GeometryDensities}.
\end{osservazione}

\begin{proposizione}\label{prop:PhiAsDoubling}
For any Radon measure $\phi$ on $\mathbb{G}$ and $\phi$-almost every $x\in\mathbb{G}$ the set $\Tan(\phi,x)$ is either empty or a cone.
Suppose $\phi$ is a Radon measure on $\mathbb{G}$ such that the set $\mathfrak{B}(\phi,x):=\{\nu\in\Tan(\phi,x):F_1(\nu)=1\}$
is a non-empty weak* compact for $\phi$-almost every $x\in\mathbb{G}$. Then $\phi$ is locally asymptotically doubling.
\end{proposizione}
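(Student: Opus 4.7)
The first assertion is a direct consequence of the composition of dilations. Suppose $\mathrm{Tan}(\phi,x)\neq\emptyset$ and fix $\nu\in\mathrm{Tan}(\phi,x)$ realised as $c_iT_{x,r_i}\phi\rightharpoonup\nu$, with $r_i\downarrow 0$ and $c_i>0$. The identity $T_{0,\lambda}\circ T_{x,r_i}=T_{x,\lambda r_i}$, which follows from $\delta_\lambda\circ\delta_{r_i}=\delta_{\lambda r_i}$, combined with the weak-$*$ continuity of $T_{0,\lambda}$ and of scalar multiplication by $\eta$, gives
\[
(\eta c_i)\,T_{x,\lambda r_i}\phi=\eta\,T_{0,\lambda}(c_iT_{x,r_i}\phi)\rightharpoonup \eta\,T_{0,\lambda}\nu \neq 0
\]
for every $\lambda,\eta>0$. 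Since $\lambda r_i\downarrow 0$, $\eta c_i>0$ and the limit is nonzero, this exhibits $\eta T_{0,\lambda}\nu\in\mathrm{Tan}(\phi,x)$, proving the cone property.

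For the second assertion, by the first part and the hypothesis the cone $\mathcal{T}:=\mathrm{Tan}(\phi,x)$ fulfils (i) of \cref{prop:asdoubl} for $\phi$-a.e.\ $x$; hence (iii) yields $q(x)<\infty$ with $\nu(B(0,2r))\le q(x)\nu(B(0,r))$ for all $\nu\in\mathcal{T}$ and $r>0$. A countable Borel decomposition lets me fix $q(x)\le q_0$ on a positive $\phi$-measure Borel set $E$. I would argue by contradiction: pick $x\in E$ at which $\limsup_{r\to 0}\phi(B(x,2r))/\phi(B(x,r))=\infty$ and, following \cite[Proposition 2.2 and Lemma 2.3]{Preiss1987GeometryDensities}, select $r_i\downarrow 0$ to be local almost-maximisers of the doubling defect. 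Setting $\mu_i:=T_{x,2r_i}\phi/\phi(B(x,2r_i))$, one has $\mu_i(B(0,1))=1$ and $\mu_i(B(0,1/2))\to 0$. The maximality of $r_i$, combined with the $F_r$-growth $F_{\tau r}(\psi)\le\lambda F_r(\psi)$ on $\mathcal{T}$ of \cref{rem:LambdaCones} and the two-sided sandwich $(s/2)\phi(B(x,s/2))\le F_s(T_{x,1}\phi)\le s\,\phi(B(x,s))$, should give the uniform bound $\sup_i\mu_i(B(0,R))<\infty$ for every $R>0$; \cref{prop1.12preiss} then extracts a weak-$*$ subsequential limit $\mu_i\rightharpoonup\mu\in\mathrm{Tan}(\phi,x)$, with $\mu(B(0,1))\ge 1$ and $\mu(U(0,1/2))=0$ by the usual semicontinuity on closed and open sets.

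The final contradiction will use the doubling of $\mu$ at two scales. At $r=1/2$ one has $\mu(B(0,1/2))\ge\mu(B(0,1))/q_0\ge 1/q_0$, and because $\mu(U(0,1/2))=0$ this mass concentrates on the sphere: $\mu(\partial B(0,1/2))\ge 1/q_0$. Continuity radii of $s\mapsto\mu(B(0,s))$ being co-countable, I pick $r^{*}\in(1/4,1/2)$ with $\mu(\partial B(0,r^{*}))=0$, so that $\mu(B(0,r^{*}))=\mu(U(0,r^{*}))\le\mu(U(0,1/2))=0$; doubling at $r^{*}$ then gives $\mu(B(0,2r^{*}))\le q_0\cdot 0=0$, in direct contradiction with $\mu(B(0,2r^{*}))\ge\mu(\partial B(0,1/2))\ge 1/q_0>0$ (since $2r^{*}>1/2$). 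The main obstacle along this path is precisely the uniform mass bound on $\mu_i$: written out, it is itself a doubling-type condition on $\phi$ at scales slightly above $r_i$ and is therefore not directly available. The Preiss-style almost-maximal selection of the scales $r_i$ together with the transport of the $F_r$-growth on $\mathcal{T}$ through the sandwich is the delicate mechanism that breaks this circularity, and it is the step where the Carnot-group adaptation of \cite[Lemma 2.3]{Preiss1987GeometryDensities} requires the most care.
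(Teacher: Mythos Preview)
Your proof of the cone property is correct and identical to the paper's.

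For the doubling part, you have correctly identified both the strategy and its weak point, but the proof as written is incomplete: you explicitly flag the uniform mass bound $\sup_i\mu_i(B(0,R))<\infty$ as the ``main obstacle'' and do not actually establish it. This is the entire content of the proposition, so leaving it as a reference to \cite[Lemma 2.3]{Preiss1987GeometryDensities} with the comment that the adaptation ``requires the most care'' is a gap, not a proof. Your endgame contradiction (concentration on $\partial B(0,1/2)$ combined with doubling at a nearby continuity radius $r^*$) is neat and would work once you have $\mu\in\Tan(\phi,x)$, but without the mass bound you cannot extract any subsequential limit, let alone one lying in the tangent cone.

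The paper breaks the circularity differently, and carries it out in full. It works with the $F_1$-ratio rather than the ball-mass ratio: assuming $F_1(T_{x,2r_i}\phi)>2\mathfrak d^{-1}q^2 F_1(T_{x,r_i}\phi)$ along some $r_i\to 0$, a short computation gives $d_1(T_{x,2r_i}\phi,\mathscr{T})\ge\varepsilon$ for a fixed $\varepsilon\in(0,1)$. On the other hand, along any sequence $s_i$ realising a tangent one has $d_1(T_{x,s_i}\phi,\mathscr{T})\to 0$. The key move is to slide down from $s_i$ to the \emph{smallest} $\ell_i\in(0,s_i]$ with $d_1(T_{x,\eta}\phi,\mathscr{T})<\varepsilon$ for all $\eta\in(\ell_i,s_i]$; by continuity $d_1(T_{x,\ell_i}\phi,\mathscr{T})=\varepsilon$, and one checks $\ell_i/s_i\to 0$. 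Now for every $r\ge 1$ one has $r\ell_i\in(\ell_i,s_i]$ eventually, hence $d_r(T_{x,\ell_i}\phi,\mathscr{T})\le\varepsilon$. The $F_{\tau r}\le\lambda F_r$ growth on $\mathscr{T}$ (\cref{rem:LambdaCones}) then transfers, via this $\varepsilon$-closeness at every scale $r\ge 1$, to $F_{\tau r}(T_{x,\ell_i}\phi)\le 2F_r(T_{x,\ell_i}\phi)$ and thus, iterating, to the uniform bound needed for \cref{prop1.12preiss}. The extracted limit lies in $\mathscr{T}$ but satisfies $d_1(\cdot,\mathscr{T})=\varepsilon$, a contradiction. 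The crucial point is that the scale selection $\ell_i$ is designed so that the growth bound holds \emph{by construction} at all larger scales; your ``almost-maximiser'' selection aims at the same effect, but you have not shown why it delivers it.
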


\begin{proof}
In order to prove the first part of the statement, let $x\in\supp(\phi)$ be a point where $\Tan(\phi,x)$ is non-empty, choose a $\nu\in \Tan(\phi,x)$ and assume that $r_i$ and $c_i$ are two sequences such that
$$c_i T_{x,r_i}\phi\rightharpoonup \nu.$$
To conclude the proof of the claim we need to show that for any $\eta,\lambda>0$ we have $\eta T_{0,\lambda}\nu\in\Tan(\phi,x)$ and to do this, we just note that
$$\eta c_i T_{x,\lambda r_i} \phi=\eta T_{0,\lambda}(c_iT_{x,r_i}\phi)\rightharpoonup\eta T_{0,\lambda}\nu.$$
This shows that $\eta T_{0,\lambda}\nu\in \Tan(\phi,x)$ and thus $\Tan(\phi,x)$ is a cone.

Fix a point in $\mathbb{G}$ where the set $\mathfrak{B}(\phi,x)$ is a compact cone and thanks to \cref{prop:asdoubl}(iii) we infer there exists a $q>0$ such that $\nu(B(0,2r))\leq q\nu(B(0,r))$ for any $\nu\in\Tan(\phi,x)$ and every $r>0$. Let $\mathfrak{d}:=\inf\{\dist(z,U(0,1/2)^c):z\in B(0,1/4)\}>0$. We now prove that
\begin{equation}\label{eqn:AgognatoLimsup}
\limsup_{r\to 0}F_1(T_{x,2r}\phi)/F_1(T_{x,r}\phi)\leq 2d^{-1}q^2.
\end{equation}
Indeed, if by contradiction $r_i$ is an infinitesimal sequence such that $F_{1}(T_{x,2r_i}\phi)> 2\mathfrak{d}^{-1}q^2 F_{1}(T_{x,r_i}\phi)$, then for any $\nu\in\mathfrak{B}(\phi,x)$ we have
\begin{equation}
    F_1(T_{x,2r_i}\phi/F_1(T_{x,2r_i}\phi),\nu)\geq F_{1/2}(T_{x,2r_i}\phi/F_1(T_{x,2r_i}\phi) ,\nu)\geq F_{1/2}(\nu)-F_{1/2}(T_{x,2r_i}\phi)/F_1(T_{x,2r_i}\phi),
    \label{eq:ineq1}
\end{equation}
where the last inequality comes from \cref{rem:ScalinfFxr} and \cref{dist:lemma}. Furthermore, we also have for any $\nu\in\mathfrak{B}(\phi,x)$ that
\begin{equation}
    F_{1/2}(\nu)=\frac{F_{1/2}(\nu)}{F_{1}(\nu)}\geq \frac{\mathfrak{d}\nu(B(0,1/4))}{2\nu(B(0,1))}\geq \frac{\mathfrak{d}}{2q^2}.
    \label{eq:ineq2}
\end{equation}
Thanks to the absurd hypothesis and the fact that for any $s>0$ we have $F_s(T_{x,r}\phi)=sF_1(T_{x,rs}\phi)$, we infer that
\begin{equation}
    F_{1/2}(T_{x,2r_i}\phi)/F_1(T_{x,2r_i}\phi)=F_{1}(T_{x,r_i}\phi)/2F_1(T_{x,2r_i}\phi)\leq \mathfrak{d}/4q^2.
    \label{eq:ineq3}
\end{equation}
Putting \eqref{eq:ineq1}, \eqref{eq:ineq2} and \eqref{eq:ineq3} together, we conclude that
\begin{equation}
    F_1(T_{x,2r_i}\phi/F_1(T_{x,2r_i}\phi),\nu)\geq \mathfrak{d}/4q^2\geq\min\{\mathfrak{d}/4q^2,1/2\}=:\varepsilon,
    \label{eq:ineq4}
\end{equation}
for any $\nu\in\mathfrak{B}(\phi,x)$. Let us now denote, for simplicity, $\mathscr{T}:=\mathrm{Tan}(\phi,x)$. By taking into account the definition of $d_1$ in \eqref{eqn:drPreiss}, we get from the previous computations that $d_{1}(T_{x,2r_i}\phi,\mathscr{T})\geq \varepsilon$ for every $i$. Let us fix $\nu\in\Tan(\phi,x)$ such that $c_iT_{x,s_i}\phi\rightharpoonup \nu$ and let us note that \eqref{eqn:drPreiss} and \eqref{eqn:ContinuitydrPreiss} imply that $$\lim_{i\to 0}d_1(T_{x,s_i}\phi,\mathscr{T})=\lim_{i\to\infty}d_1(c_iT_{x,s_i}\phi,\mathscr{T})=d_1(\nu,\mathscr{T})=0.$$
Thanks to the above chain of identities, for $i$ sufficiently large, we denote by $\ell_i$ the smallest number among those $\ell\in[0,s_i]$ with the property that $d_1(T_{x,\eta},\mathscr{T})<\varepsilon$ for every $\ell<\eta\leq s_i$.  Since $d_1(T_{x,2r_i}\phi,\mathscr{T})\geq \varepsilon$ we conclude that $\ell_i>0$ for $i$ sufficiently large and $d_1(T_{x,\ell_i}\phi,\mathscr{T})= \varepsilon$ by the minimality of $\ell_i$ and the continuity of the map $\eta\mapsto d_1(T_{x,\eta}\phi,\mathscr{T})$. 

If, up to subsequences, $\ell_i/s_i\to_{i}t>0$, we conclude that, thanks to \eqref{eqn:ContinuitydrPreiss}, 
$$
d_1(T_{0,t}\nu,\mathscr{T})=\lim_{i\to+\infty}d_1(T_{x,ts_i}\phi,\mathscr{T})\geq \varepsilon,
$$
where the last inequality is true since $ts_i$ is arbitrarily near to $\ell_i$ for $i$ large enough, and $d_1(T_{x,\ell_i}\phi,\mathscr{T})\geq \varepsilon$. The previous inequality gives a contradiction since $T_{0,t}\nu\in\mathscr{T}$ and hence we should have $d_1(T_{0,t}\nu,\mathscr{T})=0$. Thus, $\ell_i/s_i\to 0$. This means that for every $r\geq 1$, taking into account \eqref{eqn:HomogeneityPreiss}, we have 
\begin{equation}\label{eqn:PreissLim}
\limsup_{i\to+\infty} d_r(T_{x,\ell_i}\phi,\mathscr{T})=\limsup_{i\to+\infty}d_1(T_{x,r\ell_i}\phi,\mathscr{T})\leq \varepsilon,
\end{equation}
since $\ell_i\leq r\ell_i\leq s_i$ for $i$ sufficiently large.  Since $\varepsilon< 1$, we have that $\lambda:=2/(1+\varepsilon)>1$, and hence, by \cref{rem:LambdaCones}, there exists $\tau>1$ such that $F_{\tau r}(\psi)\leq \lambda F_r(\psi)$ for every $\psi\in\mathscr{C}$ and for every $r>0$, since $\mathscr{C}$ has a compact basis.  Hence, taking \eqref{eqn:PreissLim} into account with $\tau r$ instead of $r$, we get that, whenever $r\geq 1$ and $i$ is sufficiently big, there exists $\psi\in\mathscr{T}$ with $F_{\tau r}(\psi)=1$ and $$
F_{\tau r}\left(\frac{T_{x,\ell_i}\phi}{F_{\tau r}(T_{x,\ell_i}\phi)},\psi\right)\leq\varepsilon/2.
$$
As a consequence, whenever $r\geq 1$ and $i$ is sufficiently big, by the triangle inequality for $F$ (cf. \cref{F1:metric}) and by the fact that $F_{\tau r}(\cdot)\geq F_{r}(\cdot)$, we get that 
$$
\frac{F_{r}(T_{x,\ell_i}\phi)}{F_{\tau r}(T_{x,\ell_i}\phi)}\geq F_r(\psi)-\varepsilon/2\geq \lambda^{-1}F_{\tau r}(\psi)-\varepsilon\geq 1/2.
$$
Hence, iterating, we have shown that there exists $\tau>1$ such that that for every $r\geq 1$ and every $p\in\mathbb N$, 
$$
\limsup_{i\to +\infty}\frac{F_{\tau^p r}(T_{x,\ell_i}\phi)}{F_{r}(T_{x,\ell_i}\phi)}<+\infty.
$$
By the arbitrariness of $p\in\mathbb N$ and $r\geq 1$, this implies that we are in a position to apply \cref{prop1.12preiss} to the sequence $\frac{T_{x,\ell_i}\phi}{F_1(T_{x,\ell_i}\phi)}$, which then converges, up to subsequences, to $\widetilde\nu\in\mathscr{T}$ with $F_1(\widetilde \nu)=1$. But then, by \eqref{eqn:ContinuitydrPreiss},
$$
d_1(\widetilde\nu,\mathscr{T})=\lim_{i\to+\infty}d_1(T_{x,\ell_i}\phi,\mathscr{T})\geq \varepsilon,
$$
that is a contradiction since $d_1(\widetilde\nu,\mathscr{T})=0$. Hence we finally have proven \eqref{eqn:AgognatoLimsup}.

Hence, from \eqref{eqn:AgognatoLimsup}, we deduce
$$
\limsup_{r\to 0}\frac{\phi(B(x,2r))}{\phi(B(x,r))} \leq\limsup_{r\to 0} \frac{2F_1(T_{x,4r}\phi)}{2^{-1}F_1(T_{x,r}\phi)}\leq 16\mathfrak{d}^{-2}q^4,
$$
whence the conclusion.
\end{proof}

Let us now prove a simple consequence of the previous Proposition.
\begin{proposizione}\label{prop:ConvergencePreiss}
Let $\phi$ be a Radon measure on $\mathbb G$ such that for $\phi$-almost every $x\in\mathbb G$ we have $\mathrm{Tan}(\phi,x)=\{\lambda\mathcal{S}^h\llcorner\mathbb V(x),\lambda>0\}$ for some homogeneous subgroup $\mathbb V(x)$ of Hausdorff dimension $h\in\mathbb N$. Then, for $\phi$-almost every $x\in\mathbb G$, the measure $T_{x,r}\phi/F_1(T_{x,r}\phi)$ weak* converges to $(h+1)\mathcal{C}^h\llcorner\mathbb V(x)$.
\end{proposizione}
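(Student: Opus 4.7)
The plan is to identify $\mathfrak{B}(\phi,x)$ as a singleton equal to $\{(h+1)\mathcal{C}^h\llcorner\mathbb V(x)\}$, use this together with \cref{prop:PhiAsDoubling} to deduce asymptotic doubling at $x$, and then extract the desired convergence by a subsequence argument based on \cref{prop1.12preiss}.

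First, I would observe that both $\mathcal{S}^h\llcorner\mathbb V(x)$ and $\mathcal{C}^h\llcorner\mathbb V(x)$ are Haar measures on $\mathbb V(x)$: indeed, \cref{lemma:PALLAUNITARIAVOLUMEUNO} provides the constant-density hypothesis to apply \cref{regularity:density2} to the former, while it gives the conclusion directly for the latter. They are therefore proportional, and the standing assumption can be rewritten as $\mathrm{Tan}(\phi,x)=\{\lambda\mathcal{C}^h\llcorner\mathbb V(x):\lambda>0\}$. Since $F_1((h+1)\mathcal{C}^h\llcorner\mathbb V(x))=1$ was already recorded in \cref{rem:ScalingDandF}, this one-parameter family contains exactly one element of unit $F_1$-norm, so $\mathfrak{B}(\phi,x)=\{(h+1)\mathcal{C}^h\llcorner\mathbb V(x)\}$ is (trivially) weak*-compact and \cref{prop:PhiAsDoubling} yields that $\phi$ is locally asymptotically doubling at $\phi$-a.e.\ $x$.

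Next, I would fix an arbitrary sequence $r_k\to 0$ and set $\nu_k:=T_{x,r_k}\phi/F_1(T_{x,r_k}\phi)$. The estimate needed to apply the compactness part of \cref{prop1.12preiss} to $\{\nu_k\}$ is a uniform bound of the form $\nu_k(B(0,R))\leq C(R,x)$ for every $R>0$. This I would obtain by combining the elementary lower bound
\[
F_1(T_{x,r}\phi)\geq \tfrac{1}{2}T_{x,r}\phi(B(0,1/2))=\tfrac{1}{2}\phi(B(x,r/2)),
\]
which is immediate from $\mathrm{dist}(z,U(0,1)^c)\geq 1/2$ on $B(0,1/2)$, with finitely many iterations of the asymptotic doubling property to dominate $\phi(B(x,r_kR))$ by a multiple of $\phi(B(x,r_k/2))$ for $r_k$ small.

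Finally, for any weak*-cluster point $\mu$ of $\{\nu_k\}$ along a subsequence $\{k_j\}$, the membership $\mu\in\mathrm{Tan}(\phi,x)$ is immediate from the definition, and weak*-continuity of $F_1$ (its integrand being continuous and supported in the compact set $\overline{B(0,1)}$) yields $F_1(\mu)=\lim_j F_1(\nu_{k_j})=1$. The singleton description of $\mathfrak{B}(\phi,x)$ then forces $\mu=(h+1)\mathcal{C}^h\llcorner\mathbb V(x)$, and since every subsequence admits a further subsequence converging to this same limit, the full sequence converges. The main obstacle is really securing the uniform boundedness of $\{\nu_k(B(0,R))\}$, which is precisely the point where the asymptotic doubling given by \cref{prop:PhiAsDoubling}, and thus the singleton structure of $\mathfrak{B}(\phi,x)$, is crucially used.
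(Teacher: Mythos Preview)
Your proposal is correct and follows essentially the same route as the paper's proof: identify $\mathfrak{B}(\phi,x)$ as the singleton $\{(h+1)\mathcal{C}^h\llcorner\mathbb V(x)\}$, invoke \cref{prop:PhiAsDoubling} for asymptotic doubling, use \cref{prop1.12preiss} for subsequential compactness, and pin down the limit via continuity of $F_1$. You supply more detail than the paper (the explicit mass bound from doubling and the Haar-measure justification for $\mathcal{S}^h\llcorner\mathbb V(x)\propto\mathcal{C}^h\llcorner\mathbb V(x)$), but the structure is the same.
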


\begin{proof}
For $\phi$-almost every $x\in\mathbb G$ we have that $\mathfrak{B}(\phi,x)=\{(h+1)\mathcal{C}^h\llcorner\mathbb V(x)\}$, taking into account \cite[Proposition 2.12]{antonelli2020rectifiable} and \cref{lemma:PALLAUNITARIAVOLUMEUNO}. Hence $\mathfrak{B}(\phi,x)$ is clearly compact for $\phi$-almost every $x\in\mathbb G$, and then $\phi$ is locally asymptotically doubling, due to \cref{prop:PhiAsDoubling}. Hence for every sequence $r_i\to 0$ we can extract a subsequence in $i$ such that $T_{x,r_i}\phi/F_1(T_{x,r_i}\phi)$ weak* converges to some $\nu\in\Tan(\phi,x)$, due to the fact that $\phi$ is locally asymptotically doubling and thus the hypothesis of \cref{prop1.12preiss} is verified. Since $F_1(\nu)=1$ by continuity of $F_1$, we conclude that $\nu=(h+1)\mathcal{C}^h\llcorner\mathbb V(x)$. Thus, being the sequence $r_i$ arbitrary, we obtain the thesis.
\end{proof}

The following proposition, which is inspired by \cite[4.4(4)]{Preiss1987GeometryDensities}, will be of crucial importance in the proof of the two fundamental results of this section, namely \cref{prop:TangentLocalizedCone}, and \cref{prop:TangentCompemented}.

\begin{proposizione}\label{prop:bdonplanes}
Let $0<\sigma<1/5$, $\phi$ be a Radon measure on $\mathbb{G}$, $h\in\{1,\ldots, Q\}$, and
$d_{z,t}(\phi,\mathfrak{M}(h,\{\mathbb{V}\}))\leq \sigma^{h+4}$,
then
$$\phi(B(y,s)\cap B(y\mathbb{V},\sigma^2t/(h+1)))\geq (1-5\sigma)(s/r)^h\phi(B(x,r)),$$
whenever $x,y\in z\mathbb{V}\cap B(z,(1-\sigma)t)$, $\sigma t\leq r\leq (1-\sigma)t-\lVert z^{-1}x\rVert$, and $\sigma t\leq s\leq (1-\sigma)t-\lVert z^{-1}y\rVert$.
\end{proposizione}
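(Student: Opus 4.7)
The plan is to unpack the flatness hypothesis, set $c := F_1(T_{z,t}\phi)$ (the case $c=0$ is trivial since then $\phi$ vanishes on $U(z,t)$ and the target inequality reduces to $0 \ge 0$), and translate $d_{z,t}(\phi, \mathfrak{M}(h,\{\mathbb V\})) \le \sigma^{h+4}$ via the scaling identity of \cref{rem:ScalinfFxr} into the estimate $|\int g\, d\phi - \int g\, d\mu| \le c t\sigma^{h+4}$ for every $g \in \lip(B(z,t))$, where $\mu := \lambda\, \mathcal{C}^h \llcorner (z\mathbb V)$ and $\lambda := (h+1)ct^{-h}$. Two well-chosen tent functions will then yield matching upper and lower bounds for the two sides of the target inequality, and the result will follow by taking the ratio.

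For the upper bound on $\phi(B(x,r))$, I would use the tent $f(w) := \max(0, \min(\delta,\, r + \delta - \|x^{-1}w\|))$ with buffer $\delta := \sigma t/(h+1)$, which is $1$-Lipschitz, is sandwiched between $\delta\, \mathbf{1}_{B(x,r)}$ and $\delta\, \mathbf{1}_{B(x, r+\delta)}$, and is supported inside $B(z,t)$ thanks to $\|z^{-1}x\| + r + \delta \le (1-\sigma)t + \sigma t/(h+1) \le t$. Since $x \in z\mathbb V$, left-invariance and \cref{lemma:PALLAUNITARIAVOLUMEUNO} give $\mu(B(x,r+\delta)) = \lambda(r+\delta)^h$, so the $F$-estimate yields $\phi(B(x,r)) \le \lambda(r+\delta)^h + c(h+1)\sigma^{h+3}$. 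The critical point is that with this choice of buffer $h\delta/r \le h\sigma/(h+1) < \sigma \le 1/5$ (using the hypothesis $r \ge \sigma t$), hence $(1+\delta/r)^h \le e^\sigma \le 1 + 5\sigma/4$ uniformly in $h$, and the additive error divided by $r^h$ is bounded by $\lambda \sigma^3$; altogether $\phi(B(x,r))/r^h \le \lambda(1+2\sigma)$.

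For the lower bound on $\phi(B(y,s) \cap B(y\mathbb V, \rho))$ with $\rho := \sigma^2 t/(h+1)$, I would use the function $g(w) := \bigl[\min(s-\|y^{-1}w\|,\, \rho - \mathrm{dist}(w, y\mathbb V))\bigr]^+$, which is $1$-Lipschitz as the positive part of a minimum of two $1$-Lipschitz functions, is bounded above by $\rho$, and is supported precisely in $B(y,s) \cap B(y\mathbb V, \rho) \subseteq B(z,t)$ (the inclusion uses $\|z^{-1}y\| + s \le (1-\sigma)t$). On $y\mathbb V = z\mathbb V$ the function $g$ attains the value $\rho$ on $y\mathbb V \cap B(y, s-\rho)$, which has $\mathcal{C}^h$-measure $(s-\rho)^h$ by \cref{lemma:PALLAUNITARIAVOLUMEUNO}. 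The $F$-estimate then gives $\phi(B(y,s)\cap B(y\mathbb V, \rho)) \ge \lambda(s-\rho)^h - c(h+1)\sigma^{h+2}$, and the analogous estimates $h\rho/s \le \sigma$ (from $s \ge \sigma t$) together with $(t/s)^h \sigma^{h+2} \le \sigma^2$ lead to $\phi(B(y,s)\cap B(y\mathbb V, \rho))/s^h \ge \lambda(1-2\sigma)$.

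Dividing the second estimate by the first yields $\phi(B(y,s)\cap B(y\mathbb V, \rho)) \ge (s/r)^h \cdot \frac{1-2\sigma}{1+2\sigma} \cdot \phi(B(x,r))$, and the elementary inequality $(1-2\sigma)/(1+2\sigma) \ge 1-5\sigma$ (equivalent to $\sigma + 10\sigma^2 \ge 0$, valid for $\sigma \ge 0$) concludes the proof. The main subtlety — and the reason the factor $(h+1)$ appears both in the buffer $\delta$ and in the tube width $\rho$ — is that a naive buffer of size $\sigma t$ would produce an error factor $(1+\sigma t/r)^h$ that blows up with $h$ when $r$ is close to $\sigma t$; the chosen normalization keeps all quantities of the form $(1+x)^h$ tame by forcing $hx \le \sigma$ uniformly in $h$.
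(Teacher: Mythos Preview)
Your strategy is exactly the paper's: rescale the hypothesis into an $F$-estimate between $\phi$ and a multiple of $\mathcal{C}^h\llcorner(z\mathbb V)$, test against two tent functions to bound $\phi(B(x,r))$ from above and $\phi(B(y,s)\cap B(y\mathbb V,\rho))$ from below, and take the ratio. The paper normalises to $z=0$, $t=1$, $F_1(\phi)=1$ and uses a single buffer $q=\sigma^2/(h+1)$ for both bounds, but these are cosmetic differences.

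There is, however, a computational slip in your upper bound. You set $\delta:=\sigma t/(h+1)$ and then claim $h\delta/r\le h\sigma/(h+1)$; with $r\ge\sigma t$ this only yields $h\delta/r\le h/(h+1)$, which is close to $1$, not to $\sigma$. In the worst case $r=\sigma t$ one has $\delta/r=1/(h+1)$ and $(1+\delta/r)^h\to e$ as $h\to\infty$, so $(1+\delta/r)^h\le e^\sigma$ fails and the bound $\phi(B(x,r))/r^h\le\lambda(1+2\sigma)$ does not follow. The fix is simply to take $\delta:=\sigma^2 t/(h+1)$, matching both the paper's $q$ and your own $\rho$: then $h\delta/r\le h\sigma/(h+1)<\sigma$ genuinely holds, $(1+\delta/r)^h\le e^\sigma\le 1+5\sigma/4$, and the additive error divided by $r^h$ becomes $\lambda\sigma^2$ rather than $\lambda\sigma^3$, still giving $\phi(B(x,r))/r^h\le\lambda(1+2\sigma)$. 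Your lower bound and the final ratio $(1-2\sigma)/(1+2\sigma)\ge 1-5\sigma$ are correct as written.
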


\begin{proof}
The definition of $d_{z,t}(\cdot,\mathfrak{M}(h,\{\mathbb{V}\}))$ implies that
$$F_{0,1}(T_{z,t}\phi/F_1(T_{z,t}\phi),(h+1)\mathcal{C}^h\llcorner \mathbb{V})\leq \sigma^{h+4},$$
and thus up to redefining $\phi$ we can assume without loss of generality that $z=0$, $t=1$ and that $F_1(\phi)=1$.
Thus, let $q:=\sigma^2/(h+1)$, $x\in \mathbb{V}$ and $r>0$ as in the hypothesis of the proposition. Define $$g(w):=\min\{1,\dist(w,\mathbb{G}\setminus B(x,r+q))/q\}.$$
Notice that $B(x,r)\Subset B(0,1)$, and thanks to the assumptions on $\phi$ we infer that, calling $\mathrm{Lip}(g)$ the Lipschitz constant of the function $g$,
\begin{equation}
\begin{split}
     \phi(B(x,r))&\leq \int g(w)d\phi(w)
     \leq(h+1)\int g(w) d\mathcal{C}^h\llcorner \mathbb{V}(w)+\text{Lip}(g)F_{0,1}(\phi,(h+1)\mathcal{C}^h\llcorner \mathbb{V}(w))\\
     &\leq(h+1)\mathcal{C}^h\llcorner \mathbb{V}(B(x,r+q))+\sigma^{h+4}/q=(h+1)(r+q)^h+\sigma^{h+4}/q.
     \label{eq:ineq10}
\end{split}
\end{equation}
With the same argument used above,see \cite[Equation (37)]{antonelli2020rectifiable}, for any $y$ and $s>0$ as in the hypothesis of the proposition one can also show that
\begin{equation}
    (h+1)(s-q)^h=(h+1)\mathcal{C}^h\llcorner \mathbb{V}(B(y,s-q))\leq \phi(B(y,s)\cap B(\mathbb{V},q))+\sigma^{h+4}/q.
    \label{eq:ineq11}
\end{equation}
Thus, putting together \eqref{eq:ineq10} and \eqref{eq:ineq11} we infer that
\begin{equation}
    \begin{split}
        \frac{\phi(B(y,s)\cap B(\mathbb{V},q))}{\phi(B(x,r))}&\geq \frac{(h+1)(s-q)^h-\sigma^{h+4}/q}{(h+1)(r+q)^h+\sigma^{h+4}/q}=\left(\frac{s}{r}\right)^h\frac{\left(1-\frac{\sigma^2}{s(h+1)}\right)^h-\frac{\sigma^{h+2}}{s^h}}{\left(1+\frac{\sigma^2}{r(h+1)}\right)^h+\frac{\sigma^{h+2}}{r^h}} \\
        &\geq \left(\frac{s}{r}\right)^h
       \frac{\left(1-\frac{\sigma}{h+1}\right)^h-\sigma^2}{\left(1+\frac{\sigma}{h+1}\right)^h+\sigma^2}\geq \left(\frac{s}{r}\right)^h\frac{1-h/(h+1)\sigma-\sigma^2}{1+2h/(h+1)\sigma+\sigma^2} \geq \left(\frac{s}{r}\right)^h\frac{1-2\sigma}{1+3\sigma}\geq (1-5\sigma)\left(\frac{s}{r}\right)^h
       ,
        \nonumber
    \end{split}
\end{equation}
where in the third inequality above we are using that $\sigma\leq r$ and $\sigma\leq s$; in the fourth inequality we are using that $(1-\sigma/(h+1))^h\geq 1-h/(h+1)\sigma$ by Bernoulli inequality, and $(1+\sigma/(h+1))^h\leq 1+2h/(h+1)\sigma$, which can be easily verified by induction since $2h\sigma/(h+1)\leq 1$.
\end{proof}

Before proving the main results of this section, namely \cref{prop:TangentLocalizedCone}, and \cref{prop:TangentCompemented}, we now state and prove three measurability results that will play a crucial role in the rest of the paper. Roughly speaking, we prove that when a measure has unique tangents (or unique approximate tangents), the map that associates a point $x\in\mathbb G$ to its tangent (or approximate tangent) is measurable.

\begin{lemma}\label{lem:BorelTangents3}
Let $\phi$ be a Radon measure such that, for $\phi$-almost every $x\in\mathbb G$, there exists $\tau(\phi,x)\in \G(h)$ such that
$$
\mathrm{Tan}(\phi,x)=\{\lambda\mathcal{C}^h\llcorner \tau(\phi,x):\lambda>0\}.
$$
Then the map $x\mapsto \tau(\phi,x)$ is $\phi$-measurable as a map from $\mathbb G$ to $\G(h)$.
\end{lemma}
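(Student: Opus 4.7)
The strategy is to factor the assignment $\tau(\phi,\cdot)$ through the space of Radon measures: I will realize $\tau(\phi,x)$ as the image, under a continuous embedding $\G(h)\hookrightarrow\mathcal{M}$, of a pointwise $\phi$-almost everywhere weak$^*$ limit of continuous measure-valued maps. The main obstacle is verifying the weak$^*$-continuity in $x$ of the blow-up maps $x\mapsto T_{x,r}\phi$; once that is in place, the rest is essentially formal, combining \cref{prop:ConvergencePreiss} with \cref{prop:CompGrassmannian} and \cref{prop1.12preiss}.

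First, for each fixed $r>0$ I would prove that the map $\Psi_r\colon\mathbb{G}\to(\mathcal{M},\mathrm{weak}^*)$ defined by $\Psi_r(x):=T_{x,r}\phi$ is continuous. Using the change of variables $\int f\,dT_{x,r}\phi=\int f(\delta_{1/r}(x^{-1}y))\,d\phi(y)$ for $f\in C_c(\mathbb{G})$, the continuity of the group product and of the dilations, together with dominated convergence (as $x$ varies in a small neighbourhood, the integrand remains supported in a fixed bounded subset of $\mathbb{G}$), yield continuity in $x$ of each such integral. Since for $x\in\supp(\phi)$ and $r>0$ one has $F_1(\Psi_r(x))\geq \tfrac12\,\phi(B(x,r/2))>0$, and $F_1(\cdot)$ is continuous on $(\mathcal{M},\mathrm{weak}^*)$ by \cref{dist:lemma}, the renormalization $\widetilde\Psi_r(x):=\Psi_r(x)/F_1(\Psi_r(x))$ is a continuous map $\supp(\phi)\to(\mathcal{M},\mathrm{weak}^*)$.

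Next, the hypothesis $\Tan(\phi,x)=\{\lambda\mathcal{C}^h\llcorner\tau(\phi,x):\lambda>0\}$ together with \cref{prop:ConvergencePreiss} (and the identity $F_1((h+1)\mathcal{C}^h\llcorner\mathbb{V})=1$, which follows from \cref{lemma:PALLAUNITARIAVOLUMEUNO} and \cite[Proposition 2.12]{antonelli2020rectifiable}) yields $\widetilde\Psi_r(x)\rightharpoonup (h+1)\mathcal{C}^h\llcorner\tau(\phi,x)$ as $r\to 0^+$ for $\phi$-almost every $x$. In particular, the sequence of continuous maps $x\mapsto\widetilde\Psi_{1/n}(x)$ converges pointwise $\phi$-almost everywhere to $F(x):=(h+1)\mathcal{C}^h\llcorner\tau(\phi,x)$. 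Because $(\mathcal{M},\mathrm{weak}^*)$ is metrizable and separable by \cref{prop1.12preiss}, such a pointwise $\phi$-a.e.\ limit of continuous maps into a separable metric space is automatically $\phi$-measurable; hence $F$ is $\phi$-measurable.

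Finally, I would show that $\iota\colon\G(h)\to\mathcal{M}$ defined by $\iota(\mathbb{V}):=(h+1)\mathcal{C}^h\llcorner\mathbb{V}$ is a topological embedding. Continuity of $\iota$ is contained in the proof of \cref{F1:metric} via \cite[Proposition 2.29]{antonelli2020rectifiable}, and $\iota$ is injective since a homogeneous subgroup is recovered as the support of the associated flat measure. Compactness of $\G(h)$ (\cref{prop:CompGrassmannian}) then upgrades $\iota$ to a homeomorphism onto $\iota(\G(h))$, so $\iota^{-1}$ is continuous there. Since $F(x)=\iota(\tau(\phi,x))$ for $\phi$-almost every $x$, we conclude $\tau(\phi,x)=\iota^{-1}(F(x))$, which is $\phi$-measurable as the composition of a $\phi$-measurable map with a continuous one. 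The most delicate point is the first step, where the dominated convergence argument needs a careful handling of the uniform compact support of the integrand as $(x,r)$ varies.
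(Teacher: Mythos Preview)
Your argument is correct and takes a genuinely different route from the paper's proof. The paper proceeds by constructing, for a dense countable family $\{\mathbb{V}_\ell\}\subseteq\G(h)$ and parameters $r,\varepsilon$, the upper semicontinuous ``cone-escaping mass'' functions
\[
f_{r,\ell,\varepsilon}(x)=\phi(B(x,r))^{-1}\phi\bigl(\{w\in B(x,r):\dist(x^{-1}w,\mathbb{V}_\ell)\geq\varepsilon\lVert x^{-1}w\rVert\}\bigr),
\]
and then shows (via a $\liminf$ in $r$ and a $\sup$ in $\varepsilon$) that the measurable level sets $\{\widetilde f_{\ell,\varepsilon}=0\}$ coincide, up to $\phi$-null sets, with $\{x:\tau(\phi,x)\subseteq C_{\mathbb{V}_\ell}(\varepsilon)\}$; a covering of open subsets of $\G(h)$ by such cone-containment sets (via \cite[Lemma~2.15]{antonelli2020rectifiable}) then yields measurability of $\tau^{-1}(\Omega)$ for every open $\Omega$.

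Your approach bypasses this explicit construction by factoring $\tau(\phi,\cdot)$ through the embedding $\iota:\G(h)\hookrightarrow\mathcal{M}$ and invoking \cref{prop:ConvergencePreiss} to realize $\iota\circ\tau(\phi,\cdot)$ as a $\phi$-a.e.\ pointwise limit of the continuous maps $\widetilde\Psi_{1/n}$. This is shorter and more conceptual, and it cleanly exploits the separable metric structure on $\mathcal{M}$ from \cref{prop1.12preiss} together with the compactness of $\G(h)$. The paper's route, while more hands-on, has the advantage of being directly adaptable to the weaker ``approximate tangent'' hypothesis of \cref{prop:approxtangentismeasurable}, where no analogue of \cref{prop:ConvergencePreiss} is available and one must work with density ratios rather than full blow-up convergence; your method does not transfer to that setting without additional work.
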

\begin{proof}
First of all, from \cref{prop:PhiAsDoubling} we get that $\phi$ is locally asymptotically doubling.  We let $\{\mathbb{V}_\ell\}_{\ell\in \N}$ be a countable dense set in $\G(h)$ that exists thanks to the compactness of the Grassmanian, see \cref{prop:CompGrassmannian}. Furthermore, for any $r\in (0,1)\cap \Q$ any $\varepsilon>0$, and any $\ell\in\N$ we define the function
$$
f_{r,\ell,\varepsilon}(x):=\phi(B(x,r))^{-1}\phi(\{w\in B(x,r):\dist(x^{-1}w,\mathbb{V}_\ell)\geq\varepsilon\lVert x^{-1}w\rVert\})=:\phi(B(x,r))^{-1}\phi(I(x,r,\ell,\varepsilon)),
$$
when $\phi(B(x,r))>0$ and we set it to be $+\infty$ if $\phi(B(x,r))=0$.
We claim that the functions $f_{r,\ell,\varepsilon}$ are upper semicontinuous. Let $\{x_i\}_{i\in\N}$ be a sequence of points converging to some $x\in\mathbb{G}$. If $\phi(B(x,r))=0$ the upper semicontinuity on the sequence $\{x_i\}_{i\in\mathbb N}$ is trivially verified by definition of $f_{r,\ell,\varepsilon}$. So let us assume without loss of generality that $\phi(B(x,r))>0$. Since $x_i\to x$ and $\phi$ is a Radon measure we have $\phi(B(x,r))= \lim_i \phi(B(x_i,r))$, and then we can assume withouot loss of generality that $\phi(B(x_i,r))>0$ for every $i$.

Since the sets $I(x_i,r,\ell,\varepsilon)$ are contained in $B(x,2)$ provided $i$ is sufficiently big, we infer thanks to Fatou's Lemma that
\begin{equation}
\limsup_{i\to+\infty}f_{r,\ell,\varepsilon}(x_i)=\limsup_{i\to+\infty}\phi(B(x_i,r))^{-1}\int \chi_{I(x_i,r,\ell,\varepsilon)}(z)d\phi(z)\leq \phi(B(x,r))^{-1}\int\limsup_{i\to+\infty} \chi_{I(x_i,r,\ell,\varepsilon)}(z)d\phi(z).
\label{eq:::num20}
\end{equation}
Furthermore, since $x_i\to x$ and the sets $I(x_i,r,\ell,\varepsilon)$ and $I(x,r,\ell,\varepsilon)$ are closed, we have
$$\limsup_{i\to+\infty} \chi_{I(x_i,r,\ell,\varepsilon)}=\chi_{\limsup_{i\to+\infty} I (x_i,r,\ell,\varepsilon)}\leq \chi_{I(x,r,\ell,\varepsilon)},
$$ 
where the first equality is true in general. Then, from \eqref{eq:::num20}, we infer that
\begin{equation}
\limsup_{i\to\infty}f_{r,\ell,\varepsilon}(x_i)\leq \phi(B(x,r))^{-1}\int\limsup_{i\to+\infty} \chi_{I(x_i,r,\ell,\varepsilon)}(z)d\phi(z)\leq \phi(B(x,r))^{-1}\int \chi_{I(x,r,\ell,\varepsilon)}(z)d\phi(z)=f_{r,\ell,\varepsilon}(x),
\nonumber
\end{equation}
and this concludes the proof that $f_{r,\ell,\varepsilon}$ is upper semicontinuous. This shows that for every $\ell\in\mathbb N$ and $\varepsilon>0$, the function
$$
f_{\ell,\varepsilon}:=\liminf_{r\in\Q\cap(0,1),r\to 0} f_{r,\ell,\varepsilon},
$$ 
is $\phi$-measurable. Hence also $\widetilde f_{\ell,\varepsilon}:=\sup_{\widetilde{\varepsilon}\in\mathbb Q,\widetilde{\varepsilon}>\varepsilon}f_{\ell,\varepsilon}$ is $\phi$-measurable. As a consequence, since $\mathrm{Tan}(\phi,x)= \{\lambda\mathcal{C}^h\llcorner \tau(\phi,x):\lambda>0\}$ for $\phi$-almost every $x\in\mathbb{G}$, we infer that the set
\begin{equation}\label{eqn:FIRSTINCLUSION}
\begin{split}
     B_{\ell,\varepsilon}&:=\big\{x\in\mathbb{G}:\widetilde f_{\ell,\varepsilon}(x)=0\}\cap \{x\in \mathbb{G}: \text{there exists $\tau(\phi,x)$}\big\}=\{x\in \mathbb{G}:\tau(\phi,x) \subseteq C_{\mathbb V_\ell}(\varepsilon)\},
\end{split}
\end{equation}
is $\phi$-measurable as well. Let us justify the last equality in the previous line. If $\widetilde f_{\ell,\varepsilon}(x)=0$, then $f_{\ell,\widetilde{\varepsilon}}(x)=0$ for every $\widetilde{\varepsilon}>\varepsilon$, $\widetilde\varepsilon\in\mathbb Q$. Hence, arguing as in \cite[Proof of Proposition 5.5]{antonelli2020rectifiable}, in particular as in the lines slightly above \cite[Equation (109)]{antonelli2020rectifiable}, we get $\tau(\phi,x) \subseteq C_{\mathbb V_\ell}(\widetilde{\varepsilon})$ for every $\widetilde\varepsilon\in\mathbb Q$ and $\widetilde\varepsilon>\varepsilon$. Let us explain this with further details. We first get that there exist $r_i\to 0$ such that $f_{r_i,\ell,\widetilde\varepsilon}(x)\to_{i}0$. Since $\phi$ is locally asymptotically doubling, thanks to \cref{prop1.12preiss} we deduce that $\phi(B(x,r_i))^{-1}T_{x,r_i}\phi$ converges, up to subsequences, to some tangent measure $\nu\in \Tan(\phi,x)$, and then from the hypothesis we have $\nu=\lambda\mathcal{C}^h\llcorner\tau(\phi,x)$, for some $\lambda>0$. Then the same computations in the two displayed equations before \cite[Equation (109)]{antonelli2020rectifiable} give the sought conclusion. Taking $\widetilde\varepsilon\to\varepsilon$ we get the first inclusion of \eqref{eqn:FIRSTINCLUSION}. On the other hand, if $\tau(\phi,x) \subseteq C_{\mathbb V_\ell}(\varepsilon)$, we get that $\tau(\phi,x)\subseteq \{w\in\mathbb G:\dist(w,\mathbb V_\ell)<\widetilde\varepsilon\|w\|\}$ for every $\widetilde\varepsilon>\varepsilon$. Hence we can argue as in \cite[Equation (111)]{antonelli2020rectifiable} in order to obtain that $f_{\ell,\widetilde\varepsilon}(x)=0$ for every $\widetilde\varepsilon>\varepsilon$ and then passing to the limit as $\widetilde\varepsilon\to\varepsilon$ we get the sought conclusion.

In order to prove that the map $x\mapsto \tau(\phi,x)$ is $\phi$-measurable, it suffices to check that the for any open $\Omega\subseteq \G(h)$ we have that $\tau^{-1}(\Omega)$ is $\phi$-measurable. To show this we note that, thanks to \cite[Lemma 2.15]{antonelli2020rectifiable}, there is a sequence of radii $r_k>0$ such that
$$
\Omega=\bigcup_{\substack{k\in\N\\ \mathbb{V}_k\in\Omega}} \{\mathbb{W}\in \G(h):\mathbb W\subseteq C_{\mathbb V_k}(r_k)\}.
$$
This implies that, up to $\phi$-null sets, $\tau^{-1}(\Omega)= \bigcup_{k\in\N} B_{k,r_k}$,
which thanks to the above discussion is a $\phi$-measurable set.
\end{proof}

\begin{lemma}\label{lem:BorelTangents}
Let $\phi$ be a $\mathscr{P}_h$-rectifiable measure. Denote $\tau(\phi,x)$ to be the unique element of $\G(h)$, that exists $\phi$-almost everywhere by definition, for which
$$
\mathrm{Tan}_h(\phi,x)\subseteq\{\lambda\mathcal{C}^h\llcorner \tau(\phi,x):\lambda>0\}.
$$
Then the map $x\mapsto \tau(\phi,x)$ is $\phi$-measurable as a map from $\mathbb{G}$ to $\G(h)$.
\end{lemma}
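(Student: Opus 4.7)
The plan is to reduce this statement to the previously proved \cref{lem:BorelTangents3}. The latter lemma requires the stronger hypothesis that the \emph{full} Preiss tangent cone $\Tan(\phi,x)$ coincides with $\{\lambda \mathcal{C}^h\llcorner \tau(\phi,x):\lambda>0\}$, whereas the $\mathscr{P}_h$-rectifiability hypothesis only gives the analogous statement for the $h$-rescaled tangents $\Tan_h(\phi,x)$. So the core of the proof is to upgrade one into the other by exploiting the two-sided density bounds $0<\Theta^h_*(\phi,x)\le\Theta^{h,*}(\phi,x)<+\infty$ that hold $\phi$-almost everywhere by \cref{def:PhRectifiableMeasure}.

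First I would show that, for $\phi$-almost every $x$, the set $\Tan_h(\phi,x)$ is nonempty and $0\notin\Tan_h(\phi,x)$: by upper density finiteness, any sequence $r_i^{-h}T_{x,r_i}\phi$ is locally uniformly bounded, so \cref{prop1.12preiss} yields a weak$^*$ converging subsequence; by lower density positivity, the limit $\nu$ satisfies $\nu(\overline{B(0,1)})>0$, so it is nonzero. By hypothesis, together with \cref{lemma:PALLAUNITARIAVOLUMEUNO} (which allows us to pass between $\mathcal S^h$ and $\mathcal C^h$ on homogeneous subgroups by an absolute constant), we conclude $\nu=\lambda_0\mathcal C^h\llcorner\tau(\phi,x)$ for some $\lambda_0>0$.

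Next I would show the key conversion: $\Tan(\phi,x)=\{\lambda\mathcal C^h\llcorner\tau(\phi,x):\lambda>0\}$ for $\phi$-a.e.\ $x$. For the inclusion $\subseteq$, take $\mu\in\Tan(\phi,x)$ realised as $c_i T_{x,r_i}\phi\rightharpoonup\mu$. Consider in parallel $r_i^{-h}T_{x,r_i}\phi$; by the density bounds and the previous paragraph, a subsequence converges to $\lambda_0\mathcal C^h\llcorner\tau(\phi,x)$ with $\lambda_0>0$. Writing $c_iT_{x,r_i}\phi=(c_ir_i^h)\,(r_i^{-h}T_{x,r_i}\phi)$ and using that $\mu$ is a nonzero Radon measure, the scalar factors $c_ir_i^h$ must converge (up to subsequences) to some $\beta\in(0,+\infty)$, whence $\mu=\beta\lambda_0\,\mathcal C^h\llcorner\tau(\phi,x)$. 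The inclusion $\supseteq$ then follows because, by \cref{prop:PhiAsDoubling}, $\Tan(\phi,x)$ is a cone and already contains the nonzero element $\lambda_0\mathcal C^h\llcorner\tau(\phi,x)$.

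With this identification in hand, \cref{lem:BorelTangents3} applies directly and gives the $\phi$-measurability of $x\mapsto\tau(\phi,x)$. I do not expect a serious obstacle here: the one technical point to handle carefully is the extraction of the scalar limit $\beta\in(0,+\infty)$, where one must rule out the degenerate cases $\beta=0$ and $\beta=+\infty$ using, respectively, $\mu\neq 0$ and the local finiteness of $\mu$ on a ball where $\lambda_0\mathcal C^h\llcorner\tau(\phi,x)$ has strictly positive mass.
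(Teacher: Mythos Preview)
Your proposal is correct and follows essentially the same approach as the paper: the paper's proof consists of the single observation that, by a routine argument (citing \cite[Remark 14.4(3)]{Mattila1995GeometrySpaces}), one has $\Tan(\phi,x)=\{\lambda\mathcal{C}^h\llcorner\tau(\phi,x):\lambda>0\}$ for $\phi$-almost every $x$, and then invokes \cref{lem:BorelTangents3}. You have simply spelled out in detail that routine argument---the compactness from the upper density bound, the exclusion of the zero measure from the lower density bound, and the identification of $\Tan$ with $\Tan_h$ up to positive scalars via the convergence of $c_ir_i^h$---which is precisely the content of the Mattila reference.
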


\begin{proof}
From a routine argument (cf. \cite[Remark 14.4(3)]{Mattila1995GeometrySpaces}), we get that $\mathrm{Tan}(\phi,x)=\{\lambda\mathcal{C}^h\llcorner\tau(\phi,x)\}$ for $\phi$-almost every $x\in\mathbb G$. Hence we can apply \cref{lem:BorelTangents3}
\end{proof}

The proof of the following lemma follows as the ones above. We omit the details.

\begin{lemma}\label{prop:approxtangentismeasurable}
Suppose $d$ is a homogeneous left-invariant metric on $\mathbb{G}$, let $E$ be a Borel set of finite $\mathcal{S}^h_{d}$-measure, and suppose that for $\mathcal{S}^h_{d}$-almost every $x\in E$ there exists $\mathbb V(x)\in\G(h)$ for which for any $0<\varepsilon<1$ and any $0<\beta<1$ there exist a $\rho(x,\varepsilon,\beta)>0$ such that
\begin{equation}\label{eqn:ConditionApprox}
\mathcal{S}^h_{d}\llcorner E(B_{d}(x,r)\setminus xC_{\mathbb{V}(x),{d}}(\beta))\leq \varepsilon \mathcal{S}^h_{d}\llcorner E(B_d(x,r)),
\end{equation}
for any $0<r<\rho(x,\varepsilon,\beta)$. Then the map $x\mapsto \mathbb{V}(x)$ from $E$ to $\G(h)$ is $\mathcal{S}^h_{d}\llcorner E$-measurable.
\end{lemma}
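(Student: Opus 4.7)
The plan is to mimic the scheme of the proof of \cref{lem:BorelTangents3} almost verbatim, with the cone-exclusion ratio replacing the ratio involving tangent measures. I would fix a countable dense family $\{\mathbb V_\ell\}_{\ell\in\N}$ in the compact metric space $(\G(h),d_{\mathbb G})$, whose existence follows from \cref{prop:CompGrassmannian}, and for every $r\in\Q\cap(0,1)$, $\ell\in\N$, and $\beta\in\Q\cap(0,1)$ I would define
$$
f_{r,\ell,\beta}(x):=\frac{\mathcal{S}^h_{d}\llcorner E\bigl(B_d(x,r)\setminus xC_{\mathbb V_\ell,d}(\beta)\bigr)}{\mathcal{S}^h_{d}\llcorner E\bigl(B_d(x,r)\bigr)}
$$
when the denominator is positive, and set $f_{r,\ell,\beta}(x):=+\infty$ otherwise. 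These are the direct analogues of the auxiliary functions used in \cref{lem:BorelTangents3}, now adapted to the approximate tangent condition \eqref{eqn:ConditionApprox}.

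The first technical step is upper semicontinuity of $f_{r,\ell,\beta}$. For a sequence $x_i\to x$ with $\mathcal{S}^h_{d}\llcorner E(B_d(x,r))>0$, the denominator converges because $\mathcal{S}^h_{d}\llcorner E$ is a finite Radon measure; the numerator is estimated via Fatou's Lemma applied to the closed "bad" sets $B_d(x_i,r)\setminus x_iC_{\mathbb V_\ell,d}(\beta)$, exploiting that left translation is continuous and $C_{\mathbb V_\ell,d}(\beta)$ is closed, so that $\limsup_i\chi_{B_d(x_i,r)\setminus x_iC_{\mathbb V_\ell,d}(\beta)}\leq \chi_{B_d(x,r)\setminus xC_{\mathbb V_\ell,d}(\beta)}$. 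This reproduces the chain around \eqref{eq:::num20} in \cref{lem:BorelTangents3}. Consequently the functions $f_{\ell,\beta}:=\liminf_{r\in\Q\cap(0,1),\,r\to 0}f_{r,\ell,\beta}$ and $\widetilde f_{\ell,\beta}:=\sup_{\beta'\in\Q\cap(\beta,1)}f_{\ell,\beta'}$ are Borel measurable, so the sets $B_{\ell,\beta}:=\{x\in E:\widetilde f_{\ell,\beta}(x)=0\}$ are Borel as well.

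The key step is to verify, up to an $\mathcal{S}^h_d\llcorner E$-null set, the identity $B_{\ell,\beta}=\{x\in E:\mathbb V(x)\subseteq C_{\mathbb V_\ell,d}(\beta)\}$. For the inclusion "$\supseteq$" I would use a cone-nesting argument: if $\mathbb V(x)\subseteq C_{\mathbb V_\ell,d}(\beta_0)$ with $\beta_0<\beta$, then an application of the triangle inequality to $\mathrm{dist}_d(\cdot,\mathbb V_\ell)$ shows that $C_{\mathbb V(x),d}(\gamma)\subseteq C_{\mathbb V_\ell,d}(\beta)$ whenever $\gamma\leq(\beta-\beta_0)/(1+\beta_0)$, and then the approximate tangent hypothesis \eqref{eqn:ConditionApprox} applied with this $\gamma$ and arbitrary $\varepsilon$ yields $f_{\ell,\beta'}(x)=0$ for every rational $\beta'>\beta$, whence $\widetilde f_{\ell,\beta}(x)=0$. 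For the converse inclusion, the argument is identical to the one in the proof of \cref{lem:BorelTangents3}, comparing the vanishing of $\widetilde f_{\ell,\beta}(x)$ with the hypothesis on $\mathbb V(x)$ and using \cite[Proof of Proposition 5.5]{antonelli2020rectifiable}.

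Finally, I would invoke \cite[Lemma 2.15]{antonelli2020rectifiable} to express any open set $\Omega\subseteq\G(h)$ as a countable union of cone neighborhoods $\{\mathbb W\in\G(h):\mathbb W\subseteq C_{\mathbb V_k,d}(r_k)\}$ with $\mathbb V_k\in\Omega\cap\{\mathbb V_\ell\}_{\ell\in\N}$, and conclude that $\mathbb V^{-1}(\Omega)=\bigcup_k B_{k,r_k}$ up to $\mathcal{S}^h_{d}\llcorner E$-null sets, proving the required measurability. The main obstacle I anticipate is the careful handling of the upper semicontinuity at points where the denominator of $f_{r,\ell,\beta}$ vanishes, resolved by the $+\infty$ convention, together with checking the nesting of cones at the level of the homogeneous distance $d$, which does not satisfy the same symmetric geometry as a Euclidean norm and therefore requires the quantitative estimate on $\gamma$ written above rather than a literal transcription of the Euclidean argument.
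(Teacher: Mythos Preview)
Your proposal is correct and takes exactly the approach the paper indicates: the paper's proof reads in its entirety ``The proof of the following lemma follows as the ones above. We omit the details,'' and you have reproduced the scheme of \cref{lem:BorelTangents3} with the cone-exclusion ratio in place of the tangent-measure ratio, which is precisely what is intended. The only caveat is that the inclusion $B_{\ell,\beta}\subseteq\{x:\mathbb V(x)\subseteq C_{\mathbb V_\ell,d}(\beta)\}$ is not literally ``identical'' to the corresponding step in \cref{lem:BorelTangents3}, since there one extracts a weak* limit of rescalings using asymptotic doubling, a structure not assumed here; the adaptation goes through the hypothesis \eqref{eqn:ConditionApprox} directly (or, as the paper notes just after the statement, one may instead argue as in \cite[Proposition~3.9]{MatSerSC}).
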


Notice that the previous statement could be also obtained arguing as in \cite[Proposition 3.9]{MatSerSC}, after having noticed that, since $2^{-h}\leq \Theta^{h,*}_{d}(\mathcal{S}^h_{d}\llcorner E,x)\leq 1$ for $\mathcal{S}^h_{d}\llcorner E$-almost every $x\in\mathbb G$ due to \cite[2.10.19(1) and 2.10.19(5)]{Federer1996GeometricTheory}, the condition \eqref{eqn:ConditionApprox} is equivalent to asking that $\mathbb V(x)$ is an approximate tangent plane to $E$ at $x$ in the sense of \cite[Equation (3.2)]{MatSerSC}.

\begin{osservazione}
The results in \cref{lem:BorelTangents3}, \cref{lem:BorelTangents}, and \cref{prop:approxtangentismeasurable} are readily true also when we allow $\tau(\phi,x)$ (or $\mathbb V(x)$) to be in some Borel subset of $\G(h)$.
\end{osservazione}

\begin{proposizione}\label{prop:TangentLocalizedCone}
Suppose $\phi$ is a Radon measure on $\mathbb{G}$ such that, for $\phi$-almost every $x\in\mathbb G$, we have $\Tan(\phi,x)=\{\lambda\mathcal{S}^h\llcorner \mathbb{V}(x):\lambda>0\}$ for some $\mathbb{V}(x)\in\G(h)$. Then, for every $\alpha\in(0,1)$ there exist $\{\mathbb V_i\}_{i\in\mathbb N}\subseteq \G(h)$, and a family of compact $C_{\mathbb V_i}(\alpha)$-sets $\{\Gamma_i\}_{i\in\mathbb N}$ such that 
$$
\phi(\mathbb G\setminus \cup_{i\in\mathbb N}\Gamma_i)=0.
$$
\end{proposizione}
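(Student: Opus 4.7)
My plan has three parts: preparation via known results, a crucial cone-localization claim for $\supp(\phi)$, and a covering construction through a discretization of the Grassmannian. For the preparation, note that for $\phi$-almost every $x$ the basis $\mathfrak B(\phi,x)=\{(h+1)\mathcal{C}^h\llcorner\mathbb V(x)\}$ is a weak$^*$-compact singleton, thanks to \cite[Proposition 2.12]{antonelli2020rectifiable} and \cref{lemma:PALLAUNITARIAVOLUMEUNO}. Hence \cref{prop:PhiAsDoubling} yields that $\phi$ is locally asymptotically doubling, and \cref{prop:ConvergencePreiss} gives $T_{x,r}\phi/F_1(T_{x,r}\phi)\rightharpoonup (h+1)\mathcal{C}^h\llcorner\mathbb V(x)$ as $r\downarrow 0$, equivalently $d_{x,r}(\phi,\mathfrak M(h,\{\mathbb V(x)\}))\to 0$, see \cref{rem:ScalingDandF}. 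By \cref{lem:BorelTangents3}, the selection $x\mapsto\mathbb V(x)$ is $\phi$-measurable into $\G(h)$.

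The crucial intermediate step is the cone claim: for $\phi$-a.e.\ $x$ and every $\beta\in(0,1)$ there exists $\rho(x,\beta)>0$ such that $\supp(\phi)\cap B(x,r)\subseteq xC_{\mathbb V(x)}(\beta)$ for every $0<r\le\rho(x,\beta)$. The plan is to derive it from \cref{prop:bdonplanes}: given $\sigma\in(0,1/5)$, the weak$^*$ convergence above provides $t_0=t_0(x,\sigma)>0$ with $d_{x,t}(\phi,\mathfrak M(h,\{\mathbb V(x)\}))\le\sigma^{h+4}$ for every $t\le t_0$. The mass-concentration of \cref{prop:bdonplanes} (applied at $z=x=y$ on $x\mathbb V(x)$ with $r=s=t/2$) yields
\begin{equation*}
\phi\bigl(B(x,t/2)\setminus B(x\mathbb V(x),\sigma^2 t/(h+1))\bigr)\le 5\sigma\,\phi(B(x,t/2));
\end{equation*}
combining this at dyadic scales, using asymptotic doubling and letting $\sigma\downarrow 0$ as $t\downarrow 0$, one first obtains $\phi(B(x,r)\setminus xC_{\mathbb V(x)}(\beta))=o(\phi(B(x,r)))$ as $r\downarrow 0$. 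To upgrade this mass estimate to the support inclusion, I argue by contradiction: a hypothetical sequence $y_k\in\supp(\phi)\cap B(x,r_k)\setminus xC_{\mathbb V(x)}(\beta)$ with $r_k\downarrow 0$ would, after rescaling at the scale $t\sim d(x,y_k)$, conflict with the upper-bound estimate implicit in the proof of \cref{prop:bdonplanes} (integrating a $1/q$-Lipschitz cutoff supported in a ball disjoint from $\mathbb V(x)$) combined with local asymptotic doubling at $\phi$-generic $y_k$ — which we may freely assume by restricting $y_k$ to a $\phi$-full Borel set where doubling and the tangent regularity hold.

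With the cone claim at hand, the construction is a routine discretization. Fix $\alpha\in(0,1)$; by \cref{prop:CompGrassmannian} choose a countable dense family $\{\mathbb V_i\}_{i\in\mathbb N}\subseteq(\G(h),d_\mathbb G)$, and use \cite[Lemma 2.15]{antonelli2020rectifiable} to pick $\beta_\alpha>0$ so that $d_\mathbb G(\mathbb V,\mathbb V_i)<\beta_\alpha/2$ forces $C_\mathbb V(\beta_\alpha)\subseteq C_{\mathbb V_i}(\alpha)$; let $\{z_j\}_{j\in\mathbb N}$ be dense in $\mathbb G$. For every $(i,n,j)\in\mathbb N^3$, set
\begin{equation*}
A_{i,n,j}:=\bigl\{x\in\supp(\phi):\,d_\mathbb G(\mathbb V(x),\mathbb V_i)<\beta_\alpha/2,\ \supp(\phi)\cap B(x,1/n)\subseteq xC_{\mathbb V_i}(\alpha)\bigr\}\cap B(z_j,1/(2n)),
\end{equation*}
and define $\Gamma_{i,n,j}:=\overline{A_{i,n,j}}$. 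Any two points $x_1,x_2\in A_{i,n,j}$ satisfy $d(x_1,x_2)<1/n$, hence $x_2\in\supp(\phi)\cap B(x_1,1/n)\subseteq x_1 C_{\mathbb V_i}(\alpha)$, so $A_{i,n,j}$ is a $C_{\mathbb V_i}(\alpha)$-set; since the cone $C_{\mathbb V_i}(\alpha)$ is closed and $A_{i,n,j}\subseteq\overline{B(z_j,1/(2n))}$, $\Gamma_{i,n,j}$ is a compact $C_{\mathbb V_i}(\alpha)$-set. Combining the cone claim with the density of $\{\mathbb V_i\}$ in $\G(h)$ and of $\{z_j\}$ in $\mathbb G$ produces $\phi(\mathbb G\setminus\bigcup_{(i,n,j)\in\mathbb N^3}\Gamma_{i,n,j})=0$, concluding. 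The main obstacle is exactly the cone claim: without an a priori positive lower $h$-density for $\phi$, the mass-to-support upgrade in the last step of the argument requires the quantitative upper-bound estimate extracted from the proof of \cref{prop:bdonplanes} together with local asymptotic doubling, which is the technical core of the proposition.
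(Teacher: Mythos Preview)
Your preparation and discretization steps are fine, but the ``cone claim'' at the heart of the argument has a genuine gap. You assert that for $\phi$-a.e.\ $x$ and every $\beta$, $\supp(\phi)\cap B(x,r)\subseteq xC_{\mathbb V(x)}(\beta)$ for small $r$, and propose to prove it by contradiction: given $y_k\in\supp(\phi)\cap B(x,r_k)\setminus xC_{\mathbb V(x)}(\beta)$, you want the upper bound from \cref{prop:bdonplanes} (applied at $x$) to force $\phi(B(y_k,s))$ to be small, while asymptotic doubling at $y_k$ forces it to be large. But asymptotic doubling at $y_k$ only compares $\phi(B(y_k,2s))$ with $\phi(B(y_k,s))$; it gives no lower bound for $\phi(B(y_k,s))$ in terms of $\phi(B(x,d(x,y_k)))$, which is what you need for a contradiction. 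Restricting the $y_k$ to a $\phi$-full set where $d_{y_k,t}(\phi,\mathfrak M(h,\{\mathbb V(y_k)\}))\to 0$ does not help either: the threshold $t_0(y_k)$ below which this quantity is small depends on $y_k$ and may be much smaller than $d(x,y_k)$, so you cannot apply \cref{prop:bdonplanes} at $y_k$ at the relevant scale. In short, a one-point application of \cref{prop:bdonplanes} (at $x$) cannot control arbitrary support points near $x$.

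The paper avoids this obstacle by never claiming the support inclusion. Instead it first passes, via Severini--Egoroff, to a compact subset $E$ of small diameter on which $d_{z,t}(\phi,\mathfrak M(h,\{\mathbb V\}))<\sigma^{h+4}$ holds \emph{uniformly} in $z\in E$ and in the relevant range of $t$. Then for any two points $\widetilde x,\widetilde y\in E$ at distance $a$, it applies \cref{prop:bdonplanes} \emph{twice}: once centred at $\widetilde y$ (to get mass concentration of $\phi$ near $\widetilde y\mathbb V$ in $B(\widetilde y,\widetilde r)$), and once centred at $\widetilde x$ (to get a lower bound $\phi(B(\widetilde x,\mathfrak e a))\gtrsim (\mathfrak e/(2+\mathfrak e))^h\phi(B(\widetilde x,\widetilde r+a))$). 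Since $B(\widetilde x,\mathfrak e a)\subseteq B(\widetilde y,\widetilde r)$ and $B(\widetilde y,\widetilde r)\subseteq B(\widetilde x,\widetilde r+a)$, these two estimates are incompatible unless $B(\widetilde x,\mathfrak e a)$ meets the tube $B(\widetilde y\mathbb V,2a\sigma^2(1+\mathfrak e)/(h+1))$, which yields $d(\widetilde x,\widetilde y\mathbb V)\le(\mathfrak e+2\sigma^2(1+\mathfrak e)/(h+1))\,d(\widetilde x,\widetilde y)$. This is precisely the $C_{\mathbb V}(\alpha)$-set property for $E$. The uniformity from Egoroff is exactly what allows \cref{prop:bdonplanes} to be invoked at \emph{both} $\widetilde x$ and $\widetilde y$ at the common scale $\sim a$, which is the missing ingredient in your contradiction scheme.
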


\begin{proof}
First of all, by \cref{prop:PhiAsDoubling}, the measure $\phi$ is locally asymptotically doubling. Up to restricting $\phi$ to closed balls and by using the locality of tangents in \cref{prop:Lebesuge} and Lebesgue Theorem in \cref{prop:Lebesuge}, we may assume that $\phi$ is supported on a compact set $K$ and that it is still locally asymptotically doubling. Let $S$ be dense countable subset of $(\G(h),d_\mathbb{G})$, that exists thanks to \cref{prop:CompGrassmannian}. Thanks to \cite[Proposition 2.29]{antonelli2020rectifiable}, we infer that also the countable set $\{(h+1)\mathcal{C}^{h}\llcorner \mathbb{W}:\mathbb{W}\in S\}$ is dense in the metric space $(\{(h+1)\mathcal{C}^{h}\llcorner \mathbb{V}:\mathbb{V}\in \G(h)\},F_{0,1})$.


Let us now fix $\mathfrak{e}<1/10$, $\sigma<1/100(\mathfrak{e}/(3(1+\mathfrak{e})))^h$,$\mathbb V\in S$ and let us denote 
$$
K_{\mathbb V}:=\{x\in K:F_{0,1}((h+1)\mathcal{C}^h\llcorner \mathbb{V},(h+1)\mathcal{C}^h\llcorner \mathbb{V}(x))<\sigma^{h+4}\},
$$
where $\mathbb V(x)\in\G(h)$ is such that $\mathrm{Tan}(\phi,x)=\{\lambda\mathcal{S}^h\llcorner\mathbb V(x),\lambda>0\}$. Since $\{(h+1)\mathcal{C}^{h}\llcorner \mathbb{W}:\mathbb{W}\in S\}$ is dense in the metric space $(\{(h+1)\mathcal{C}^{h}\llcorner \mathbb{V}:\mathbb{V}\in \G(h)\},F_{0,1})$ we conclude that $K=\cup_{\mathbb V\in S}K_{\mathbb V}$. By \cref{lem:BorelTangents3}, one gets that $K_{\mathbb V}$ is $\phi$-measurable for every $\mathbb V\in S$. Thus by \cref{prop:Lebesuge},  we can assume without loss of generality that $\phi$ is locally asymptotically doubling and supported on $K_{\mathbb V}$ for some $\mathbb V\in S$, which from now on we fix.

We now claim that for $\phi$-almost every $x\in\mathbb G$ the following holds
 \begin{equation}\label{eqn:CLAIMCLAIM}
 \lim_{r\to 0^+}d_{x,r}(\phi,\mathfrak{M}(h,\{\mathbb V\}))= F_{0,1}((h+1)\mathcal{C}^h\llcorner \mathbb{V},(h+1)\mathcal{C}^h\llcorner \mathbb{V}(x)).
 \end{equation}
 Indeed, for $\phi$-almost every $x\in\mathbb G$ the measure $T_{x,r}\phi/F_1(T_{x,r}\phi)$ converges to $(h+1)\mathcal{C}^h\llcorner \mathbb V(x)$ as $r\to 0^+$, see \cref{prop:ConvergencePreiss} and thus, from the definition of $d_{x,r}$, we get that 
 \begin{equation}\label{eqn:CiFaVedereContinuita}
 d_{x,r}(\phi,\mathfrak{M}(h,\{\mathbb V\}))=F_{0,1}(T_{x,r}\phi/F_1(T_{x,r}\phi),(h+1)\mathcal{C}^h\llcorner\mathbb V),
 \end{equation}
 from which we deduce the claim \eqref{eqn:CLAIMCLAIM} by using the previous convergence and the continuity of $F_{0,1}$, see, e.g., \cite[Proposition 1.10]{MarstrandMattila20} or \cref{prop1.12preiss}. Moreover, the function $x\to d_{x,r}(\phi,\mathfrak{M}(h,\{\mathbb V\}))$ is continuous in $x$ for every $r>0$. Indeed, by \eqref{eqn:CiFaVedereContinuita} and the continuity of $F_{0,1}$, it is sufficent to see that, for every $r>0$, the map $x\to T_{x,r}\phi/F_1(T_{x,r}\phi)$ is continuous from $\mathbb G$ to the space of Radon measures equipped with the weak* convergence, which is clear again by the continuity of $F_1(\cdot)$ and by the continuity of the map $x\to T_{x,r}\phi$, which is readily verified (see, e.g., the computations at the end of \cite[page 22]{MarstrandMattila20}). Hence, by using Severini-Egoroff Theorem, we can assume without loss of generality that $\phi$ is supported on a compact set $E$ such that $\diam(E)<s$ and such that  $d_{x,r}(\phi,\mathfrak{M}(h,\{\mathbb{V}\}))<\sigma^{h+4}$ whenever $ x\in E$ and $r\in(0,400(h+1)s)$. Let us now fix $\widetilde x,\widetilde y\in E$ and denote $a:=d(\widetilde x,\widetilde y)$, $\widetilde t:=2a(1+\mathfrak e)$, $\widetilde r:=a(1+\mathfrak e)$ and $\widetilde s:=a\mathfrak e$.
 
 Let us apply \cref{prop:bdonplanes} first with the choices $x=y=z=\widetilde y$, $s=r=\widetilde r$, $t=\widetilde t$ and $\sigma$ as above, that yields
\begin{equation}
    \phi(B(\widetilde y,\widetilde r)\cap B(\widetilde y\mathbb{V},\sigma^2\widetilde t/(h+1)))\geq (1-5\sigma)\phi(B(\widetilde y,\widetilde r)),
    \label{eq:num:num1}
\end{equation}
and secondly with $x=y=z=\widetilde x$, $r=\widetilde r+a$, $s=\widetilde s$, $t=3a(1+\mathfrak e)$ and $\sigma$, we get
\begin{equation}
    \phi(B(\widetilde x,\widetilde s)\cap B(\widetilde x\mathbb{V},\sigma^2\cdot 3a(1+\mathfrak e)/(h+1)))\geq (1-5\sigma)(\widetilde s/(\widetilde r+a))^h\phi(B(\widetilde x,\widetilde r+a)).
      \label{eq:num:num2}
\end{equation}
Putting together \eqref{eq:num:num1} and \eqref{eq:num:num2}, we conclude that
 \begin{equation}
     \begin{split}
         \phi(B(\widetilde y,\widetilde r)\setminus B(\widetilde y\mathbb{V},2a\sigma^2(1+\mathfrak{e})/(h+1)))&=\phi(B(\widetilde y,\widetilde r))-\phi(B(\widetilde y,\widetilde r)\cap B(\widetilde y\mathbb{V},2a\sigma^2(1+\mathfrak{e})/(h+1))) \\
         \leq 5\sigma \phi(B(\widetilde y,\widetilde r))& \leq 5\sigma \phi(B(\widetilde x,\widetilde r+a))\leq \frac{5\sigma}{1-5\sigma}\left(\frac{2(1+\mathfrak e)}{\mathfrak e}\right)^h\phi(B(\widetilde x,\widetilde s))
         \label{eq:num:num:3}
     \end{split}
 \end{equation}
 If by contradiction $B(\widetilde x,\mathfrak{e}a)\cap B(\widetilde y\mathbb{V},2a\sigma^2(1+\mathfrak{e})/(h+1)))= \emptyset$ then from \eqref{eq:num:num:3} and the fact that $B(\widetilde x,\widetilde s)\subseteq B(\widetilde y,\widetilde r)$, we infer
 \begin{equation}
     \begin{split}
\phi(B(\widetilde x,\widetilde s))\leq \frac{5\sigma}{1-5\sigma}\left(\frac{2(1+\mathfrak e)}{\mathfrak e}\right)^h\phi(B(\widetilde x,\widetilde s)),
\nonumber
     \end{split}
 \end{equation}
 that is in contradiction thanks with the choice of $\sigma$. Hence, for every $\widetilde x,\widetilde y\in E$ we have that $B(\widetilde x,\mathfrak{e}a)\cap B(\widetilde y\mathbb{V},2a\sigma^2(1+\mathfrak{e})/(h+1)))\neq \emptyset$ and thus $d(\widetilde x,\widetilde y\mathbb V)\leq a(\mathfrak e+2\sigma^2(1+\mathfrak e)/(h+1))=d(\widetilde x,\widetilde y)(\mathfrak e+2\sigma^2(1+\mathfrak e)/(h+1))$. Hence, the compact set $E$ is a $C_{\mathbb V}(\mathfrak e+2\sigma^2(1+\mathfrak e)/(h+1))$-set. Since it is clear that, for any given $\alpha>0$, $\sigma$ and $\mathfrak e$ can be chosen small enough in order to have $\mathfrak e+2\sigma^2(1+\mathfrak e)/(h+1)<\alpha$, the proof is thus concluded.
 \end{proof}

In the case the tangents are complemented we can give the following improvement of the latter Proposition.
\begin{proposizione}\label{prop:TangentCompemented}
Let $1\leq h\leq Q$ be a natural number. There exist $\{\mathbb V_i\}_{i\in\mathbb N}\subseteq \G_c(h)$ and $\mathbb L_i$ complementary subgroups of $\mathbb V_i$ such that the following holds.

Suppose $\phi$ is a Radon measure on $\mathbb{G}$ such that, for $\phi$-almost every $x\in\mathbb G$, we have $\Tan(\phi,x)=\{\lambda\mathcal{S}^h\llcorner \mathbb{V}(x):\lambda>0\}$ for some $\mathbb{V}(x)\in\G_c(h)$. Then, for every $\alpha\in(0,1)$ there exists a family of compact sets $\{\Gamma_i\}_{i\in\mathbb N}$ such that 
$$
\phi(\mathbb G\setminus \cup_{i\in\mathbb N}\Gamma_i)=0,
$$
and, for every $i\in\mathbb N$, $\Gamma_i$ is a compact intrinsic Lipschitz graph, which is also a $C_{\mathbb V_i}(\alpha)$-set, of a map $\varphi_i:A_i\subseteq \mathbb V_i\to\mathbb L_i$, where $A_i$ is compact.
\end{proposizione}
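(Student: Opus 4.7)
The plan is to run essentially the same argument as in Proposition~\ref{prop:TangentLocalizedCone}, with two modifications: (a) choose the countable dense family of planes inside $\G_c(h)$ and equip each with a fixed complement, and (b) invoke Proposition~\ref{prop:ConeAndGraph} at the end to upgrade each $C_{\mathbb V_i}(\alpha)$-set to a bona fide intrinsic Lipschitz graph over $\mathbb V_i$ with values in $\mathbb L_i$.

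First, I fix once and for all the countable family $\{(\mathbb V_i,\mathbb L_i)\}_{i\in\mathbb N}$ appearing in the statement. The set
\[
\mathcal{P}:=\{(\mathbb V,\mathbb L)\in\G_c(h)\times\G(Q-h):\mathbb L\text{ is a complement of }\mathbb V\}
\]
is a subset of the compact metric space $\G(h)\times\G(Q-h)$, hence separable; take $\{(\mathbb V_i,\mathbb L_i)\}_{i\in\mathbb N}$ to be a countable dense subset. These are the subgroups and complements used throughout.

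Now fix $\phi$ satisfying the hypothesis and $\alpha\in(0,1)$. By Lemma~\ref{lem:BorelTangents3} the map $x\mapsto\mathbb V(x)$ is $\phi$-measurable, and $\mathfrak e(\mathbb V(x))>0$ holds $\phi$-a.e.\ by definition of $\G_c(h)$. Using the lower semicontinuity of $\mathfrak e$ from Proposition~\ref{prop:grasscompiffC3}, I partition $\supp\phi$ countably according to the level sets $\{\mathfrak e(\mathbb V(x))\in[2^{-k-1},2^{-k})\}$, and, via the locality of tangents (Proposition~\ref{prop:Lebesuge}), reduce to working on one such piece where $\mathfrak e(\mathbb V(x))\geq\eta$ for some uniform $\eta>0$. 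I then proceed as in Proposition~\ref{prop:TangentLocalizedCone}: choose $\mathfrak e_0$ and $\sigma$ small compared to $\alpha$, use the $\phi$-measurability of $x\mapsto\mathbb V(x)$ and the density of $\{\mathbb V_i\}$ in $\G_c(h)$ to decompose $\phi$ into countably many Borel pieces $K_{\mathbb V_i}$ on each of which $F_{0,1}((h+1)\mathcal C^h\llcorner\mathbb V_i,(h+1)\mathcal C^h\llcorner\mathbb V(\cdot))<\sigma^{h+4}$, apply Severini-Egoroff to obtain compact sets of small diameter on which the convergence $d_{x,r}(\phi,\mathfrak M(h,\{\mathbb V_i\}))<\sigma^{h+4}$ is uniform, and then use Proposition~\ref{prop:bdonplanes} exactly as before to conclude that each piece is a $C_{\mathbb V_i}(\alpha')$-set for some $\alpha'<\alpha$ as small as desired.

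The one genuine issue is matching the a priori fixed family $\{(\mathbb V_i,\mathbb L_i)\}$ with the given $\alpha$, in the sense that one needs $\alpha\leq\oldep{ep:Cool}(d,\mathbb V_i,\mathbb L_i)$ for every index used, so that Proposition~\ref{prop:ConeAndGraph} applies. Here I exploit that by lower semicontinuity of $\mathfrak e$ one has, at each $x$ in the reduced support, some complement $\mathbb L$ of $\mathbb V(x)$ with $\oldep{ep:Cool}(d,\mathbb V(x),\mathbb L)>\alpha$; by (lower semi)continuity of $\oldep{ep:Cool}$ in the pair $(\mathbb V,\mathbb L)\in\mathcal P$, the set $\{(\mathbb V,\mathbb L)\in\mathcal P:\oldep{ep:Cool}(d,\mathbb V,\mathbb L)>\alpha\}$ is relatively open in $\mathcal P$, hence, by density of $\{(\mathbb V_i,\mathbb L_i)\}$, infinitely many indices $i$ approximate $(\mathbb V(x),\mathbb L)$ and satisfy $\alpha<\oldep{ep:Cool}(d,\mathbb V_i,\mathbb L_i)$. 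I may therefore restrict the decomposition to those indices. Once this is done, for each chosen index $i$ the compact $C_{\mathbb V_i}(\alpha)$-set $\Gamma_i$ satisfies the hypothesis of Proposition~\ref{prop:ConeAndGraph}, so $P_{\mathbb V_i}\colon\Gamma_i\to\mathbb V_i$ is injective and $\Gamma_i$ is the intrinsic graph of an intrinsic Lipschitz map $\varphi_i\colon A_i:=P_{\mathbb V_i}(\Gamma_i)\to\mathbb L_i$; compactness of $A_i$ follows from that of $\Gamma_i$ together with continuity of $P_{\mathbb V_i}$.

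The main technical obstacle is precisely the quantifier order in the statement: the family $\{(\mathbb V_i,\mathbb L_i)\}$ is chosen before $\phi$ and $\alpha$, so one must ensure \emph{a priori} that for any $\alpha\in(0,1)$ and any $\phi$ with $\mathfrak e(\mathbb V(x))>\alpha$ on a piece of its support, sufficiently many $(\mathbb V_i,\mathbb L_i)$ approximate the possible tangent pairs while keeping $\oldep{ep:Cool}>\alpha$. The separability of $\mathcal P$ together with the openness of the sublevel sets of $\oldep{ep:Cool}^{-1}$ is exactly what makes this reduction work.
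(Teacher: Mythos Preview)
Your overall strategy is the same as the paper's: stratify by the size of $\mathfrak e(\mathbb V(x))$, take a countable dense family of planes with chosen complements inside each stratum, run the cone argument of Proposition~\ref{prop:TangentLocalizedCone} via Proposition~\ref{prop:bdonplanes}, and finish with Proposition~\ref{prop:ConeAndGraph}. The difference is in how you resolve the ``matching'' of $\alpha$ with $\oldep{ep:Cool}(\mathbb V_i,\mathbb L_i)$, and there your argument has a gap.

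You write that on the piece where $\mathfrak e(\mathbb V(x))\geq\eta$ there is, at each $x$, a complement $\mathbb L$ with $\oldep{ep:Cool}(\mathbb V(x),\mathbb L)>\alpha$. This is only true when $\eta>\alpha$; on strata with $\eta<\alpha$ (which you must also treat, since $\alpha\in(0,1)$ is arbitrary and $\mathfrak e(\mathbb V(x))$ can be as small as you like) no such complement need exist, and then your restricted family $\{(\mathbb V_i,\mathbb L_i):\oldep{ep:Cool}>\alpha\}$ may fail to approximate $\mathbb V(x)$ at all. Concretely, if $\oldep{ep:Cool}$ happens to be continuous in the pair and $\mathfrak e(\mathbb V(x))=\alpha/2$, then every pair near $(\mathbb V(x),\mathbb L)$ has $\oldep{ep:Cool}<\alpha$ and your restriction discards all of them. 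Incidentally, the (lower semi)continuity of $\oldep{ep:Cool}$ in the pair that you invoke is plausible but is not stated in the paper; only that of $\mathfrak e$ (in $\mathbb V$ alone) is.

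The fix is precisely what the paper does and avoids the restriction entirely: you do \emph{not} need $\alpha\leq\oldep{ep:Cool}(\mathbb V_i,\mathbb L_i)$. Proposition~\ref{prop:ConeAndGraph} only requires the \emph{actual} cone opening to be at most $\oldep{ep:Cool}(\mathbb V_i,\mathbb L_i)$. So on the stratum with $\mathfrak e(\mathbb V(x))\in(1/\ell,1/(\ell-1)]$, choose the dense family $\{\mathbb V_{i,\ell}\}$ inside that stratum together with complements $\mathbb L_{i,\ell}$ satisfying $\oldep{ep:Cool}(\mathbb V_{i,\ell},\mathbb L_{i,\ell})>1/\ell$, and run the cone argument with target opening $\min\{\alpha,1/\ell\}$ rather than $\alpha$. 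The resulting compact set is then a $C_{\mathbb V_{i,\ell}}(\min\{\alpha,1/\ell\})$-set, hence simultaneously a $C_{\mathbb V_{i,\ell}}(\alpha)$-set and, since $1/\ell<\oldep{ep:Cool}(\mathbb V_{i,\ell},\mathbb L_{i,\ell})$, an intrinsic Lipschitz graph over $\mathbb V_{i,\ell}$ with values in $\mathbb L_{i,\ell}$. The universal family $\{(\mathbb V_i,\mathbb L_i)\}$ is then the union over $\ell$ of the $\{(\mathbb V_{i,\ell},\mathbb L_{i,\ell})\}$, fixed before seeing $\phi$ or $\alpha$.
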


\begin{proof}
The proof follows exactly the same lines as the proof of \cref{prop:TangentLocalizedCone}, so we just sketch it underlying the main differences. For every 
$\ell\in\N$, with $\ell\geq 2$, let us define
$$
\G_c(h,\ell):=\{\mathbb V\in \G_c(h):\exists\, \mathbb L\,\, \text{complement of}\,\, \mathbb V
 \,\,\text{s.t.}\,\, 1/\ell <\oldep{ep:Cool}(\mathbb V,\mathbb L)\leq 1/(\ell-1)\}.
$$ 
Observe that \cref{prop:CompGrassmannian} implies that $\G_c(h,\ell)$ is separable for any $\ell\in\N$, since $\G_c(h,\ell)\subseteq \G(h)$ and $(\G(h),d_{\mathbb G})$ is a compact metric space. Let \begin{equation}\label{eqn:Dl}
 \mathscr{D}_\ell:=\{\mathbb V_{i,\ell}\}_{i\in\N},
 \end{equation} 
 be a countable dense subset of $\G_c(h,\ell)$ and
 \begin{equation}\nonumber
     \text{for all $i\in\N$, choose a complement $\mathbb L_{i,\ell}$ of $\mathbb V_{i,\ell}$ s.t. $1/\ell <\oldep{ep:Cool}(\mathbb V_{i,\ell},\mathbb L_{i,\ell})\leq 1/(\ell-1)$}.
 \end{equation}
 Now, let $S:=\{\mathbb V_{i,\ell}\}_{i,\ell\in\mathbb N}$, which is a dense countable subset of $(\G_c(h),d_\mathbb{G})$ thanks to the definition given above. As in the above proposition we infer that also the countable set $\{(h+1)\mathcal{C}^{h}\llcorner \mathbb{W}:\mathbb{W}\in S\}$ is dense in the metric space $(\{(h+1)\mathcal{C}^{h}\llcorner \mathbb{V}:\mathbb{V}\in \G_c(h)\},F_{0,1})$. Let us now fix, for every $\ell\in\mathbb N$,  $\mathfrak{e}_\ell<\min\{1/10,1/(2\ell),\alpha/2\}$, where $\alpha$ is as in the statement, and $\sigma_\ell<\min\{1/100(\mathfrak{e}_\ell/(3(1+\mathfrak{e}_\ell)))^h,\sigma'_\ell\}$, where $\sigma'_\ell$ is chosen small enough such that $\mathfrak e_\ell + 2(\sigma_\ell')^2(1+\mathfrak e_\ell)/(h+1)<\min\{\alpha,1/\ell\}$. Moreover, for every $\mathbb V_{i,\ell}\in \mathscr{D}_\ell$, let us denote 
$$
K_{\mathbb V_{i,\ell}}:=\{x\in K:F_{0,1}((h+1)\mathcal{C}^h\llcorner \mathbb{V}_{i,\ell},(h+1)\mathcal{C}^h\llcorner \mathbb{V}(x))<\sigma_\ell^{h+4}\},
$$
where $\mathbb V(x)$ is the element of $\G(h)$ for which $\mathrm{Tan}(\phi,x)=\{\lambda\mathcal{S}^h\llcorner\mathbb V(x),\lambda>0\}$. Arguing as in the above proposition, being $K$ the compact set on which we can assume $\phi$ is supported without loss of generality, we have $K=\cup_{\ell\in\mathbb N}\cup_{\mathbb V_{i,\ell}\in\mathscr{D}_\ell}K_{\mathbb V_{i,\ell}}$. Hence, we can assume without loss of generality that $\phi$ is supported on $K_{\mathbb V_{i,\ell}}$ for some $\mathbb V_{i,\ell}$. The computations in the proof of the above proposition can be repeated substituting $\sigma_\ell$ with $\sigma$ accordingly, allowing us to conclude that $\phi$-almost every $K_{\mathbb V_{i,\ell}}$ can be covered by compact sets that are $C_{\mathbb V_{i,\ell}}(\mathfrak e_\ell + 2(\sigma_\ell)^2(1+\mathfrak e_\ell)/(h+1))$. By the very choice of $\sigma'_\ell$ this implies that the latter compact sets are $C_{\mathbb V_{i,\ell}}(\min\{\alpha,1/\ell\})$-sets, and since $1/\ell<\oldep{ep:Cool}(\mathbb V_{i,\ell},\mathbb L_{i,\ell})$, we also conclude that they are graphs according to the splitting $\mathbb G=\mathbb V_{i,\ell}\cdot\mathbb L_{i,\ell}$, see \cref{prop:ConeAndGraph}.
\end{proof}

\section{From flat tangents to $\mathscr{P}$-rectifiability}\label{sec:Density}
In this section we first prove that, in an arbitrary Carnot group, having flat (complemented) tangent measures à la Preiss implies being $\mathscr{P}$-rectifiable, see \cref{thm:ExistenceOfDensityPlus}. Then we will prove a rectifiable criterion, see \cref{prop:rett.1}, which will allow us to complete the proof of \cref{thm:INTRO1Equivalence}. 

In this section a Carnot group $\mathbb G$ will be fixed, along with a left-invariant homogeneous distance on it, that sometimes will be understood.
Throughout this section we assume that $\mathbb{V}\in\G_c(h)$ and that $\mathbb{V}\mathbb{L}=\mathbb{G}$. In this chapter whenever we deal with $C_{\mathbb V}(\alpha)$-sets we are tacitly assuming that $\alpha\leq\oldep{ep:Cool}(\mathbb V,\mathbb L)$.
\medskip

\subsection{From flat Preiss's tangents to $\mathscr{P}$-rectifiability}

We are going to prove that whenever a measure has flat (complemented) tangents \'a la Preiss, then it is $\mathscr{P}$-rectifiable. Throughout this section we assume that $\mathbb{V}\in\G_c(h)$ and that $\mathbb L$ is a complementary subgroup of $\mathbb V$. Let us begin with a proposition that roughly tells us the following. If $\Gamma$ is a compact $C_{\mathbb V}(\alpha)$-set with $\alpha\leq \oldep{ep:Cool}(\mathbb V,\mathbb L)$, and moreover we know that the measure $\mathcal{S}^h\llcorner\Gamma$ is locally asymptotically doubling, hence $\Gamma$ has big projections on $\mathbb V$. This will allow us to prove that the lower $h$-density of $\mathcal{S}^h\llcorner\Gamma$ is positive almost everywhere, see \cref{prop:DensitaInfPositiva}. The latter conclusion eventually leads to the following result: if a set has flat (complemented) Preiss's tangents, then it is $\mathscr{P}^c$-rectifiable, see \cref{thm:ExistenceOfDensityPlus}. 

Let us start by recalling two results from \cite{antonelli2020rectifiable} and proving an adaption of \cite[Proposition 4.6]{antonelli2020rectifiable} to our context. 

\begin{lemma}[{\cite[Lemma 4.2]{antonelli2020rectifiable}}]\label{lemma:tec.cono}
Suppose $d$ is a homogeneous left-invariant metric on $\mathbb{G}$. Then, there exists an $A:= A(d,\mathbb V,\mathbb L)>1$ such that for any $w\in B_d(0,1/5A)$, any $y\in \partial B_d(0,1)\cap C_{\mathbb{V},d}(\oldep{ep:Cool}(d,\mathbb{V},\mathbb{L}))$ and any $z\in B_d(y,1/5A)$, we have
$w^{-1}z\not\in \mathbb{L}$.
\end{lemma}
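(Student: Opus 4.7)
The plan is to argue by contradiction: if $w^{-1}z$ lay in $\mathbb L$, then the splitting projection $P_\mathbb V$ would equate $z$ with $w$, so $\|P_\mathbb V z\|_d$ would inherit the smallness of $\|w\|_d$ and be of order $1/A$; but $z$ is close to a point $y$ whose $\mathbb V$-projection is \emph{macroscopically} non-zero, so $\|P_\mathbb V z\|_d$ cannot be that small. For $A$ large enough the two conclusions are incompatible.

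First I would establish a uniform lower bound on $\|P_\mathbb V y\|_d$ for every admissible $y$. By \cref{lemma:LCapCw=e} one has $\oldep{ep:Cool}=\oldC{ProjC}/2$, and the elementary cone-comparison recorded in \cref{oss:ConiEquivalenti} (applied with $\alpha=\oldep{ep:Cool}$) gives $\|P_\mathbb L y\|_d\leq \|P_\mathbb V y\|_d$. Combining this with the inequality $\|y\|_d\leq \|P_\mathbb V y\|_d+\|P_\mathbb L y\|_d$ from \cref{cor:2.2.19} and using $\|y\|_d=1$ yields $\|P_\mathbb V y\|_d\geq 1/2$.

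Next, I would invoke the continuity of the projection $P_\mathbb V:\mathbb G\to\mathbb V$ (see \cref{prop:InvarianceOfProj}): being continuous on the compact set $\overline{B_d(0,2)}$, it is uniformly continuous there, so there exists $\delta=\delta(d,\mathbb V,\mathbb L)>0$ such that $d(P_\mathbb V p,P_\mathbb V q)<1/4$ whenever $p,q\in\overline{B_d(0,2)}$ with $d(p,q)<\delta$. I would then fix $A>1$ so large that both $1/(5A)<\min\{1,\delta\}$ and $1/(5A\,\oldC{ProjC})<1/4$.

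Finally, assume for contradiction that $w^{-1}z=\ell\in\mathbb L$. Writing $w=P_\mathbb V w\cdot P_\mathbb L w$, we have $z=w\ell=P_\mathbb V w\cdot(P_\mathbb L w\cdot\ell)$ with $P_\mathbb L w\cdot \ell\in\mathbb L$; uniqueness of the splitting thus forces $P_\mathbb V z=P_\mathbb V w$, and \cref{cor:2.2.19} bounds $\|P_\mathbb V z\|_d=\|P_\mathbb V w\|_d\leq \|w\|_d/\oldC{ProjC}<1/4$. On the other hand, $z\in \overline{B_d(0,2)}$ and $d(z,y)<\delta$ imply $d(P_\mathbb V z,P_\mathbb V y)<1/4$, hence, by the reverse triangle inequality and the first step, $\|P_\mathbb V z\|_d\geq \|P_\mathbb V y\|_d-d(P_\mathbb V z,P_\mathbb V y)\geq 1/4$, contradicting the previous bound. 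The only delicate point is the uniform-continuity claim for $P_\mathbb V$: this projection is typically not Lipschitz for the homogeneous distance, so the modulus $\delta$ must be extracted from compactness of $\overline{B_d(0,2)}$ (equivalently, from the polynomial expression of $P_\mathbb V$ in exponential coordinates). Once this is in hand, everything reduces to straightforward manipulation of the splitting estimates in \cref{cor:2.2.19}.
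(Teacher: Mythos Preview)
The paper does not supply a proof of this lemma: it is stated as a citation from the companion paper \cite[Lemma 4.2]{antonelli2020rectifiable}, so there is no in-paper argument to compare against. Your proof is correct and self-contained. The key observations are exactly the right ones: (a) the constraint $y\in C_{\mathbb V}(\oldep{ep:Cool})\cap\partial B(0,1)$ forces $\lVert P_{\mathbb V}y\rVert\geq 1/2$ via \cref{oss:ConiEquivalenti} and \cref{cor:2.2.19}; (b) the hypothesis $w^{-1}z\in\mathbb L$ forces $P_{\mathbb V}z=P_{\mathbb V}w$, whence $\lVert P_{\mathbb V}z\rVert$ is small; and (c) uniform continuity of $P_{\mathbb V}$ on the compact $\overline{B(0,2)}$ propagates the lower bound from $y$ to $z$. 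The contradiction is clean.

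One small correction: \cref{prop:InvarianceOfProj} is not a reference for the continuity of $P_{\mathbb V}$ --- that proposition concerns the Jacobian of $z\mapsto P_{\mathbb V}(xz)$. The continuity you need is immediate from the fact that, in exponential coordinates, the splitting projections are polynomial maps (see for instance the description of the group operation around \eqref{opgr}); you correctly flag this at the end of your write-up.
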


\begin{proposizione}[{\cite[Proposition 4.3]{antonelli2020rectifiable}}]\label{prop:palleseparate}
Suppose $d$ is a homogeneous left-invariant metric on $\mathbb{G}$. Let us fix $\alpha<\oldep{ep:Cool}(d,\mathbb V,\mathbb L)$ and suppose $\Gamma$ is a $C_{\mathbb{V},d}(\alpha)$-set. For any $x\in \Gamma$ let $\rho(x)$ to be the biggest number satisfying the following condition. For any $0<r<\rho(x)$ and any $y\in B(x,r)\cap \Gamma$ we have
    $$P_\mathbb{V}(B_d(x,r)) \cap P_\mathbb{V}(B_d(y,s))=\emptyset\,\, \text{ for any }r,s<d(x,y)/5A,$$
where $A=A(d,\mathbb V,\mathbb L)$ is the constant yielded by \cref{lemma:tec.cono}. Then, the function $x\mapsto \rho(x)$ is positive everywhere on $\Gamma$ and upper semicontinuous.  
\end{proposizione}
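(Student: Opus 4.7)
The plan is to reduce positivity to \cref{lemma:tec.cono} by a dilation and left-translation argument, and to obtain upper semicontinuity via a routine perturbation argument using the continuity of the projection map.

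For positivity, I would fix $x\in\Gamma$ and prove the stronger claim that the separation condition holds for every $y\in\Gamma\setminus\{x\}$. Set $R:=d(x,y)$ and observe that $y':=\delta_{1/R}(x^{-1}y)$ lies in $\partial B(0,1)\cap C_{\mathbb V}(\alpha)$ by the $C_{\mathbb V}(\alpha)$-set property of $\Gamma$ combined with the $\delta_\lambda$-invariance of cones. Since $\alpha<\oldep{ep:Cool}(d,\mathbb V,\mathbb L)$, in particular $y'\in\partial B(0,1)\cap C_{\mathbb V}(\oldep{ep:Cool})$, so \cref{lemma:tec.cono} applies. Given arbitrary $\widetilde w\in B(x,R/5A)$ and $\widetilde z\in B(y,R/5A)$, the normalized points $w:=\delta_{1/R}(x^{-1}\widetilde w)\in B(0,1/5A)$ and $z:=\delta_{1/R}(x^{-1}\widetilde z)\in B(y',1/5A)$ satisfy $w^{-1}z\notin\mathbb L$ by \cref{lemma:tec.cono}. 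Since dilations are group automorphisms, $\widetilde w^{-1}\widetilde z=\delta_R(w^{-1}z)$, and the $\delta_\lambda$-invariance of $\mathbb L$ forces $\widetilde w^{-1}\widetilde z\notin\mathbb L$. By uniqueness of the splitting decomposition this is equivalent to $P_{\mathbb V}(\widetilde w)\neq P_{\mathbb V}(\widetilde z)$; hence $P_{\mathbb V}(B(x,r))\cap P_{\mathbb V}(B(y,s))=\emptyset$ for all $r,s<d(x,y)/5A$, yielding $\rho(x)>0$.

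For upper semicontinuity, I would show that the sublevel set $\{x\in\Gamma:\rho(x)<c\}$ is open for every $c>0$. If $\rho(x)<c$, then by the very definition of the supremum the separation condition must fail at some level $\mathbf r\in(\rho(x),c)$, so there exist $y_0\in B(x,\mathbf r)\cap\Gamma$, radii $r_0,s_0<d(x,y_0)/5A$, and witnesses $w\in B(x,r_0)$, $z\in B(y_0,s_0)$ with $P_{\mathbb V}(w)=P_{\mathbb V}(z)$. For $x'$ in a small enough neighborhood of $x$, continuity of $d(\cdot,y_0)$ keeps $y_0\in B(x',c)\cap\Gamma$ and ensures both $r_0+d(x,x')<d(x',y_0)/5A$ and $s_0<d(x',y_0)/5A$. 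By the triangle inequality $w\in B(x',r_0+d(x,x'))$, so the same pair $(w,z)$ still realizes $P_{\mathbb V}(B(x',r_0+d(x,x')))\cap P_{\mathbb V}(B(y_0,s_0))\neq\emptyset$, which forces $\rho(x')<c$ on a neighborhood of $x$, as desired.

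The main subtle point is the perturbed witness construction in the upper semicontinuity argument: a naive left translation does not conjugate $P_{\mathbb V}$, so one must keep the same witness and pay a mild inflation in the radius around the center, which is absorbed into the strict inequality $r_0<d(x,y_0)/5A$. This is soft, but requires all the strict inequalities defining $\rho$ to survive simultaneously on a full neighborhood, which is ensured by continuity of $d(\cdot,y_0)$ and by choosing $d(x,x')$ sufficiently small.
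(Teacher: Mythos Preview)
Your argument is correct, and since the paper imports this statement from \cite[Proposition~4.3]{antonelli2020rectifiable} without reproducing a proof there is nothing further to compare against; the dilation/left-translation reduction to \cref{lemma:tec.cono} via the equivalence $P_{\mathbb V}(\widetilde w)=P_{\mathbb V}(\widetilde z)\Longleftrightarrow\widetilde w^{-1}\widetilde z\in\mathbb L$ is exactly the natural mechanism. One remark: your positivity step actually establishes the separation condition for \emph{every} $y\in\Gamma\setminus\{x\}$ with no restriction on $d(x,y)$, so in fact $\rho\equiv+\infty$ on $\Gamma$; this makes upper semicontinuity automatic and renders your second paragraph (which is nonetheless correct, including the point about absorbing the center shift into the strict inequality $r_0<d(x,y_0)/5A$) unnecessary.
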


\begin{proposizione}\label{prop:proj}
Let $\alpha\leq\oldep{ep:Cool}(\mathbb V,\mathbb L)$ and suppose $\Gamma$ is a compact $C_\mathbb{V}(\alpha)$-set of $\mathcal{S}^h$-finite measure such that for  $\mathcal{S}^h\llcorner \Gamma$-almost every $x\in\mathbb{G}$ we have
$$\limsup_{r\to 0}\frac{\mathcal{S}^h\llcorner \Gamma(B(x,2r))}{\mathcal{S}^h\llcorner \Gamma(B(x,r))}<\infty.$$
Then, there exists a constant $\newC\label{C:proj}:=\oldC{C:proj}(h,A)>0$ such that for $\mathcal{S}^h$-almost every $x\in \Gamma$ there exists an infinitesimal sequence $\{\ell_i(x)\}_{i\in\N}$ such that for any $i\in\N$ we have
\begin{equation}
    \mathcal{S}^h(P_{\mathbb{V}}(\Gamma\cap B(x,\ell_i(x))))>\oldC{C:proj}\ell_i(x)^h
    \label{eq:bigproj}
\end{equation}
\end{proposizione}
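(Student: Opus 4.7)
The proof follows closely the scheme of \cite[Proposition 4.6]{antonelli2020rectifiable}, of which this statement is a mild strengthening: the lower density assumption used there is replaced by the weaker requirement of asymptotic doubling of $\mathcal{S}^h\llcorner\Gamma$. First I would reduce to a uniformly controlled compact subset. By \cref{prop:palleseparate} the function $\rho$ is positive and upper semicontinuous on $\Gamma$, so $\{\rho\geq 1/n\}$ is closed for every $n\in\mathbb N$; moreover the asymptotic doubling hypothesis yields, for $\mathcal{S}^h\llcorner\Gamma$-almost every $x$, a scale $r(x)>0$ and a constant $M(x)$ beneath which the doubling inequality holds. By a Lusin-type exhaustion it is then enough to prove the conclusion for $\mathcal{S}^h$-almost every point of a fixed compact $\Gamma'\subseteq\Gamma$ of positive measure on which $\rho\geq\rho_0>0$ and on which $\mathcal{S}^h\llcorner\Gamma(B(\cdot,2r))\leq M_0\,\mathcal{S}^h\llcorner\Gamma(B(\cdot,r))$ for every $r<r_0$; the general conclusion will then follow by $\sigma$-additivity.

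Next I would extract a sequence of good scales. Since $\mathcal{S}^h(\Gamma)<+\infty$, Federer's density bound \cite[2.10.19(5)]{Federer1996GeometricTheory} gives $\Theta^{h,*}(\mathcal{S}^h\llcorner\Gamma,x)\geq 2^{-h}$ at $\mathcal{S}^h\llcorner\Gamma$-almost every $x\in\Gamma'$, hence an infinitesimal sequence $\ell_i\to 0^+$, all smaller than $\min(r_0,\rho_0)$, along which $\mathcal{S}^h(\Gamma\cap B(x,\ell_i))\geq 2^{-h-1}\ell_i^h$. At each such scale I would perform a Vitali-type covering of $\Gamma\cap B(x,\ell_i)$ designed to produce balls with \emph{pairwise disjoint projections}: fix a parameter $\lambda\in(0,1)$ depending only on $A$ and on the cone constants, and extract a maximal $5A\lambda\ell_i$-separated family $\{y_j\}\subseteq\Gamma\cap B(x,\ell_i)$. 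Maximality gives $\Gamma\cap B(x,\ell_i)\subseteq\bigcup_j B(y_j,5A\lambda\ell_i)$, while the separation $d(y_i,y_j)>5A\lambda\ell_i$ combined with $\lambda\ell_i<\rho(y_j)$ allows \cref{prop:palleseparate} to ensure that $\{P_{\mathbb V}(B(y_j,\lambda\ell_i))\}_j$ are pairwise disjoint.

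Finally, the ingredients must be combined. The uniform doubling inequality on $\Gamma'$, iterated a fixed number of times depending only on $A$, compares $\mathcal{S}^h(\Gamma\cap B(y_j,5A\lambda\ell_i))$ with $\mathcal{S}^h(\Gamma\cap B(y_j,\lambda\ell_i))$; summing the covering inequality then yields a quantitative lower bound on the total $\mathcal{S}^h$-mass of $\Gamma$ in the $\lambda\ell_i$-neighbourhoods of the $y_j$'s, and hence on the cardinality of $\{y_j\}$. This, together with the exact value $\mathcal{S}^h(P_{\mathbb V}(B(y_j,\lambda\ell_i)))=\oldC{BallC}(\lambda\ell_i)^h$ from \cref{cor:2.2.19} and with \cref{lemma:lowerbd}, which transfers a definite portion of each such projected ball inside $P_{\mathbb V}(\Gamma\cap B(y_j,\lambda\ell_i))$ via the cone condition, produces
\[
\mathcal{S}^h\bigl(P_{\mathbb V}(\Gamma\cap B(x,2\ell_i))\bigr)\;\geq\;\sum_{j}\mathcal{S}^h\bigl(P_{\mathbb V}(\Gamma\cap B(y_j,\lambda\ell_i))\bigr)\;\geq\;\oldC{C:proj}\ell_i^h,
\]
for a positive constant $\oldC{C:proj}$ depending only on $h$ and $A$. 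Setting $\ell_i(x):=2\ell_i$ concludes the argument. The main obstacle will be the last scale-balancing step: $\lambda$ must be small enough to invoke \cref{prop:palleseparate}, yet large enough that the doubling passage between the scales $\lambda\ell_i$ and $5A\lambda\ell_i$, together with the application of \cref{lemma:lowerbd} involving the factor $\mathfrak C(\alpha)$, still delivers a quantitative bound independent of $x$, mirroring exactly the delicate balancing performed in \cite[Proposition 4.6]{antonelli2020rectifiable}.
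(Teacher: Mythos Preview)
Your overall scheme is close to that of \cite[Proposition~4.6]{antonelli2020rectifiable}, but the final step, where you pass from the covering family $\{y_j\}$ to a lower bound on $\mathcal{S}^h\bigl(P_{\mathbb V}(\Gamma\cap B(x,2\ell_i))\bigr)$, does not work as written. You invoke \cref{lemma:lowerbd} claiming it ``transfers a definite portion of each projected ball inside $P_{\mathbb V}(\Gamma\cap B(y_j,\lambda\ell_i))$'', but that lemma only gives
\[
\mathcal{S}^h\bigl(P_{\mathbb V}(B(y_j,\lambda\ell_i)\cap\Gamma)\bigr)\;\geq\;\mathcal{S}^h\bigl(P_{\mathbb V}\bigl(B(y_j,\mathfrak C(\alpha)\lambda\ell_i)\cap y_jC_{\mathbb V}(\alpha)\bigr)\,\cap\,P_{\mathbb V}(\Gamma)\bigr),
\]
whose right-hand side is still an intersection with $P_{\mathbb V}(\Gamma)$. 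Bounding that quantity from below is exactly the big-projections statement you are trying to prove, so the argument is circular. Likewise, your doubling step produces a lower bound on $\sum_j\mathcal{S}^h(\Gamma\cap B(y_j,\lambda\ell_i))$, but there is no general inequality of the form $\mathcal{S}^h(P_{\mathbb V}(A))\gtrsim\mathcal{S}^h(A)$ for subsets $A$ of a cone (only the reverse inequality \eqref{eq:n520} holds), so this mass bound cannot be converted into a projection bound, nor can the induced cardinality bound on $\{y_j\}$.

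The paper closes this gap by a different mechanism: instead of estimating $\mathcal{S}^h\bigl(P_{\mathbb V}(\Gamma\cap B_j)\bigr)$, it bounds $\mathcal{S}^h(\Gamma\cap B(w,R))$ from above by $\sum_j r_j^h$ using the upper density constraint built into the fine cover $\mathscr{F}_\delta(k)$, identifies $\sum_j r_j^h$ with $\sum_j\mathcal{S}^h\bigl(P_{\mathbb V}(B(x_j,r_j))\bigr)$ (the \emph{full} projected balls), and uses the disjointness of those projections to rewrite this as the measure of their union. The crucial extra step, absent from your plan, is letting the covering parameter $\delta\to 0$ so that $\bigcup_{B\in\mathscr F_\delta(w,R)}B$ Hausdorff-converges to $\overline{\mathscr D(k)}\cap B(w,R)\subseteq\Gamma$, whence upper semicontinuity of $\mathcal{S}^h\llcorner\mathbb V$ under Hausdorff convergence yields the bound on $\mathcal{S}^h\bigl(P_{\mathbb V}(\Gamma\cap B(w,R))\bigr)$. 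Note also that this route uses only the universal density bounds $2^{-h}\leq\Theta^{h,*}\leq 1$, so the resulting constant depends only on $h$ and $A$; your use of the uniform doubling constant $M_0$ would make $\oldC{C:proj}$ depend on the (pointwise, non-quantitative) doubling hypothesis, contrary to the statement.
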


\begin{proof}
We will just sketch the proof, that is an adaptation of \cite[Proposition 4.6]{antonelli2020rectifiable} and we refer the reader to \cite[Proposition 4.6]{antonelli2020rectifiable} for the missing details. Let $N\in \N$ be the unique natural number for which $5^{N-2}\leq A<5^{N-1}$, where $A$ is as in \cref{lemma:tec.cono}. Notice that, since $2^{-h}\leq \Theta^{h,*}(\mathcal{S}^h\llcorner\Gamma,x)\leq 1$ for $\mathcal{S}^h\llcorner\Gamma$-almost every $x\in\mathbb G$ (cf. \cite[2.10.19(1) and 2.10.19(5)]{Federer1996GeometricTheory}), hence, for $\mathcal{S}^h\llcorner \Gamma$-almost every $x\in\mathbb G$ there exists an infinitesimal sequence $\{\ell_i(x)\}_{i\in\N}$ such that
\begin{equation}
    \frac{1}{2^{h+1}}\leq\frac{\mathcal{S}^h(\Gamma\cap B(x,\ell_i(x)))}{\ell_i(x)^h}\leq 2.
    \label{eq:bdbddens}
\end{equation}
Thus, for any $k\in\N$ and $0<\delta<1/2$ we define the following sets
\begin{equation}
    \begin{split}
        A(k):=&\{x\in \Gamma:\rho (x)>1/k\},\\
        \mathscr{D}(k):=&\bigg\{x\in A(k):\lim_{r\to 0}\frac{\mathcal{S}^{h}(B(x,r)\cap A(k))}{\mathcal{S}^{h}(B(x,r)\cap \Gamma)}=1\bigg\},\\
        \mathscr{F}_{\delta}(k):=&\bigg\{B(x,r):x\in \mathscr{D}(k) \text{, }r\leq \frac{\min\{k^{-1},\delta\}}{1000 A}\text{ and }\frac{1}{2^{h+1}}\leq\frac{\mathcal{S}^h\llcorner\Gamma(B(x,5^{N+1}r))}{(5^{N+1}r)^h}\leq 2\bigg\},
    \end{split}
\end{equation}
where $\rho(x)$ is the number defined in \cref{prop:palleseparate}.
First of all notice that, thanks to \eqref{eq:bdbddens}, $\mathscr{F}_{\delta}(k)$ is a fine covering of $\mathcal{S}^h\llcorner\Gamma$-almost all $\mathscr{D}(k)$. Furthermore, for any $k$ the sets $A(k)$ are Borel since thanks to \cref{prop:palleseparate}, the function $\rho$ is upper semicontinuous and, since by assumption $\mathcal{S}^h\llcorner \Gamma$ is locally asymptotically doubling, we also know that $\mathcal{S}^h\llcorner \Gamma(A(k)\setminus\mathscr{D}(k))=0 $. Finally, from \cref{prop:palleseparate} we infer that $\mathcal{S}^h(\Gamma\setminus\cup_{k=1}^{+\infty} A(k))=0$. 
Let us apply \cite[Lemma 4.5]{antonelli2020rectifiable} to $N$ and $\mathscr{F}_\delta(k)$ and we obtain the disjoint subfamily $\mathscr{G}_\delta(k)$ of $\mathscr{F}_\delta(k)$
such that
\begin{itemize}
    \item[($\alpha$)] for any $B,B^\prime\in\mathscr{G}_\delta(k)$ we have that $5^NB\cap 5^NB^\prime=\emptyset$,
    \item[($\beta$)] $\bigcup_{B\in\mathscr{F}_\delta(k)} B\subseteq \bigcup_{B\in\mathscr{G}_\delta(k)} 5^{N+1}B$.
\end{itemize}
Throughout the rest of the proof we fix a $w\in \mathscr{D}(k)$ such that there exists a sequence $\{\ell_i(w)\}_{i\in\N}$ satisfying \eqref{eq:bdbddens}, $\ell_i(w)\leq \min\{k^{-1},\delta\}/8$, and
\begin{equation}\label{eqn:RequirementsOnR}
\frac{\mathcal{S}^{h}\llcorner \mathscr{D}(k)(B(w,\ell_i(w)))}{\mathcal{S}^{h}\llcorner \Gamma(B(w,\ell_i(w)))}\geq \frac{1}{2}\qquad\text{for any }i\in\N,
\end{equation}
where the inequality follows from the fact that $\mathcal{S}^h\llcorner \Gamma$-almost every point of $\mathscr{D}(k)$ has density one with respect to the locally asymptotically doubling measure $\mathcal{S}^h\llcorner \Gamma$. Notice that, according to the previous discussion, the previous choice on $w$ is made in a set of full $\mathcal{S}^h\llcorner\Gamma$-measure, so that if we prove the estimate \eqref{eq:bigproj} with such a $w$ we are done.
For the ease of notation we continue the proof fixing the radius $\ell_i(w)=R$. We stress that the forthcoming estimates are verified also for any $\ell_i(w)$. 
As in \cite[Proposition 4.6]{antonelli2020rectifiable}, one can prove that for any couple of closed balls $B(x,r),B(y,s)\in \mathscr{G}_\delta(k)$ such that $B(w,R)$ intersects both $B(x,5^{N+1}r)$ and $B(y,5^{N+1}s)$, we have
\begin{equation}
    P_\mathbb{V}(B(x,r))\cap P_{\mathbb{V}}(B(y,s))=\emptyset.
    \label{eq:palle}
\end{equation}

In order to proceed with the conclusion of the proof, let us define
\begin{equation}
    \begin{split}
    \mathscr{F}_\delta(w,R):=&\{B\in\mathscr{F}_\delta(k): 5^{N+1}B\cap B(w, R)\cap\mathscr{D}(k)\neq \emptyset\},\\
    \mathscr{G}_\delta(w,R):=&\{B\in\mathscr{G}_\delta(k): 5^{N+1}B\cap B(w, R)\cap\mathscr{D}(k)\neq \emptyset\},
        \nonumber
    \end{split}
\end{equation}
Thanks to our choice of $R$, see \eqref{eqn:RequirementsOnR}, and the definition of $\mathscr{G}_{\delta}(w,R)$ we have
$$\frac{R^h}{2^{h+1}}\leq \mathcal{S}^h\llcorner \Gamma(B(w,R))\leq 2\mathcal{S}^h\llcorner \mathscr{D}(k)(B(w,R))\leq2\mathcal{S}^h\llcorner \mathscr{D}(k)\bigg(\bigcup_{B\in \mathscr{G}_{\delta}(w,R)} 5^{N+1}B\bigg).$$
Let $\mathscr{G}_{\delta}(w,R):=\{B(x_i,r_i)\}_{i\in\N}$ and recall that $x_i\in \mathscr{D}(k)$. This implies, thanks to \cref{cor:2.2.19}, that
\begin{equation}
\begin{split}
    \mathcal{S}^h\llcorner \mathscr{D}(k)\bigg(\bigcup_{B\in \mathscr{G}_{\delta}(w,R)} 5^{N+1}B\bigg)\leq 2\cdot 5^{(N+1)h}\sum_{i\in\N}r_i^h
    &=  2\cdot 5^{(N+1)h}\oldC{BallC}(\mathbb{V},\mathbb L)^{-1}\sum_{i\in\N} \mathcal{S}^h(P_\mathbb{V}(B(x_i,r_i)))\\
    = 2\cdot 5^{(N+1)h}\oldC{BallC}(\mathbb{V},\mathbb L)^{-1}\mathcal{S}^h\bigg(P_\mathbb{V}\bigg(\bigcup_{i\in\N} B(x_i,r_i)\bigg)\bigg)&\leq 2\cdot 5^{(N+1)h}\oldC{BallC}(\mathbb{V},\mathbb L)^{-1}\mathcal{S}^h\bigg(P_\mathbb{V}\bigg(\bigcup_{B\in \mathscr{F}_\delta(w,R)} B\bigg)\bigg),
    \nonumber
    \end{split}
\end{equation}
where the first inequality comes from the subadditivity of the measure and the upper estimate that we have in the definition of $\mathscr{F}_{\delta}(k)$; while the first identity of the second line above comes from \eqref{eq:palle}. Summing up, for any $\delta>0$ we have
$$
\frac{\oldC{BallC}(\mathbb{V},\mathbb L) R^h}{ 5^{(N+1)h}2^{h+3}}\leq \mathcal{S}^h\bigg(P_\mathbb{V}\bigg(\bigcup_{B\in \mathscr{F}_\delta(w,R)} B\bigg)\bigg).
$$
Arguing as in the end of the proof of \cite[Proposition 4.6]{antonelli2020rectifiable}, we get the Hausdorff convergence
$$
P_{\mathbb{V}}\bigg(\bigcup_{B\in \mathscr{F}_\delta(w,R)} B\bigg)\underset{H,\delta\to 0}{\longrightarrow} P_\mathbb{V}\bigg(\overline{\mathscr{D}(k)}\cap B(w,R)\bigg).
$$
Thanks to the upper semicontinuity of the Lebesgue measure with respect to the Hausdorff convergence we eventually infer that
$$\frac{\oldC{BallC}(\mathbb{V},\mathbb L) R^h}{5^{(N+1)h}2^{h+3}}\leq\limsup_{\delta\to0}\mathcal{S}^h\bigg(P_\mathbb{V}\bigg(\bigcup_{B\in \mathscr{F}_\delta(w,R)} B\bigg)\bigg)\leq \mathcal{S}^h(P_\mathbb{V}(\overline{\mathscr{D}(k)}\cap B(w,R)))\leq \mathcal{S}^h(P_\mathbb{V}(\Gamma\cap B(w,R))),$$
where the last inequality above comes from the compactness of $\Gamma$ and the fact that $\mathscr{D}(k)\subseteq \Gamma$.
\end{proof}

\begin{proposizione}\label{prop:mutuallyabs}
Let us fix $\alpha\leq\oldep{ep:Cool}(\mathbb V,\mathbb L)$ and suppose $\Gamma$ is a compact $C_\mathbb{V}(\alpha)$-set of finite $\mathcal{S}^h$-measure such that
$$
\limsup_{r\to 0}\frac{\mathcal{S}^h\llcorner \Gamma(B(x,2r))}{\mathcal{S}^h\llcorner \Gamma(B(x,r))}<\infty,
$$
for $\mathcal{S}^h$-almost every $x\in \Gamma$. Let us set $\varphi:P_{\mathbb V}(\Gamma)\to \mathbb{L}$ the map whose graph is $\Gamma$, see \cref{prop:ConeAndGraph}, and set $\Phi:P_{\mathbb V}(\Gamma)\to\mathbb G$ to be the graph map of $\varphi$. Let us define $\Phi_*\mathcal S^h\llcorner\mathbb V$ to be the measure on $\Gamma$ such that for every measurable $A\subseteq \Gamma$ we have $\Phi_*\mathcal S^h\llcorner\mathbb V(A):=\mathcal{S}^h\llcorner\mathbb V(\Phi^{-1}(A))=\mathcal{S}^h\llcorner \mathbb V(P_{\mathbb V}(A))$. Then $\Phi_*\mathcal{S}^h\llcorner \mathbb{V}$ is mutually absolutely continuous with respect to $\mathcal{S}^h\llcorner \Gamma$.
\end{proposizione}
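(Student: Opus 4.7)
The plan is to prove the two absolute-continuity inclusions separately. The easy direction, namely $\Phi_\ast \mathcal S^h\llcorner \mathbb V \ll \mathcal S^h\llcorner \Gamma$, is an immediate consequence of the projection estimate \eqref{eq:n520} in \cref{cor:2.2.19}: if $A\subseteq \Gamma$ is Borel with $\mathcal S^h\llcorner \Gamma(A)=0$, then $\mathcal S^h(A)=0$ and hence
\[
\Phi_\ast \mathcal S^h\llcorner \mathbb V (A) = \mathcal S^h\llcorner\mathbb V(P_{\mathbb V}(A)) \leq 2\oldC{BallC}(\mathbb V,\mathbb L)\,\mathcal S^h(A) = 0.
\]

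For the opposite direction $\mathcal S^h\llcorner \Gamma \ll \Phi_\ast \mathcal S^h\llcorner \mathbb V$, I will argue by contradiction, relying crucially on the big-projection estimate \cref{prop:proj}. Suppose $A\subseteq \Gamma$ is Borel with $\Phi_\ast\mathcal S^h\llcorner\mathbb V(A)=\mathcal S^h\llcorner\mathbb V(P_{\mathbb V}(A))=0$ but $\mathcal S^h\llcorner\Gamma(A)>0$. Since $\mathcal S^h\llcorner\Gamma$ is locally asymptotically doubling by assumption, Lebesgue's differentiation theorem (see \cref{prop:Lebesuge}) gives that at $\mathcal S^h\llcorner\Gamma$-almost every $x\in A$,
\[
\lim_{r\to 0}\frac{\mathcal S^h\llcorner\Gamma(B(x,r)\setminus A)}{\mathcal S^h\llcorner\Gamma(B(x,r))}=0.
\]
Moreover, thanks to \cite[2.10.19(1)]{Federer1996GeometricTheory}, for $\mathcal S^h\llcorner\Gamma$-almost every $x\in\Gamma$ we have $\Theta^{h,\ast}(\mathcal S^h\llcorner\Gamma,x)\leq 1$, so $\mathcal S^h\llcorner\Gamma(B(x,r))\leq 2r^h$ for all sufficiently small $r$. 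Finally, by \cref{prop:proj}, at $\mathcal S^h$-almost every $x\in\Gamma$ there exists an infinitesimal sequence $\{\ell_i(x)\}_{i\in\mathbb N}$ along which $\mathcal S^h(P_{\mathbb V}(\Gamma\cap B(x,\ell_i(x))))>\oldC{C:proj}\,\ell_i(x)^h$.

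Fixing $x\in A$ where all three properties hold simultaneously, the inclusion $\Gamma\cap B(x,\ell_i(x)) \subseteq (A\cap B(x,\ell_i(x))) \cup ((\Gamma\setminus A)\cap B(x,\ell_i(x)))$, combined with the subadditivity of $\mathcal S^h\llcorner\mathbb V\circ P_{\mathbb V}$ and a second application of \eqref{eq:n520}, yields
\[
\oldC{C:proj}\,\ell_i(x)^h < \mathcal S^h(P_{\mathbb V}(A\cap B(x,\ell_i(x))))+ 2\oldC{BallC}(\mathbb V,\mathbb L)\,\mathcal S^h\llcorner\Gamma(B(x,\ell_i(x))\setminus A).
\]
The first summand on the right is zero because $P_{\mathbb V}(A\cap B(x,\ell_i(x)))\subseteq P_{\mathbb V}(A)$ and $\mathcal S^h\llcorner\mathbb V(P_{\mathbb V}(A))=0$ by hypothesis. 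The second summand is $o(\ell_i(x)^h)$ by combining the Lebesgue density $1$ of $A$ at $x$ with the upper bound $\mathcal S^h\llcorner\Gamma(B(x,\ell_i(x)))\leq 2\ell_i(x)^h$. Dividing by $\ell_i(x)^h$ and letting $i\to\infty$ gives $\oldC{C:proj}\leq 0$, which is the desired contradiction.

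The main obstacle is the second direction, and within it the key ingredient is \cref{prop:proj}: without a quantitative lower bound on the size of the projection of $\Gamma$ onto $\mathbb V$ at arbitrarily small scales, the projection $P_{\mathbb V}$ restricted to $\Gamma$ could in principle annihilate a large portion of $\Gamma$. The doubling hypothesis is used twice, once to guarantee the applicability of the projection lemma \cref{prop:proj} itself and once to invoke Lebesgue differentiation; both uses are essential to control the measure of $B(x,\ell_i(x))\setminus A$ by the measure of a ball of radius $\ell_i(x)$ and hence ultimately by $\ell_i(x)^h$.
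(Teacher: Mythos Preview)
Your proof is correct and uses the same key ingredient as the paper, namely \cref{prop:proj}, for the hard direction. The execution differs slightly: the paper passes by inner regularity to a compact subset $C\subseteq A$ with $\mathcal S^h(C)>0$ and $\mathcal S^h(P_{\mathbb V}(C))=0$, observes that $C$ is itself a compact $C_{\mathbb V}(\alpha)$-set with $\mathcal S^h\llcorner C$ locally asymptotically doubling (by \cref{prop:Lebesuge}(i)), and then applies \cref{prop:proj} \emph{directly to $C$} to get $\mathcal S^h(P_{\mathbb V}(C))>0$, an immediate contradiction. Your route applies \cref{prop:proj} to $\Gamma$ and then uses Lebesgue differentiation plus the upper-density bound $\Theta^{h,\ast}\leq 1$ to transfer the big-projection estimate from $\Gamma$ to $A$; this works but is a little more involved, since the paper's version avoids both the density-one argument and the appeal to \cite[2.10.19(1)]{Federer1996GeometricTheory}.
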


\begin{proof}

The fact that $\Phi_*\mathcal{S}^h\llcorner \mathbb{V}$ is absolutely continuous with respect to $\mathcal{S}^h\llcorner \Gamma$ is an immediate consequence of the second part of \cref{cor:2.2.19}. 
Viceversa, suppose by contradiction that there exists a compact subset $C$ of $\Gamma$ of positive $\mathcal{S}^h$-measure such that
\begin{equation}
    0=\Phi_*\mathcal{S}^h\llcorner \mathbb{V}(C)=\mathcal{S}^h(P_\mathbb{V}(C)).
    \label{eq:contradiction}
\end{equation}
Since $\mathcal{S}^h\llcorner C$ is locally asymptotically doubling by \cref{prop:Lebesuge} and $C$ has positive and finite $\mathcal{S}^h$-measure, we infer thanks to \cref{prop:proj} that the set $C$ must have a projection of positive $\mathcal{S}^h$-measure. This however comes in contradiction with \eqref{eq:contradiction}.
\end{proof}

In order to prove the forthcoming \cref{prop:DensitaInfPositiva} we need the following result, which is precisely \cite[Proposition 4.10]{antonelli2020rectifiable}.

\begin{proposizione}
\label{prop:vitali2PD}
Suppose $d$ is a homogeneous left-invariant distance on $\mathbb{G}$, let $\mathbb V,\mathbb L$ be complementary subgroups of $\mathbb G$ such that $\mathbb V\in\G_c(h)$, and let us fix $\alpha\leq\oldep{ep:Cool}(\mathbb V,\mathbb L)$. Suppose that $\Gamma$ is a compact $C_{\mathbb V,d}(\alpha)$-set of finite $\mathcal{S}^h$-measure. As in \cref{prop:mutuallyabs}, let us denote with $\Phi:P_{\mathbb V}(\Gamma)\to\mathbb G$ the graph map of $\varphi:P_{\mathbb V}(\Gamma)\to \mathbb L$ whose intrinsic graph is $\Gamma$.
Then, for $\mathcal{S}^h_d$-almost every $w\in P_\mathbb{V}(\Gamma)$ we have
\begin{equation}
    \lim_{r\to 0}\frac{\mathcal{S}^h_d\big(P_\mathbb{V}\big(B_{d}(\Phi(w),r)\cap \Phi(w)C_{\mathbb V,d}(\alpha)\big)\cap P_\mathbb{V}(\Gamma)\big)}{\mathcal{S}_{d}^h\big(P_\mathbb{V}\big(B_d(\Phi(w),r)\cap \Phi(w)C_{\mathbb V,d}(\alpha)\big)\big)}=1.
    \label{eq:limi}
\end{equation}
\end{proposizione}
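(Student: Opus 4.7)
The plan is to read the claim as a Lebesgue-type density statement for the Borel set $E := P_\mathbb V(\Gamma) \subseteq \mathbb V$. The second estimate in \cref{cor:2.2.19} gives $\mathcal{S}^h(E) < +\infty$, and by \cref{lemma:PALLAUNITARIAVOLUMEUNO} the restriction $\mathcal{S}^h\llcorner\mathbb V$ is (up to a constant) the Haar measure on $\mathbb V$, so $(\mathbb V, d, \mathcal{S}^h\llcorner\mathbb V)$ is an Ahlfors $h$-regular, and hence doubling, metric measure space. Standard Lebesgue differentiation for finite Borel measures in doubling spaces yields, for $\mathcal{S}^h\llcorner\mathbb V$-a.e.\ $w \in E$, that $s^{-h}\,\mathcal{S}^h\bigl((B(w, s) \cap \mathbb V) \setminus E\bigr) \to 0$ as $s \to 0$; call this property $(\dagger)$.

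\textbf{Identifying the cone projection.} For $w \in E$ set $x := \Phi(w)$ and consider the map $\Psi := \Psi_x \colon \mathbb V \to \mathbb V$, $\Psi(z) := P_\mathbb V(xz)$, from \cref{prop:InvarianceOfProj}, which is a bijection with unit Jacobian. A direct check, writing $xk = \Psi(P_\mathbb V(k)) \cdot \ell$ for a suitable $\ell \in \mathbb L$ and using that $\mathbb L$ is a subgroup, gives $P_\mathbb V(xk) = \Psi(P_\mathbb V(k))$ for every $k \in \mathbb G$. Combined with the dilation-invariance of the cone $C_\mathbb V(\alpha)$ this identifies
\begin{equation*}
P_\mathbb V(B(x, r) \cap xC_\mathbb V(\alpha)) = \Psi(\delta_r T), \qquad T := P_\mathbb V(B(0, 1) \cap C_\mathbb V(\alpha)),
\end{equation*}
and since $B_\mathbb V(0, 1) \subseteq T$ the denominator in the statement equals $r^h\mathcal{S}^h(T) =: c_\alpha r^h$, with $c_\alpha \geq 1$ independent of $w$.

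\textbf{Conclusion modulo a Lipschitz bound, and main obstacle.} The desired limit would then follow from $(\dagger)$ together with a Lipschitz-type inclusion $\Psi(\delta_r T) \subseteq B(w, K r) \cap \mathbb V$: one would estimate
\begin{equation*}
\frac{\mathcal{S}^h(\Psi(\delta_r T) \setminus E)}{c_\alpha r^h} \leq \frac{K^h}{c_\alpha} \cdot \frac{\mathcal{S}^h\bigl((B(w, K r) \cap \mathbb V) \setminus E\bigr)}{(K r)^h},
\end{equation*}
which tends to $0$ as $r \to 0$ by $(\dagger)$, and combining with $\mathcal{S}^h(\Psi(\delta_r T)) = c_\alpha r^h$ yields the claimed ratio converging to $1$. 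The main obstacle is precisely establishing this local Lipschitz-type inclusion for $\Psi$ near $0$. An explicit computation gives $w^{-1}\Psi(z) = P_\mathbb L(x) \cdot z \cdot P_\mathbb L(xz)^{-1}$; when $\mathbb L$ is invariant under $\mathbb V$-conjugation (which holds automatically for horizontal $\mathbb V$, and in particular in every step-$2$ splitting with horizontal $\mathbb V$), this collapses to $\Psi(z) = wz$, so $\Psi$ is a left-translation isometry of $\mathbb V$ and the inclusion is immediate with $K$ depending only on $\oldC{ProjC}$. In the general situation the conjugation invariance can fail, and one needs to exploit the polynomial nature of the Carnot group operations together with the unit Jacobian of $\Psi$ in order to obtain the required estimate with a constant $K = K(x)$; this is the strategy carried out in \cite[Proposition 4.10]{antonelli2020rectifiable}, which the statement cites.
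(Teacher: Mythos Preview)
Your setup and the identification $P_\mathbb V(B(x,r)\cap xC_\mathbb V(\alpha))=\Psi(\delta_r T)$ are correct, as is the computation of the denominator via the unit Jacobian. The gap is in the final step: the inclusion $\Psi(\delta_r T)\subseteq B(w,Kr)\cap\mathbb V$ on which your reduction to ordinary Lebesgue differentiation hinges is \emph{false} in general, even with $K=K(x)$. In $\mathbb H^1$ take the splitting $\mathbb V=\{(a,0,c)\}$, $\mathbb L=\{(0,b,0)\}$ (here $\mathbb L$ is \emph{not} $\mathbb V$-conjugation-invariant), and $x=(0,B,0)$, so $w=P_\mathbb V(x)=0$. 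For $z=(r,0,0)\in\mathbb V\cap B(0,r)\subseteq\delta_r T$ one gets $xz=(r,B,-\tfrac12 Br)$ and $\Psi(z)=P_\mathbb V(xz)=(r,0,-Br)$, hence $\lVert w^{-1}\Psi(z)\rVert$ is of order $\sqrt{|B|\,r}$ as $r\to 0$, which is not $O(r)$. The unit-Jacobian property of \cref{prop:InvarianceOfProj} controls \emph{measures} of images under $\Psi$, not their diameters; your claim that the cited reference establishes the Lipschitz estimate is a misreading.

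The actual proof in \cite[Proposition 4.10]{antonelli2020rectifiable} (whose structure is visible in the sketch of \cref{prop:vitali2PDAdapted}) takes a genuinely different route. Rather than reducing to ball-based density on $\mathbb V$, it constructs a fine covering relation on $P_\mathbb V(\Gamma)$ in which the ``balls'' at $w\in P_\mathbb V(\Gamma)$ are precisely the sets $G(w,r)=P_\mathbb V(B(\Phi(w),r)\cap\Phi(w)C_\mathbb V(\alpha))$, supplemented by auxiliary sets at points of $\mathbb V\setminus P_\mathbb V(\Gamma)$, and checks directly that this is an $\mathcal{S}^h\llcorner\mathbb V$-Vitali relation in Federer's sense. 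The key is a \emph{measure} comparison: the enlarged set $\hat G(w,r)$ is shown to lie in $P_\mathbb V(B(\Phi(w),C'r))$, whose $\mathcal S^h$-measure equals $(C'r)^h\mathcal S^h(P_\mathbb V(B(0,1)))$ by \cref{prop:InvarianceOfProj} and homogeneity, while $\mathcal S^h(G(w,r))=r^h\mathcal S^h(T)$; this gives the bounded-enlargement condition without ever containing $G(w,r)$ in a metric ball of comparable radius. Federer's differentiation theorem for Vitali relations then yields \eqref{eq:limi}. Your argument, as written, is complete only in the special case you isolate where $\Psi$ reduces to a left translation on $\mathbb V$.
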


\begin{proposizione}\label{prop:DensitaInfPositiva}
Let $\alpha\leq\oldep{ep:Cool}(\mathbb V,\mathbb L)$ and suppose $\Gamma$ is a $C_\mathbb{V}(\alpha)$-set such that $\mathcal{S}^h\llcorner \Gamma$ is locally asymptotically doubling. Then, $\Theta^h_*(\mathcal{S}^h\llcorner\Gamma,x)>0$ for $\mathcal{S}^h$-almost every $x\in\Gamma$.
\end{proposizione}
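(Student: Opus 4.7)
The plan is to show $\Theta^h_*(\mathcal{S}^h\llcorner\Gamma,x)>0$ by chaining $\mathcal{S}^h(\Gamma\cap B(x,r))\gtrsim \mathcal{S}^h(P_\mathbb V(\Gamma\cap B(x,r)))\gtrsim r^h$, where the first inequality is \eqref{eq:n520} in \cref{cor:2.2.19} and the second one comes from combining \cref{lemma:lowerbd} with the Vitali-type density result \cref{prop:vitali2PD}. The asymptotic-doubling hypothesis enters only in the preliminary step, namely the reduction to compact subsets of finite $\mathcal{S}^h$-measure.

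First I would reduce to $\Gamma$ compact with $\mathcal{S}^h(\Gamma)<\infty$. Since $\mathcal{S}^h\llcorner\Gamma$ is locally asymptotically doubling it is Radon, so $\Gamma$ is, up to an $\mathcal{S}^h$-null set, the countable union of compact subsets $K_n$ of finite $\mathcal{S}^h$-measure. Each $K_n$ is still a $C_\mathbb V(\alpha)$-set, and \cref{prop:Lebesuge}(i) gives $\Theta^h_*(\mathcal{S}^h\llcorner K_n,\cdot)=\Theta^h_*(\mathcal{S}^h\llcorner\Gamma,\cdot)$ $\mathcal{S}^h$-a.e.\ on $K_n$. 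Hence it suffices to prove the claim for each $K_n$, which I rename $\Gamma$.

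Next I would apply \cref{prop:vitali2PD}: for $\mathcal{S}^h$-a.e.\ $w\in P_\mathbb V(\Gamma)$ and, by \cref{prop:mutuallyabs}, for $\mathcal{S}^h\llcorner\Gamma$-a.e.\ $x=\Phi(w)\in\Gamma$, we have
\begin{equation*}
\lim_{s\to 0}\frac{\mathcal{S}^h\bigl(P_\mathbb V(B(x,s)\cap xC_\mathbb V(\alpha))\cap P_\mathbb V(\Gamma)\bigr)}{\mathcal{S}^h\bigl(P_\mathbb V(B(x,s)\cap xC_\mathbb V(\alpha))\bigr)}=1.
\end{equation*}
Using $B(x,s)\cap xC_\mathbb V(\alpha)=x\cdot(B(0,s)\cap C_\mathbb V(\alpha))$ and the invariance of the projection stated in \cref{prop:InvarianceOfProj}, the denominator equals $\mathcal{S}^h(P_\mathbb V(B(0,s)\cap C_\mathbb V(\alpha)))\geq \mathcal{S}^h(B(0,s)\cap \mathbb V)\geq c_0 s^h$, with $c_0>0$ by \cref{lemma:PALLAUNITARIAVOLUMEUNO} and the equivalence of $\mathcal{S}^h$ and $\mathcal{C}^h$. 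Thus for $s$ small enough (depending on $x$) the numerator is at least $\tfrac{c_0}{2}s^h$.

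Finally I would set $s=\mathfrak{C}(\alpha)r$ in the above and chain \cref{lemma:lowerbd} with \eqref{eq:n520} in \cref{cor:2.2.19} to obtain, for all sufficiently small $r$,
\begin{equation*}
\mathcal{S}^h(\Gamma\cap B(x,r))\geq \tfrac{1}{2\oldC{BallC}}\mathcal{S}^h(P_\mathbb V(\Gamma\cap B(x,r)))\geq \tfrac{1}{2\oldC{BallC}}\mathcal{S}^h\bigl(P_\mathbb V(B(x,\mathfrak{C}(\alpha)r)\cap xC_\mathbb V(\alpha))\cap P_\mathbb V(\Gamma)\bigr)\geq c_1 r^h,
\end{equation*}
with $c_1:=\tfrac{c_0\,\mathfrak{C}(\alpha)^h}{4\,\oldC{BallC}}>0$ independent of $x$. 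This yields $\Theta^h_*(\mathcal{S}^h\llcorner\Gamma,x)\geq c_1>0$ for $\mathcal{S}^h$-a.e.\ $x\in\Gamma$. I do not foresee any major obstacle: the argument is essentially a short chaining of results already proved in the paper, the only subtlety being the initial reduction to compact subsets of finite $\mathcal{S}^h$-measure via \cref{prop:Lebesuge}(i).
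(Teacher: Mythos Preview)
Your argument is correct and follows essentially the same route as the paper: both chain \cref{prop:vitali2PD}, \cref{lemma:lowerbd}, and \eqref{eq:n520} (via \cref{prop:mutuallyabs} to pass from a.e.\ on $P_\mathbb V(\Gamma)$ to a.e.\ on $\Gamma$), the only cosmetic difference being that the paper phrases the same inequality chain as a contradiction while you give the direct lower bound $\Theta^h_*\geq c_1$. One small inaccuracy in your summary: the doubling hypothesis is not used only in the reduction step, since \cref{prop:mutuallyabs} (which you correctly invoke) also requires it.
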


\begin{proof}
Assume by contradiction that there exists a compact set $C\subseteq \Gamma$ of positive $\mathcal{S}^h$-measure such that $\Theta^h_*(\mathcal{S}^h\llcorner\Gamma,x)=0$ for every $x\in C$. Since by \cref{prop:mutuallyabs} the measures $\mathcal{S}^h\llcorner \Gamma$ and $\Phi_*\mathcal{S}^h\llcorner \mathbb{V}$ are mutually absolutely continuous, the set $P_\mathbb{V}(C)$ must have positive $\mathcal{S}^h$-measure as well. In particular we have thanks to \cref{prop:vitali2PD}, \cref{lemma:lowerbd}, \cref{cor:2.2.19}, and \cref{prop:InvarianceOfProj} that for $\mathcal{S}^h$-almost every $x\in C$ we have
\begin{equation}
    \begin{split}
        &\mathcal{S}^h\big(P_\mathbb{V}\big(B(0,1)\cap C_\mathbb{V}(\alpha)\big)\big)\\=&\liminf_{r\to 0}\frac{\mathcal{S}^h\big(P_\mathbb{V}\big(B(x,\mathfrak{C}(\alpha)r)\cap xC_\mathbb{V}(\alpha)\big)\cap P_\mathbb{V}(\Gamma)\big)}{\mathcal{S}^h\big(P_\mathbb{V}\big(B(\Phi(w),\mathfrak{C}(\alpha)r)\cap \Phi(w)C_\mathbb{V}(\alpha)\big)\big)}\frac{\mathcal{S}^h\big(P_\mathbb{V}\big(B(\Phi(w),\mathfrak{C}(\alpha)r)\cap \Phi(w)C_\mathbb{V}(\alpha)\big)\big)}{(\mathfrak{C}(\alpha)r)^h}\\
        =&\liminf_{r\to 0}\frac{\mathcal{S}^h\big(P_\mathbb{V}\big(B(x,\mathfrak{C}(\alpha)r)\cap xC_\mathbb{V}(\alpha)\big)\cap P_\mathbb{V}(\Gamma)\big)}{(\mathfrak{C}(\alpha)r)^h}\\
        \leq& \liminf_{r\to 0}\frac{\mathcal{S}^h\llcorner \mathbb{V}(P_\mathbb{V}(B(x,r)\cap \Gamma))}{(\mathfrak{C}(\alpha)r)^h}\leq 2\oldC{ProjC}(\mathbb{V},\mathbb L)\liminf_{r\to 0}\frac{\mathcal{S}^h\llcorner \Gamma(B(x,r))}{(\mathfrak{C}(\alpha)r)^h}=0,
        \nonumber
    \end{split}
\end{equation}
where $\mathfrak{C}(\alpha)$ is the constant introduced in \cref{lemma:lowerbd}. The above computation is in contradiction with the fact that $\mathcal{S}^h\big(P_\mathbb{V}\big(B(0,1)\cap C_\mathbb{V}(\alpha)\big)\big)$ is positive thus concluding the proof of the proposition.
\end{proof}

We are now in a position to state the main result of this subsection.
\begin{teorema}\label{thm:ExistenceOfDensityPlus}
    Let $\Gamma\subseteq \mathbb G$ be compact such that $\mathcal{S}^h(\Gamma)<+\infty$. Assume that for $\mathcal{S}^h\llcorner\Gamma$-almost every $x\in\mathbb G$ we have $\mathrm{Tan}(\mathcal{S}^h\llcorner\Gamma,x)=\{\lambda\mathcal{S}^h\llcorner\mathbb V(x):\lambda>0,\}$, where $\mathbb V(x)\in\G_c(h)$. Then,  $\mathcal{S}^h\llcorner\Gamma$ is $\mathscr{P}_h^c$-rectifiable.
\end{teorema}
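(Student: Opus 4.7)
The plan is to verify separately the two defining properties of $\mathscr{P}_h^c$-rectifiability for the measure $\phi:=\mathcal{S}^h\llcorner\Gamma$. The upper-density bound $\Theta^{h,*}(\phi,x)\leq 1$ for $\phi$-almost every $x$ is immediate from $\mathcal{S}^h(\Gamma)<+\infty$ via \cite[2.10.19(1)]{Federer1996GeometricTheory}, and condition (ii)$^*$ of \cref{def:PhRectifiableMeasure} follows at once from the hypothesis, since $\Tan_h(\phi,x)\subseteq\Tan(\phi,x)$ and the subgroup $\mathbb V(x)$ is already complemented by assumption. Thus the only non-trivial point is to establish the positivity of the lower $h$-density, namely $\Theta^h_*(\phi,x)>0$ for $\phi$-almost every $x\in\Gamma$.

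The strategy for this lower bound proceeds in three steps. First, since $\Tan(\phi,x)=\{\lambda\mathcal{S}^h\llcorner\mathbb V(x):\lambda>0\}$ at $\phi$-almost every $x$, the set $\mathfrak{B}(\phi,x)$ reduces to the singleton $\{(h+1)\mathcal{C}^h\llcorner\mathbb V(x)\}$ and is therefore weak$^*$-compact, so \cref{prop:PhiAsDoubling} implies that $\phi$ is locally asymptotically doubling. Second, fixing a cone opening $\alpha>0$ below the thresholds $\oldep{ep:Cool}(\mathbb V_i,\mathbb L_i)$ supplied by \cref{prop:TangentCompemented}, that proposition allows us to cover $\Gamma$, up to a $\phi$-negligible set, by a countable family of compact intrinsic Lipschitz graphs $\Gamma_i$ that are $C_{\mathbb V_i}(\alpha)$-sets, with $\mathbb V_i\in\G_c(h)$ and fixed complements $\mathbb L_i$. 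Third, for every $i$, the restriction $\mathcal{S}^h\llcorner\Gamma_i$ inherits the local asymptotic doubling property from $\phi$ by item (i) of \cref{prop:Lebesuge}, so the hypotheses of \cref{prop:DensitaInfPositiva} are satisfied and yield $\Theta^h_*(\mathcal{S}^h\llcorner\Gamma_i,x)>0$ for $\mathcal{S}^h$-almost every $x\in\Gamma_i$. Item (i) of \cref{prop:Lebesuge} also gives the identification $\Theta^h_*(\phi,x)=\Theta^h_*(\mathcal{S}^h\llcorner\Gamma_i,x)$ at $\mathcal{S}^h\llcorner\Gamma_i$-almost every $x\in\Gamma_i$, and summing over $i\in\mathbb N$ delivers the sought positivity at $\phi$-almost every point of $\Gamma$.

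Conceptually, the hard work is already encapsulated in the preparatory results of this section: the crux is \cref{prop:DensitaInfPositiva}, which itself rests on the big-projections estimate \cref{prop:proj}, the Vitali-type differentiation \cref{prop:vitali2PD}, and the mutual absolute continuity \cref{prop:mutuallyabs}. Within the proof itself the only subtlety is organisational, namely arranging the covering of \cref{prop:TangentCompemented} with a cone opening that is simultaneously admissible for all the $\Gamma_i$ and then transferring the local density estimates from each $\Gamma_i$ back to $\Gamma$ through the locality guaranteed by \cref{prop:Lebesuge}. Once this bookkeeping is carried out, conditions (i) and (ii)$^*$ of \cref{def:PhRectifiableMeasure} are simultaneously in force, so $\phi$ is $\mathscr{P}_h^c$-rectifiable.
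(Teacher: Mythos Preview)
Your proposal is correct and follows essentially the same route as the paper: invoke \cref{prop:PhiAsDoubling} for asymptotic doubling, cover via \cref{prop:TangentCompemented}, apply \cref{prop:DensitaInfPositiva} on each piece, and transfer via \cref{prop:Lebesuge}. The only organisational difference is that you verify condition (ii)$^*$ directly for $\phi$ via the inclusion $\Tan_h(\phi,x)\setminus\{0\}\subseteq\Tan(\phi,x)$, whereas the paper first establishes $\mathscr{P}_h^c$-rectifiability of each $\mathcal{S}^h\llcorner\Gamma_i$ and then transfers the full package back to $\Gamma$; both are valid and rely on the same chain of lemmas.
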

\begin{proof}
We have that $\mathcal{S}^h\llcorner\Gamma$ is locally asymptotically doubling, see \cref{prop:PhiAsDoubling}. Moreover, from \cref{prop:TangentCompemented}, there exist $\{\mathbb V_i\}_{i\in\mathbb N}\subseteq \G_c(h)$, and $\{\mathbb L_i\}_{i\in\mathbb N}$, such that $\mathbb L_i$ and $\mathbb V_i$ are homogeneous complementary subgroups, with the property that for every $\alpha>0$ there exists a family of compact sets $\{\Gamma_i\}$ such that $\Gamma_i$ is a $C_{\mathbb V_i}(\min\{\alpha,\oldep{ep:Cool}(\mathbb V_i,\mathbb L_i)\})$-set, and
\begin{equation}\label{eqn:ShAlmostEverywhere}
\mathcal{S}^h(\Gamma \setminus \cup_{i\in\mathbb N}\Gamma_i)=0.
\end{equation}
Since $\mathcal{S}^h\llcorner\Gamma$ is locally asymptotically doubling, then $\mathcal{S}^h\llcorner\Gamma_i$ is locally asymptotically doubling for every $i\in\mathbb N$, see \cref{prop:Lebesuge}. Hence, we can apply \cref{prop:DensitaInfPositiva} to conclude that  $\Theta^h_*(\mathcal{S}^h\llcorner\Gamma_i,x)>0$  for every $i\in\mathbb N$ and for $\mathcal{S}^h$-almost every $x\in\Gamma_i$. In addition, from the previous inequality and \cite[2.10.19(5)]{Federer1996GeometricTheory} for every $i\in\mathbb N$, we get that
\begin{equation}\label{eqn:DensityBoundYEAH}
0<\Theta^h_*(\mathcal{S}^h\llcorner\Gamma_i,x)\leq \Theta^{h,*}(\mathcal{S}^h\llcorner\Gamma_i,x)<+\infty, \quad \text{for $\mathcal{S}^h$-almost every $x\in\Gamma_i$.}
\end{equation}
Moreover, since for $\mathcal{S}^h$-almost every $x\in\Gamma$ we have $\Tan(\mathcal{S}^h\llcorner\Gamma,x)=\{\lambda\mathcal{S}^h\llcorner\mathbb V(x):\lambda>0\}$ with $\mathbb V(x)\in \G_c(h)$, we deduce that, for every $i\in\mathbb N$, the locality of tangents in \cref{prop:Lebesuge} ensures that for $\mathcal{S}^h$-almost every $x\in\Gamma_i$ we have $\Tan(\mathcal{S}^h\llcorner\Gamma_i,x)=\{\lambda\mathcal{S}^h\llcorner\mathbb V(x):\lambda>0\}$. From the previous equality, we conclude that for every $i\in\mathbb N$ we have $\Tan_h(\phi,x)\subseteq\{\lambda\mathcal{S}^h\llcorner\mathbb V(x):\lambda>0\}$. Hence, from the latter conclusion and \eqref{eqn:DensityBoundYEAH} we get that $\mathcal{S}^h\llcorner\Gamma_i$ is a $\mathscr{P}_h^c$-rectifiable measure for every $i\in\mathbb N$. Finally, from \eqref{eqn:ShAlmostEverywhere} and \cref{prop:Lebesuge} we conclude that $\mathcal{S}^h\llcorner\Gamma$ is a $\mathscr{P}^c_h$-rectifiable measure. 
\end{proof}

\subsection{From approximate tangent planes to $\mathscr{P}$-rectifiability}

In this section we aim at proving that whenever an approximate (complemented) $h$-dimensional tangent plane to a set $\Gamma$ exists almost everywhere in the sense of \cref{prop:rett.1}, then the measure $\mathcal{S}^h\llcorner\Gamma$ is $\mathscr{P}^c_h$-rectifiable. First, we need a crucial estimate on projections that will be useful also later on. Since none of the main results of this subsection depends on the choice of the metric, in the following $d$ will be an arbitrary homogeneous left-invariant metric and $\lVert\rVert_{d}$ its associated homogeneous norm.

\begin{proposizione}\label{prop:EstimateOnProjection}
Suppose $d$ is a homogeneous left-invariant distance on $\mathbb{G}$ and let $\mathbb{V},\mathbb{W}\in \G_c(h)$ be complemented by the same subgroup $\mathbb{L}$. Then, there exists an increasing function $\Delta:(0,\oldC{ProjC}(d,\mathbb W,\mathbb L)]\to(0,+\infty)$, depending only on $\mathbb{V}$, $\mathbb{W}$, $\mathbb{L}$ and $d$, such that $\lim_{\beta\to 0}\Delta(\beta)=0$ and satisfying the following condition.

For any $\alpha\leq\oldep{ep:Cool}(\mathbb V,\mathbb L)$ and any  $C_{\mathbb{V},d}(\alpha)$-set $\Gamma$ of finite $\mathcal{S}^h$-measure if there are an $x\in \Gamma$, a $\beta\leq\oldC{ProjC}(d,\mathbb W,\mathbb L)$ and a $\rho>0$ such that 
\begin{equation}\label{eqn:HypothesisOnW}
    \Gamma\cap B_d(x,r) \subseteq xC_{\mathbb{W},d}(\beta),\qquad\text{for all $0<r<\rho$.}
\end{equation}
then
\begin{equation}
    \Big\lvert\frac{\mathcal{S}_d^h(P_{\mathbb V}(B_d(x,r)\cap xC_{\mathbb W,d}(\beta))\cap P_{\mathbb V}(\Gamma))}{r^h}-\frac{\mathcal{S}^h(P_{\mathbb V}(B_d(x,r)\cap xC_{\mathbb W,d}(\beta)\cap \Gamma))}{r^h}\Big\rvert\leq \Delta(\beta),
    \nonumber
\end{equation}
for any $0<r<(1+\alpha(\oldC{ProjC}(d,\mathbb V,\mathbb L)-\alpha)^{-1})^{-1}\oldC{ProjC}(d,\mathbb V,\mathbb L)\rho=:\varrho(\rho,\alpha)$.
\end{proposizione}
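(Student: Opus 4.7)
The plan is to first reduce to $x=0$ by left-invariance, using \cref{prop:InvarianceOfProj} to see that both quantities in the statement transform by the unit-Jacobian bijection $\Psi_x(v):=P_\mathbb V(x\cdot v)$ of $\mathbb V$ onto itself, and hence their $\mathcal{S}^h_d$-measures are unchanged by replacing $\Gamma$ with $x^{-1}\Gamma$. After this reduction, $0\in\Gamma$, $\Gamma\cap B_d(0,r')\subseteq C_{\mathbb W,d}(\beta)$ for every $0<r'<\rho$, and by \cref{prop:ConeAndGraph} there is a map $\varphi:P_\mathbb V(\Gamma)\to\mathbb L$ with $\Gamma=\{w\cdot\varphi(w):w\in P_\mathbb V(\Gamma)\}$. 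Writing $\Phi(w):=w\cdot\varphi(w)$ and denoting by $E_1,E_2$ the first and second projected sets in the statement, one has $E_2\subseteq E_1$, so it suffices to bound $\mathcal{S}^h_d(E_1\setminus E_2)/r^h$. For any $w\in E_1$, combining \cref{cor:2.2.19} with \cref{oss:ConiEquivalenti} applied to the $C_{\mathbb V,d}(\alpha)$-set $\Gamma$ yields
\begin{equation*}
\|\Phi(w)\|_d\leq\|w\|_d+\|\varphi(w)\|_d\leq\frac{\oldC{ProjC}(d,\mathbb V,\mathbb L)}{\oldC{ProjC}(d,\mathbb V,\mathbb L)-\alpha}\|w\|_d\leq\frac{r}{\oldC{ProjC}(d,\mathbb V,\mathbb L)-\alpha}<\rho,
\end{equation*}
whenever $r<\varrho(\rho,\alpha)$, so the standing hypothesis forces $\Phi(w)\in C_{\mathbb W,d}(\beta)$, and $E_1\setminus E_2$ is precisely the set of $w\in E_1$ for which $\Phi(w)\notin B_d(0,r)$.

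The key structural observation is that, since $\mathbb V$ and $\mathbb W$ share the complement $\mathbb L$, the restriction $\pi:=P_\mathbb V|_\mathbb W:\mathbb W\to\mathbb V$ is a homogeneous bijection (injectivity from $\mathbb W\cap\mathbb L=\{0\}$, surjectivity from $\mathbb W\cdot\mathbb L=\mathbb G$), and a direct decomposition gives the identity $P_\mathbb V(g)=\pi\bigl(P_\mathbb W^{\mathbb W\mathbb L}(g)\bigr)$ valid for every $g\in\mathbb G$. For $w\in E_1\setminus E_2$, pick $z\in B_d(0,r)\cap C_{\mathbb W,d}(\beta)$ with $P_\mathbb V(z)=w$ and set $\gamma:=\Phi(w)\in C_{\mathbb W,d}(\beta)\setminus B_d(0,r)$; by injectivity of $\pi$, the points $z$ and $\gamma$ share the same $\mathbb W$-projection $\widetilde w\in\mathbb W$, so that $z=\widetilde w\cdot\ell_z$ and $\gamma=\widetilde w\cdot\ell_\gamma$ with $\ell_z,\ell_\gamma\in\mathbb L$. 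Setting $\beta':=\beta/(\oldC{ProjC}(d,\mathbb W,\mathbb L)-\beta)$, \cref{oss:ConiEquivalenti} applied to $z,\gamma\in C_{\mathbb W,d}(\beta)$ gives $\|\ell_z\|_d,\|\ell_\gamma\|_d\leq\beta'\|\widetilde w\|_d$, and the triangle inequality yields
\begin{equation*}
\frac{r}{1+\beta'}<\|\widetilde w\|_d\leq\frac{r}{1-\beta'}\qquad\text{(for $\beta'<1$)},
\end{equation*}
so $\widetilde w$ lies in a thin annulus $A(r,\beta')\subseteq\mathbb W$ and $w=\pi(\widetilde w)\in\pi(A(r,\beta'))$.

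To conclude, the commutation $\pi\circ\delta_R=\delta_R\circ\pi$ together with the $h$-homogeneity of $\mathcal{S}^h_d$ yields $\mathcal{S}^h_d(\pi(B_d(0,R)\cap\mathbb W))=KR^h$, where $K:=\mathcal{S}^h_d(\pi(B_d(0,1)\cap\mathbb W))$ is finite by the inclusion $\pi(B_d(0,1)\cap\mathbb W)\subseteq B_d(0,1/\oldC{ProjC}(d,\mathbb V,\mathbb L))\cap\mathbb V$ coming from \cref{cor:2.2.19} and \cref{lemma:PALLAUNITARIAVOLUMEUNO}. Using injectivity of $\pi$ to write the image of the annulus as a set-theoretic difference of images of balls,
\begin{equation*}
\mathcal{S}^h_d(E_1\setminus E_2)\leq\mathcal{S}^h_d(\pi(A(r,\beta')))=Kr^h\biggl(\frac{1}{(1-\beta')^h}-\frac{1}{(1+\beta')^h}\biggr),
\end{equation*}
which suggests setting $\Delta(\beta):=K\bigl((1-\beta'(\beta))^{-h}-(1+\beta'(\beta))^{-h}\bigr)$ on the range $\beta'(\beta)<1$, and extending it to the whole interval $(0,\oldC{ProjC}(d,\mathbb W,\mathbb L)]$ by a monotone interpolation with the trivial bound $\mathcal{S}^h_d(E_1)\leq\oldC{BallC}(d,\mathbb V,\mathbb L)r^h$ supplied by \cref{cor:2.2.19}. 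Since both $\beta\mapsto\beta'(\beta)$ and $\beta'\mapsto(1-\beta')^{-h}-(1+\beta')^{-h}$ are increasing, so is $\Delta$; it depends only on $\mathbb V,\mathbb W,\mathbb L,d$, and $\Delta(\beta)\to 0$ as $\beta\to 0$. The main technical obstacle is the simultaneous bookkeeping of the two splittings $\mathbb V\cdot\mathbb L$ and $\mathbb W\cdot\mathbb L$; the identity $P_\mathbb V=\pi\circ P_\mathbb W^{\mathbb W\mathbb L}$ is precisely what allows a single parameter $\widetilde w\in\mathbb W$ to simultaneously control the ``$z$-side'' and the ``$\gamma$-side'' of the defect $E_1\setminus E_2$.
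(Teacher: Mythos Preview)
Your proof is correct and shares its core idea with the paper's argument: after reducing to $x=0$ and showing that any $w\in E_1$ satisfies $\Phi(w)\in C_{\mathbb W,d}(\beta)$ (so that the defect $E_1\setminus E_2$ consists of those $w$ with $\|\Phi(w)\|_d>r$), one exploits that both the witness $z\in B_d(0,r)\cap C_{\mathbb W,d}(\beta)$ and $\Phi(w)$ lie in the narrow cone $C_{\mathbb W,d}(\beta)$ with the same $\mathbb V$-projection, hence are close in the $\mathbb L$-direction, and the defect is trapped in a thin annulus. The difference is organizational. The paper bounds $\|\Phi(w)\|_d$ directly by $f_2(\beta)r$ with $f_2(\beta)=1+2\beta(\oldC{ProjC}(\mathbb W,\mathbb L)-\beta)^{-1}\oldC{ProjC}(\mathbb W,\mathbb L)^{-1}$, obtaining $E_1\setminus E_2\subseteq P_{\mathbb V}\big((B_d(0,f_2(\beta))\setminus B_d(0,1))\cap C_{\mathbb W,d}(\beta)\big)\cdot r^h$ and setting $\Delta(\beta)$ equal to the $\mathcal{S}^h$-measure of the latter set; you instead introduce the homogeneous bijection $\pi=P_{\mathbb V}|_{\mathbb W}$ and the identity $P_{\mathbb V}=\pi\circ P_{\mathbb W}^{\mathbb W\mathbb L}$ to pass to $\mathbb W$-coordinates, trap the common $\mathbb W$-projection $\widetilde w$ in the annulus $r/(1+\beta')<\|\widetilde w\|_d\leq r/(1-\beta')$, and obtain the explicit $\Delta(\beta)=K\big((1-\beta')^{-h}-(1+\beta')^{-h}\big)$. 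Your route makes the asymptotics $\Delta(\beta)\to 0$ immediate from the formula, while the paper appeals to continuity of measure; on the other hand, the paper's $\Delta$ is defined for every $\beta\leq\oldC{ProjC}(\mathbb W,\mathbb L)$ without the interpolation you need for $\beta'\geq 1$.
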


\begin{proof}
In order to simplify notation throughout the proof of the proposition, we will drop everywhere the dependence on $d$.

Let us fix an $x\in \Gamma$, a $0<\beta\leq \oldC{ProjC}(\mathbb W,\mathbb L)$ and a $\rho$ where \eqref{eqn:HypothesisOnW} holds. We denote with $ P_\mathbb V,P_\mathbb L^{\mathbb V}$, respectively, the projections associated to the splitting $\mathbb G=\mathbb V\cdot\mathbb L$, and analogously for the splitting $\mathbb G=\mathbb W\cdot\mathbb L$. For the sake of notation, for any fixed $0<r<(1+\alpha(\oldC{ProjC}(\mathbb V,\mathbb L)-\alpha)^{-1})^{-1}\oldC{ProjC}(\mathbb V,\mathbb L)\rho$ we let
\begin{equation}
    \begin{split}
        A_r:=P_{\mathbb V}(B(x,r)\cap xC_{\mathbb W}(\beta))\cap P_{\mathbb V}(\Gamma)\qquad\text{and}\qquad
        B_r:=P_{\mathbb V}(B(x,r)\cap xC_{\mathbb W}(\beta)\cap \Gamma).
        \nonumber
    \end{split}
\end{equation}
Since the inclusion $B_r\subseteq A_r$ is always verified, we want to estimate the measure of those $w$ contained in $A_r\setminus B_r$. If $y\in A_r$, there are $w\in \Gamma$ such that $P_{\mathbb{V}}(w)=y$, and an $\ell\in\mathbb{L}$ such that $y\ell\in B(x,r)\cap xC_\mathbb{W}(\beta)$. Let us notice that \cref{cor:2.2.19} implies that $\|P_{\mathbb V}(x^{-1}y)\|=\|P_{\mathbb V}(x^{-1}y\ell)\|\leq \oldC{ProjC}(\mathbb V,\mathbb L)^{-1}r$. Moreover, since $\Gamma$ is a $C_{\mathbb V}(\alpha)$-set, we even get that, by exploiting \cref{oss:ConiEquivalenti},
$$\|P_{\mathbb L}^{\mathbb V}(x^{-1}w)\|\leq \alpha(\oldC{ProjC}(\mathbb V,\mathbb L)-\alpha)^{-1}\|P_{\mathbb V}(x^{-1}w)\|=\alpha(\oldC{ProjC}(\mathbb V,\mathbb L)-\alpha)^{-1}\|P_{\mathbb V}(x^{-1}y)\| \leq \alpha(\oldC{ProjC}(\mathbb V,\mathbb L)-\alpha)^{-1}\oldC{ProjC}(\mathbb V,\mathbb L)^{-1}r.$$
This implies in particular that
$$\|x^{-1}w\|\leq \|P_{\mathbb V}(x^{-1}w)\|+\|P_{\mathbb L}^{\mathbb V}(x^{-1}w)\|=\|P_{\mathbb V}(x^{-1}y)\|+\|P_{\mathbb L}^{\mathbb V}(x^{-1}w)\|\leq (1+\alpha(\oldC{ProjC}(\mathbb V,\mathbb L)-\alpha)^{-1})\oldC{ProjC}^{-1}(\mathbb V,\mathbb L)r.$$
Hence, from the choice of $r$, we infer that $(1+\alpha(\oldC{ProjC}(\mathbb V,\mathbb L)-\alpha)^{-1})\oldC{ProjC}(\mathbb V,\mathbb L)^{-1}r<\rho$ and thus  we can use the hypothesis in \eqref{eqn:HypothesisOnW} applied to $w$ to obtain that $x^{-1}w\in C_\mathbb W(\beta)$.
Thus, by also exploiting \cref{oss:ConiEquivalenti} and the fact that $x^{-1}y\ell\in C_\mathbb W(\beta)$ we get that
 \begin{align}
      \lVert P_\mathbb{L}^\mathbb{W}(x^{-1}y)\ell\rVert\leq \beta(\oldC{ProjC}(\mathbb W,\mathbb L)-\beta)^{-1}\lVert P_\mathbb{W}(x^{-1}y)\rVert\quad&\text{and}\quad\lVert P_\mathbb{L}^\mathbb{W}(x^{-1}y)P_\mathbb{L}^\mathbb{V}(w)\rVert\leq \beta(\oldC{ProjC}(\mathbb W,\mathbb L)-\beta)^{-1}\lVert P_\mathbb{W}(x^{-1}y)\rVert,
       \label{numberio2}
 \end{align}
 where the last inequality comes from the fact that $P_\mathbb{L}^\mathbb{W}(x^{-1}w)=P_\mathbb{L}^\mathbb{W}(x^{-1}yP_\mathbb{L}^\mathbb{V}(w))=P_\mathbb{L}^\mathbb{W}(x^{-1}y)P_\mathbb{L}^\mathbb{V}(w)$.
 Thanks to \eqref{numberio2} we deduce that
 \begin{equation}
 \begin{split}
     \lVert \ell^{-1}P_\mathbb{L}^{\mathbb V}(w)\rVert&\leq \lVert P_\mathbb{L}^\mathbb{W}(x^{-1}y)\ell\rVert+\lVert P_\mathbb{L}^\mathbb{W}(x^{-1}y)P_\mathbb{L}^{\mathbb V}(w)\rVert\leq 2\beta(\oldC{ProjC}(\mathbb W,\mathbb L)-\beta)^{-1}\lVert P_\mathbb{W}(x^{-1}y)\rVert \\
     &= 2\beta(\oldC{ProjC}(\mathbb W,\mathbb L)-\beta)^{-1}\lVert P_\mathbb{W}(x^{-1}y\ell)\rVert\\
     &\leq 2\beta(\oldC{ProjC}(\mathbb W,\mathbb L)-\beta)^{-1} \oldC{ProjC}(\mathbb{W},\mathbb{L})^{-1}\lVert x^{-1}y\ell\rVert \\
     &\leq 2\beta(\oldC{ProjC}(\mathbb W,\mathbb L)-\beta)^{-1} \oldC{ProjC}(\mathbb{W},\mathbb{L})^{-1}r.
     \end{split}
     \end{equation}
      This in particular implies that
     $$
     \lVert x^{-1}w\rVert=\lVert x^{-1}yP_\mathbb{L}^{\mathbb V}(w)\rVert\leq r+\lVert \ell^{-1}P_\mathbb{L}^{\mathbb V}(w)\rVert\leq(1+2\beta(\oldC{ProjC}(\mathbb W,\mathbb L)-\beta)^{-1} \oldC{ProjC}(\mathbb{W},\mathbb{L})^{-1})r=:f_2(\beta)r.
     $$
The above chain of inequalities, together with the hypothesis \eqref{eqn:HypothesisOnW}, allows us to conclude that 
   $$A_r\subseteq P_\mathbb{V}(B(x,f_2(\beta)r)\cap xC_\mathbb{W}(\beta)\cap \Gamma).$$
Finally this allows us to infer
   \begin{equation}
       \begin{split}
           \mathcal{S}^h(A_r)-\mathcal{S}^h(B_r)
           \leq\mathcal{S}^h(P_\mathbb{V}(B(x,f_2(\beta)r)\cap xC_\mathbb{W}(\beta)\cap \Gamma)\setminus P_\mathbb{V}(B(x,r)\cap xC_\mathbb{W}(\beta)\cap \Gamma))\\
           =\mathcal{S}^h(P_\mathbb{V}(B(x,f_2(\beta)r)\setminus B(x,r)\cap xC_\mathbb{W}(\beta)\cap \Gamma)),
       \end{split}
   \end{equation}
   where the last identity comes from the injectivity of $P_\mathbb{V}$ when restricted to $\Gamma$, since $\alpha\leq\oldep{ep:Cool}(\mathbb V,\mathbb L)$, see \cref{prop:ConeAndGraph}. Finally, \cref{prop:InvarianceOfProj} implies
   \begin{equation}
       \begin{split}
            \mathcal{S}^h(A_r)-\mathcal{S}^h(B_r)
           &\leq\mathcal{S}^h(P_\mathbb{V}(B(x,f_2(\beta)r)\setminus B(x,r)\cap xC_\mathbb{W}(\beta)))\\
           &=\mathcal{S}^h(P_\mathbb{V}(B(0,f_2(\beta))\setminus B(0,1)\cap C_\mathbb{W}(\beta)))r^h=:\Delta(\beta)r^h.
       \end{split}
   \end{equation}
   The function $\Delta$ is easily seen to be increasing and thanks to the continuity from above of the measure, the fact that
   $\lim_{\beta\to 0}\Delta(\beta)=0$ immediately follows too since $f_2(\beta)\to 1$ as $\beta\to 0$.
\end{proof}


\begin{proposizione}\label{prop:rett.1}
Suppose $d$ is a homogeneous left-invariant distance on $\mathbb{G}$ and that $\Gamma$ is a Borel set of $\mathcal{C}^h_d$-finite measure such that at $\mathcal{C}^h_d$-almost every $x\in \Gamma$ there exists $\mathbb V(x)\in\G_c(h)$ for which for any $0<\beta<1$ there exists a $\rho(x,\beta)>0$ such that
\begin{equation}
\Gamma\cap B_d(x,\rho(x,\beta))\subseteq xC_{\mathbb{V}(x),d}(\beta). 
\label{eq:num1}
\end{equation}
Then, the measure $\mathcal{C}^h_d\llcorner \Gamma$ is $\mathscr{P}_h^c$-rectifiable. In addition we have that $\Theta^h_d(\mathcal{C}^h_d\llcorner \Gamma,x)=1$  and $\Tan_h(\mathcal{C}^h_d\llcorner \Gamma,x)=\{\mathcal{C}^h_d\llcorner \mathbb{V}(x)\}$ for $\mathcal{C}^h_d\llcorner \Gamma$-almost every $x$.
\end{proposizione}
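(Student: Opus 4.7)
The plan is to establish $\Theta^h_d(\mathcal{C}^h_d \llcorner \Gamma, x) = 1$ at $\mathcal{C}^h_d \llcorner \Gamma$-almost every $x \in \Gamma$, and then to identify the unique tangent measure via \cref{regularity:density} and \cref{regularity:density2}, thereby obtaining both the $\mathscr{P}_h^c$-rectifiability of $\mathcal{C}^h_d\llcorner\Gamma$ and the explicit description of $\Tan_h(\mathcal{C}^h_d \llcorner \Gamma, x)$.

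The first move is to uniformise the hypothesis. The cone condition \eqref{eq:num1} implies the approximate tangent condition of \cref{prop:approxtangentismeasurable}, so $x\mapsto\mathbb V(x)$ is measurable, and by Severini--Egoroff I may assume, up to losing an arbitrarily small part of $\Gamma$ in $\mathcal{C}^h_d$-measure, that $\Gamma$ is compact, $x\mapsto\mathbb V(x)$ is continuous from $\Gamma$ to $\G_c(h)$, and $\rho(\cdot,\beta)$ admits a uniform lower bound $\rho_\Gamma(\beta)>0$ for each $\beta\in(0,1)$. The image of $\mathbb V(\cdot)$ is then a compact subset of $(\G_c(h),d_\mathbb{G})$ by \cref{prop:CompGrassmannian}, and \cref{prop:grasscompiffC3} supplies complementary subgroups $\mathbb L(x)$ with a uniform positive lower bound on $\oldep{ep:Cool}(\mathbb V(x),\mathbb L(x))$; the constants $\oldC{ProjC}$, $\oldC{BallC}$ and the modulus $\Delta$ of \cref{prop:EstimateOnProjection} can then all be chosen independent of $x\in\Gamma$.

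The core step is the pointwise density equality at any $x\in\Gamma$ that is also a Lebesgue density point of the reduced set inside the original one (such points are $\mathcal{C}^h_d\llcorner\Gamma$-generic, using $\Theta^{h,*}(\mathcal{C}^h_d\llcorner\Gamma,\cdot)<\infty$ almost everywhere and standard differentiation arguments). For the lower bound on $\Theta^h_{d,*}$, I fix $\beta$ small, take $r<\rho_\Gamma(\beta)$ so that $\Gamma\cap B_d(x,r)\subseteq x C_{\mathbb V(x),d}(\beta)$, and pick any centered covering $\{B_d(y_j,s_j)\}$ of $\Gamma\cap B_d(x,r)$ with $y_j\in\Gamma\cap B_d(x,r)$ and $s_j\leq\delta$. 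Projecting along $\mathbb L(x)$, the family $\{P_{\mathbb V(x)}(B_d(y_j,s_j))\}$ covers $P_{\mathbb V(x)}(\Gamma\cap B_d(x,r))$, whose $\mathcal{C}^h_d$-measure on $\mathbb V(x)$ is within $\Delta(\beta)r^h$ of $\mathcal{C}^h_d\llcorner\mathbb V(x)(B_d(x,r))=r^h$ by \cref{prop:EstimateOnProjection} (applied with $\mathbb V=\mathbb W=\mathbb V(x)$) and \cref{lemma:PALLAUNITARIAVOLUMEUNO}; comparing the total volume on $\mathbb V(x)$ yields $\sum_j s_j^h\geq (1-\omega(\beta))r^h$ with $\omega(\beta)\to 0$. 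For the upper bound on $\Theta^{h,*}_d$, I would cover $x\mathbb V(x)\cap B_d(x,r)$ by disjoint small centered balls $B_d(v_j,s_j)$ with $v_j\in x\mathbb V(x)$ and $\sum s_j^h\leq(1+\eta)r^h$, use \cref{prop:EstimateOnProjection} and the cone condition to lift all but a negligibly-projected set of centers to points $\tilde y_j\in\Gamma$ with $P_{\mathbb V(x)}(\tilde y_j)=v_j$ and $d(\tilde y_j,v_j)\leq f(\beta)s_j$, and verify that the slightly inflated balls $B_d(\tilde y_j,(1+f(\beta))s_j)$ cover $\Gamma\cap B_d(x,r)$ up to a residue that is handled by a smaller-scale iteration of the same argument. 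The main obstacle is precisely this upper bound: producing a centered covering with constant exactly $1$ forces sharp, $x$-uniform control both of the exceptional set of un-liftable centers and of the inflation factor $f(\beta)$, which is where \cref{prop:EstimateOnProjection} and the uniformisation from the Egoroff step are essential.

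Once $\Theta^h_d(\mathcal{C}^h_d\llcorner\Gamma,x)=1$ is established, \cref{regularity:density} yields that every $\nu\in\Tan_h(\mathcal{C}^h_d\llcorner\Gamma,x)$ satisfies $\nu(B_d(y,s))=s^h$ for every $y\in\supp\nu$ and $s>0$, and \cref{prop:convergence} combined with the cone condition forces $\supp\nu\subseteq\mathbb V(x)$: any $z\in\supp\nu$ is a rescaled limit of points of $\Gamma$ lying in $xC_{\mathbb V(x),d}(\beta)$ for every $\beta>0$. \cref{regularity:density2} then identifies $\nu$ with $\mathcal{C}^h_d\llcorner\mathbb V(x)$, so that $\Tan_h(\mathcal{C}^h_d\llcorner\Gamma,x)=\{\mathcal{C}^h_d\llcorner\mathbb V(x)\}$ with $\mathbb V(x)\in\G_c(h)$; together with the positivity of $\Theta^h_{d,*}$ coming from the density equality, this is the $\mathscr{P}_h^c$-rectifiability of $\mathcal{C}^h_d\llcorner\Gamma$ in the sense of \cref{def:PhRectifiableMeasure}.
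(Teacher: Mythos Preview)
Your overall architecture---reduce to $\Theta^h_d(\mathcal{C}^h_d\llcorner\Gamma,x)=1$, then invoke \cref{prop:convergence}, \cref{regularity:density}, \cref{regularity:density2}---is exactly the paper's, and your final paragraph is correct. Two substantive issues remain.

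\textbf{Upper bound.} Your lifting-and-inflation construction is both sketchy and unnecessary: for the \emph{centered} Hausdorff measure one has $\Theta^{h,*}_d(\mathcal{C}^h_d\llcorner E,x)\leq 1$ for $\mathcal{C}^h_d$-a.e.\ $x\in E$ in any metric space, by \cite[Corollary~4.17]{EdgarCentered}. The paper simply quotes this; no covering construction is needed on that side.

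\textbf{Lower bound.} Here there is a genuine gap. First, \cref{prop:EstimateOnProjection} requires $\Gamma$ to be a $C_{\mathbb V}(\alpha)$-set for a \emph{fixed} $\mathbb V$; your Egoroff step gives continuity of $\mathbb V(\cdot)$ and a uniform $\rho_\Gamma(\beta)$, but not a single-plane cone condition valid at every point. The paper handles this by passing to a countable dense family $\{\mathbb V_j\}$ in a compact slice $\G_c^{\eta_0}(h)$ and working on pieces $\Gamma_j$ that are genuine $C_{\mathbb V_j}(\varepsilon)$-sets (so that \cref{prop:EstimateOnProjection} and \cref{prop:vitali2PD} apply). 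Second, and more seriously, projecting the \emph{full} balls $B_d(y_j,s_j)$ gives
\[
\mathcal{C}^h_d\llcorner\mathbb V(x)\bigl(P_{\mathbb V(x)}(B_d(y_j,s_j))\bigr)=\mathcal{C}^h_d\bigl(P_{\mathbb V(x)}(B_d(0,1))\bigr)\cdot s_j^h,
\]
and the constant $\mathcal{C}^h_d(P_{\mathbb V(x)}(B_d(0,1)))$ is in general strictly larger than $1$ in a Carnot group; your inequality $\sum_j s_j^h\geq(1-\omega(\beta))r^h$ therefore does not follow. The paper's remedy is to project not the full ball but $B_d(y_\ell,s_\ell)\cap y_\ell C_{\mathbb V_j}(\varepsilon)$: by \cref{prop:InvarianceOfProj} and homogeneity this has $\mathcal{C}^h_d$-measure exactly $s_\ell^h\cdot\mathcal{C}^h_d(P_{\mathbb V_j}(B_d(0,1)\cap C_{\mathbb V_j}(\varepsilon)))$, and---via \cref{prop:vitali2PD}---the set being covered has measure essentially $r^h$ times the \emph{same} factor, so the factor cancels and one recovers the sharp constant up to an error $\daleth(\varepsilon)$ that is then shown to vanish as $\varepsilon\to 0$ uniformly in $j$. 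Without this cone-intersection trick (and the accompanying use of \cref{prop:vitali2PD}, which you do not invoke) the sharp density~$1$ cannot be obtained by your projection argument.
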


\begin{proof} 
In order to simplify notation throughout the proof of the proposition, we will drop everywhere the dependence on $d$.
First of all we define the family of sets
$$
\mathscr{F}:=\{\Gamma\subseteq \mathbb{G}:\Gamma\text{ is Borel, }\mathcal{S}^h\llcorner \Gamma\text{ is $\mathscr{P}_h^c$-rectifiable and $\Theta^h(\mathcal{C}^h\llcorner \Gamma,x)=1$ for $\mathcal{S}^h\llcorner \Gamma$-almost every $x$}\}.
$$
Thanks to \cite[Proposition 1.22]{MarstrandMattila20} we can write $\Gamma$ as $\Gamma=\Gamma^r\cup \Gamma^u$ where $\Gamma^r$ is a Borel set for which there are countable many $\Sigma_k\in\mathscr{F}$ such that $\Gamma^r\subseteq \cup_k\Sigma_k$ and $\Gamma^u$ is a Borel set such that $\mathcal{S}^h(\Gamma\cap \Sigma)=0$ for any $\Sigma\in\mathscr{F}$.

Let us prove that $\Gamma^r\in\mathscr{F}$. For any $k\in\N$ we define $\tilde \Sigma_k:=\Sigma_k\setminus \cup_{1\leq i\leq k-1}\Sigma_i$. Thanks to \cref{prop:Lebesuge} the measures $\mathcal{S}^h\llcorner \tilde\Sigma_k$ is still $\mathscr{P}_h^c$-rectifiable, the $\tilde\Sigma_k$ are pairwise disjoint and their union still contains $\Gamma^r$. Again by \cref{prop:Lebesuge} we infer that for any $k\in \N$ the measure $\mathcal{S}^h\llcorner (\tilde\Sigma_k\cap \Gamma^r)$ is $\mathscr{P}_h^c$-rectifiable and $\Theta^h(\mathcal{C}^h\llcorner  (\tilde\Sigma_k\cap \Gamma^r),x)=1$ for $\mathcal{S}^h\llcorner  (\tilde\Sigma_k\cap \Gamma^r)$-almost every $x$ and this finally implies that
$$\Theta^h(\mathcal{C}^h\llcorner  \Gamma^r,x)=1,$$
for $\mathcal{S}^h\llcorner  \Gamma^r$-almost every $x$. Applying \cref{prop:Lebesuge} to the measure $\mathcal{S}^h\llcorner  \Gamma^r$ and to the Borel set $\tilde \Sigma_k$ we infer that $\Tan_h(\mathcal{S}^h\llcorner  \Gamma^r,x)$ is unique and flat $\mathcal{S}^h\llcorner  \Gamma^r$-almost everywhere on $\tilde \Sigma_k$. Since the $\tilde\Sigma_k$ countably cover $\Gamma^r$ this concludes the proof that $\mathcal{S}^h\llcorner \Gamma^r$ is $\mathscr{P}_h^c$-rectifiable.
 \medskip
 
The above argument shows that we can assume by contradiction that $\Gamma$ is compact set of positive and finite $\mathcal{S}^h$-measure and that 
\begin{equation}
    \mathcal{S}^h(\Gamma\cap \Sigma)=0\text{ for any }\Sigma\in\mathscr{F}.
    \label{eq:gammaunrect}
\end{equation}
For any $\eta>0$ we let
$$
\G_c^\eta(h):=\Big\{\mathbb{V}\in\G_c(h):\inf_{\mathbb{W}\in\G(h)\setminus\G_c(h)}d_\mathbb{G}(\mathbb{V},\mathbb{W})\geq \eta\Big\}\subseteq \G_c(h).
$$
Thanks to \cref{prop:CompGrassmannian} it follows that $\G_c^\eta(h)$ is a closed, thus compact, subset of $\G(h)$.
Thanks to \cref{prop:approxtangentismeasurable}, for any $\eta>0$ the set $\Gamma\eta:=\{x\in\Gamma:\eqref{eq:num1}\text{ holds at $x$ and } \mathbb{V}(x)\in\G_c^\eta(h)\}$ is $\mathcal{S}^h$-measurable. In addition to this, since $\mathbb{V}(x)$ belongs $\mathcal{S}^h\llcorner \Gamma$-almost everywhere to $\G_c(h)$, that is an open set in $\G(h)$, see \cite[Proposition 2.17]{antonelli2020rectifiable}, we have
$$\mathcal{S}^h(\Gamma\setminus \bigcup_{\eta\in\Q^+\setminus\{0\}}\Gamma^\eta)=0.$$
In particular there exists an $\eta_0>0$ such that $\mathcal{S}^h(\Gamma^{\eta_0})>0$.
In the following we let $E$ be a compact subset of $\Gamma^{\eta_0}$ such that 
$$
\mathcal{S}^h(\Gamma^{\eta_0}\setminus E)<\mathcal{S}^h(\Gamma^{\eta_0})/2.
$$
Note further that thanks to \cref{prop:grasscompiffC3} we have that
$$m(\eta_0):=\min_{\mathbb{W}\in\G_c^{\eta_0}(h)}\mathfrak{e}(\mathbb{V})>0.$$
Let
$\mathscr{D}:=\{\mathbb V_{j}\}_{j\in\N}$
 be a countable dense subset of $\G_c^{\eta_0}(h)$ and
 \begin{equation}\nonumber
     \text{for all $j\in\N$ we choose a complement $\mathbb L_{j}$ of $\mathbb V_{j}$ s.t. $\oldep{ep:Cool}(\mathbb V_{j},\mathbb L_{j})>\mathfrak{e}(\mathbb{V}_j)/2\geq m(\eta_0)/2$}.
 \end{equation}
From now on we let $\varepsilon$ be a fixed positive number in $(0,m(\eta_0)/10)$ such that 
\begin{equation}\label{eqn:BoundStr}
    1-3m(\eta_0)^{-1}\varepsilon(1+3 m(\eta_0)^{-2}\varepsilon)/(m(\eta_0)-\varepsilon)>0,
\end{equation}
which we can do taking $\varepsilon$ small enough. The previous estimate will play a role later on. For any $p,q\in\N$ we define the set
\begin{equation}
    F(p,q):=\{x\in E: B(x,1/q)\cap \Gamma\subseteq xC_{\mathbb{V}_p}(\varepsilon/6)
\},
\end{equation}
and we claim that
\begin{equation}
    \mathcal{S}^h\big(E\setminus \bigcup_{p,q\in\N} F(p,q)\big)=0.
    \label{eq:num2}
\end{equation}
By density of the family $\mathscr{D}$ in $\G_c^{\eta_0}(h)$ and since by construction for any $x\in\Gamma^{\eta_0}$ we have $\mathbb V(x)\in \G_c^{\eta_0}(h)$, we deduce that there must exist a plane $\mathbb V_p\in\mathscr{D}$ such that $d_{\mathbb G}(\mathbb{V}_p,\mathbb{V}(x))<30^{-1}\varepsilon$.
This, jointly with \cite[Lemma 2.15]{antonelli2020rectifiable}, 
implies that
\begin{equation}\label{eq:split1}
\begin{split}
    C_{\mathbb{V}(x)}(30^{-1}\varepsilon) 
    \subseteq C_{\mathbb V_p}(6^{-1}\varepsilon).
    \end{split}
\end{equation}
Since by definition of $\Gamma^{\eta_0}$, \eqref{eq:num1} holds at every point $x\in E$, we can find a $\rho(x)>0$ such that for any $0<r<\rho(x)$ we have
\begin{equation}
     B(x,r)\cap \Gamma\subseteq  xC_{\mathbb{V}(x)}(30^{-1}\varepsilon).
     \label{eq:split2}
\end{equation}
In particular, putting together \eqref{eq:split1} and \eqref{eq:split2} we infer that for $\mathcal{S}^h\llcorner \Gamma$-almost every $x\in E$ there are a $p=p(x)>0$ and a $\rho(x)>0$ such that whenever $0<r<\rho(x)$ we have
$$
B(x,r)\cap \Gamma\subseteq xC_{\mathbb{V}_{p}}(6^{-1}\varepsilon),
$$
and this concludes the proof of \eqref{eq:num2}. Thanks to \cite[Proposition 3.3]{antonelli2020rectifiable} and \cref{prop:ConeAndGraph}, we get that there are countably many $\mathbb{V}_j\in \G_c^{\eta_0}(h)$ complemented by some $\mathbb{L}_j$, compact subsets $K_j$ of $\mathbb{V}_j$ and intrinsic Lipschitz functions $\varphi_j:K_j\subset\mathbb{V}_j\to\mathbb{L}_j$ such that
\begin{enumerate}
    \item for any $z\in K_j$ we have  $\Gamma_j=\{w\varphi_j(w):w\in K_j\}\subseteq z\varphi_j(z)C_\mathbb{V_j}(\varepsilon)$, and $\Gamma_j\subseteq E$ 
    \item $\mathcal{S}^h(E\setminus \cup_j\Gamma_j)=0$.
\end{enumerate}
Thanks to  \cite[Corollary 4.17]{EdgarCentered} we know that $\Theta^{*,h}(\mathcal{C}^h\llcorner E,x)\leq 1$ for $\mathcal{C}^h\llcorner E$-almost every $x$ and now we wish to prove that $\Theta^{h}_*(\mathcal{C}^h\llcorner E,x)\geq 1$ for $\mathcal{C}^h\llcorner E$-almost every $x$.

Fix a $j\in\N$, and an $x\in \Gamma_j$ such that the conclusion in \cref{prop:vitali2PD} holds. Notice that such a choice of $x$ can be made in a set of $\mathcal{C}^h\llcorner\Gamma_j$-full measure in $\Gamma_j$. Suppose that $r_k$ is an infinitesimal sequence such that 
$$
\Theta^h_*(\mathcal{C}^h\llcorner\Gamma_j,x)=\lim_{k\to \infty} r_k^{-h}\mathcal{C}^h\llcorner \Gamma_j(B(x,r_k)).
$$
Thanks to item 1. above and to \cref{prop:EstimateOnProjection} one infers that we have that for any $k\in \N$ we get
\begin{equation}
    \Big\lvert\frac{\mathcal{C}^h(P_{\mathbb V_j}(B(x,r_k)\cap xC_{\mathbb V_j}(\varepsilon))\cap P_{\mathbb V_j}(\Gamma_j))}{r_k^h}-\frac{\mathcal{C}^h(P_{\mathbb V_j}(B(x,r_k)\cap xC_{\mathbb V_j}(\varepsilon)\cap \Gamma_j))}{r_k^h}\Big\rvert\leq \Delta_j(\varepsilon),
    \label{es:num2}
\end{equation}
where $
\Delta_j$
was introduced in the statement of \cref{prop:EstimateOnProjection} and depends only on the split $\mathbb{V}_j\cdot\mathbb L_j=\mathbb{G}$. In addition to this, for any $j\in\N$ the definitions of $\oldep{ep:Cool}(\cdot,\cdot)$ and of $\mathbb{L}_j$ imply that
\begin{equation}
   \oldC{ProjC}(\mathbb{V}_j,\mathbb{L}_j)=2\oldep{ep:Cool}(\mathbb{V}_j,\mathbb{L}_j)>\mathfrak{e}(\mathbb{V}_j)\geq m(\eta_0), \label{estimatedelta}
\end{equation}
and in turn this means that $\Delta_j(\varepsilon)$ can be estimated with
\begin{equation}
    \begin{split}
        \Delta_j(\varepsilon)&=\mathcal{C}^h(P_{\mathbb{V}_j}(B(0,f_2(\varepsilon))\setminus B(0,1)\cap C_{\mathbb{V}_j}(\varepsilon)))\\
        &=\mathcal{C}^h(P_{\mathbb{V}_j}(B(0,1+2\varepsilon(\oldC{ProjC}(\mathbb V_j,\mathbb L_j)-\varepsilon)^{-1} \oldC{ProjC}(\mathbb{V}_j,\mathbb{L}_j)^{-1})\setminus B(0,1)\cap C_{\mathbb{V}_j}(\varepsilon)))\\
        &\leq \mathcal{C}^h(P_{\mathbb{V}_j}(B(0,1+3 m(\eta_0)^{-2}\varepsilon)\setminus B(0,1)\cap C_{\mathbb{V}_j}(\varepsilon))),
        \label{es:num3}
    \end{split}
\end{equation}
where the last inequality above comes from \eqref{estimatedelta} and the fact that $\varepsilon\in(0,m(\eta_0)/10)$.
From  \eqref{es:num2}, the invariance properties in \cref{prop:InvarianceOfProj}, the fact that $x\in\Gamma_j$ was chosen in such a way that \cref{prop:vitali2PD} holds, and the homogeneity of $\mathcal{C}^h\llcorner\mathbb V$, we infer that
\begin{equation}
    \begin{split}
       \mathcal{C}^h(P_{\mathbb V_j}(B(0,1)\cap C_{\mathbb V_j}(\varepsilon)))& \limsup_{k\to+\infty}\Big(\frac{\mathcal{C}^h(P_{\mathbb V_j}(B(x,r_k)\cap xC_{\mathbb V_j}(\varepsilon))\cap P_{\mathbb V_j}(\Gamma_j))}{\mathcal{C}^h(P_{\mathbb V_j}(B(x,r_k)\cap xC_{\mathbb V_j}(\varepsilon)))}-\frac{\mathcal{C}^h(P_{\mathbb V_j}(B(x,r_k)\cap xC_{\mathbb V_j}(\varepsilon)\cap \Gamma_j)}{\mathcal{C}^h(P_{\mathbb V_j}(B(x,r_k)\cap xC_{\mathbb V_j}(\varepsilon)))})\Big)\\
       &= \mathcal{C}^h(P_{\mathbb V_j}(B(0,1)\cap C_{\mathbb V_j}(\varepsilon)))\Big(1-\liminf_{k\to+\infty}\frac{\mathcal{C}^h(P_{\mathbb V_j}(B(x,r_k)\cap xC_{\mathbb V_j}(\varepsilon)\cap \Gamma_j))}{\mathcal{C}^h(P_{\mathbb V_j}(B(x,r_k)\cap xC_{\mathbb V_j}(\varepsilon)))}\Big)\leq \Delta_j(\varepsilon).
       \nonumber
    \end{split}
\end{equation}
This implies that, for every $0\leq\delta\leq 1/100$, up to passing to a non-relabelled subsequence in $k$, we can assume without loss of generality that for any $k\in\N$ we have
$$
1-\frac{\mathcal{C}^h(P_{\mathbb V_j}(B(x,r_k)\cap \Gamma_j))}{r_k^h\mathcal{C}^h(P_{\mathbb V_j}(B(0,1)\cap C_{\mathbb V_j}(\varepsilon)))}= 1-\frac{\mathcal{C}^h(P_{\mathbb V_j}(B(x,r_k)\cap xC_{\mathbb V_j}(\varepsilon)\cap \Gamma_j))}{\mathcal{C}^h(P_{\mathbb V_j}(B(x,r_k)\cap xC_{\mathbb V_j}(\varepsilon)))}\leq \frac{\delta+\Delta_j(\varepsilon)}{\mathcal{C}^h(P_{\mathbb V_j}(B(0,1)\cap C_{\mathbb V_j}(\varepsilon)))}.
$$

Now, let us fix a $k\in\N$ sufficiently large such that
$\lvert \mathcal{C}^h\llcorner \Gamma_j(B(x,r_k))/r_k^h-\Theta^h_*(\mathcal{C}^h\llcorner\Gamma_j,x)\rvert\leq \delta$, and let $\Gamma^\prime_j \subseteq \Gamma_j$ be a Borel set such that $\lvert\mathcal{C}^h(B(x,r_k)\cap \Gamma_j)-\mathcal{C}_0^h(B(x,r_k)\cap \Gamma_j')\rvert\leq \delta r_k^h$. Finally, we choose a covering with balls $\{B(y_\ell,s_\ell)\}_{\ell\in \N}$ of $\Gamma_j' \cap B(x,r_k)$, with $y_\ell\in\Gamma_j'$,  such that $\lvert \sum_{\ell\in \N}s_\ell^h-\mathcal{C}^h_0 (B(x,r_k)\cap \Gamma_j^\prime)\rvert\leq \delta r_k^h$. This implies in particular that
\begin{equation}
    \lvert \mathcal{C}^h(B(x,r_k)\cap \Gamma_j)-\sum_{\ell\in\N}s_\ell^h\rvert\leq\lvert\mathcal{C}^h(B(x,r_k)\cap \Gamma_j)-\mathcal{C}_0^h(B(x,r_k)\cap \Gamma_j')\rvert+\lvert \sum_{\ell\in \N}s_\ell^h-\mathcal{C}^h_0 (B(x,r_k)\cap \Gamma_j^\prime)\rvert\leq 2\delta r_k^h.
    \label{eq:num5}
\end{equation}

The above inequalities together imply in particular that for such a $k\in\mathbb N$ we have
\begin{equation}
\begin{split}
      &1-\frac{\delta+\Delta_j(\varepsilon)}{\mathcal{C}^h(P_{\mathbb V_j}(B(0,1)\cap C_{\mathbb V_j}(\varepsilon)))}\leq \frac{\mathcal{C}^h(P_{\mathbb V_j}(B(x,r_k)\cap \Gamma_j))}{r_k^h\mathcal{C}^h(P_{\mathbb V_j}(B(0,1)\cap C_{\mathbb V_j}(\varepsilon)))}\leq\frac{\mathcal{C}^h(P_{\mathbb V_j}(B(x,r_k)\cap \Gamma_j^\prime))+\oldC{BallC}(\mathbb V_j,\mathbb L_j)\delta r_k^h}{r_k^h\mathcal{C}^h(P_{\mathbb V_j}(B(0,1)\cap C_{\mathbb V_j}(\varepsilon)))}\\
     &\leq\frac{\mathcal{C}^h(P_{\mathbb V_j}(\bigcup_{\ell\in\N}B(y_\ell,s_\ell)\cap y_\ell C_{\mathbb{V}_j}(\varepsilon)))+\oldC{BallC}\delta r_k^h}{r_k^h\mathcal{C}^h(P_{\mathbb V_j}(B(0,1)\cap C_{\mathbb V_j}(\varepsilon)))}
      \leq r_k^{-h}\sum_{    \ell\in\N} s_\ell^h+\frac{\oldC{BallC}\delta}{\mathcal{C}^h(P_{\mathbb V_j}(B(0,1)\cap C_{\mathbb V_j}(\varepsilon)))},\end{split}
  \label{eq:num6}
\end{equation}
where in the second inequality we used 
\begin{equation}
\begin{split}
\mathcal{C}^h(P_{\mathbb V_j}(B(x,r_k)\cap \Gamma_j))&-\mathcal{C}^h(P_{\mathbb V_j}(B(x,r_k)\cap \Gamma_j'))=\mathcal{C}^h(P_{\mathbb V_j}(B(x,r_k)\cap \Gamma_j\setminus B(x,r_k)\cap \Gamma_j'))\\
&\leq \oldC{BallC}\mathcal{C}^h(B(x,r_k)\cap \Gamma_j\setminus B(x,r_k)\cap \Gamma_j')=\oldC{BallC}\left(\mathcal{C}^h(B(x,r_k)\cap \Gamma_j)-\mathcal{C}^h(B(x,r_k)\cap \Gamma_j')\right) \\
&\leq \oldC{BallC}\left(\mathcal{C}^h(B(x,r_k)\cap \Gamma_j)-\mathcal{C}^h_0(B(x,r_k)\cap \Gamma_j')\right)\leq \oldC{BallC}\delta r_k^h,
\end{split}
\end{equation}
that is true taking into account the second part of \cref{cor:2.2.19}, the fact that $P_{\mathbb V_j}$ is injective on $\Gamma_j$, see \cref{prop:ConeAndGraph}, and the fact that $\mathcal{S}^h\leq \mathcal{C}^h$ by definition.
Hence putting together \eqref{eq:num5} and \eqref{eq:num6} we deduce that, for $k$ large enough,
$$
1-\frac{(1+\oldC{BallC})\delta+\Delta_j(\varepsilon)}{\mathcal{C}^h(P_{\mathbb V_j}(B(0,1)\cap C_{\mathbb V_j}(\varepsilon)))}\leq r_k^{-h}\sum_{\ell\in\N}s_\ell^h\leq\mathcal{C}^h(B(x,r_k)\cap \Gamma_j)/r_k^h+2\delta\leq\Theta^h_*(\mathcal{C}^h\llcorner \Gamma_j,x)+3\delta.
$$
Thanks to the arbitrariness of $\delta$, this implies that for $\mathcal{C}^h\llcorner \Gamma_j$-almost every $x$ we have, by making use of \eqref{es:num3}
\begin{equation}
    \begin{split}
        1-\daleth(\varepsilon,j):=1-\frac{\mathcal{C}^h(P_{\mathbb{V}_j}(B(0,1+3 m(\eta_0)^{-2}\varepsilon)\setminus B(0,1)\cap C_{\mathbb{V}_j}(\varepsilon)))}{\mathcal{C}^h(P_{\mathbb V_j}(B(0,1)\cap C_{\mathbb V_j}(\varepsilon)))}&\leq 1-\frac{\Delta_j(\varepsilon)}{\mathcal{C}^h(P_{\mathbb V_j}(B(0,1)\cap C_{\mathbb V_j}(\varepsilon)))}\\
        &\leq \Theta^h_*(\mathcal{C}^h\llcorner \Gamma_j,x)\leq \Theta^h_*(\mathcal{C}^h\llcorner E,x).
        \label{eqn:EquazioneIncriminata}
    \end{split}
\end{equation}
We now wish to get a bound from above of $\daleth(\varepsilon,j)$ that does not depend on $j$. In order to do this, we first of all let $\rho_1(\varepsilon):=1+3 m(\eta_0)^{-2}\varepsilon$ and $\rho_2(\varepsilon):=1-3m(\eta_0)^{-1}\varepsilon\rho_1(\varepsilon)/(m(\eta_0)-\varepsilon)>0$, thanks to \eqref{eqn:BoundStr}.
We claim that the following inclusion holds
\begin{equation}
    \begin{split}
        P_{\mathbb{V}_j}(B(0,\rho_1(\varepsilon))\setminus B(0,1)\cap C_{\mathbb{V}_j}(\varepsilon))
        \subseteq P_{\mathbb{V}_j}(B(0,\rho_1(\varepsilon))\cap C_{\mathbb{V}_j}(\varepsilon))\setminus P_{\mathbb{V}_j}(B(0,\rho_2(\varepsilon))\cap C_{\mathbb{V}_j}(\varepsilon)).
        \label{eq:num4}
    \end{split}
\end{equation}
By definition, if $y\in P_{\mathbb{V}_j}(B(0,\rho_1(\varepsilon))\setminus B(0,1)\cap C_{\mathbb{V}_j}(\varepsilon))$, there exists an $\ell_1\in \mathbb{L}_j$ such that $y\ell_1\in B(0,\rho_1(\varepsilon))\setminus B(0,1)\cap C_{\mathbb{V}_j}(\varepsilon)$. Notice that if $\ell\in \mathbb{L}_j$ is such that $y\ell\in C_{\mathbb{V}_j}(\varepsilon)$, by \cref{cor:2.2.19} we have
$$
m(\eta_0)\lVert \ell\rVert\leq\oldC{ProjC}(\mathbb{V}_j,\mathbb{L}_j)\lVert \ell\rVert\leq\dist(y\ell,\mathbb{V}_j)\leq\varepsilon\lVert y\ell\rVert\leq \varepsilon\lVert y\rVert+\varepsilon\lVert \ell\rVert,
$$
and then $\|\ell\|\leq \varepsilon\|y\|/(m(\eta_0)-\varepsilon)$.
This implies that in order to prove that $y$ does not belong to $P_{\mathbb{V}_j}(B(0,\rho_2(\varepsilon))\cap C_{\mathbb{V}_j}(\varepsilon))$ we just need to show that $y\ell\not \in B(0,\rho_2(\varepsilon))\cap C_{\mathbb{V}_j}(\varepsilon)$ for any $\ell\in B(0,\varepsilon\lVert y\rVert/(m(\eta_0)-\varepsilon))\cap \mathbb{L}_j$. This however, follows from the following computation
$$\lVert y\ell \rVert\geq \lVert y\ell_1 \rVert-\lVert \ell_1^{-1}\ell \rVert\geq 1-\frac{2\varepsilon\lVert y\rVert}{m(\eta_0)-\varepsilon}>\rho_2(\varepsilon),$$
where the last inequality comes from the fact that $\|\ell_1^{-1}\ell\|\leq \|\ell_1^{-1}\|+\|\ell\|\leq 2\varepsilon\|y\|/(m(\eta_0)-\varepsilon)$, and $\lVert y\rVert\leq \oldC{ProjC}(\mathbb{V}_j,\mathbb{L}_j)^{-1}\lVert y\ell_1\rVert\leq m(\eta_0)^{-1}\rho_1(\varepsilon)$.
This proves \eqref{eq:num4} and in turn the inequality
$\daleth(\varepsilon,j)\leq\rho_1(\varepsilon)^h-\rho_2(\varepsilon)^h$,  by homogeneity of $\mathcal{C}^h$. Furthermore, since on the right-hand side of the previous inequality we have an expression independent on $j$ we conclude that, by exploiting \eqref{eqn:EquazioneIncriminata}, for $\mathcal{C}^h$-almost every $x\in \Gamma_j$ we have
$$1-(\rho_1(\varepsilon)^h-\rho_2(\varepsilon)^h)\leq \Theta^h_*(\mathcal{C}^h\llcorner E,x).$$
Thanks to the arbitrariness of $j$ and to the fact that $\mathcal{C}^h(E\setminus \cup_{j}\Gamma_j)=0$, we deduce that the previous inequality holds for $\mathcal{C}^h$-almost every $x\in E$. Since $\varepsilon$ can be chosen arbitrarily small, we conclude that $\Theta^h_*(\mathcal{C}^h\llcorner E,x)\geq 1$, and then $\Theta^h(\mathcal{C}^h\llcorner E,x)=1$ for $\mathcal{C}^h$-almost every $x\in E$.
\medskip

Eventually, \cref{prop:convergence} together with \eqref{eq:num1} concludes that for $\mathcal{C}^h$-almost every $x\in E$ and for any $\nu\in\Tan_h(\mathcal{C}^h\llcorner E,x)$ the support of $\nu$ is contained in $\mathbb{V}(x)$. In addition to this, from \cref{regularity:density} and \cref{regularity:density2} we have that for $\mathcal{C}^h\llcorner E$-almost every $x\in\mathbb G$ we have $\Tan_h(\mathcal{C}^h\llcorner E,x)=\{\mathcal{C}^h\llcorner \mathbb{V}(x)\}$. This concludes the proof of the fact that $\mathcal{C}^h\llcorner E$ is $\mathscr{P}_c^h$-rectifiable and this comes in contradiction with the fact that $E\subseteq \Gamma$ has positive $\mathcal{S}^h\llcorner\Gamma$-measure by construction and \eqref{eq:gammaunrect}.
\end{proof}

%
%

Let us now verify that an intrinsically differentiable graph satisfies the hypothesis of \cref{prop:rett.1}. First, we recall the definition of intriniscally differentiable graph.
\begin{definizione}[Intrinsically differentiable graph]\label{defiintrinsicdiffgraph}
   Let $\mathbb V$ and $\mathbb L$ be two complementary subgroups of a Carnot group $\mathbb G$, with $h:=\dim_H\mathbb V$. Let ${\varphi}:A\subseteq \mathbb V \to\mathbb L$ be a continuous function with $A$ Borel in $\mathbb V$. Let $a_0\in A$. We say that $\Gamma:=\mathrm{graph}(\varphi):=\{a\cdot\varphi(a):a\in A\}$ is an {\em $h$-dimensional intrinsically differentiable graph} at $w\in\Gamma$ if there exists a homogeneous subgroup $\mathbb V(w)$ such that for all $k>0$
    \begin{equation}\label{eqn:TANGENTE}
\lim_{\lambda \to \infty } d_{H,\mathbb G}\left(\delta _\lambda (w^{-1}\cdot\Gamma )\cap B(0,k), \mathbb V(w)\cap B(0,k)\right)=0,
    \end{equation}
    where $d_{H,\mathbb G}$ is the Hausdorff distance between closed subsets of $\mathbb G$. We will call $\mathbb V(w)$ the {\em Hausdorff tangent} of $\Gamma$ at $w$.
    \end{definizione}
    
    \begin{lemma}\label{prop:idiffapproximatetangent}
    Let $\varphi:A\subseteq \mathbb V\to\mathbb L$ be a map such that $\Gamma:=\mathrm{graph}(\varphi)$ is an intrinsically differentiable graph at $w\in\Gamma$ with tangent $\mathbb V(w)$. Then, for every $\beta$ there exists $\rho=\rho(\beta)$ such that 
    $$
    \Gamma\cap B(w,\rho)\subseteq w C_{\mathbb V(w)}(\beta).
    $$
    \end{lemma}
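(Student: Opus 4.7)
The plan is to argue by contradiction using the scale invariance of the cone condition under $\delta_\lambda$ together with the defining Hausdorff convergence \eqref{eqn:TANGENTE}.

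Concretely, I would suppose the conclusion fails at some $w\in\Gamma$ for some $\beta>0$. Then there exists a sequence $\{y_n\}_{n\in\mathbb N}\subseteq\Gamma$ with $y_n\to w$ such that
$$
\mathrm{dist}(w^{-1}y_n,\mathbb V(w))>\beta\,\|w^{-1}y_n\|.
$$
Set $r_n:=\|w^{-1}y_n\|\to 0$ and $\lambda_n:=1/r_n\to\infty$, and define $z_n:=\delta_{\lambda_n}(w^{-1}y_n)$, so that $\|z_n\|=1$ and $z_n\in \delta_{\lambda_n}(w^{-1}\Gamma)\cap B(0,2)$.

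The key point is that, since $\mathbb V(w)$ is a homogeneous subgroup, it is $\delta_\lambda$-invariant, and the distance $d$ is $\lambda$-homogeneous. Therefore
$$
\mathrm{dist}(z_n,\mathbb V(w))=\mathrm{dist}(\delta_{\lambda_n}(w^{-1}y_n),\delta_{\lambda_n}\mathbb V(w))=\lambda_n\,\mathrm{dist}(w^{-1}y_n,\mathbb V(w))>\beta.
$$
On the other hand, applying \eqref{eqn:TANGENTE} with $k=2$ yields
$$
d_{H,\mathbb G}\bigl(\delta_{\lambda_n}(w^{-1}\Gamma)\cap B(0,2),\,\mathbb V(w)\cap B(0,2)\bigr)\to 0,
$$
so that $\mathrm{dist}(z_n,\mathbb V(w)\cap B(0,2))\to 0$, and in particular $\mathrm{dist}(z_n,\mathbb V(w))\to 0$. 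This contradicts the previous lower bound and concludes the proof.

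There is essentially no hard step here: the statement is an immediate reformulation of the defining Hausdorff convergence, and the only care needed is to track the correct exponents when using the $\lambda_n$-homogeneity of $d$ (and thus of $\mathrm{dist}(\cdot,\mathbb V(w))$) together with the $\delta_\lambda$-invariance of the homogeneous subgroup $\mathbb V(w)$.
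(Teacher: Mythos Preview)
Your proof is correct and follows essentially the same idea as the paper's: both exploit the homogeneity of $d$ and the $\delta_\lambda$-invariance of $\mathbb V(w)$ to turn the defining Hausdorff convergence \eqref{eqn:TANGENTE} into the cone inclusion. The only stylistic difference is that the paper argues directly---first extracting from \eqref{eqn:TANGENTE} with $k=1$ the bound $\mathrm{dist}(p,w\mathbb V(w))\le\varepsilon r$ for all $p\in\Gamma\cap B(w,r)$ and $r\le r_0(\varepsilon)$, and then using a dyadic trick ($r_0 2^{-k}<\|w^{-1}p\|\le r_0 2^{-k+1}$) to get $\mathrm{dist}(p,w\mathbb V(w))\le 2\varepsilon\|w^{-1}p\|$---whereas you reach the same conclusion by contradiction and rescaling to the unit sphere.
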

    \begin{proof}
    We first claim that for every $\varepsilon>0$ there exists $r_0:=r_0(\varepsilon)$ such that 
    \begin{equation}\label{eqn:HAUSHAUSHAUS}
    \sup_{p\in \Gamma\cap B(w,r)}\mathrm{dist}(p,w\mathbb V(w))\leq \varepsilon r, \qquad \text{for all $0<r\leq r_0$}.
    \end{equation}
    Indeed, this follows just by taking $k=1$ in the definition \eqref{eqn:TANGENTE} and by exploiting the very definition of Hausdorff distance.
    
    Now let us take $\varepsilon\leq \beta/2$. We claim that $\Gamma\cap B(w,r_0(\varepsilon))\subseteq wC_{\mathbb V(w)}(\beta)$. Indeed, let $p\in \Gamma\cap B(w,r_0(\varepsilon))$, and $k\geq 1$ be such that $r_02^{-k}< \|w^{-1}\cdot p\|\leq r_02^{-k+1}$. Since $p\in \Gamma\cap B(w,r_02^{-k+1})$, from \eqref{eqn:HAUSHAUSHAUS} we get
    $$
    \mathrm{dist}(p,w\mathbb V(w))\leq \varepsilon r_02^{-k+1} \leq 2\varepsilon\|w^{-1}\cdot p\|\leq \beta \|w^{-1}\cdot p\|,
    $$
    thus showing the claim.
    \end{proof}
    
We are now ready to give the proof of \cref{thm:INTRO1Equivalence}.    
\begin{proof}[Proof of {\cref{thm:INTRO1Equivalence}}]
We prove different implications in separate points.

\begin{enumerate}
    \item[1.$\Rightarrow$2.] If $\mathcal{S}^h\llcorner\Gamma$ is $\mathscr{P}_h^c$-rectifiable, then $\mathcal{S}^h\llcorner\Gamma$ is asymptoticall doubling. Hence, by a routine argument (cf. \cite[Remark 14.4(3)]{Mattila1995GeometrySpaces}) we get that, for $\mathcal{S}^h\llcorner\Gamma$-almost every $x\in\mathbb G$, every element in $\mathrm{Tan}(\mathcal{S}^h\llcorner\Gamma,x)$ is a constant multiple of an element of $\mathrm{Tan}_h(\mathcal{S}^h\llcorner\Gamma,x)$, which is by hypothesis of the form $\lambda\mathcal{S}^h\llcorner\mathbb V(x)$ with $\mathbb V(x)\in\G_c(h)$, whence the conclusion.
    \item[2.$\Rightarrow$1.] It follows from \cref{thm:ExistenceOfDensityPlus} by approximating the Borel set $\Gamma$ from within by compact sets.
    \item[1.$\Rightarrow$3.] It is a consequence of \cite[Theorem 1.8]{antonelli2020rectifiable},  \cref{prop:TangentCompemented}, and the fact that the Hausdorff tangent at $\mathcal{S}^h\llcorner\Gamma_i$-almost ever $x$ of $\Gamma_i$ is complemented since it coincides almost everywhere with the subgroup on which it is supported the tangent measure.
    \item[3.$\Rightarrow$1.] Since, for every $i\in\mathbb N$, $\Gamma_i$ is an intriniscally differentiable graph at $\mathcal{S}^h\llcorner\Gamma_i$-almost every point of it, by \cref{prop:idiffapproximatetangent} we conclude that the hypothesis of \cref{prop:rett.1} is verified. Hence, for every $i\in\mathbb N$, $\mathcal{S}^h\llcorner\Gamma_i$ is $\mathscr{P}_h^c$-rectifiable. Hence, since $\mathcal{S}^h(\Gamma\setminus\cup_{i=1}^{+\infty}\Gamma_i)=0$, by a routine argument involving the locality of tangents and the Lebesgue differentiation theorem, see \cref{prop:Lebesuge}, we conclude that $\mathcal{S}^h\llcorner\Gamma$ is $\mathscr{P}_h^c$-rectifiable as well.
\end{enumerate}
\medskip
Let us show that the item 3. implies the last part of the statement. Since, for every $i\in\mathbb N$, $\Gamma_i$ is intrinsically differentiable, arguing as above we can apply \cref{prop:rett.1} and then conclude that $\Theta^h(\mathcal{C}^h\llcorner\Gamma_i,x)=1$ for $\mathcal{C}^h\llcorner\Gamma_i$-almost every $x\in\mathbb G$. Hence, from the Lebesgue differentiation theorem in \cref{prop:Lebesuge}, we conclude that, for every $i\in\mathbb N$, $\Theta^h(\mathcal{C}^h\llcorner\Gamma,x)=1$ for $\mathcal{C}^h\llcorner\Gamma_i$-almost every $x\in\mathbb G$, and hence the same conclusion holds for $\mathcal{C}^h\llcorner\Gamma$-almost every $x\in\mathbb G$ since $\mathcal{C}^h(\Gamma\setminus\cup_{i=1}^{+\infty}\Gamma_i)=0$. The last convergence result is a direct consequence of the fact that the density is 1 and \cite[Proposition 2.26]{antonelli2020rectifiable}.
\end{proof}

\section{Area formula}\label{sec:AreaFormula}
In this section $\mathbb G$ is an arbitrary Carnot group, and $\mathbb V$ and $\mathbb L$ are two homogeneous complementary subgroups, i.e., such that $\mathbb G=\mathbb V\cdot\mathbb L$ and $\mathbb V\cap\mathbb L=\{0\}$. Moreover, let $h$ be the Hausdorff dimension of $\mathbb V$. We equip $\mathbb G$ with an arbitrary fixed left-invariant homogeneous distance $d$ that sometimes will be understood.




\begin{lemma}[{\cite[Proposition 3.1.5]{FMS14}}]\label{lem:ComplGraph}
Let $\mathbb V,\mathbb L$ be two complementary subgroups in $\mathbb G$. Let $\mathbb P$ be a homogeneous subgroup that is a complementary subgroup of $\mathbb L$. Then there exists a map $\varphi_{\mathbb P}:\mathbb V\to\mathbb L$ such that $\mathbb P=\Phi_{\mathbb P}(\mathbb V):=\mathbb V\cdot\varphi_P(\mathbb V)$.
\end{lemma}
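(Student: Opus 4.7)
The plan is to reduce the statement to showing that the restriction $P_{\mathbb V}|_{\mathbb P}:\mathbb P\to\mathbb V$ is a bijection: once this is established, for every $v\in\mathbb V$ there is a unique $p\in\mathbb P$ with $P_{\mathbb V}(p)=v$, and setting $\varphi_{\mathbb P}(v):=P_{\mathbb L}(p)$ gives $p=v\cdot\varphi_{\mathbb P}(v)$ by the uniqueness of the $\mathbb V\cdot\mathbb L$-decomposition. Ranging over $v\in\mathbb V$ then yields $\mathbb P=\Phi_{\mathbb P}(\mathbb V)$ as required.

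For injectivity of $P_{\mathbb V}|_{\mathbb P}$, suppose $p_1,p_2\in\mathbb P$ satisfy $P_{\mathbb V}(p_1)=P_{\mathbb V}(p_2)$. Writing $p_i=P_{\mathbb V}(p_i)\cdot P_{\mathbb L}(p_i)$ one computes
$$
p_2^{-1}\cdot p_1 \,=\, P_{\mathbb L}(p_2)^{-1}\cdot P_{\mathbb L}(p_1)\in \mathbb L.
$$
Since $\mathbb P$ is a subgroup, $p_2^{-1}\cdot p_1\in\mathbb P$, and because $\mathbb P$ is complementary to $\mathbb L$ we have $\mathbb P\cap\mathbb L=\{0\}$, so $p_1=p_2$.

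For surjectivity, fix $v\in\mathbb V$. Using the splitting $\mathbb G=\mathbb P\cdot\mathbb L$ we write $v=p\cdot\ell$ with $p\in\mathbb P$ and $\ell\in\mathbb L$; then $p=v\cdot\ell^{-1}$ with $v\in\mathbb V$ and $\ell^{-1}\in\mathbb L$, so the uniqueness of the $\mathbb V\cdot\mathbb L$-decomposition yields $P_{\mathbb V}(p)=v$. Thus $P_{\mathbb V}|_{\mathbb P}$ is a bijection and the map $\varphi_{\mathbb P}:=P_{\mathbb L}\circ(P_{\mathbb V}|_{\mathbb P})^{-1}$ is the desired function.

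There is no serious obstacle here: the argument is a purely algebraic manipulation of the two splittings $\mathbb G=\mathbb V\cdot\mathbb L$ and $\mathbb G=\mathbb P\cdot\mathbb L$, and homogeneity is not even needed beyond the definition of complementary subgroups. The only point worth double-checking is that $\mathbb P$ being only assumed to be a \emph{homogeneous subgroup complementary to $\mathbb L$} is enough to invoke $\mathbb P\cap\mathbb L=\{0\}$ and $\mathbb G=\mathbb P\cdot\mathbb L$, both of which are built into the definition of complementary subgroup recalled in \cref{def:Grassmannian}.
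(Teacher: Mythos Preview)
Your proof is correct. The paper does not actually supply a proof of this lemma: it is stated with a citation to \cite[Proposition 3.1.5]{FMS14} and used as a black box, so there is no argument in the paper to compare against. Your approach---showing that $P_{\mathbb V}|_{\mathbb P}$ is a bijection via the two splittings and then defining $\varphi_{\mathbb P}:=P_{\mathbb L}\circ(P_{\mathbb V}|_{\mathbb P})^{-1}$---is the standard elementary argument and is exactly what one would expect the cited reference to contain.
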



\begin{definizione}[Area factor, {\cite[Lemma 3.2]{JNGV20}}]\label{def:areafactorCENTR}
Let $\mathbb V,\mathbb L$ be two complementary subgroups in $\mathbb G$. Let $\mathbb P$ be a homogeneous subgroup that is a complementary subgroup of $\mathbb L$. Take $\varphi_{\mathbb P}:\mathbb V\to\mathbb L$ as in \cref{lem:ComplGraph} and let $\Phi_{\mathbb P}:v\mapsto v\cdot\varphi_{\mathbb P}(v)$ be its graph map. Then the {\em centered area factor} of $\mathbb P$ with respect to the splitting $\mathbb V\cdot\mathbb L$ is the unique $0<\mathcal{A}(\mathbb P)<+\infty$ such that
\begin{equation}\label{eqn:EqualityOfMeasures}
\mathcal{C}^h\llcorner \mathbb P=\mathcal{A}(\mathbb P)(\Phi_{\mathbb P})_*(\mathcal{C}^h\llcorner\mathbb V).
\end{equation}
\end{definizione}

Let $\mathbb P$, $\varphi_{\mathbb P}$, and $\Phi_{\mathbb P}:\mathbb V\to\mathbb P$ be as in \cref{def:areafactorCENTR}. It is readily seen that $(\Phi_{\mathbb P})_*(\mathcal{C}^h\llcorner\mathbb V)$ is a Haar measure on $\mathbb P$, compare with the beginning of the proof of \cite[Lemma 3.2]{JNGV20}. From \cite[Theorem 3.1]{FSSC15} we conclude
\begin{equation}\label{eqn:WhoIsTheAreaFactor?}
    \mathcal{A}(\mathbb P)^{-1}=\limsup_{r\to 0} \frac{((\Phi_{\mathbb P})_*(\mathcal{C}^h\llcorner\mathbb V))(B(0,r))}{r^h}=\limsup_{r\to 0} \frac{\mathcal{C}^h(P_{\mathbb V}(B(0,r)\cap\mathbb P))}{r^h}=\mathcal{C}^h(P_{\mathbb V}(B(0,1)\cap\mathbb P)),
\end{equation}
where in the last equality we used the homogeneity of $\mathcal{C}^h\llcorner\mathbb V$.

\begin{lemma}\label{lem:ContinuousAreaFactorCENTR}
Given the splitting $\mathbb G=\mathbb V\cdot\mathbb L$, the area factor $\mathcal{A}(\cdot)$ is continuous on the set of homogeneous subgroups that have $\mathbb L$ as a complementary subgroup.
\end{lemma}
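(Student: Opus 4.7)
The plan is to use \eqref{eqn:WhoIsTheAreaFactor?} to reduce the statement to the continuity of the map
$$
\mathbb{P}\longmapsto \mathcal{C}^h\bigl(P_{\mathbb V}(B(0,1)\cap\mathbb P)\bigr)=\mathcal{C}^h\bigl(\Phi_{\mathbb P}^{-1}(B(0,1))\bigr)
$$
on the set of homogeneous subgroups of $\mathbb G$ having $\mathbb L$ as a complement. Note that \cref{cor:2.2.19} applied to the fixed splitting $\mathbb V\cdot\mathbb L$ gives $\|v\|\leq \oldC{ProjC}(\mathbb V,\mathbb L)^{-1}\|\Phi_{\mathbb P}(v)\|$ for every such $\mathbb P$ and every $v\in\mathbb V$, so all the sets $A_{\mathbb P}:=\Phi_{\mathbb P}^{-1}(B(0,1))$ are contained in the fixed compact $K:=\overline{B(0,\oldC{ProjC}(\mathbb V,\mathbb L)^{-1})}\cap\mathbb V$, and $\mathcal{A}(\mathbb P)\geq \oldC{ProjC}(\mathbb V,\mathbb L)^{h}$ uniformly.

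Fix a sequence $\mathbb P_n\to\mathbb P$ in $(\G_c(h),d_{\mathbb G})$ with all $\mathbb P_n$ and $\mathbb P$ complemented by $\mathbb L$. The main step is to establish the locally uniform convergence $\Phi_{\mathbb P_n}\to\Phi_{\mathbb P}$ on $\mathbb V$. By the $\delta_\lambda$-invariance of the subgroups, the convergence $d_{\mathbb G}(\mathbb P_n,\mathbb P)\to 0$ upgrades to Hausdorff convergence $\mathbb P_n\cap B(0,R)\to\mathbb P\cap B(0,R)$ for every $R>0$. Combining this with the continuity of the fixed projection $P_{\mathbb V}$ and the uniform lower bound of \cref{cor:2.2.19}, one argues, for each $v\in\mathbb V$, by extracting points $q_n\in\mathbb P_n$ with $q_n\to\Phi_{\mathbb P}(v)$ and checking that $P_{\mathbb V}(q_n)\to v$, so that $q_n=\Phi_{\mathbb P_n}(P_{\mathbb V}(q_n))\to\Phi_{\mathbb P}(v)$. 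Pointwise convergence is then upgraded to locally uniform convergence via the $\delta_\lambda$-homogeneity of the graph maps $\Phi_{\mathbb P_n}$ together with the uniform intrinsic Lipschitz control on $\varphi_{\mathbb P_n}$, which is available because the quantities $\oldep{ep:Cool}(\mathbb P_n,\mathbb L)$ stay bounded away from zero by lower semicontinuity (cf.~\cref{prop:grasscompiffC3}) and by \cref{prop:ConeAndGraph}. This step is the main technical obstacle of the proof.

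Once locally uniform convergence is granted, the functions $\rho_n(v):=\|\Phi_{\mathbb P_n}(v)\|$ converge uniformly on $K$ to $\rho(v):=\|\Phi_{\mathbb P}(v)\|$. Since $\varphi_{\mathbb P}$ is $\delta_\lambda$-homogeneous of degree one (as $\mathbb P$ is $\delta_\lambda$-invariant), so is $\Phi_{\mathbb P}$, and hence $\rho(\delta_\lambda v)=\lambda\rho(v)$. Together with the standard scaling of the Haar measure $\mathcal{C}^h\llcorner\mathbb V$ under $\delta_\lambda$ this yields $\mathcal{C}^h(\{\rho\leq t\})=t^h\mathcal{C}^h(A_{\mathbb P})$ for every $t>0$, whence $\mathcal{C}^h(\{\rho=1\})=0$ by continuity from below. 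For any $\varepsilon\in(0,1)$ and $n$ large enough, uniform convergence on $K$ gives
$$
\{\rho\leq 1-\varepsilon\}\subseteq A_{\mathbb P_n}\subseteq\{\rho\leq 1+\varepsilon\},
$$
and therefore $(1-\varepsilon)^h\mathcal{C}^h(A_{\mathbb P})\leq\mathcal{C}^h(A_{\mathbb P_n})\leq (1+\varepsilon)^h\mathcal{C}^h(A_{\mathbb P})$. Passing first to the limit as $n\to\infty$ and then letting $\varepsilon\to 0$ gives $\mathcal{C}^h(A_{\mathbb P_n})\to\mathcal{C}^h(A_{\mathbb P})$; combined with the uniform lower bound $\mathcal{A}(\mathbb P_n)\geq\oldC{ProjC}(\mathbb V,\mathbb L)^h>0$ this yields $\mathcal{A}(\mathbb P_n)\to\mathcal{A}(\mathbb P)$, as desired.
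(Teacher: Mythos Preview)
Your approach is correct and follows the same route as the paper: reduce to the formula \eqref{eqn:WhoIsTheAreaFactor?} and prove continuity of $\mathbb P\mapsto\mathcal C^h(P_{\mathbb V}(B(0,1)\cap\mathbb P))$, which the paper simply defers to \cite[Proof of Lemma 3.2]{JNGV20} while you spell it out. One minor point: your appeal to \cref{prop:grasscompiffC3} for the lower bound on $\oldep{ep:Cool}(\mathbb P_n,\mathbb L)$ is not quite apt, since that proposition concerns $\mathfrak e(\mathbb V)=\sup_{\mathbb L'}\oldep{ep:Cool}(\mathbb V,\mathbb L')$ rather than the constant for the \emph{fixed} complement $\mathbb L$; however, the uniform control you actually need (and the ensuing locally uniform convergence of $\Phi_{\mathbb P_n}$) follows directly from a compactness argument using the Hausdorff convergence $\mathbb P_n\to\mathbb P$ together with $\mathbb P\cap\mathbb L=\{0\}$, so the gap is easily closed.
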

\begin{proof}
It directly follows from the explicit expression in \eqref{eqn:WhoIsTheAreaFactor?} together with a simple argument that can be found, e.g., at the end of \cite[Proof of Lemma 3.2]{JNGV20}.
\end{proof}

\begin{definizione}[Elementary $\mathscr{P}_h$-rectifiable graph]\label{def:ElementaryPh}
Let $\mathbb V,\mathbb L$ be two homogeneous complementary subgroups of a Carnot group $\mathbb G$, and $\alpha\leq\oldep{ep:Cool}(\mathbb V,\mathbb L)$. We say that a compact set $\Gamma$ is an $\alpha$-{\em elementary $\mathscr{P}_h^c$-rectifiable graph with respect to $\mathbb V$ and $\mathbb L$} if the following four conditions hold 
\begin{enumerate}
    \item[(i)] $\Gamma$ is a compact $C_{\mathbb V}(\alpha)$-set of $\mathcal{S}^h$-finite measure and thus it is  the intrinsic graph of a continuous map $\varphi:A\subseteq\mathbb V\to\mathbb L$, with $A:=P_{\mathbb V}(\Gamma)$, see \cref{prop:ConeAndGraph},
    \item[(ii)] $\mathcal{S}^h\llcorner\Gamma$ is a $\mathscr{P}_h^c$-rectifiable measure,
    \item[(iii)] for $\mathcal{S}^h\llcorner\Gamma$-almost every $x\in\mathbb G$, the subgroup $\tau(\Gamma,x):=\tau(\mathcal{S}^h\llcorner\Gamma,x)$ defined in \cref{lem:BorelTangents} is complementary to $\mathbb L$,
   \item[(iv)] The value of $\mathcal{A}(\tau(\Gamma,x))$ is uniformly bounded above for $\mathcal{S}^h\llcorner\Gamma$-almost every $x\in\mathbb G$, where $\mathcal{A}$ is the centered area factor defined in \cref{def:areafactorCENTR}.
\end{enumerate}
\end{definizione}

For the crucial limit result in \cref{prop:CrucialLimit}, we need the following adaptation of \cite[Proposition 4.10]{antonelli2020rectifiable}.
\begin{proposizione}
\label{prop:vitali2PDAdapted}
Let $\mathbb V,\mathbb L$ be complementary subgroups of a Carnot group $\mathbb G$. Let us fix $\alpha\leq\oldep{ep:Cool}(\mathbb V,\mathbb L)$ and suppose that $\Gamma$ is a compact $C_\mathbb{V}(\alpha)$-set of finite $\mathcal{S}^h$-measure. For $\mathcal{S}^h\llcorner\Gamma$-almost every $x\in\mathbb G$, let $\mathcal{C}_x:=C_{\mathbb V(x)}(\beta_x)$, for some $\mathbb V(x)\in \G_c(h)$ that is complemented by $\mathbb L$, and some $\beta_x>0$. Let us further assume that $\mathcal{A}(\mathbb V(x))$, defined with respect to the splitting $\mathbb G=\mathbb V\cdot\mathbb L$ (see \cref{def:areafactorCENTR}) is uniformly bounded above by a constant $C$ for $\mathcal{S}^h\llcorner\Gamma$-almost every $x\in\mathbb G$. As in \cref{prop:mutuallyabs}, let us denote with $\Phi:P_{\mathbb V}(\Gamma)\to\mathbb G$ the graph map of $\varphi:P_{\mathbb V}(\Gamma)\to \mathbb L$ whose intrinsic graph is $\Gamma$.

Then for $\mathcal{S}^h$-almost every $w\in P_\mathbb{V}(\Gamma)$ we have
\begin{equation}
    \lim_{r\to 0}\frac{\mathcal{S}^h\big(P_\mathbb{V}\big(B(\Phi(w),r)\cap \Phi(w)\mathcal{C}_{\Phi(w)}\big)\cap P_\mathbb{V}(\Gamma)\big)}{\mathcal{S}^h\big(P_\mathbb{V}\big(B(\Phi(w),r)\cap \Phi(w)\mathcal{C}_{\Phi(w)}\big)\big)}=1.
    \label{eq:limiAdapted}
\end{equation}
\end{proposizione}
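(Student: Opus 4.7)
The plan is to adapt the proof of \cite[Proposition 4.10]{antonelli2020rectifiable}, which handles the case of a single fixed cone $C_{\mathbb V,d}(\alpha)$, to the varying-cone setting of \eqref{eq:limiAdapted}. The strategy is a Lusin-type reduction to an essentially constant cone, combined with a sandwich argument controlled by the modulus of continuity in the Grassmannian and the continuity of the area factor.

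First I would verify that the map $x\mapsto (\mathbb V(x),\beta_x)$ from $\Gamma$ to $\G_c(h)\times(0,+\infty)$ is $\mathcal{S}^h\llcorner\Gamma$-measurable; this is done for $\mathbb V(\cdot)$ via arguments analogous to those of \cref{lem:BorelTangents3} and \cref{prop:approxtangentismeasurable}, while measurability of $\beta_\cdot$ comes from the assumption. Since $(\G_c(h),d_\mathbb G)$ is separable by \cref{prop:CompGrassmannian}, Lusin's theorem for Radon measures gives, for every $\eta>0$, a compact set $K_\eta\subseteq\Gamma$ with $\mathcal{S}^h(\Gamma\setminus K_\eta)<\eta$ on which $\mathbb V(\cdot)$ and $\beta_\cdot$ are continuous. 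Since $\mathcal{S}^h\llcorner\Gamma$ is locally asymptotically doubling (by \cref{prop:Lebesuge}, invoking the lower projection bound from \cref{prop:DensitaInfPositiva} if necessary, or directly by the fact that $\Gamma$ is a $C_\mathbb V(\alpha)$-set of finite $\mathcal{S}^h$-measure), it suffices to establish \eqref{eq:limiAdapted} for $\mathcal{S}^h$-almost every $w\in P_\mathbb V(K_\eta)$.

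For fixed $\sigma>0$, by compactness and continuity I would cover $K_\eta$ by finitely many Borel pieces $K^{(1)},\ldots,K^{(N)}$ such that, on each $K^{(i)}$, $d_\mathbb G(\mathbb V(\cdot),\mathbb V_i)<\sigma$ and $|\beta_\cdot-\beta_i|<\sigma$ for fixed $(\mathbb V_i,\beta_i)\in\G_c(h)\times(0,+\infty)$. A cone inclusion argument in the spirit of \cite[Lemma 2.15]{antonelli2020rectifiable} then yields a modulus $\omega(\sigma)\to 0$ as $\sigma\to 0$ such that, for every $x\in K^{(i)}$,
\begin{equation*}
x C_{\mathbb V_i}(\beta_i-\omega(\sigma))\subseteq x\mathcal{C}_x\subseteq x C_{\mathbb V_i}(\beta_i+\omega(\sigma)).
\end{equation*}
This sandwiches both the numerator and the denominator of the ratio in \eqref{eq:limiAdapted} between the analogous quantities with the fixed cones $C_{\mathbb V_i}(\beta_i\pm\omega(\sigma))$.

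For each fixed auxiliary cone $C_{\mathbb V_i}(\beta)$ with $\mathbb V_i$ complemented by $\mathbb L$, a straightforward adaptation of the proof of \cite[Proposition 4.10]{antonelli2020rectifiable} — in which one uses the injectivity of $P_\mathbb V$ on $\Gamma$, the mutual absolute continuity of \cref{prop:mutuallyabs}, and Lebesgue differentiation for the locally asymptotically doubling measure $\mathcal{S}^h\llcorner\Gamma$ — gives that the corresponding fixed-cone ratio tends to $1$ for $\mathcal{S}^h\llcorner K^{(i)}$-a.e.\ $w$. The uniform upper bound on $\mathcal{A}(\mathbb V(x))$, together with \eqref{eqn:WhoIsTheAreaFactor?}, ensures $\mathcal{S}^h(P_\mathbb V(B(\Phi(w),r)\cap\Phi(w)C_{\mathbb V_i}(\beta_i\pm\omega(\sigma))))\geq c r^h$ with $c>0$ independent of $i$, and \cref{lem:ContinuousAreaFactorCENTR} gives that the denominators for $\beta_i+\omega(\sigma)$ and $\beta_i-\omega(\sigma)$ differ by a factor $1+O(\omega(\sigma))$. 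Combining the sandwich with this asymptotic equality, and letting first $\sigma\to 0$ and then $\eta\to 0$, yields \eqref{eq:limiAdapted}. The main obstacle is precisely making this last comparison uniform in $i$; the role of the uniform upper bound on $\mathcal{A}(\mathbb V(x))$ in the hypothesis is exactly to prevent the denominators from degenerating along different pieces of the decomposition, which would otherwise destroy the limit.
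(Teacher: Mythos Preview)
Your route is genuinely different from the paper's. The paper does not reduce to fixed cones via Lusin plus sandwich; instead it directly modifies the Vitali covering relation of \cite[Proposition~4.10]{antonelli2020rectifiable} by redefining $G(w,r):=P_{\mathbb V}\big(B(\Phi(w),r)\cap \Phi(w)\,\mathcal{C}_{\Phi(w)}\big)$ with the \emph{varying} cone, and then re-checks the engulfing condition \cite[Equation~(84)]{antonelli2020rectifiable} for this new family. The outer estimate $\hat G(w,r)\subseteq P_{\mathbb V}(B(\Phi(w),Cr))$ carries over verbatim, so the only new input needed is a uniform lower bound on $\mathcal{C}^h(G(w,r))/r^h$; this is exactly where the bound on $\mathcal{A}(\mathbb V(x))$ enters, since $\mathbb V(\Phi(w))\subseteq \mathcal{C}_{\Phi(w)}$ gives $\mathcal{C}^h\big(P_{\mathbb V}(B(0,1)\cap\mathcal{C}_{\Phi(w)})\big)\geq \mathcal{C}^h\big(P_{\mathbb V}(B(0,1)\cap\mathbb V(\Phi(w)))\big)=\mathcal{A}(\mathbb V(\Phi(w)))^{-1}\geq C^{-1}$. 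After that, Federer's differentiation for Vitali relations on $\mathbb V$ finishes the proof.

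Your sketch has two concrete problems. First, you repeatedly rely on $\mathcal{S}^h\llcorner\Gamma$ being locally asymptotically doubling, but that is \emph{not} among the hypotheses, and none of the results you invoke furnish it: \cref{prop:DensitaInfPositiva} \emph{assumes} doubling, \cref{prop:Lebesuge} only transfers doubling to restrictions, and an arbitrary compact $C_{\mathbb V}(\alpha)$-set of finite $\mathcal{S}^h$-measure need not carry a doubling surface measure. Second, your description of the mechanism behind \cite[Proposition~4.10]{antonelli2020rectifiable} is off: that proof uses neither \cref{prop:mutuallyabs} nor Lebesgue differentiation for $\mathcal{S}^h\llcorner\Gamma$, but rather builds a $\mathcal{S}^h\llcorner\mathbb V$-Vitali relation on $P_{\mathbb V}(\Gamma)$ and applies Federer's theorem there. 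Once you run \emph{that} argument correctly for a fixed auxiliary cone $C_{\mathbb V_i}(\beta)$, you will see the only nontrivial step is lower-bounding $\mathcal{C}^h\big(P_{\mathbb V}(B(0,1)\cap C_{\mathbb V_i}(\beta))\big)$ --- and that same lower bound works pointwise for the varying cone $\mathcal{C}_{\Phi(w)}$ via the area-factor hypothesis, which makes your entire Lusin/sandwich superstructure redundant.
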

\begin{proof}
The proof is almost identical to the one of \cite[Proposition 4.10]{antonelli2020rectifiable}, and we outline just the main changes. First, the fine covering $S$ is exactly the same of \cite[Proposition 4.10]{antonelli2020rectifiable}, except that in $(\beta)$ one defines $G(w,r):=P_\mathbb{V}(B(\Phi(w),r)\cap \Phi(w)C_{\mathbb V(\Phi(w))}(\beta_{\Phi(w)}))$ for $w\in P_{\mathbb V}(\Gamma)$. Thus we need to check \cite[Equation (84)]{antonelli2020rectifiable} with the newly defined covering $S$. The verification of \cite[Equation (84)]{antonelli2020rectifiable} for the part of the covering in $(\alpha)$ is precisely the same as in \cite{antonelli2020rectifiable}. Moreover, as it is readily seen by how the estimates are made, arguing verbatim as in \cite{antonelli2020rectifiable}, and by using the same notation therein, we get whenever $w\in P_{\mathbb V}(\Gamma)$ and $0<r<r(w)$ is sufficiently small we have
$$
\hat{G}(w,r)\subseteq P_\mathbb{V}(B(\Phi(w),50(A+1)r)) \cup P_\mathbb{V}(B(\Phi(w), C(\Gamma)r)),
$$
where $C(\Gamma)$ is a suitable constant depending only on $\Gamma$.
Hence whenever $w\in P_{\mathbb V}(\Gamma)$ and $0<r<r(w)$ is sufficiently small we have, by exploiting the homogeneity of $\mathcal{C}^h$ and the invariance properties in \cref{prop:InvarianceOfProj},
$$
\frac{\mathcal{C}^h(\hat{G}(w,r))}{\mathcal{C}^h(G(w,r))}\leq \frac{\mathcal{C}^h(P_{\mathbb V}(B(0,\max{50(A+1),C(\Gamma)})))}{\mathcal{C}^h(P_\mathbb V(B(0,1)\cap C_{\mathbb V(\Phi(w))}(\beta_{\Phi(w)})))}\leq C\mathcal{C}^h(P_{\mathbb V}(B(0,\max{50(A+1),C(\Gamma)})))
$$
where the last inequality is true since $\mathcal{C}^h(P_\mathbb V(B(0,1)\cap C_{\mathbb V(\Phi(w))}(\beta_{\Phi(w)})))\geq \mathcal{C}^h(P_{\mathbb V}(B(0,1)\cap \mathbb V(\Phi(w))))=\mathcal{A}(\mathbb V(\Phi(w)))^{-1}\geq C^{-1}$. Since the ratio of centered Hausdorff measures on $\mathbb V$ is the same as the ratio of spherical Hausdorff measures, the previous estimate allows to conclude that \cite[Equation (84)]{antonelli2020rectifiable} holds true also for $w\in P_{\mathbb V}(\Gamma)$ for this newly defined covering $S$ described above. Hence applications of standard differentiation results allow to conclude the proof of \eqref{eq:limiAdapted} as in \cite{antonelli2020rectifiable}.
\end{proof}

\begin{proposizione}\label{prop:CrucialLimit}
Let $\mathbb V,\mathbb L$ be two complementary subgroups of $\mathbb G$ and let $\alpha\leq\oldep{ep:Cool}(\mathbb V,\mathbb L)$ and. Let $\Gamma$ be an $\alpha$-elementary $\mathscr{P}_h^c$-rectifiable graph with respect to $\mathbb V,\mathbb L$, see \cref{def:ElementaryPh}. Then for $\mathcal{S}^h\llcorner\Gamma$ almost every $x\in \mathbb G$, we have the following equality 
\begin{equation}\label{eqn:LHS=RHSCENTR}
\begin{split}
    \lim_{r\to 0}r^{-h}\mathcal{C}^h(P_{\mathbb V}(B(x,r)\cap\Gamma)) = \mathcal{C}^h(P_{\mathbb V}(B(0,1)\cap\tau(\Gamma,x))),
    \end{split}
\end{equation}
where $\tau(\Gamma,x)$ is the tangent plane at $x$ introduced in \cref{lem:BorelTangents}.
\end{proposizione}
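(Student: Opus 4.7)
The plan is to adapt the strategy of \cite{JNGV20}: reduce the limit on $\Gamma$ to one on the intrinsically differentiable pieces $\Gamma_i$ supplied by \cref{thm:INTRO1Equivalence}, exploit the cone containment of \cref{prop:idiffapproximatetangent}, combine the projection estimate of \cref{prop:EstimateOnProjection} with the Vitali-type density \cref{prop:vitali2PDAdapted}, and finally let the cone opening $\beta$ tend to zero.

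Since $\mathcal{S}^h\llcorner\Gamma$ is $\mathscr{P}_h^c$-rectifiable, the implication $(1)\Rightarrow(3)$ of \cref{thm:INTRO1Equivalence} produces countably many intrinsic Lipschitz graphs $\Gamma_i \subseteq \Gamma$ that are intrinsically differentiable at $\mathcal{S}^h$-almost every point with complemented Hausdorff tangents, and $\mathcal{S}^h(\Gamma \setminus \bigcup_i \Gamma_i) = 0$. I would fix a point $x$ lying in some $\Gamma_i$ at which: (a) $\Gamma_i$ is intrinsically differentiable with Hausdorff tangent $\tau(\Gamma, x) \in \G_c(h)$, the identification between the Hausdorff tangent of $\Gamma_i$ and the Preiss tangent of $\mathcal{S}^h\llcorner\Gamma$ coming from \cref{prop:rett.1} together with the locality in \cref{prop:Lebesuge} applied to $\chi_{\Gamma_i}\cdot \mathcal{S}^h\llcorner\Gamma$; (b) $\Gamma_i$ has density $1$ in $\Gamma$ at $x$, so $\mathcal{S}^h((\Gamma \setminus \Gamma_i) \cap B(x, r)) = o(r^h)$; (c) the conclusion of \cref{prop:vitali2PDAdapted} holds at $x$ applied to the $C_{\mathbb V}(\alpha)$-set $\Gamma_i$ with $\mathcal{C}_y = C_{\tau(\Gamma, y)}(\beta)$ for every rational $\beta > 0$, which is legitimate because the area factor $\mathcal{A}(\tau(\Gamma, \cdot))$ is uniformly bounded by the elementary graph hypothesis. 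Condition (a) with \cref{prop:idiffapproximatetangent} gives, for every $\beta > 0$, a $\rho(\beta) > 0$ with $\Gamma_i \cap B(x, \rho(\beta)) \subseteq xC_{\tau(\Gamma, x)}(\beta)$, while (b) combined with the projection bound of \cref{cor:2.2.19} yields $\mathcal{C}^h(P_{\mathbb V}((\Gamma\setminus\Gamma_i) \cap B(x, r))) = o(r^h)$; hence it suffices to compute the limit with $\Gamma_i$ in place of $\Gamma$.

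For $r < \rho(\beta)$, the cone containment reduces $\Gamma_i \cap B(x, r) \cap xC_{\tau(\Gamma, x)}(\beta)$ to $\Gamma_i \cap B(x, r)$, so \cref{prop:EstimateOnProjection} applied to $\Gamma_i$ with $\mathbb W = \tau(\Gamma, x)$ gives
\begin{equation*}
\Bigl| r^{-h} \mathcal{C}^h\bigl( P_{\mathbb V}(B(x, r) \cap xC_{\tau(\Gamma, x)}(\beta)) \cap P_{\mathbb V}(\Gamma_i) \bigr) - r^{-h} \mathcal{C}^h\bigl( P_{\mathbb V}(\Gamma_i \cap B(x, r)) \bigr) \Bigr| \leq \Delta(\beta),
\end{equation*}
while the Vitali density in (c), combined with \cref{prop:InvarianceOfProj} and the homogeneity identity $\mathcal{C}^h(P_{\mathbb V}(B(x,r) \cap xC_{\tau(\Gamma,x)}(\beta))) = r^h \mathcal{C}^h(P_{\mathbb V}(B(0,1) \cap C_{\tau(\Gamma,x)}(\beta)))$, yields
\begin{equation*}
\limsup_{r \to 0} \Bigl| r^{-h} \mathcal{C}^h(P_{\mathbb V}(\Gamma \cap B(x, r))) - \mathcal{C}^h(P_{\mathbb V}(B(0,1) \cap C_{\tau(\Gamma, x)}(\beta))) \Bigr| \leq \Delta(\beta).
\end{equation*}
Letting $\beta \to 0$, $\Delta(\beta) \to 0$ by \cref{prop:EstimateOnProjection} and $C_{\tau(\Gamma, x)}(\beta) \searrow \tau(\Gamma, x)$, so continuity from above of the Radon measure $\mathcal{C}^h\llcorner\mathbb V$ yields the claim. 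The main obstacle is the simultaneous selection of $x$ at which (a)--(c) all hold and the tangent coincidences $\tau(\Gamma_i,\cdot) = \tau(\Gamma,\cdot)$ are valid for each $i$, together with the need to transfer the cone containment from $\Gamma_i$ to $\Gamma$ through the $o(r^h)$ replacement coming from (b).
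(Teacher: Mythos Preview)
Your proof is correct, and the analytical core---combining \cref{prop:EstimateOnProjection} with \cref{prop:vitali2PDAdapted} and then letting the cone opening $\beta\to0$---coincides with the paper's. The difference lies in how the subsets of $\Gamma$ carrying the cone containment are produced. The paper works directly with the sets $E(\vartheta,\gamma)$ of \cref{def:EThetaGamma}: it first uses the mutual absolute continuity of $\Phi_*(\mathcal{C}^h\llcorner\mathbb V)$ and $\mathcal{C}^h\llcorner\Gamma$ from \cref{prop:mutuallyabs} to replace $\Gamma$ by $E(\vartheta,\gamma)$ via a density-one argument for the \emph{pushforward} measure (so that the replacement happens already at the level of projections), and then invokes \cite[Proposition~3.2]{antonelli2020rectifiable} to obtain $E(\vartheta,\gamma)\cap B(x,r)\subseteq xC_{\tau(\Gamma,x)}(\beta)$ directly from $\mathscr P_h^c$-rectifiability, without passing through intrinsic differentiability. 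You instead pull back the structure theorem \cref{thm:INTRO1Equivalence} to obtain intrinsically differentiable pieces $\Gamma_i$ and read off the cone containment from \cref{prop:idiffapproximatetangent}. Your route is arguably cleaner once \cref{thm:INTRO1Equivalence} is available, while the paper's is more self-contained and avoids relying on that heavier result.

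One small point to fix: item~3 of \cref{thm:INTRO1Equivalence} gives graphs $\Gamma_i$ covering $\Gamma$ up to a null set, not $\Gamma_i\subseteq\Gamma$, and these $\Gamma_i$ are intrinsic Lipschitz over splittings $\mathbb V_i\cdot\mathbb L_i$ that need not coincide with $\mathbb V\cdot\mathbb L$. You should therefore work with $\Gamma_i\cap\Gamma$, which is a $C_{\mathbb V}(\alpha)$-set (since $\Gamma$ is) and inherits the one-sided cone containment from $\Gamma_i$; with this replacement all applications of \cref{prop:EstimateOnProjection} and \cref{prop:vitali2PDAdapted} go through as written.
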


\begin{proof}
Let us notice first that by means of \cref{prop:InvarianceOfProj} and from the homogeneity of the measure we get that 
\begin{equation}\label{eqn:Bastacosi}
\mathcal{C}^h(P_{\mathbb V}(B(0,1)\cap\delta_{\lambda}(x^{-1}\Gamma)))=\lambda^{h}\mathcal{C}^h(P_{\mathbb V}(B(x,\lambda^{-1})\cap \Gamma)),
\end{equation}
for every $x\in\Gamma$, $\lambda>0$. Let us call $\Phi:A\subseteq\mathbb V\to\mathbb L$ the graph map of $\varphi$ as in item (i) of \cref{def:ElementaryPh}. From \cref{prop:mutuallyabs} we get that the measure $\Phi_*(\mathcal{C}^h\llcorner\mathbb V)$ is mutually absolutely continuous with respect to $\mathcal{C}^h\llcorner\Gamma$. As a consequence, if we fix $\vartheta,\gamma\in\mathbb N$, we have that $\mathcal{C}^h\llcorner\Gamma$-almost every point $x$ in $E(\vartheta,\gamma)$, see \cref{def:EThetaGamma}, is a point of density one for the measure $\Phi_*(\mathcal{C}^h\llcorner\mathbb V)$, that is to say for every $\vartheta,\gamma\in\mathbb N$ we have that 
$$
\lim_{r\to 0} \frac{(\Phi_*(\mathcal{C}^h\llcorner\mathbb V))(B(x,r))}{(\Phi_*(\mathcal{C}^h\llcorner\mathbb V))(B(x,r)\cap E(\vartheta,\gamma))}=\lim_{r\to 0} \frac{\mathcal{C}^h(P_{\mathbb V}(B(x,r)\cap\Gamma))}{\mathcal{C}^h(P_{\mathbb V}(B(x,r)\cap E(\vartheta,\gamma)))}=1, \qquad \text{for $\mathcal{C}^h\llcorner\Gamma$-almost every $x\in E(\vartheta,\gamma)$.}
$$
From the previous equality, \cref{prop::E}, identity \eqref{eqn:Bastacosi} and the invariance properties of \cref{prop:InvarianceOfProj}, we conclude that it is sufficient to prove that for every $\vartheta,\gamma\in\mathbb N$ we have that 
\begin{equation}\label{eqn:LHS=RHSCENTRNUOVO}
\begin{split}
    \lim_{r\to 0}\mathcal{C}^h(P_{\mathbb V}(B(0,1)\cap\delta_{1/r}(x^{-1}E(\vartheta,\gamma)))) = \mathcal{C}^h(P_{\mathbb V}(B(0,1)\cap\tau(\Gamma,x))),
    \end{split}
\end{equation}
for $\mathcal{C}^h\llcorner\Gamma$-almost every $x\in E(\vartheta,\gamma)$.


From now on we assume $\vartheta,\gamma\in\mathbb N$ to be fixed. Thanks to \cite[Proposition 3.2]{antonelli2020rectifiable} for $\mathcal{C}^h\llcorner\Gamma$-almost every $x\in E(\vartheta,\gamma)$ and any $\beta\leq\oldC{ProjC}(\tau(\Gamma,x),\mathbb L)$, there exists a $\tilde\varrho(x,\beta)$ such that $E(\vartheta,\gamma)\cap B(x,r)\subseteq xC_{\tau(\Gamma,x)}(\beta)$ for very $0<r<\widetilde\varrho(x,\beta)$. For such an $x$ and $\beta\leq\oldC{ProjC}(\tau(\Gamma,x),\mathbb L)$, note that \cref{prop:EstimateOnProjection} with the choices $\Gamma=E(\vartheta,\gamma)$ and $\rho=\tilde\varrho(x,\beta)$ allows us to infer that
\begin{equation}\label{eqn:CrucialLimit3}
\left|\frac{\mathcal{C}^h(P_{\mathbb V}(B(x,r)\cap xC_{\tau(\Gamma,x)}(\beta))\cap P_{\mathbb V}(E(\vartheta,\gamma)))}{r^h}-\frac{\mathcal{C}^h(P_{\mathbb V}(B(x,r)\cap xC_{\tau(\Gamma,x)}(\beta)\cap E(\vartheta,\gamma)))}{r^h}\right|\leq \Delta_x (\beta),
\end{equation}
for any $0<r<\varrho(\tilde\varrho(x,\beta),\alpha)$. In addition to this, \cref{prop:vitali2PDAdapted}, the homogeneity of $\mathcal{C}^h\llcorner\mathbb V$, and the invariance properties of \cref{prop:InvarianceOfProj} imply that, for any $\beta>0$, we get that for $\mathcal{C}^h\llcorner\Gamma$-almost every $x\in E(\vartheta,\gamma)$ we have
\begin{equation}\label{eq:limite}
    \begin{split}
        &     \qquad\qquad\qquad\qquad\lim_{r\to 0}\frac{\mathcal{C}^h(P_{\mathbb V}(B(x,r)\cap xC_{\tau(\Gamma,x)}(\beta))\cap P_{\mathbb V}(E(\vartheta,\gamma)))}{r^h}\\
        =&\lim_{r\to 0}\frac{\mathcal{C}^h(P_{\mathbb V}(B(x,r)\cap xC_{\tau(\Gamma,x)}(\beta))\cap P_{\mathbb V}(E(\vartheta,\gamma)))}{\mathcal{C}^h(P_{\mathbb V}(B(x,r)\cap xC_{\tau(\Gamma,x)}(\beta)))} 
        \cdot\frac{\mathcal C^h(P_{\mathbb V}(B(x,r)\cap xC_{\tau(\Gamma,x)}(\beta)))}{r^h}\\ &\qquad\qquad\qquad\qquad\qquad=\mathcal{C}^h(P_{\mathbb V}(B(0,1)\cap C_{\tau(\Gamma,x)}(\beta))).
    \end{split}
\end{equation}
Finally, from the continuity of measures, for $\mathcal{C}^h\llcorner\Gamma$-almost every $x\in E(\vartheta,\gamma)$ there exists a function $\Delta^\prime_x(\beta)$ with $\Delta^\prime_x(\beta)\to 0$ as $\beta\to 0$ (pointwise in $x$), and
\begin{equation}\label{eqn:CrucialLimit2}
    |\mathcal{C}^h(P_{\mathbb V}(B(0,1)\cap C_{\tau(\Gamma,x)}(\beta)))-\mathcal{C}^h(P_{\mathbb V}(B(0,1)\cap\tau(\Gamma,x)))|\leq \Delta^\prime_x(\beta), \qquad \text{for all $\beta>0$}.
\end{equation}
Let us define $E_n$ as the set of points $x$ in $E(\vartheta,\gamma)$ such that $\tau(\Gamma,x)$ exists and $\oldC{ProjC}(\tau(\Gamma,x),\mathbb L)>1/n$. Obviously $\mathcal{C}^h(E(\vartheta,\gamma)\setminus \cup_{n=1}^{+\infty} E_n)=0$. Thus it is sufficient to prove the claim \eqref{eqn:LHS=RHSCENTRNUOVO} for $\mathcal{C}^h\llcorner\Gamma$-almost every $x\in E_n$. Let us fix $n\in\mathbb N$.
The above discussion shows that, if we fix $\beta\leq 1/n$, then for $\mathcal{C}^h\llcorner\Gamma$-almost every $x\in E_n$ we have that \eqref{eqn:CrucialLimit3}, \eqref{eq:limite} and \eqref{eqn:CrucialLimit2} imply
\begin{equation}\label{eqn:INTRIGO}
    \begin{split}
       &\limsup_{r\to 0}\lvert \mathcal{C}^h(P_{\mathbb V}(B(0,1)\cap\delta_{1/r}(x^{-1}E(\vartheta,\gamma))))-\mathcal{C}^h(P_{\mathbb V}(B(0,1)\cap\tau(\Gamma,x)))\rvert\\
        &\leq \Delta^\prime_x(\beta)+\limsup_{r\to 0}\lvert \mathcal{C}^h(P_{\mathbb V}(B(0,1)\cap\delta_{1/r}(x^{-1}E(\vartheta,\gamma))))-\mathcal{C}^h(P_{\mathbb V}(B(0,1)\cap C_{\tau(\Gamma,x)}(\beta)))\rvert\\
        \leq\Delta^\prime_x(\beta)&+\limsup_{r\to 0}\Big\lvert \frac{\mathcal{C}^h(P_{\mathbb V}(B(x,r)\cap xC_{\tau(\Gamma,x)}(\beta)\cap  E(\vartheta,\gamma)))}{r^h}-\frac{\mathcal{C}^h(P_{\mathbb V}(B(x,r)\cap xC_{\tau(\Gamma,x)}(\beta))\cap P_{\mathbb V}(E(\vartheta,\gamma)))}{r^h}\Big\rvert\\
        &+\limsup_{r\to0}\Big\lvert \frac{\mathcal{C}^h(P_{\mathbb V}(B(x,r)\cap xC_{\tau(\Gamma,x)}(\beta))\cap P_{\mathbb V}(E(\vartheta,\gamma)))}{r^h}-\frac{\mathcal{C}^h(P_{\mathbb V}(B(x,r)\cap xC_{\tau(\Gamma,x)}(\beta)))}{r^h}\Big\rvert\\
        &\qquad\qquad\qquad\qquad\qquad\qquad\leq \Delta^\prime_x(\beta)+\Delta_x(\beta).
    \end{split}
\end{equation}
Thus by taking the intersection of the $\mathcal{C}^h\llcorner\Gamma$-full measure sets in $E_n$ on which the previous inequality holds for $\beta=1/m$, with $m\geq n$, we get that for $\mathcal{C}^h\llcorner\Gamma$-almost every $x\in E_n$, the previous inequality holds for every $\beta=1/m$, with $m\geq n$. By fixing an $x$ in such a set of full $\mathcal{C}^h\llcorner\Gamma$-measure in $E_n$ and taking $\beta=1/m$ in \eqref{eqn:INTRIGO} and $m\to +\infty$, we get the claim \eqref{eqn:LHS=RHSCENTRNUOVO} for $\mathcal{C}^h\llcorner\Gamma$-almost every $x\in E_n$, and thus the proof is concluded.

\end{proof}

\begin{proposizione}\label{prop:SplitInElementaryGraph}
There exist a family $\mathscr F:=\{\mathbb V_k\}_{k\in\mathbb N}\subseteq \G_c(h)$ and $\mathbb L_k$ complementary subgroups of $\mathbb V_k$ such that the following holds. If $\phi$ is a $\mathscr{P}_h^c$-rectifiable measure, there are continuous maps $\varphi_k:A_k\subseteq\mathbb V_k\to\mathbb L_k$, with $A_k$ compact, such that
\begin{itemize}
\item[(i)] for every $k\in\mathbb N$ we have $\Gamma_k:=\mathrm{graph}(\varphi_k)=A_k\cdot\varphi_k(A_k)$ is an $\alpha_k$-elementary  $\mathscr{P}_h^c$-rectifiable graph with respect to $\mathbb V_k$ and $\mathbb L_k$ for some $\alpha_k$, see \cref{def:ElementaryPh},
\item[(ii)] $\phi(\mathbb G\setminus \cup_{k\in\mathbb N}\Gamma_k)=0$,
    
\end{itemize}
\end{proposizione}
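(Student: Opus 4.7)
The plan is as follows. Take as $\mathscr F = \{\mathbb V_k\}_{k \in \mathbb N}$ and $\{\mathbb L_k\}_{k \in \mathbb N}$ the universal family and complements constructed inside the proof of \cref{prop:TangentCompemented}, which do not depend on $\phi$. Since $\phi$ is $\mathscr P_h^c$-rectifiable, \cref{prop:PhiAsDoubling} gives that $\phi$ is locally asymptotically doubling, and a routine tangent-scaling argument (cf.\ \cite[Remark 14.4(3)]{Mattila1995GeometrySpaces}) yields $\Tan(\phi, x) = \{\lambda \mathcal S^h \llcorner \mathbb V(x) : \lambda > 0\}$ with $\mathbb V(x) \in \G_c(h)$ for $\phi$-a.e.\ $x$. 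Thus I can apply \cref{prop:TangentCompemented} to $\phi$ with an opening parameter $\alpha$ to be chosen small enough later: this yields compact intrinsic Lipschitz graphs $\widetilde \Gamma_k$ of maps $\widetilde\varphi_k : \widetilde A_k \subseteq \mathbb V_k \to \mathbb L_k$, each a $C_{\mathbb V_k}(\alpha_k)$-set with $\alpha_k \leq \oldep{ep:Cool}(\mathbb V_k, \mathbb L_k)$, covering $\mathbb G$ $\phi$-almost everywhere. Since moreover $\phi(\mathbb G \setminus \bigcup_{\vartheta, \gamma \in \mathbb N} E(\vartheta, \gamma)) = 0$ by \cref{prop::E}, I would refine the covering by setting $\Gamma_{k, \vartheta, \gamma} := \widetilde \Gamma_k \cap E(\vartheta, \gamma)$. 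These are compact, still $C_{\mathbb V_k}(\alpha_k)$-sets, and by \cref{prop:ConeAndGraph} they are intrinsic Lipschitz graphs of the restriction of $\widetilde\varphi_k$ to $P_{\mathbb V_k}(\Gamma_{k, \vartheta, \gamma})$. After re-enumerating the pieces as $\{\Gamma_m\}_{m \in \mathbb N}$ (with associated $\mathbb V_m, \mathbb L_m, \alpha_m, \varphi_m, A_m$, and ambient $E_m := E(\vartheta(m), \gamma(m))$), the family still covers $\mathbb G$ $\phi$-almost everywhere.

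The hard step is verifying item (ii) of \cref{def:ElementaryPh}, i.e., that $\mathcal S^h \llcorner \Gamma_m$ is itself $\mathscr P_h^c$-rectifiable. I would obtain this by applying \cref{prop:rett.1} to $\mathcal C^h \llcorner \Gamma_m$. On $E_m$ the uniform density bound $\vartheta^{-1} r^h \leq \phi(B(x, r)) \leq \vartheta r^h$ yields, through a standard $5r$-covering argument, that $\mathcal C^h \llcorner \Gamma_m$ and $\phi \llcorner \Gamma_m$ are mutually absolutely continuous (and in particular $\mathcal S^h(\Gamma_m) < \infty$). Next, the cone-inclusion statement of \cite[Proposition 3.2]{antonelli2020rectifiable}, already exploited in the proof of \cref{prop:CrucialLimit}, gives that at $\phi$-a.e.\ (hence $\mathcal C^h \llcorner \Gamma_m$-a.e.) $x \in E_m$ and for every sufficiently small $\beta > 0$ there is $\widetilde \varrho(x, \beta) > 0$ with $E_m \cap B(x, r) \subseteq x C_{\mathbb V(x)}(\beta)$ whenever $0 < r < \widetilde\varrho(x, \beta)$. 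Since $\Gamma_m \subseteq E_m$, the same inclusion holds with $\Gamma_m$ in place of $E_m$, which is precisely the hypothesis of \cref{prop:rett.1}. Hence $\mathcal C^h \llcorner \Gamma_m$, and then $\mathcal S^h \llcorner \Gamma_m$ by \cref{rem:ShPRectifiableIfAndOnlyIf}, is $\mathscr P_h^c$-rectifiable, with $\tau(\Gamma_m, x) = \mathbb V(x)$ at $\mathcal S^h \llcorner \Gamma_m$-a.e.\ $x$.

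Items (i), (iii) and (iv) then follow from a suitable a priori choice of the opening parameter $\alpha$. Item (i) is built into the construction, using the $\mathcal S^h$-finiteness obtained above. For items (iii) and (iv), the very construction of \cref{prop:TangentCompemented} forces $\mathbb V(x) = \tau(\Gamma_m, x)$ to lie in an arbitrarily small neighborhood of $\mathbb V_m$ in $(\G(h), d_{\mathbb G})$, provided $\alpha$ is small enough. Since being complemented by the fixed subgroup $\mathbb L_m$ is an open condition near $\mathbb V_m$ in $\G(h)$, this ensures that $\tau(\Gamma_m, x)$ is complemented by $\mathbb L_m$, yielding (iii). Finally, the continuity of the centered area factor $\mathcal A(\cdot)$ on the set of subgroups complemented by $\mathbb L_m$ (\cref{lem:ContinuousAreaFactorCENTR}) gives the uniform upper bound $\sup_x \mathcal A(\tau(\Gamma_m, x)) < \infty$ required by (iv).
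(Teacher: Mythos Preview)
Your argument is correct, but it is considerably more laborious than the paper's and routes through heavier machinery in places where a one-line locality argument suffices.

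\medskip

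\textbf{On item (ii) of \cref{def:ElementaryPh}.} You refine by $E(\vartheta,\gamma)$, pull the cone inclusion for $E_m$ from \cite[Proposition 3.2]{antonelli2020rectifiable}, and then invoke \cref{prop:rett.1} to conclude that $\mathcal S^h\llcorner\Gamma_m$ is $\mathscr P_h^c$-rectifiable. This works, but the paper bypasses it entirely: since $\phi$ is $\mathscr P_h^c$-rectifiable, $\phi$ and $\mathcal C^h$ restricted to its support are mutually absolutely continuous by \cite[Proposition 2.6]{antonelli2020rectifiable}, and then \cref{prop:Lebesuge} (locality of tangents under restriction and multiplication by an $L^1$ density) transfers the tangent structure of $\phi$ directly to $\mathcal C^h\llcorner\Gamma_k$. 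No $E(\vartheta,\gamma)$ splitting, no call to \cref{prop:rett.1}.

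\medskip

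\textbf{On items (iii) and (iv).} You argue via Grassmannian proximity, reaching into the proof of \cref{prop:TangentCompemented} to say that $\mathbb V(x)$ lies in a small neighbourhood of $\mathbb V_m$ when $\alpha$ is small, and then use openness of complementation by $\mathbb L_m$ plus continuity of $\mathcal A$ on a compact neighbourhood. This is fine (the $\sigma_\ell$ in that proof goes to $0$ uniformly in $\ell$ as $\alpha\to 0$, so one small $\alpha$ does the job), but the paper's route is more direct: it starts from \cite[Theorem 3.4]{antonelli2020rectifiable}, which already produces $C_{\mathbb V_k}(\min\{\oldep{ep:Cool}(\mathbb V_k,\mathbb L_k),\hbar_{\mathbb G}\})$-sets; then the blow-up inclusion $\mathbb V(x)\subseteq C_{\mathbb V_k}(\min\{\oldep{ep:Cool},\hbar_{\mathbb G}\})$ combined with the universal constant $\hbar_{\mathbb G}$ and \cite[Proof of Proposition 2.17]{antonelli2020rectifiable} gives complementation by $\mathbb L_k$ immediately, with no need to tune $\alpha$. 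For (iv), the paper runs a compactness argument directly on the set of planes contained in the cone $C_k$ (which is closed in $\G(h)$ and consists entirely of planes complemented by $\mathbb L_k$), again avoiding any dependence on an a priori choice of $\alpha$.

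\medskip

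In short: your proof is valid, but the paper's is shorter because it replaces your use of \cref{prop:rett.1} by locality of tangents, and your Grassmannian-neighbourhood argument by the cone inclusion $\mathbb V(x)\subseteq C_k$ together with the universal aperture $\hbar_{\mathbb G}$.
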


\begin{proof}
The result in \cite[Theorem 3.4]{antonelli2020rectifiable} implies that we can find countably many $\mathbb{V}_k\in\G_c(h)$ complemented by some $\mathbb L_k$ such that the following holds. If $\phi$ is a $\mathscr{P}_h^c$-rectifiable measure, then there exist compact sets $\Gamma_k$ such that
\begin{enumerate}
\item $\phi(\mathbb G\setminus \cup_{k\in\mathbb N}\Gamma_k)=0$,
    \item for any $k\in\N$ the set $\Gamma_k$ is a $C_\mathbb{V_k}(\min\{\oldep{ep:Cool}(\mathbb{V}_k,\mathbb{L}_k),\hbar_\mathbb{G}\})$-set, where $\hbar_\mathbb{G}>0$ is the constant in \cite[Proposition 2.8]{antonelli2020rectifiable}.
\end{enumerate}
It is immediate to see that the measures $\phi$ and $\mathcal{C}^h$ are mutually absolutely continuous (see, e.g., \cite[Proposition 2.6]{antonelli2020rectifiable}) and hence by the Lebesgue differentiation theorem and the locality of tangents, cf. \cref{prop:Lebesuge}, the measure $\mathcal{C}^h\llcorner\Gamma_k$ is still a $\mathscr{P}_h^c$-rectifiable measure. This proves that each $\Gamma_k$ verifies the hypothesis (i) and (ii) of \cref{def:ElementaryPh}. In order to check (iii) we note that that, from item 2. above, for $\mathcal{C}^h\llcorner \Gamma_k$-almost every $x$ we have that the tangent $\mathbb{V}(x)$ is contained in $ C_{\mathbb{V}_k}(\min\{\oldep{ep:Cool}(\mathbb{V}_k,\mathbb{L}_k),\hbar_\mathbb{G}\})=:C_k$. This implies thanks to \cite[Proof of Proposition 2.17]{antonelli2020rectifiable} that $\mathbb{V}(x)$ is a complementary subgroup of $\mathbb{L}_k$. In order to conclude the proof of item (iv) of \cref{def:ElementaryPh}, we must prove that for any $k\in\N$ there exists a constant $C>0$ such that
$$\mathcal{C}^h(P_{\mathbb V_k}(B(0,1)\cap\mathbb V(x)))^{-1}=:\mathcal{A}(\mathbb{V}(x))\leq C\qquad \text{for }\mathcal{C}^h\llcorner \Gamma_k\text{-almost any }x\in \mathbb{G}.$$
Since $\mathbb V(x)\subseteq C_k$ for $\mathcal{C}^h\llcorner\Gamma_k$-almost every $x\in\mathbb G$, it is sufficient to show that there exists a constant $c>0$ such that for any $\mathbb{W}\in \G_c(h)$ contained in $C_k$ we have
$$\mathcal{C}^h(P_{\mathbb V_k}(B(0,1)\cap\mathbb W))\geq c.$$
Suppose by contradiction that there exists a sequence of planes $\mathbb{W}_i\in \G_c(h)$ contained in $C_k$ such that
$$\mathcal{C}^h(P_{\mathbb V_k}(B(0,1)\cap\mathbb W_i))\leq i^{-1}.$$
The compactness result in \cref{prop:CompGrassmannian} implies that there exists a $\mathbb{W}\in\G(h)$ such that $\lim_{i\to 0}d_{\mathbb{G}}(\mathbb{W},\mathbb{W}_i)=0$. Since $\mathbb W_i\in C_k$ for every $i$ we also get $\mathbb W\in C_k$ and then, since the aperture of the cone $C_k$ is smaller than $\min\{\oldep{ep:Cool}(\mathbb{V}_k,\mathbb{L}_k),\hbar_\mathbb{G}\}$, \cite[Proof of Proposition 2.17]{antonelli2020rectifiable} we have that $\mathbb{L}_k$ and $\mathbb{W}$ are complementary subgroups, and thus $\mathbb W\in \G_c(h)$. Finally \cref{lem:ContinuousAreaFactorCENTR} implies that $\mathcal{C}^h(P_{\mathbb V_k}(B(0,1)\cap\mathbb W))=0$. This is not possible since the area factor $\mathcal{A}(\mathbb W)$ relative to the splitting $\mathbb G=\mathbb V_k\cdot\mathbb L_k$ should be finite, see \cref{def:areafactorCENTR}.
\end{proof}

\begin{teorema}[Area formula for the centered measure]\label{thm:AreaFormulaCENTR}
Let $\mathbb V,\mathbb L$ be two complementary subgroups of $\mathbb G$. Suppose further $\Gamma$ is an $\alpha$-elementary $\mathscr{P}_h^c$-rectifiable graph with respect to $\mathbb{V}$ and $\mathbb{L}$, see \cref{def:ElementaryPh}.
Then, for every Borel function $\psi:\Gamma\to[0,+\infty)$ we have 
\begin{equation}\label{eqn:AreaFormulaCENTR1}
\int_\Gamma \psi(w)d\mathcal{C}^h = \int_A \psi(a\cdot\varphi(a))\mathcal{A}( \tau(\Gamma,a\cdot\varphi(a)))d\mathcal{C}^h\llcorner \mathbb{V}.
\end{equation}
where $\mathcal{A}(\cdot)$ denotes the centered area factor introduced in \cref{def:areafactorCENTR}.
\end{teorema}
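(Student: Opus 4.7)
The plan is to identify the Radon--Nikodym derivative of $\mathcal{C}^h\llcorner\Gamma$ with respect to the pushforward of $\mathcal{C}^h\llcorner\mathbb V$ under the graph map, and show that it coincides $\mathcal{C}^h\llcorner\Gamma$-almost everywhere with $\mathcal{A}(\tau(\Gamma,\cdot))$. Write $\Phi\colon A\to\Gamma$ for the graph map of $\varphi$, so that $\Phi_*(\mathcal{C}^h\llcorner\mathbb V)(E)=\mathcal{C}^h\llcorner\mathbb V(P_{\mathbb V}(E))$ for every Borel $E\subseteq\Gamma$. The same argument as in \cref{prop:mutuallyabs} (now using $\mathcal{C}^h$ instead of $\mathcal{S}^h$, which is legitimate thanks to the mutual absolute continuity noted in \cref{rem:ShPRectifiableIfAndOnlyIf} and to \cref{prop:proj}, which is applicable because $\mathcal{C}^h\llcorner\Gamma$ is locally asymptotically doubling by $\mathscr{P}_h^c$-rectifiability) shows that $\Phi_*(\mathcal{C}^h\llcorner\mathbb V)$ and $\mathcal{C}^h\llcorner\Gamma$ are mutually absolutely continuous on $\Gamma$. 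Hence there is a Radon--Nikodym density $f$ with $\mathcal{C}^h\llcorner\Gamma=f\cdot\Phi_*(\mathcal{C}^h\llcorner\mathbb V)$.

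Next, I would compute $f$ pointwise. Since $\mathcal{C}^h\llcorner\Gamma$ is asymptotically doubling, and it is mutually absolutely continuous with $\Phi_*(\mathcal{C}^h\llcorner\mathbb V)$, the latter is also locally asymptotically doubling, and the Lebesgue differentiation theorem (see \cref{prop:Lebesuge}) yields
\begin{equation*}
f(x)=\lim_{r\to 0}\frac{\mathcal{C}^h\llcorner\Gamma(B(x,r))}{\Phi_*(\mathcal{C}^h\llcorner\mathbb V)(B(x,r))}=\lim_{r\to 0}\frac{\mathcal{C}^h\llcorner\Gamma(B(x,r))}{\mathcal{C}^h(P_{\mathbb V}(B(x,r)\cap\Gamma))},
\end{equation*}
for $\mathcal{C}^h\llcorner\Gamma$-almost every $x\in\Gamma$. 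For the numerator I invoke \cref{thm:INTRO1Equivalence} (or, equivalently, its underlying ingredient \cref{prop:rett.1} applied after \cref{prop:idiffapproximatetangent} via the differentiability granted by $\mathscr{P}_h^c$-rectifiability) to get $\Theta^h(\mathcal{C}^h\llcorner\Gamma,x)=1$, so the numerator is asymptotic to $r^h$. For the denominator I apply \cref{prop:CrucialLimit} --- whose hypotheses are exactly (i)--(iv) of \cref{def:ElementaryPh} --- to obtain $r^{-h}\mathcal{C}^h(P_{\mathbb V}(B(x,r)\cap\Gamma))\to \mathcal{C}^h(P_{\mathbb V}(B(0,1)\cap\tau(\Gamma,x)))$. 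Combining the two limits and recalling from \eqref{eqn:WhoIsTheAreaFactor?} that $\mathcal{A}(\tau(\Gamma,x))=\mathcal{C}^h(P_{\mathbb V}(B(0,1)\cap\tau(\Gamma,x)))^{-1}$, I conclude $f(x)=\mathcal{A}(\tau(\Gamma,x))$ for $\mathcal{C}^h\llcorner\Gamma$-almost every $x\in\Gamma$.

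Finally, the area formula follows by plugging this identification of $f$ into the abstract change-of-variables formula for the pushforward $\Phi_*$: for every non-negative Borel $\psi\colon\Gamma\to[0,+\infty)$,
\begin{equation*}
\int_\Gamma\psi\,d\mathcal{C}^h\llcorner\Gamma=\int_\Gamma\psi(x)\mathcal{A}(\tau(\Gamma,x))\,d\Phi_*(\mathcal{C}^h\llcorner\mathbb V)(x)=\int_A\psi(a\cdot\varphi(a))\mathcal{A}(\tau(\Gamma,a\cdot\varphi(a)))\,d\mathcal{C}^h\llcorner\mathbb V(a),
\end{equation*}
which is \eqref{eqn:AreaFormulaCENTR1}. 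The delicate step is the identification of $f$, and the true heart of it is \cref{prop:CrucialLimit}, which rests on the projection estimate \cref{prop:EstimateOnProjection} together with the Vitali-type result \cref{prop:vitali2PDAdapted}; the uniform bound on $\mathcal{A}(\tau(\Gamma,\cdot))$ built into \cref{def:ElementaryPh}(iv) is precisely what makes \cref{prop:vitali2PDAdapted} applicable here, and is the substantive technical input needed to go beyond the open-domain setting of \cite{JNGV20}.
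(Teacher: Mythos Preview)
Your proof is correct and, like the paper's, rests on \cref{prop:CrucialLimit} as the decisive technical input; but you take a genuinely different (and shorter) route to the identification of the density. You invoke the already–established fact $\Theta^h(\mathcal{C}^h\llcorner\Gamma,x)=1$ from \cref{thm:INTRO1Equivalence} to handle the numerator, and then compute the Radon--Nikodym derivative of $\mathcal{C}^h\llcorner\Gamma$ with respect to $\Phi_*(\mathcal{C}^h\llcorner\mathbb V)$ directly by Lebesgue differentiation. The paper instead keeps the proof self-contained within Section~4: it defines the auxiliary measure $\mu(E)=\int_{P_{\mathbb V}(\Gamma\cap E)}\mathcal{A}(\tau(\Gamma,\Phi(a)))\,d\mathcal{C}^h\llcorner\mathbb V(a)$, observes $\mu\ll\mathcal{C}^h\llcorner\Gamma$, and shows $\Theta^{h,*}(\mu,x)=1$ for $\mu$-almost every $x$ by a Lebesgue-point argument for the integrand $a\mapsto\mathcal{A}(\tau(\Gamma,\Phi(a)))$ with respect to the Vitali relation of \cite[Proposition~4.9]{antonelli2020rectifiable}, and then concludes $\mu=\mathcal{C}^h\llcorner\Gamma$ via the density theorem \cite[Theorem~3.1]{FSSC15}. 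Your approach buys brevity by back-referencing \cref{thm:INTRO1Equivalence}; the paper's approach avoids that dependence and instead packages the density computation into a single upper-density calculation for $\mu$, which is logically cleaner if one wants the area formula to stand somewhat independently. One small imprecision: your parenthetical about ``differentiability granted by $\mathscr{P}_h^c$-rectifiability'' is not quite right, since that implication runs through \cite[Theorem~1.8]{antonelli2020rectifiable} (item~1 $\Rightarrow$ item~3 of \cref{thm:INTRO1Equivalence}) rather than directly through \cref{prop:idiffapproximatetangent}; but simply invoking the ``Moreover'' clause of \cref{thm:INTRO1Equivalence} is legitimate and suffices.
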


\begin{osservazione}
Note that the above expression is well defined since thanks to \cref{prop:mutuallyabs} the map $a\to \tau(\Gamma,a\cdot\varphi(a))$ is defined up to $\mathcal{C}^h$-null sets on $\mathbb{V}$.
\end{osservazione}

\begin{proof}
As a first step, let us show that the map $a\mapsto \mathcal{A}(\tau(\Gamma,a\cdot\varphi(a)))=:f(a)$ is $\mathcal{C}^h\llcorner \mathbb{V}$-measurable. To do so let us first recall that
\begin{enumerate}
    \item the map $a\mapsto a\cdot\varphi(a)$ is continuous from $A$ to $\Gamma$,
    \item the map $x\mapsto \tau(\Gamma,x)$ sending points of $\Gamma$ into elements of $\G_c(h)$ is $\mathcal{C}^h\llcorner \Gamma$-measurable thanks to \cref{lem:BorelTangents} and for $\mathcal{S}^h\llcorner \Gamma$-almost every $x\in \Gamma$ the plane $\tau(\Gamma,x)$ is a complementary subgroup of $\mathbb{L}$ thanks to \cref{def:ElementaryPh}(iii),
    \item thanks to \cref{lem:ContinuousAreaFactorCENTR}, the function $\mathbb W\mapsto\mathcal{A}(\mathbb W)$ is continuous when restricted to those $\mathbb{W}\in\G_c(h)$ that are complements of $\mathbb{L}$.
\end{enumerate}
Finally, items 1., 2. and 3. conclude the proof of the $\mathcal{C}^h\llcorner \mathbb{V}$-measurability of the function $f$.
In addition to this, thanks to \cref{def:ElementaryPh}(iv) we know that $f$ is uniformly bounded on $A$ and thus it is an element of $L^1_{\mathrm{loc}}(A)$.

We now introduce a measure $\mu$ supported on $\Gamma$ such that for any Borel set $E$ we have
$$
\mu(E)=\int_{P_{\mathbb V}(\Gamma\cap E)}f(a)d\mathcal{C}^h\llcorner\mathbb V(a).
$$
Since $\mathcal{C}^h\llcorner \Gamma$ is a $\mathscr{P}^c_h$-rectifiable measure, it is locally asymptotically doubling and thus \cref{prop:mutuallyabs} implies that $\mu\ll \mathcal{C}^h\llcorner\Gamma$. Therefore, if we are able to prove that $\Theta^{h,*}(\mu,x)=1$ for $\mu$-almost every $x\in \Gamma$, then \cite[Theorem 3.1]{FSSC15} concludes the proof.

Let us now proceed and prove that $\Theta^{h,*}(\mu,x)=1$ for $\mu$-almost every $x\in \Gamma$. As a first step, we note that
\begin{equation}\label{eqn:Conc4CENTR}
    \Big|r^{-h}\int_{P_{\mathbb V}( B(z\cdot\varphi(z),r)\cap \Gamma)}(f(a)-f(z))d\mathcal{C}^h(a)\Big|\leq \frac{\mathcal{C}^h(P_{\mathbb V}(B(0,1)))}{\mathcal{C}^h(P_{\mathbb V}(B(z\cdot\varphi(z),r)\cap\Gamma))}\int_{P_{\mathbb V}(B(z\cdot\varphi(z),r)\cap\Gamma)}|f(a)-f(z)|d\mathcal{C}^h(a),
\end{equation}
for any $z\in A$ and where, in order to obtain the above inequality, we used the fact that
$$
\mathcal{C}^h(P_{\mathbb V}(B(z\cdot\varphi(z),r)\cap\Gamma))\leq \mathcal{C}^h(P_{\mathbb V}(B(z\cdot\varphi(z),r)))=r^h\mathcal{C}^h(P_{\mathbb V}(B(0,1)).
$$
In addition to this, since $\mathcal{C}^h\llcorner\Gamma$ is supposed to be a $\mathscr{P}^c_h$-rectifiable measure, we infer by \cite[Proposition 4.9]{antonelli2020rectifiable} that for any $z\in P_\mathbb{V}(\Gamma)$ there exists a $0<\rho(z)<1$ such that the covering relation of $P_\mathbb{V}(\Gamma)$
$$\{(z,P_\mathbb{V}(B(\Phi(z),r)\cap \Gamma)):z\in P_\mathbb{V}(\Gamma)\text{ and }0<r<\rho(z)\}\},$$
is a $\mathcal{C}^h\llcorner P_{\mathbb V}(\Gamma)$-Vitali relation. Hence, since $f\in L^1_{\mathrm{loc}}(A)$, \cite[Corollary 2.9.9]{Federer1996GeometricTheory} allows to conclude that 
\begin{equation}\label{eqn:Conc3CENTR}
\lim_{\varepsilon\to 0^+}\sup\left\{\frac{\int_{P_{\mathbb V}(B(z\cdot\varphi(z),r)\cap\Gamma)}|f-f(z)|d\mathcal{C}^h}{\mathcal{C}^h(P_{\mathbb V}(B(z\cdot\varphi(z),r)\cap\Gamma))}: r<\rho(z),\diam (P_{\mathbb V}(B(z\cdot\varphi(z),r)\cap\Gamma))<\varepsilon \right\}=0,
\end{equation}
for $\mathcal{C}^h\llcorner P_{\mathbb V}(\Gamma)$-almost every $z\in  P_{\mathbb V}(\Gamma)$.
As a consequence, thanks to \eqref{eqn:Conc4CENTR} and \eqref{eqn:Conc3CENTR} we get
\begin{equation}\label{eqn:FINEFINE}
    \begin{split}
        &\limsup_{\mathfrak{r}\to 0}\Big\lvert \mathfrak{r}^{-h}\mu(B(z\cdot\varphi(z),\mathfrak{r}))-f(z)\mathfrak{r}^{-h}\mathcal{C}^h(P_\mathbb{V}(B(z\cdot\varphi(z),\mathfrak{r})\cap \Gamma))\Big\rvert\\
        &\leq \limsup_{\mathfrak{r}\to 0} \frac{\mathcal{C}^h(P_{\mathbb V}(B(0,1)))}{\mathcal{C}^h(P_{\mathbb V}(B(z\cdot\varphi(z),\mathfrak{r})\cap\Gamma))}\int_{P_{\mathbb V}(B(z\cdot\varphi(z),\mathfrak{r})\cap\Gamma)}|f(a)-f(z)|d\mathcal{C}^h(a)=0,
    \end{split}
\end{equation}
for $\mathcal{C}^h\llcorner P_{\mathbb V}(\Gamma)$-almost every $z\in P_{\mathbb V}(\Gamma)$. Thanks to the absolute continuity of $\mu$ with respect to $\mathcal{C}^h\llcorner \Gamma$, and to \eqref{eqn:FINEFINE} we infer that for $\mu$-almost every $x\in \Gamma$ we have
$$
\Theta^{*,h}(\mu,x)=f(x)\limsup_{\mathfrak{r}\to 0}\mathfrak{r}^{-h}\mathcal{C}^h\llcorner \mathbb{V}(P_\mathbb{V}(B(x,\mathfrak{r})\cap \Gamma))=f(x)\mathcal{C}^h(P_{\mathbb V}(B(0,1)\cap\tau(\Gamma,x)))=1,
$$
where the last identity follows from the definition of $f$, \eqref{eqn:WhoIsTheAreaFactor?} and \cref{prop:CrucialLimit}.
\end{proof}

\begin{corollario}\label{cor:AreaFormulaMeasure}
For any $\mathscr{P}^c_h$-rectifiable measure $\phi$ there are countably many $\mathbb{V}_k\in \G_c(h)$ respectively complemented by some $\mathbb{L}_k$, and countably many pairwise disjoint elementary $\mathscr{P}_h^c$-rectifiable graphs $\Gamma_k$ with respect to $\mathbb V_k$ and $\mathbb L_k$ such that for every Borel function $\psi:\mathbb{G}\to[0,+\infty)$ we have 
\begin{equation}\label{eqn:AreaFormulaCENTR2}
\int \psi(w)d\phi(w) = \sum_{k\in\N}\int \psi(a\cdot\varphi(a))\Theta^h(\phi,x)\mathcal{A}_k( \tau(\Gamma,a\cdot\varphi(a)))d\mathcal{C}^h\llcorner \mathbb{V}_k,
\end{equation}
where $\mathcal{A}_k(\cdot)$ denotes the centered area factor with respect to the splitting $\mathbb G=\mathbb V_k\cdot\mathbb L_k$ introduced in \cref{def:areafactorCENTR}.
\end{corollario}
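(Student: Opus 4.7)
The plan is to derive the corollary as a straightforward combination of \cref{prop:SplitInElementaryGraph} and \cref{thm:AreaFormulaCENTR}, so the argument is essentially bookkeeping. First I would apply \cref{prop:SplitInElementaryGraph} to $\phi$, producing countably many $\mathbb V_k\in\G_c(h)$ with complements $\mathbb L_k$ and continuous maps $\varphi_k:A_k\subseteq\mathbb V_k\to\mathbb L_k$ with $A_k$ compact, such that each $\Gamma_k:=\mathrm{graph}(\varphi_k)$ is an $\alpha_k$-elementary $\mathscr{P}_h^c$-rectifiable graph with respect to $\mathbb V_k$ and $\mathbb L_k$, and $\phi(\mathbb G\setminus\bigcup_k\Gamma_k)=0$. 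To enforce pairwise disjointness I would set $\widetilde\Gamma_k:=\Gamma_k\setminus\bigcup_{j<k}\Gamma_j$, which is a Borel subset of the elementary graph $\Gamma_k$.

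Next, since $\phi$ is $\mathscr{P}_h^c$-rectifiable it is locally asymptotically doubling and admits a positive, finite $h$-density at $\phi$-almost every point; moreover, $\phi$ and $\mathcal{C}^h\llcorner\bigcup_k\Gamma_k$ are mutually absolutely continuous (compare \cite[Proposition 2.6]{antonelli2020rectifiable}). A standard Lebesgue--Radon--Nikodym argument, based on the differentiation theorem recorded in \cref{prop:Lebesuge}, then yields
\[
\phi \;=\; \Theta^h(\phi,\cdot)\,\mathcal{C}^h\llcorner\bigcup_{k\in\N}\Gamma_k ,
\]
whence, for every Borel $\psi:\mathbb G\to[0,+\infty)$,
\[
\int_{\mathbb G} \psi\,d\phi \;=\; \sum_{k\in\N}\int_{\Gamma_k}\bigl(\psi\cdot\mathbf{1}_{\widetilde\Gamma_k}\cdot\Theta^h(\phi,\cdot)\bigr)\,d\mathcal{C}^h\llcorner\Gamma_k .
\]

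Finally, I would apply the area formula \cref{thm:AreaFormulaCENTR} on each elementary graph $\Gamma_k$ to the nonnegative Borel function $\psi\cdot\mathbf{1}_{\widetilde\Gamma_k}\cdot\Theta^h(\phi,\cdot)$, invoking once again the locality of tangents from \cref{prop:Lebesuge} to identify $\tau(\Gamma_k,x)$ with the tangent of $\phi$ at $x$ for $\mathcal{C}^h\llcorner\Gamma_k$-almost every $x$. After summing over $k$ and absorbing each indicator by restricting the domain of integration to $P_{\mathbb V_k}(\widetilde\Gamma_k)\subseteq A_k$, one recovers \eqref{eqn:AreaFormulaCENTR2}. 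No real obstacle is present: the only point warranting attention is the identification of the Radon--Nikodym derivative of $\phi$ with $\Theta^h(\phi,\cdot)$, but this follows at once from the asymptotic doubling of $\phi$, from its mutual absolute continuity with $\mathcal{C}^h$ on $\bigcup_k\Gamma_k$, and from the Lebesgue differentiation theorem in \cref{prop:Lebesuge}.
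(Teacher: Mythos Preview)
Your approach is essentially the same as the paper's: apply \cref{prop:SplitInElementaryGraph}, identify $\phi$ with $\Theta^h(\phi,\cdot)\,\mathcal{C}^h$ on the union of the graphs, and feed each piece into \cref{thm:AreaFormulaCENTR}. The one spot where your justification is looser than the paper's is the identity $\phi=\Theta^h(\phi,\cdot)\,\mathcal{C}^h\llcorner\bigcup_k\Gamma_k$: this does not follow from \cref{prop:Lebesuge} alone, since the definition of $\mathscr{P}_h^c$-rectifiability only gives $0<\Theta^h_*\le\Theta^{h,*}<\infty$, not that the $h$-density exists; the paper invokes \cite[Theorem~4.13]{antonelli2020rectifiable} (existence of the density) together with \cite[Theorem~3.1]{FSSC15} (the Federer-type density theorem for $\mathcal{C}^h$) to secure this step.
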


\begin{proof}
First of all, thanks to \cite[Theorem 3.4, Proposition 2.5 and Proposition 2.6]{antonelli2020rectifiable} there exists a $\mathcal{C}^h$-$\sigma$-finite Borel set $\Sigma$ on which $\phi$ is supported, and moreover $\phi$ is mutually absolutely continuous with respect to $\mathcal{C}^h\llcorner\Sigma$. Moreover, from \cite[Theorem 4.13]{antonelli2020rectifiable} and \cite[Theorem 3.1]{FSSC15} we conclude that
$$\phi=\Theta^h(\phi,x)\mathcal{C}^h\llcorner \Sigma.$$
Since clearly $\mathcal{C}^h\llcorner \Sigma$ is $\mathscr{P}^h_c$-rectifiable thanks to \cref{prop:Lebesuge}, we infer by \cref{prop:SplitInElementaryGraph} 
that we can find the claimed disjoint $\mathscr{P}_h^c$-elementary graphs $\Gamma_k$ covering $\mathcal{C}^h$-almost all $\Sigma$. Applying \cref{thm:AreaFormulaCENTR} to each one of the $\Gamma_k$s concludes the proof.
\end{proof}

Let us now conclude this section by giving the proof of \cref{thm:AREAINTRO1} and \cref{thm:AREAINTRO2}.

\begin{osservazione}\label{rem:AncheConIlip}
We remark that \cref{thm:AreaFormulaCENTR} holds as well if we substitute the item (i) in the definition of $\alpha$-elementary $\mathscr{P}_h^c$-rectifiable graph, see \cref{def:ElementaryPh}, with the following item
\begin{equation}
    \text{(i)*\qquad $\Gamma$ is the compact graph of an intrinsic Lipschitz function $\varphi:A\subseteq \mathbb V\to\mathbb L$, see \cref{def:iLipfunctions}.}
\end{equation}
Indeed, the proof of \cref{thm:AreaFormulaCENTR} is ultimately based on \cref{prop:mutuallyabs}, the differentiation result in \cref{prop:vitali2PD}, and \cref{prop:CrucialLimit}, which are themselves based on \cref{prop:mutuallyabs}, the differentiation results \cref{prop:vitali2PD}, \cref{prop:vitali2PDAdapted} and \cref{prop:EstimateOnProjection}. All these latter results do not specifically use the fact that $\Gamma$ is a compact $C_{\mathbb V}(\alpha)$-set for $\alpha\leq\oldep{ep:Cool}(\mathbb V,\mathbb L)$ but just two basic consequences of this: namely, the fact that $\mathbb L\cap C_{\mathbb V}(\alpha)=\{0\}$, see \cref{lemma:LCapCw=e}, and the fact that $P_\mathbb V$ is injective on $\Gamma$, see \cref{prop:ConeAndGraph}. Since we obviously have, by the very definition of the cone $C_{\mathbb V,\mathbb L}(\alpha)$, that $\mathbb L\cap C_{\mathbb V,\mathbb L}(\alpha)=\{0\}$ for every $\alpha>0$, and since we also readily have that if $\Gamma$ is an intrinisc Lipschitz graph then $P_\mathbb V$ is injective on $\Gamma$, we conclude that the same strategy of the proof can be adapted to prove \cref{thm:AreaFormulaCENTR} with the above modification of the definition of $\alpha$-elementary $\mathscr{P}_h^c$-rectifiable graph.
\end{osservazione}

\begin{proof}[Proof of \cref{thm:AREAINTRO1}]
First of all, let us notice that we can assume $A$, and hence $\Gamma$, to be compact. Indeed, since $A$ is Borel, $\Gamma$ is Borel, because it is the image of $A$ under the graph map of $\varphi$, which is a continuous injective map. Since $\mathcal{S}^h\llcorner\Gamma$ is $\mathscr{P}_h^c$-rectifiable we hence deduce that $\Gamma$ is $\mathcal{S}^h$-$\sigma$-finite, cf. \cite[Proposition 2.4 and Proposition 2.5]{antonelli2020rectifiable}. Hence we have that there exists an increasing sequence $\{E_i\}_{i\in\mathbb N}$ of compact sets such that $\chi_{E_i}\to\chi_{\Gamma}$ holds  $\mathcal{S}^h\llcorner\Gamma$-almost everywhere. From \eqref{eq:n520} we also deduce that $\chi_{P_{\mathbb V}(E_i)}\to\chi_{P_{\mathbb V}(\Gamma)}$ holds $\mathcal{S}^h\llcorner A$-almost everywhere. Hence if we know the \cref{thm:AREAINTRO1} to be true for each $E_i$ we are done by monotone convergence.
 
Hence we now prove \cref{thm:AREAINTRO1} assuming $A$, and then $\Gamma$, to be compact. Taking into account \cref{rem:AncheConIlip}, we observe that the proof of \cref{thm:AREAINTRO1} is concluded if we have in addition that $\mathcal{A}(\mathbb V(x))$ is uniformly bounded above for $\mathcal{S}^h\llcorner\Gamma$-almost every $x\in\mathbb G$, since it is the only reamining hypothesis to verify in order to apply the modified version of \cref{thm:AreaFormulaCENTR} discussed in \cref{rem:AncheConIlip}. But since $a\to\mathcal{A}(\mathbb V(a\cdot\varphi(a)))$ is $\mathcal{C}^h\llcorner\mathbb V$-measurable, see the first part of the proof of \cref{thm:AreaFormulaCENTR}, we can divide the set $A$ into countably many disjoint measurable subsets where $\mathcal{A}(\cdot)$ is uniformly bounded above. Hence, by approximating each of these countably many pieces from the inside with compact sets as explained at the beginning of this proof, and applying \cref{thm:AreaFormulaCENTR} as discussed in \cref{rem:AncheConIlip}, we conclude by approximation that \cref{thm:AreaFormulaCENTR} holds on each piece of the latter disjoint union. Then summing together finishes the proof of \cref{thm:AREAINTRO1}.
\end{proof}
\begin{proof}[Proof of \cref{thm:AREAINTRO2}]
It is a consequence of \cref{thm:AREAINTRO1} and \cref{prop:rett.1}, taking into account that the intrinsic differentiability implies the hypothesis of \cref{prop:rett.1} (cf. \cref{prop:idiffapproximatetangent}) from which we get that $\mathcal{S}^h\llcorner\Gamma$ is $\mathscr{P}_h^c$-rectifiable.
\end{proof}

\section{Applications}

In this section we provide some applications of the rectifiability criterion proved in \cref{prop:rett.1}, which was at the core of the proof of 3. $\Rightarrow$ 1. of \cref{thm:INTRO1Equivalence}. Let us first recall the definition of $C^1_{\mathrm H}$-function.
\begin{definizione}[$C^1_{\mathrm H}$-function]\label{def:C1h}
Let $\mathbb G$ and $\mathbb G'$ be two Carnot groups endowed with left-invariant homogeneous distances $d$ and $d'$, respectively. Let $\Omega\subseteq \mathbb G$ be open and let $f:\Omega\to\mathbb G'$ be a function. We say that $f$ is {\em Pansu differentiable at $x\in\Omega$} if there exists a homogeneous homomorphism $df_x:\mathbb G\to\mathbb G'$ such that
$$
\lim_{y\to x}\frac{d'(f(x)^{-1}\cdot f(y),df_x(x^{-1}\cdot y))}{d(x,y)}=0.
$$
Moreover we say that $f$ is {\em of class $C^1_{\mathrm H}$ in $\Omega$} if the map $x\mapsto df_x$ is continuous from $\Omega$ to the space of homogeneous homomorphisms from $\mathbb G$ to $\mathbb G'$.
\end{definizione}

\begin{proposizione}\label{structure:liplevelsets}
Let $B$ be a Borel set in $\mathbb{G}$ and suppose $\mathbb{H}$ is a Carnot group of homogeneous dimension $\mathcal{Q}^\prime$ with $\mathcal{Q}\geq\mathcal{Q}^\prime$. Let $f:B\subseteq \mathbb{G}\to \mathbb{H}$ be a Lipschitz map such that
\begin{equation}
    \text{$\mathrm{Ker}(df(x))\in \G_c(\mathbb{G})$ for $\mathcal{S}^{\mathcal{Q}}$-almost every $x\in \{z\in B:df(z) \text{ exists and is surjective}\}$,}
    \label{condizionesurj}
\end{equation}
where $\G_c(\mathbb G)$ denotes the set of complemented homogeneous subgroups in $\mathbb G$.
Then, for $\mathcal{S}^{\mathcal{Q}^\prime}$-almost every $y\in f(B)$, the following holds. For $\mathcal{S}^{\mathcal{Q}-\mathcal{Q}^\prime}$-almost every $x\in f^{-1}(y)$ and any $0<\beta<1$ there exists a $\rho(x,\beta)>0$ such that 
$$f^{-1}(y)\cap B(x,\rho(x,\beta))\subseteq xC_{\mathrm{Ker}(df(x))}(\beta).$$
In particular the measure $\mathcal{S}^{\mathcal{Q}-\mathcal{Q}^\prime}\llcorner f^{-1}(y)$ is $\mathscr{P}^c_{\mathcal{Q}-\mathcal{Q}^\prime}$-rectifiable in $\mathbb{G}$ and $$\Tan_{\mathcal{Q}-\mathcal{Q}^\prime}(\mathcal{S}^{\mathcal{Q}-\mathcal{Q}^\prime}\llcorner f^{-1}(y),x)\subseteq\{\lambda\mathcal{C}^{\mathcal{Q}-\mathcal{Q}^\prime}\llcorner \mathrm{Ker}(df(x)):\lambda> 0\}\quad\text{for $\mathcal{S}^{\mathcal{Q}-\mathcal{Q}^\prime}$-almost every $x\in f^{-1}(y)$.}$$
\end{proposizione}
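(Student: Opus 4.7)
The strategy is to reduce the statement to an application of the rectifiability criterion \cref{prop:rett.1}, so that the two displayed conclusions (the $\mathscr{P}^c_{Q-Q'}$-rectifiability and the inclusion of tangent measures) will follow at once from the cone-type inclusion for $f^{-1}(y)$ at almost every point. Hence the entire proof will reduce to verifying that, for $\mathcal{S}^{Q'}$-almost every $y\in f(B)$ and $\mathcal{S}^{Q-Q'}$-almost every $x\in f^{-1}(y)$, the following two facts hold: the Pansu differential $df(x)$ exists and is surjective, and the cone inclusion $f^{-1}(y)\cap B(x,\rho(x,\beta))\subseteq xC_{\mathrm{Ker}(df(x))}(\beta)$ holds for every $\beta\in(0,1)$ with a suitable $\rho(x,\beta)>0$.

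The first step is the measurability/coarea-type reduction. By Pansu--Rademacher, the set $N\subseteq B$ where $df$ fails to exist has $\mathcal{S}^{Q}(N)=0$. The coarea inequality for Lipschitz maps between Carnot groups (à la Magnani) yields
\[
\int^{*}_{f(B)}\mathcal{S}^{Q-Q'}(f^{-1}(y)\cap A)\,d\mathcal{S}^{Q'}(y)\;\leq\;C\,\mathrm{Lip}(f)^{Q'}\,\mathcal{S}^{Q}(A),
\]
for any Borel $A\subseteq B$. Applying this with $A=N$ shows that for $\mathcal{S}^{Q'}$-a.e.\ $y$ the fiber $f^{-1}(y)$ meets $N$ in an $\mathcal{S}^{Q-Q'}$-null set. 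A Sard-type argument in the sub-Riemannian setting (again available in the Magnani line of results) gives that the image under $f$ of the set $S=\{x\in B\setminus N:df(x)\text{ not surjective}\}$ is $\mathcal{S}^{Q'}$-negligible, so for $\mathcal{S}^{Q'}$-a.e.\ $y$ the fiber $f^{-1}(y)$ is disjoint from $S$. Combining, for $\mathcal{S}^{Q'}$-a.e.\ $y$ and $\mathcal{S}^{Q-Q'}$-a.e.\ $x\in f^{-1}(y)$, the differential $df(x)$ exists and is surjective, and by the standing assumption \eqref{condizionesurj} its kernel is complemented.

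The second step is the pointwise cone estimate, which is the technical heart. Fix such an $x\in f^{-1}(y)$ and set $\mathbb K:=\mathrm{Ker}(df(x))\in\G_c(\mathbb{G})$, with a complementary subgroup $\mathbb L$; since $df(x)$ is surjective, its restriction $df(x)|_{\mathbb L}:\mathbb L\to\mathbb H$ is a bijective homogeneous homomorphism between Carnot groups, hence bi-Lipschitz, so there is $c=c(x)>0$ with $\|df(x)(\ell)\|_{\mathbb H}\geq c\|\ell\|$ for every $\ell\in\mathbb L$. For $z\in f^{-1}(y)$ write $x^{-1}z = k\cdot\ell$ in the splitting $\mathbb G = \mathbb K\cdot\mathbb L$; since $k\in\mathbb K$ we have $df(x)(x^{-1}z)=df(x)(\ell)$. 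Pansu differentiability at $x$ combined with $f(z)=f(x)=y$ gives, for any $\varepsilon>0$ and $z$ close enough to $x$,
\[
\|df(x)(\ell)\|_{\mathbb H}\;=\;\|df(x)(x^{-1}z)\|_{\mathbb H}\;\leq\;\varepsilon\, d(x,z),
\]
and therefore $\|P_{\mathbb L}(x^{-1}z)\|=\|\ell\|\leq (\varepsilon/c)\,d(x,z)$. By \cref{oss:ConiEquivalenti}, this is precisely the statement that $z\in xC_{\mathbb K,\mathbb L}(\varepsilon/c)\subseteq xC_{\mathbb K}(\beta)$ once $\varepsilon/c$ is small enough in terms of $\beta$. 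This produces the required $\rho(x,\beta)$.

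Finally, the conclusion: for $\mathcal{S}^{Q'}$-a.e.\ $y\in f(B)$ the Borel set $\Gamma_y:=f^{-1}(y)$ satisfies, at $\mathcal{S}^{Q-Q'}$-a.e.\ point $x$, the hypothesis \eqref{eq:num1} of \cref{prop:rett.1} with $\mathbb V(x)=\mathrm{Ker}(df(x))\in\G_c(Q-Q')$, provided we also know $\mathcal{S}^{Q-Q'}(\Gamma_y)<+\infty$, which again follows from the coarea inequality above for $\mathcal{S}^{Q'}$-a.e.\ $y$. Then \cref{prop:rett.1} delivers both the $\mathscr{P}^c_{Q-Q'}$-rectifiability of $\mathcal{S}^{Q-Q'}\llcorner f^{-1}(y)$ (recalling \cref{rem:ShPRectifiableIfAndOnlyIf} to pass between $\mathcal{S}^{Q-Q'}$ and $\mathcal{C}^{Q-Q'}$) and the inclusion $\Tan_{Q-Q'}(\mathcal{S}^{Q-Q'}\llcorner f^{-1}(y),x)\subseteq\{\lambda\mathcal{C}^{Q-Q'}\llcorner\mathrm{Ker}(df(x)):\lambda>0\}$. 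I expect the main obstacle to be the coarea/Sard step in the first paragraph, which requires appealing to the Carnot-group coarea inequality and to a negligibility statement for non-surjective Pansu differentials; the cone estimate in the second paragraph, although technically the heart of the argument, is a fairly direct consequence of Pansu differentiability together with the splitting provided by the complementarity assumption.
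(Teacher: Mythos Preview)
Your proposal is correct and follows essentially the same route as the paper: reduce to the rectifiability criterion \cref{prop:rett.1} by (a) using Magnani's coarea inequality and Sard-type negligibility to ensure that for $\mathcal{S}^{Q'}$-a.e.\ $y$ the fiber has finite $\mathcal{S}^{Q-Q'}$-measure and $df$ exists and is surjective at $\mathcal{S}^{Q-Q'}$-a.e.\ fiber point, and (b) deriving the cone inclusion from Pansu differentiability and injectivity of $df(x)$ on a complement of its kernel. Two minor remarks: the paper handles the non-surjective set via the fiber-null statement $\mathcal{S}^{Q-Q'}(f^{-1}(y)\cap N')=0$ (Magnani, Theorem~2.7) rather than the image-null Sard formulation you invoke, which is a slightly weaker hypothesis but already sufficient; and your claim that $df(x)|_{\mathbb L}$ is bi-Lipschitz because it is ``a bijective homogeneous homomorphism between Carnot groups'' is not quite right as stated, since a complementary subgroup $\mathbb L$ need not be Carnot, but the lower bound $\|df(x)(\ell)\|_{\mathbb H}\geq c\|\ell\|$ follows anyway from injectivity, homogeneity, and compactness of the unit sphere in $\mathbb L$ (which is exactly the ``standard compactness argument'' the paper invokes).
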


\begin{proof}
Without loss of generality we can assume that $B$ is a bounded Borel set. Thanks to \cite[Equation (9) and Proposition 1.12]{Magnanicoarea}, since $B$ is bounded and $f$ is Lipschitz on $B$, one infers that for $\mathcal{S}^{\mathcal{Q}^\prime}$-almost every $y\in f(B)$ we have $\mathcal{S}^{\mathcal{Q}-\mathcal{Q}^\prime}(f^{-1}(y))<\infty$. Moreover, from \cite[Theorem 2.6 and Theorem 2.7]{Magnanicoarea} we have
\begin{equation}
 \mathcal{S}^{\mathcal{Q}-\mathcal{Q}^\prime}(f^{-1}(y)\cap N)=0, \qquad\text{and}\qquad\mathcal{S}^{\mathcal{Q}-\mathcal{Q}^\prime}(f^{-1}(y)\cap N^\prime)=0,
    \label{cond:num1}
\end{equation}
where $N$ is the $\mathcal{S}^{\mathcal{Q}}$-null set on which the Pansu differential $df(x)$ does not exists and $N^\prime$ is the $\mathcal{S}^\mathcal{Q}$-measurable set where $df(x)$ is not surjective. In addition to this, thanks to \eqref{condizionesurj}, we know that
$$\mathcal{S}^{\mathcal{Q}}(\{z\in B\setminus (N\cup N^{\prime}):\mathrm{Ker}(df(x))\not\in \G_c(\mathbb{G})\})=0,$$
and thus, \cite[Theorem 2.6]{Magnanicoarea} implies that for $\mathcal{S}^{\mathcal{Q}^\prime}$-almost every $y\in f(B)$
we have
\begin{equation}
\mathcal{S}^{\mathcal{Q}-\mathcal{Q}^\prime}(\{z\in f^{-1}(y): df(z) \text{ exists, is surjective and }\mathrm{Ker}(df(z))\not\in \G_c(\mathbb{G})\})=0.
    \label{cond:num2}
\end{equation}
We can further refine the above condition thanks to the following observation. Since for $\mathcal{S}^{\mathcal{Q}^\prime}$-almost every $y\in f(B)$ and for $\mathcal{S}^{\mathcal{Q}-\mathcal{Q}^\prime}$-almost every $z\in f^{-1}(y)$ the Pansu differential $df(z)$ exists and is surjective, then by the first homomorphism theorem $\mathbb{G}/\mathrm{Ker}(df(z))\cong \mathbb{H}$ and in particular
the subgroup $\mathrm{Ker}(df(z))$ must have homogeneous dimension $\mathcal{Q}-\mathcal{Q}^\prime$. 

Therefore, throughout the rest of the proof, we fix a $y\in f(B)$ such that  $\mathcal{S}^{\mathcal{Q}-\mathcal{Q}^\prime}(f^{-1}(y))<\infty$, and recall that for $\mathcal{S}^{\mathcal{Q}-\mathcal{Q}^\prime}$-almost every $z\in f^{-1}(y)$ the Pansu differential $df(z)$ exists, is surjective and $\mathrm{Ker}(df(z))\in \G_c(\mathcal{Q}-\mathcal{Q}^\prime)$.

In order to conclude the proof we show that the hypothesis of \cref{prop:rett.1} is satisfied. Fix a point $x\in f^{-1}(y)$ such that $df(x)$ exists, is surjective and $\mathrm{Ker}(df(x))\in \G_c(\mathcal{Q}-\mathcal{Q}^\prime)$. Notice that thanks to what we proved above, such $x$ can be chosen in a set of $\mathcal{S}^{\mathcal{Q}-\mathcal{Q}'}$-full measure in $f^{-1}(y)$. Let us note that for any $\varepsilon>0$ there exists an $\eta:=\eta(x,\varepsilon)>0$ such that for any $w\in B(x,\eta)$ we have  
$$
d_{\mathbb{H}}(f(x)df(x)[x^{-1}w],f(w))\leq \varepsilon d_{\mathbb{G}}(w,x).
$$
This in particular implies that for any $w\in B(x,\eta)\cap f^{-1}(y)$ we have
\begin{equation}
    \begin{split}
        0&=d_{\mathbb{H}}(f(w),f(x))\geq d_{\mathbb{H}}(f(x)df(x)[x^{-1}w],f(x))-d_{\mathbb{H}}(f(x)df(x)[x^{-1}w],f(w))\\
        &\geq d_{\mathbb{H}}(f(x)df(x)[x^{-1}w],f(x))-\varepsilon d_{\mathbb{G}}(w,x),
        \nonumber
    \end{split}
\end{equation}
implying that
$$
\lVert df(x)[P_{\mathbb{L}(x)}(x^{-1}w)]\rVert_{\mathbb{H}}=\lVert df(x)[x^{-1}w]\rVert_{\mathbb{H}}\leq \varepsilon\lVert x^{-1}w\rVert_\mathbb{G},
$$
where $\mathbb{L}(x)$ is a complementary subgroup of $\mathrm{Ker}(df(x))$ and $P_{\mathbb{L}(x)}$ is the splitting projection on $\mathbb{L}(x)$ associated to the split $\mathrm{Ker}(df(x))\cdot \mathbb{L}(x)$. Thanks to a standard compactness argument, it is not hard to see that there exists a constant $C(x)>0$ such that $\lVert df(x)[P_{\mathbb{L}(x)}(x^{-1}w)]\rVert_{\mathbb H}\geq C(x)\lVert P_{\mathbb{L}(x)}(x^{-1}w)\rVert_{\mathbb G}$ for every $w\in B(x,\eta)$, and thus 
$$
\mathrm{dist}_\mathbb{G}(x^{-1}w, \mathrm{Ker}(df(x)))\leq \lVert P_{\mathbb{L}(x)}(x^{-1}w)\rVert_\mathbb{G}\leq C(x)^{-1}\varepsilon \lVert x^{-1}w\rVert_\mathbb{G},
$$
for every $w\in B(x,\eta)\cap f^{-1}(y)$, proving that $f^{-1}(y)\cap B(x,\eta)\subseteq xC_{\mathrm{Ker}(df(x))}(C(x)^{-1}\varepsilon)\cap B(x,\eta)$. Since we fall in the hypothesis of \cref{prop:rett.1}, the proof of the proposition is achieved.
\end{proof}

Before stating the following corollaries of \cref{structure:liplevelsets}, let us recall the definition of $(\mathbb G,\mathbb G')$-rectifiable sets.
\begin{definizione}[$C^1_{\mathrm H}$-submanifold]\label{def:C1Hmanifold}
Given an arbitrary Carnot group $\mathbb G$, we say that $\Sigma\subseteq \mathbb G$ is a {\em $C^1_{\mathrm H}$-submanifold} of $\mathbb G$ if there exists a Carnot group $\mathbb G'$ such that for every $p\in \Sigma$ there exists an open neighborhood $\Omega$ of $p$ and a function $f\in C^1_{\mathrm H}(\Omega;\mathbb G')$ such that 
\begin{equation}\label{eqn:RepresentationOfSigma}
\Sigma\cap \Omega =\{g\in\Omega:f(g)=0\},
\end{equation}
and $df_p:\mathbb G\to\mathbb G'$ is surjective with $\mathrm{Ker}(df_p)$ complemented. In this case we say that $\Sigma$ is a {\em $C^1_{\mathrm H}(\mathbb G,\mathbb G')$-submanifold}.
\end{definizione}
\begin{definizione}[($\mathbb G,\mathbb G')$-rectifiable set]\label{def:GG'Rect}
Given two arbitrary Carnot groups $\mathbb G$ and $\mathbb G'$ of homogeneous dimensions $Q$ and $Q'$, respectively, we say that $\Sigma\subseteq \mathbb G$ is a {\em $(\mathbb G,\mathbb G')$-rectifiable set} if there exist countably many subsets $\Sigma_i$ of $\mathbb G$ that are $C^1_{\mathrm H}(\mathbb G,\mathbb G')$-submanifolds, such that 
$$
\mathcal{H}^{Q-Q'}\left(\Sigma\setminus\bigcup_{i=1}^{+\infty}\Sigma_i\right)=0.
$$
\end{definizione}

\begin{corollario}\label{structure:liplevelsets2}
Suppose $\mathcal{Q}\geq m$, let $B\subseteq \mathbb G$ a Borel subset and $f:B\to \R^m$ be a Lipschitz map such that
\begin{equation}
    \text{$\mathrm{Ker}(df(x))\in \G_c(\mathbb{G})$ for $\mathcal{S}^{\mathcal{Q}}$-almost every $x\in \{z\in B:df(z) \text{ exists and is surjective}\}$.}
    \label{condizionesurjeu}
\end{equation}
Then, $m\leq \mathrm{dim}(V_1)$ and for $\mathcal{S}^m$-almost every $y\in f(B)$ the set $f^{-1}(y)$ is $C^1_H(\mathbb{G},\R^m)$-rectifiable.
\end{corollario}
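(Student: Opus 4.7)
The plan is to reduce this to \cref{structure:liplevelsets} by choosing the target Carnot group to be $\mathbb{H}=\mathbb{R}^m$ itself, viewed as the abelian Carnot group of step one (so that $\mathcal{Q}'=m$), and then to upgrade the resulting $\mathscr{P}^c_{\mathcal{Q}-m}$-rectifiability to $C^1_H(\mathbb{G},\mathbb{R}^m)$-rectifiability via the equivalence between $\mathscr{P}^c$-rectifiability with co-horizontal tangents and $C^1_H$-rectifiability recorded right after \cref{thm:INTRO1Equivalence} (\cref{cor:PhCCoorizzontali}). The hypothesis \eqref{condizionesurjeu} is exactly \eqref{condizionesurj} for this choice of $\mathbb{H}$, so \cref{structure:liplevelsets} applies directly and yields, for $\mathcal{S}^m$-a.e.\ $y\in f(B)$, that $\mathcal{S}^{\mathcal{Q}-m}\llcorner f^{-1}(y)$ is $\mathscr{P}^c_{\mathcal{Q}-m}$-rectifiable with tangent subgroup $\mathrm{Ker}(df(x))$ at $\mathcal{S}^{\mathcal{Q}-m}$-a.e.\ $x\in f^{-1}(y)$.

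The key structural step is to verify that the tangent subgroups $\mathrm{Ker}(df(x))$ are automatically \emph{co-horizontal}. Any homogeneous homomorphism $L\colon\mathbb{G}\to\mathbb{R}^m$ satisfies $L\circ\delta_\lambda=\lambda L$; since $\delta_\lambda$ acts as $\lambda^i\mathrm{Id}$ on $V_i$ while as $\lambda\mathrm{Id}$ on $\mathbb{R}^m$, this forces $L$ to vanish on $V_i$ for every $i\geq 2$, and hence to factor as $L=\pi\circ\pi_1$ with $\pi\colon V_1\to\mathbb{R}^m$ linear. If $L=df(x)$ is surjective then so is $\pi$, which simultaneously gives the bound $m\leq\dim(V_1)$ and shows $\mathrm{Ker}(df(x))=\ker\pi\oplus V_2\oplus\cdots\oplus V_\kappa$. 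Consequently, any homogeneous complement $\mathbb{L}(x)$ of $\mathrm{Ker}(df(x))$ must satisfy $\mathbb{L}(x)\cap V_i=\{0\}$ for each $i\geq 2$, so by the canonical gradation \eqref{eq:intr1} of homogeneous subalgebras we get $\mathbb{L}(x)\subseteq V_1$; thus $\mathbb{L}(x)$ is automatically a horizontal (necessarily abelian) $m$-dimensional subgroup of $\mathbb{G}$. In the degenerate case in which no surjective Pansu differential exists on a set of positive $\mathcal{S}^\mathcal{Q}$-measure, Magnani's coarea inequality \cite[Theorem 2.7]{Magnanicoarea} forces $\mathcal{S}^m(f(B))=0$, so that both conclusions become vacuous.

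With the tangents of $\mathcal{S}^{\mathcal{Q}-m}\llcorner f^{-1}(y)$ shown to be co-horizontal for $\mathcal{S}^m$-a.e.\ $y\in f(B)$, I conclude by invoking \cref{cor:PhCCoorizzontali}, namely the equivalence ``$\mathcal{S}^h\llcorner\Gamma$ is $\mathscr{P}^c_h$-rectifiable with co-horizontal tangents if and only if $\Gamma$ is $C^1_H(\mathbb{G},\mathbb{R}^{\mathcal{Q}-h})$-rectifiable'', applied to $\Gamma=f^{-1}(y)$ with $h=\mathcal{Q}-m$; this immediately delivers the $C^1_H(\mathbb{G},\mathbb{R}^m)$-rectifiability of $f^{-1}(y)$. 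The main substantive obstacle is precisely this final equivalence, which converts the infinitesimal $\mathscr{P}^c$-data into a \emph{global} covering by level sets of $C^1_H$ maps into $\mathbb{R}^m$; however, since it is supplied as a black box by \cref{cor:PhCCoorizzontali} (itself a downstream consequence of \cref{thm:INTRO1Equivalence}), the remainder of the argument reduces to the structural bookkeeping carried out above.
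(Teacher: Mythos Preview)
Your reduction to \cref{structure:liplevelsets} with $\mathbb{H}=\mathbb{R}^m$ and your analysis of the structure of $\mathrm{Ker}(df(x))$ (in particular the proof that any homogeneous complement must be horizontal, hence $m\leq\dim V_1$) are correct and essentially identical to what the paper does in the first half of its proof.

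The gap is in the last step. You invoke \cref{cor:PhCCoorizzontali} as a black box to pass from ``$\mathscr{P}^c_{\mathcal{Q}-m}$-rectifiable with co-horizontal tangents'' to ``$C^1_H(\mathbb{G},\mathbb{R}^m)$-rectifiable'', and you describe it as a downstream consequence of \cref{thm:INTRO1Equivalence}. It is not. The direction $1.\Rightarrow 2.$ of \cref{cor:PhCCoorizzontali}---the only direction you need---is proved in the paper by the sentence ``obtained arguing precisely as in the second part of the proof of \cref{structure:liplevelsets2}.'' In other words, \cref{cor:PhCCoorizzontali} is logically \emph{posterior} to the corollary you are proving, and its nontrivial content is exactly the Whitney-extension construction carried out inside the proof of \cref{structure:liplevelsets2}. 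Your argument is therefore circular as written. \cref{thm:INTRO1Equivalence} only produces coverings by intrinsically differentiable \emph{graphs}; it does not by itself manufacture $C^1_H$ maps into $\mathbb{R}^m$ whose level sets cover $f^{-1}(y)$.

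What the paper actually does in that second half is the substantive work you are outsourcing: it uses the $\mathcal{S}^{\mathcal{Q}-m}\llcorner f^{-1}(y)$-measurability of $x\mapsto\mathrm{Ker}(df(x))$ (via \cref{lem:BorelTangents} and \cref{prop:projmap}) to obtain measurable horizontal vector fields $v_{i_j}(x)$ spanning $\mathrm{Ker}(df(x))^\perp$, applies Lusin and Severini--Egoroff to make them continuous and to make the moduli $\rho_{j,\delta}$ converge uniformly, and then feeds this data into the Whitney extension theorem \cite[Theorem~5.2]{step2} to produce the required $C^1_H(\mathbb{G},\mathbb{R}^m)$ maps. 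That construction is precisely the missing ingredient in your proposal.
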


\begin{proof}
First of all, let us note that \cref{structure:liplevelsets} immediately implies that for $\mathcal{S}^m$-almost every $y\in f(B)$, the measure $\mathcal{S}^{\mathcal{Q}-m}\llcorner f^{-1}(y)$ is $\mathscr{P}^c_{\mathcal{Q}-m}$-rectifiable. 

A necessary step to conclude the proof is to investigate further the structure of $\mathrm{Ker}(df(x))$ whenever it exists. In order to do so, we fix a point where $df(x)$ exists and note that for any $v\in\mathbb{G}$ we have $df(x)[\delta_\lambda(v)]=\lambda df(x)[v]$ for any $\lambda>0$. Since, thanks to the identification through $\exp$ of $\mathbb{G}$ with its Lie algebra, $df(x)$ can be expressed as a matrix, thus we conclude that for every $v\in V_j$ with $2\leq j\leq \kappa$, where $\kappa$ is the step of the group, we have
$$ 
df(x)[v]=\lim_{\lambda\to 0}\frac{df(x)[\delta_\lambda(v)]}{\lambda}=\lim_{\lambda\to 0}\frac{\lambda^j df(x)[v]}{\lambda}=0.
$$
So, $\mathrm{Ker}(df(x))$ is a normal subgroup containing $V_2\oplus\ldots\oplus V_\kappa$ and on the points where $df(x)$ is surjective we must have 
$$
\dim_{\mathrm{hom}}(\mathrm{Ker}(df(x)))=\mathcal{Q}-m\geq\dim_{\mathrm{hom}}(V_2\oplus\ldots\oplus V_k)=Q-\mathrm{dim}(V_1),
$$
proving that we have $m\leq \mathrm{dim}(V_1)$.

Throughout the rest of the proof we fix an $y\in f(B)$ such that $f^{-1}(y)$ is a $\mathscr{P}^c_{\mathcal{Q}-m}$-rectifiable measure. In addition to this, since $f^{-1}(y)$ is closed we may as well assume without loss of generality that $f^{-1}(y)$ is compact with finite $\mathcal{S}^{\mathcal{Q}-m}$-finite measure since the class of $\mathscr{P}^c_{\mathcal{Q}-m}$-rectifiable measures is closed under restriction to a Borel subset by splitting $f^{-1}(y)$ as in \cite[Proposition 2.4, Proposition 2.5 and Proposition 2.6]{antonelli2020rectifiable}.

Thanks to \cref{lem:BorelTangents} applied to the measure $\mathcal{S}^{\mathcal{Q}-m}\llcorner f^{-1}(y)$, and thanks to \cref{structure:liplevelsets}, we know that the map $x\mapsto \mathrm{Ker}(df(x))\in \G_c(\mathcal{Q}-m)$ is $\mathcal{S}^{\mathcal{Q}-m}\llcorner f^{-1}(y)$-measurable. Calling $\{e_i\}_{i=1}^n$ a basis of $\mathfrak g$, the latter observiation in concjunction with \cref{prop:projmap} yields that for any $i=1,\ldots,n$ the vector fields $x\mapsto\Pi_{\mathrm{Ker}(df(x))^\perp}[e_i]=:v_i(x)$ are $\mathcal{S}^{\mathcal{Q}-m}\llcorner f^{-1}(y)$-measurable. Moreover, thanks to the above discussion, according to which $\mathrm{Ker}(df(x))\supseteq V_2\oplus\dots\oplus V_\kappa$, each $v_i(x)$ belong $\mathcal{S}^{\mathcal{Q}-m}\llcorner f^{-1}(y)$-almost everywhere to $GV_1$.

For any $\varepsilon>0$ thanks to Lusin's theorem and the Borel regularity of the measure $\mathcal{S}^{\mathcal{Q}-m}\llcorner f^{-1}(y)$, we can find a compact set $K_{\varepsilon}\subseteq\mathbb{G}$ such that $\mathcal{S}^{\mathcal{Q}-m}( f^{-1}(y)\setminus K_{\varepsilon})\leq \varepsilon \mathcal{S}^{\mathcal{Q}-m} (f^{-1}(y))$ and the vector fields $v_i(x)$ are continuous on $K_\varepsilon$ for any $i=1,\ldots,n$. In particular we can split $K_\varepsilon$ into a finite partition $\{K_{\varepsilon}^I:I=(i_1,\ldots,i_m)\in\{1,\ldots,n\}^m\}$ of Borel subsets on which the vector fields
$v_{i_1}(x),\ldots,v_{i_m}(x)$ are a basis for $\mathrm{Ker}(df(x))^\perp$.

In the following we will show that for any choice 
of $I \in\{1,\ldots,n\}^m$ such that $\mathcal{S}^{\mathcal{Q}-m}\llcorner f^{-1}(y)(K_{\varepsilon}^I)>0$ and of  $\vartheta,\gamma\in \N$, the set $E(\vartheta,\gamma)$ relative to the measure $\mathcal{S}^{\mathcal{Q}-m}\llcorner K_\varepsilon^I$, introduced in \cref{def:EThetaGamma}, can be covered $\mathcal{S}^{\mathcal{Q}-m}$-almost all with countably many $C^1_H(\mathbb G,\mathbb R^m)$-rectifiable sets. This would imply that $K_\varepsilon^I$ can be covered $\mathcal{S}^{\mathcal{Q}-m}$-almost all with $C^1_H(\mathbb G,\mathbb R^m)$-rectifiable sets thanks to \cref{prop::E} and thus so can $K_\varepsilon$ thanks to the finiteness of the family $I$. Finally, the arbitrariness of $\varepsilon$ would conclude the proof of the proposition. 

For any $j=1,\ldots,m$ we let
$$\rho_{j,\delta}(x):=\sup\bigg\{ \frac{\lvert\langle v_{i_j}(x),\pi_1(x^{-1}z)\rangle\rvert}{\lVert x^{-1}z\rVert}:z\in E(\vartheta,\gamma)\text{ and }\lVert x^{-1}z\rVert\leq \delta\bigg\},$$
and we claim that for any $j=1,\ldots,m$ we have $\lim_{\delta\to 0}\rho_{j,\delta}(x)=0$ for any $x\in E(\vartheta,\gamma)$ at which for any $\beta>0$ there exists a $\rho=\rho(x,\beta)>0$ such that 
\begin{equation}
    f^{-1}(y)\cap B(x,\rho)\subseteq xC_{\mathrm{Ker}(df(x))}(\beta).
    \label{eq:num:1}
\end{equation}
Note that thanks to \cref{structure:liplevelsets}, the above condition \eqref{eq:num:1} is satisfied for $\mathcal{S}^{\mathcal{Q}-m}$-almost every $x\in E(\vartheta,\gamma)$. We remark that the functions $\rho_{j,\delta}$ are measurable for any $i\in\N$ and $\delta>0$. Indeed, on the one hand the function $(x,z)\mapsto\lvert \langle v_{i_j}(x),\pi_1(x^{-1}z)\rangle\rvert/d(x,z)$ is $\mathcal{S}^{\mathcal{Q}-m}\llcorner f^{-1}(y)$-measurable since it is the quotient of two $\mathcal{S}^{\mathcal{Q}-m}\llcorner f^{-1}(y)$-measurable functions. On the other, since $\mathbb{G}$ is separable, it is immediate to see that $\rho_{j,\delta}$ can be rewritten as the supremum on $z$ over a countable subset of $E(\vartheta,\gamma)\cap B(x,\delta)$, showing that $\rho_{j,\delta}$ is indeed measurable. Thanks to \cite[Proposition 1.5]{MarstrandMattila20}, we know that at any $x\in E(\vartheta,\gamma)$ where \eqref{eq:num:1} is satisfied for some $\beta$ and some $\rho>0$, we have
\begin{equation}
\begin{split}
        \frac{\lvert\langle v_{i_j}(x),\pi_1(x^{-1}z)\rangle\rvert}{\lVert x^{-1}z\rVert}=\frac{\dist(x^{-1}z,\mathbb{V}(v_{i_j}(x)))}{\lVert x^{-1}z\rVert}\leq \frac{\dist(x^{-1}z,\mathrm{Ker}(df(x)))}{\lVert x^{-1}z\rVert}\leq \beta,
        \label{eq:num:2}
\end{split}
\end{equation}
for any $z\in B(x,\rho)\cap f^{-1}(y)$, where $\mathbb{V}(v_{i_j}(x))$ is the $1$-codimensional homogeneous subgroup orthogonal (in the Euclidean sense) to the vector $v_{i_j}(x)$, and where the second last inequality above comes from the fact that $\mathrm{Ker}(df(x))$ is contained in $\mathbb{V}(v_{i_j}(x))$.
The bound \eqref{eq:num:2} together with \cref{structure:liplevelsets} conclude that $\lim_{\delta\to 0}\rho_{j,\delta}(x)=0$ for $\mathcal{S}^{Q-m}\llcorner E(\vartheta,\gamma)$-almost every $x\in\mathbb{G}$. Thanks to Severini-Egoroff's theorem for any $\tilde \varepsilon>0$ we can find a compact set $\tilde K_{\tilde \varepsilon}$ inside $E(\vartheta,\gamma)$ such that
\begin{enumerate}
    \item $\mathcal{S}^{\mathcal{Q}-m}( E(\vartheta,\gamma)\setminus \tilde K_{\tilde \varepsilon})\leq \tilde \varepsilon\mathcal{S}^{\mathcal{Q}-m}( E(\vartheta,\gamma))$,
    \item $v_{i_j}(x)$ is continuous on $\tilde K_{\tilde \varepsilon}$ for any $j=1,\ldots,m$,
    \item $\rho_{j,\delta}$ converges uniformly to $0$ on $\tilde K_{\tilde \varepsilon}$ for any $j=1,\ldots,m$.
\end{enumerate}
Thanks to Whitney extension theorem, see for instance \cite[Theorem 5.2]{step2}, we infer that we can find $m$ $C^1_{\mathrm{H}}$-functions defined on all of $\mathbb G$ such that $f_{j,\tilde \varepsilon}\vert_{\tilde K_{\tilde \varepsilon}}=0$ and $\nabla_\HH f_{j,\tilde \varepsilon}(x)=v_{i_j}(x)$ for any $x\in \widetilde K_{\tilde \varepsilon}$. This shows that, thanks to arbitrariness of $\tilde \varepsilon$ and to the fact that the $v_{i_j}$'s are independent everywhere on $E(\vartheta,\gamma)$, the set $E(\vartheta,\gamma)$ can be covered $\mathcal{S}^{\mathcal{Q}-m}$-almost all by the $0$-level set of countably many $C^1_H(\mathbb{G},\R^m)$-maps. Thus the proof is concluded.
\end{proof}

We end this section with some consequences of the previous \cref{structure:liplevelsets2}. The first part of the forthcoming corollary follows verbatim from the second part of the proof above; while the second part of the forthcoming corollary is a byproduct of the first part in conjunction with \cite[Proposition 6.2]{antonelli2020rectifiable}.

\begin{corollario}\label{cor:PhCCoorizzontali}
Let $\mathbb G$ be a Carnot group of homogeneous dimension $Q$, and let $1\leq h\leq Q$ be a natural number. Let $\Gamma\subseteq \mathbb G$ be a compact set such that $\mathcal{S}^h(\Gamma)<+\infty$. The following are equivalent 
\begin{enumerate}
    \item[1.] $\mathcal{S}^h\llcorner\Gamma$ is a $\mathscr{P}_h^c$-rectifiable measure, and at $\mathcal{S}^h\llcorner\Gamma$-almost every $x\in\mathbb G$ the tangent plane is complemented by a \textbf{horizontal} subgroup.
    \item[2.] $\Gamma$ is $C^1_{\mathrm{H}}(\mathbb G,\mathbb R^{Q-h})$-rectifiable.
\end{enumerate}
\end{corollario}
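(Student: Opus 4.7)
The plan is to establish the two implications separately, using different earlier results for each. Throughout, we fix an Euclidean scalar product on $\mathfrak g$ making a basis of left-invariant vector fields orthonormal, and denote by $\mathbb W^\perp$ the Euclidean orthogonal complement inside $\mathfrak g$.

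For the implication $2.\Rightarrow 1.$, I would invoke \cite[Proposition 6.2]{antonelli2020rectifiable} directly: any $C^1_{\mathrm H}(\mathbb G,\mathbb R^{Q-h})$-submanifold is locally the zero set of a $C^1_{\mathrm H}$ map whose Pansu differential has complemented horizontal kernel (by definition of $C^1_{\mathrm H}(\mathbb G,\mathbb R^{Q-h})$-submanifold, compare \cref{def:C1Hmanifold}), hence it is $\mathscr{P}_h^c$-rectifiable with tangent equal to this kernel. Since the kernel is complemented by a horizontal subgroup of dimension $Q-h$ (the image of the transpose of $df$ gives such a complement, as $df$ is represented by a matrix acting nontrivially only on $V_1$), the conclusion follows by locality of tangents (\cref{prop:Lebesuge}) and countable covering.

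For $1.\Rightarrow 2.$, the key structural observation is the following: if the tangent $\mathbb V(x)\in \G_c(h)$ is complemented by a horizontal subgroup $\mathbb L(x)\subseteq \exp V_1$, then $\mathbb V(x)$ must contain the whole $\exp(V_2\oplus\dots\oplus V_\kappa)$, since the higher strata cannot lie in the horizontal complement. Decomposing $\mathbb V(x)=\exp(V_1\cap\mathfrak v(x))\oplus \exp(V_2)\oplus\dots\oplus\exp(V_\kappa)$ and comparing Hausdorff dimensions, one gets $\dim(V_1\cap\mathfrak v(x)^\perp)=Q-h$, so the Euclidean orthogonal complement $\mathbb V(x)^\perp$ is a $(Q-h)$-dimensional horizontal subspace. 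From here I would copy almost verbatim the second part of the proof of \cref{structure:liplevelsets2}: measurability of $x\mapsto \mathbb V(x)$ given by \cref{lem:BorelTangents} together with \cref{prop:projmap} yields measurable orthonormal vector fields $v_1(x),\dots,v_{Q-h}(x)$ spanning $\mathbb V(x)^\perp$, all lying in $\exp V_1$. After restricting to compact subsets $K_\varepsilon$ via Lusin's theorem and splitting into finitely many Borel pieces where a fixed subfamily forms a basis, one defines, for each $j$, the functions
\[
\rho_{j,\delta}(x):=\sup\Bigl\{\tfrac{|\langle v_j(x),\pi_1(x^{-1}z)\rangle|}{\lVert x^{-1}z\rVert}: z\in E(\vartheta,\gamma),\, 0<\lVert x^{-1}z\rVert\leq \delta\Bigr\},
\]
and shows $\rho_{j,\delta}(x)\to 0$ as $\delta\to 0$ at $\mathcal{S}^h$-a.e.\ $x$, using that \cref{prop:rett.1} (applicable by the assumption of \textbf{complemented} flat tangents) provides the cone condition $\Gamma\cap B(x,\rho(x,\beta))\subseteq xC_{\mathbb V(x)}(\beta)$ for arbitrarily small $\beta$. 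Severini--Egoroff plus Whitney's extension theorem in Carnot groups \cite[Theorem 5.2]{step2} then provide $C^1_{\mathrm H}$ maps $\mathbb G\to \mathbb R^{Q-h}$ whose common zero set contains a large portion of $\Gamma$ and whose Pansu differentials are surjective with complemented kernel there, producing the desired $C^1_{\mathrm H}(\mathbb G,\mathbb R^{Q-h})$-rectifiable covering.

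The main obstacle I expect is bookkeeping rather than substance: one must carefully match the \emph{horizontal} complementation hypothesis with the Whitney construction, checking that the $(Q-h)$-tuple of vector fields $v_j(x)$ actually lies in $V_1$ (which is where the horizontal complementation becomes essential, as otherwise the vectors need not be in $V_1$ and Whitney's extension theorem in \cite{step2} would not directly apply), and that the resulting differentials are surjective from $\mathbb G$ onto $\mathbb R^{Q-h}$ with complemented kernel on the compact set $K_\varepsilon$. Both checks follow from the dimension count above. The remaining measurability, density, and decomposition arguments are then identical to those already carried out in the proof of \cref{structure:liplevelsets2}.
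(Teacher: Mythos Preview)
Your approach is essentially the same as the paper's: the paper proves $2.\Rightarrow 1.$ by directly invoking \cite[Proposition 6.2]{antonelli2020rectifiable}, and proves $1.\Rightarrow 2.$ by noting that one can argue ``precisely as in the second part of the proof of \cref{structure:liplevelsets2}'', which is exactly the Lusin--Whitney strategy you outline.

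One small correction: in the $1.\Rightarrow 2.$ direction you cite \cref{prop:rett.1} as the source of the cone condition $\Gamma\cap B(x,\rho(x,\beta))\subseteq xC_{\mathbb V(x)}(\beta)$, but that proposition goes the other way (it assumes the cone condition and concludes $\mathscr P_h^c$-rectifiability). The cone condition you need instead follows from $\mathscr P_h^c$-rectifiability via \cite[Proposition 3.2]{antonelli2020rectifiable} (applied on each $E(\vartheta,\gamma)$), or equivalently from \cref{thm:INTRO1Equivalence} combined with \cref{prop:idiffapproximatetangent}. With that reference fixed, your argument goes through.
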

If any of the previous holds we have $Q-h\leq \mathrm{dim}(V_1)$.
\begin{proof}
1. $\Rightarrow$ 2. is obtained arguing precisely as in the second part of the proof of \cref{structure:liplevelsets2}. 2. $\Rightarrow$ 1. is an immediate consequence of \cite[Proposition 6.2]{antonelli2020rectifiable}
\end{proof}

\begin{corollario}\label{structure:liplevelsets3}
Let $B\subseteq \mathbb G$ be a Borel subset. Suppose $f:B\to \R$ is a Lipschitz map. Then, for $\mathcal{S}^1$-almost every $y\in f(B)$ the set $f^{-1}(y)$ is $C^1_{\mathrm{H}}$-rectifiable.
\end{corollario}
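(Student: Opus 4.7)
The plan is to deduce this corollary by applying \cref{structure:liplevelsets2} with target dimension $m=1$. The only thing that needs to be verified is that the hypothesis on the kernel of the Pansu differential is automatic in this codimension-one case, i.e., that $\mathrm{Ker}(df(x))\in\G_c(\mathbb G)$ for $\mathcal{S}^Q$-almost every $x\in B$ at which $df(x)$ exists and is surjective. Once this is established, the conclusion is immediate.

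First I would recall, exactly as in the proof of \cref{structure:liplevelsets2}, that the Pansu differential $df(x):\mathbb G\to\mathbb R$, being a homogeneous homomorphism, must annihilate every higher layer. Indeed, for every $j\geq 2$ and every $v\in V_j$,
$$
df(x)[v]=\lim_{\lambda\to 0}\lambda^{-1}df(x)[\delta_\lambda v]=\lim_{\lambda\to 0}\lambda^{j-1}df(x)[v]=0,
$$
so that $\mathrm{Ker}(df(x))\supseteq V_2\oplus\dots\oplus V_\kappa$ whenever $df(x)$ exists.

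Next, whenever $df(x)$ is also surjective, $\mathrm{Ker}(df(x))\cap V_1$ is a codimension-one linear subspace of $V_1$ and $\mathrm{Ker}(df(x))$ is a normal homogeneous subgroup of $\mathbb G$ (it corresponds to a codimension-one subalgebra containing $[\mathfrak g,\mathfrak g]$). Picking any $v\in V_1$ outside $\mathrm{Ker}(df(x))\cap V_1$, the one-dimensional horizontal subgroup $\mathbb L:=\exp(\mathrm{span}(v))$ is a complement of $\mathrm{Ker}(df(x))$: on the one hand, if $\exp(tv)\in\mathrm{Ker}(df(x))$ then $t\cdot df(x)[v]=df(x)[\exp(tv)]=0$ forces $t=0$, giving $\mathrm{Ker}(df(x))\cap\mathbb L=\{0\}$; on the other hand, the induced homogeneous homomorphism $\mathbb L\to \mathbb G/\mathrm{Ker}(df(x))\cong\mathbb R$ is nonzero and hence a bijection, so that $\mathbb G=\mathrm{Ker}(df(x))\cdot\mathbb L$. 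Therefore $\mathrm{Ker}(df(x))\in\G_c(\mathbb G)$ is automatic, without any further assumption on $f$.

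Having verified the hypothesis, I would simply invoke \cref{structure:liplevelsets2} with $m=1$: the dimensional bound $m\leq \dim(V_1)$ is trivially satisfied, and the conclusion gives that for $\mathcal{S}^1$-almost every $y\in f(B)$ the fibre $f^{-1}(y)$ is $C^1_{\mathrm H}(\mathbb G,\mathbb R)$-rectifiable, which is the desired $C^1_{\mathrm H}$-rectifiability. There is no essential obstacle here: the whole argument reduces to the observation that a surjective homogeneous homomorphism to $\mathbb R$ has kernel automatically complemented by a one-dimensional horizontal subgroup, so the corollary is truly just an application of the more general co-horizontal rectifiability result.
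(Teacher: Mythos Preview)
Your proposal is correct and follows essentially the same approach as the paper: verify that in codimension one the kernel of the Pansu differential is automatically complemented (by a horizontal one-dimensional subgroup), and then invoke \cref{structure:liplevelsets2} with $m=1$. The paper states this complementation fact more tersely (``$1$-codimensional homogeneous subgroups are always complemented''), whereas you spell out the explicit construction of the horizontal complement, but the strategy is identical.
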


\begin{proof}
Let us assume that $x$ is point where the Pansu's differential $df(x)$ exists and is surjective. Thanks to the first homomorphism theorem we know that $\mathbb{G}/\mathrm{Ker}(df(z))\cong \R$ and in particular $\mathrm{Ker}(df(z))$ is a $1$-codimensional homogeneous subgroup of $\mathbb{G}$. These subgroups are always complemented, and thus \cref{structure:liplevelsets2} concludes the proof.
\end{proof}

\begin{corollario}\label{structurelip4}
Let $B\subseteq \mathbb G$ be a Borel subset. Suppose $f:B\to\R^m$ is a Lipschtiz function with $\mathcal{Q}\geq m$. Then for $\mathcal{S}^m$-almost every $y\in f(B)$ there are $m$ $C^1_H$-rectifiable sets $\Gamma_i(y)$ such that
$$f^{-1}(y)=\bigcap_{i=1}^m \Gamma_i(y).$$
\end{corollario}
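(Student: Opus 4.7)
The plan is to reduce to the scalar-valued case already handled by \cref{structure:liplevelsets3}. Write the Lipschitz map $f = (f_1, \dots, f_m)$ with components $f_i := \pi_i \circ f : B \to \R$, where $\pi_i : \R^m \to \R$ denotes the $i$-th coordinate projection. Each $f_i$ is Lipschitz as a composition of Lipschitz maps, and for every $y = (y_1, \dots, y_m) \in \R^m$ the preimage decomposes as
\[
f^{-1}(y) \;=\; \bigcap_{i=1}^m f_i^{-1}(y_i).
\]
So if we set $\Gamma_i(y) := f_i^{-1}(y_i)$, the structural identity in the statement holds for \emph{every} $y$ by construction; the only issue is guaranteeing $C^1_H$-rectifiability of each $\Gamma_i(y)$ for $\mathcal{S}^m$-almost every $y \in f(B)$.

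For this, I would apply \cref{structure:liplevelsets3} separately to each scalar Lipschitz function $f_i : B \to \R$. This produces, for each $i \in \{1,\dots,m\}$, an $\mathcal{S}^1$-negligible set $N_i \subseteq \R$ such that for every $t \in f_i(B) \setminus N_i$ the level set $f_i^{-1}(t)$ is $C^1_H$-rectifiable.

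The last step is a routine Fubini-type argument to promote these ``good'' one-dimensional conditions to a ``good'' $m$-dimensional condition on $y$. Define the exceptional set
\[
N \;:=\; \bigcup_{i=1}^m \pi_i^{-1}(N_i) \;=\; \bigcup_{i=1}^m \bigl( \R^{i-1} \times N_i \times \R^{m-i} \bigr) \;\subseteq\; \R^m.
\]
Since on $\R$ the measures $\mathcal{S}^1$ and $\mathcal{L}^1$ are mutually absolutely continuous (in fact proportional), each $N_i$ is $\mathcal{L}^1$-negligible. By Fubini, each slab $\R^{i-1}\times N_i \times \R^{m-i}$ is $\mathcal{L}^m$-negligible, hence $N$ is $\mathcal{L}^m$-negligible, and thus $\mathcal{S}^m$-negligible on $\R^m$. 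For every $y \in f(B) \setminus N$ we have $y_i \notin N_i$ for all $i$, so $\Gamma_i(y) = f_i^{-1}(y_i)$ is $C^1_H$-rectifiable for each $i$, which concludes the proof.

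There is no real obstacle in this corollary: the substantive content lives entirely in \cref{structure:liplevelsets3} (via the rectifiability criterion \cref{prop:rett.1}), and the step above is a soft measure-theoretic packaging. One small care point worth flagging is to ensure we quote the equivalence between $\mathcal{S}^m$ and $\mathcal{L}^m$ on Euclidean $\R^m$ (so that product-null sets in Lebesgue measure are also $\mathcal{S}^m$-null), but this is standard and does not require any additional structural result from the Carnot group setting.
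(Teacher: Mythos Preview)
Your proof is correct and is exactly the approach the paper takes: decompose $f^{-1}(y)=\bigcap_i f_i^{-1}(y_i)$ and invoke \cref{structure:liplevelsets3} componentwise. The paper's own proof is a one-liner that leaves the Fubini packaging implicit; you have simply written it out.
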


\begin{proof}
Since $f(z)=y$ if and only if for any $i=1,\ldots,m$ we have $f_i(z)=y_i$, the claim immediately follows thanks to \cref{structure:liplevelsets2} and \cref{structure:liplevelsets3}.
\end{proof}

\appendix


\printbibliography

\section*{Acknowledgments} The first author is partially supported by the European Research Council (ERC Starting Grant 713998 GeoMeG `\emph{Geometry of Metric Groups}'). The second author is supported by the Simons Foundation Wave Project.

\end{document}